\documentclass[11pt,a4paper]{article}
\usepackage{amsmath,amsthm,amscd,amssymb,graphicx}
\usepackage{mathtools,bm,mathrsfs}
\usepackage{enumitem}
\usepackage{microtype}
\usepackage{booktabs,array}
\usepackage[T1]{fontenc}
\usepackage{lmodern}
\usepackage{xcolor}

\newtheorem{theorem}{Theorem}[section]
\newtheorem{lemma}[theorem]{Lemma}
\newtheorem{proposition}[theorem]{Proposition}
\newtheorem{corollary}[theorem]{Corollary}
\newtheorem{assumption}[theorem]{Assumption}
\theoremstyle{definition}
\newtheorem{definition}[theorem]{Definition}

\newtheorem{remark}[theorem]{Remark}
\numberwithin{equation}{section}

\usepackage[text={178mm,254mm},centering]{geometry}
\usepackage{xurl}
\usepackage{hyperref}
\newcolumntype{P}[1]{>{\raggedright\arraybackslash}p{#1}}

\hypersetup{
  colorlinks=true,linkcolor=blue,citecolor=blue,urlcolor=blue,
  pdfauthor={Jiangwei Zhang and Jianhua Huang},
  pdftitle={Unique Ergodicity for Strongly Damped Stochastic Wave Equations under Finite-Rank Pure-Jump Levy Forcing}
}

\title{Unique Ergodicity for Strongly Damped Stochastic Wave Equations under Finite-Rank Pure-Jump L\'{e}vy Forcing\thanks{This work is supported by the National Natural Science Foundation of China (Nos. 12371198).}}
\author{%
  Jiangwei Zhang\\[2pt]
  {\small Institute of Applied Physics and Computational Mathematics}\\
  {\small Beijing 100088, P.R. China}\\[7pt]
  Jianhua Huang\thanks{Corresponding author. Email: %
  	{\texttt{jhhuang32@hotmail.com}}.}\\[2pt]
  {\small College of Science, National University of Defense Technology}\\
  {\small Changsha 410073, P.R. China}%
}
\date{}

\begin{document}
\maketitle

\begin{abstract}
{\small
We study a strongly damped stochastic wave equation on the three-dimensional torus driven by finite-rank pure-jump L\'{e}vy noise obtained from subordinated Brownian motion. Lyapunov estimates and an asymptotic compactness decomposition yield invariant measures, and a small-noise event gives weak irreducibility. To prove uniqueness, we combine conditional Malliavin calculus with terminal observability and estimates for the full variational flow starting from high modes. A control constructed on alternating stopping blocks yields the e-property under a fixed-rank balance condition involving high-mode decay, spectral leakage, and the random clock. Consequently, the invariant measure is unique and ergodic, and the Ces\`aro averages of the transition laws from every initial state converge weakly to it. We verify this condition for sufficiently small positive cubic coefficients at each fixed forcing operator of sufficiently large rank. Under a separate explicit smallness condition, synchronous coupling proves the e-property and weak convergence of the transition laws, without a lower bound on the forcing rank.
}
\end{abstract}

\noindent\textbf{Keywords:} Stochastic wave equation; pure-jump L\'{e}vy noise; invariant measure; ergodicity; Malliavin calculus; degenerate forcing.

\smallskip
\noindent\textbf{2020 Mathematics Subject Classification:} 60H15, 60H07, 37A25, 35L71, 60G51.

\setlength{\abovedisplayskip}{9pt plus 2pt minus 4pt}
\setlength{\belowdisplayskip}{9pt plus 2pt minus 4pt}
\setlength{\abovedisplayshortskip}{0pt plus 2pt}
\setlength{\belowdisplayshortskip}{5pt plus 2pt minus 2pt}

\section{Introduction}\label{DeSubintro}

In this paper, we are concerned with the strongly damped stochastic wave equation on the three-dimensional torus
\begin{equation}\label{eq:wave-second-order}
\mathrm{d}(\partial_t u)-\Delta(\partial_t u){}\mathrm{d} t-\Delta u{}\mathrm{d} t+P_0g(u){}\mathrm{d} t
=Q{}\mathrm{d} L_t,
\qquad (u(0),\partial_t u(0))=(u_0,v_0),
\end{equation}
where $P_0$ is the zero-mean projection and $Q$ acts only on finitely many Fourier modes of the velocity component.  The noise is given by
\begin{equation*}
L_t=W_{S_t},
\end{equation*}
where $W$ is a finite-dimensional Brownian motion and $S$ is an independent driftless subordinator with infinite activity and an exponential moment, as specified in Assumption~\ref{ass:levy}. Hence $L$ is a finite-dimensional pure-jump L\'{e}vy process.

The main problem is to find sufficient conditions under which a fixed finite-dimensional pure-jump forcing determines the long-time statistical behavior of the infinite-dimensional wave equation.  We begin with existence of invariant measures and weak irreducibility, and then derive a fixed-rank balance condition yielding the e-property and uniqueness of the invariant measure.  The estimates separate strong damping, the coupling between low and high modes, and the random clock. For a small cubic coefficient, we also give an independent proof by synchronous coupling.

The long-time behavior of stochastic damped wave equations has been studied by energy estimates, compactness arguments, coupling methods, and Malliavin calculus; see \cite{BarbuDaPratoTubaro2007,BrzezniakOndrejatSeidler2016,LiLi2024,LiLiangSun2025,Martirosyan2014,Tolomeo2020,ForlanoTolomeo2024} and \cite{DaPratoZabczyk2014,PeszatZabczyk2007}.  A closely related deterministic-time result is \cite{Li2026}.  In that work, finite-dimensional Gaussian forcing is used to prove uniqueness and exponential mixing for the three-dimensional strongly damped wave equation.  For equations driven by jump noise, we refer to \cite{BoShiWang2010,JiangWangWang2012,Majka2017,WangXu2018,SunXieXu2019,WangYangZhaiZhang2025}.  Time-changed Malliavin methods for degenerate stochastic fluid equations were developed in \cite{PengZhaiZhang2024,HuangPengWangZhang2025}.

The jump forcing requires estimates on random operational-time intervals and introduces inverse-clock occupation costs. These are absent from the deterministic-clock Gaussian problem in \cite{Li2026}; our general result gives Ces\`aro convergence rather than exponential mixing. A separate difficulty comes from the wave equation: multiplication by $g'(u)$ can transfer a tangent vector from high to low modes, so the control must estimate the full variational flow.

The proof has three components. Terminal wave observability and strict clock increments give a lower-tail estimate for the low-mode Malliavin covariance, uniformly on bounded energy sets. Coupled high--low estimates control the full tangent flow and quantify the spectral leakage by $d_m\to0$. Finally, Tikhonov control on alternating stopping blocks combines low-mode regularization with high-mode decay. Conditioning on the clock allows the Skorohod integral to be constructed on each controlled Gaussian factor. The resulting balance condition is stated below; the covariance tolerance and the Tikhonov parameter are chosen afterwards. For $g(s)=as^3$, we retain the coefficient $a$ in the variational estimates and show that this condition holds when $a>0$ is sufficiently small.

We now state the main results.
Let $\{P_t\}_{t\ge0}$ be the Markov semigroup generated by \eqref{eq:first-order},
\begin{equation*}
P_t\varphi(z)=\mathbb{E}[\varphi(Z_t^z)],
\qquad \varphi\in B_b(\mathbb{H}).
\end{equation*}

\begin{theorem}[Invariant measure and weak irreducibility]\label{thm:main}
	Suppose Assumptions \ref{ass:g} and \ref{ass:levy} hold, and let $Q=Q_m$ be as in \eqref{eq:Q}.  Then the Markov semigroup generated by \eqref{eq:first-order} possesses at least one invariant measure $\mu$, and every invariant measure satisfies the moment bound \eqref{eq:mu-V} below.
	Moreover, the transition semigroup is weakly irreducible in the sense of Proposition \ref{prop:weak-irr}.
\end{theorem}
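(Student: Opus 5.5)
The argument proceeds in three stages: a Lyapunov estimate, an asymptotic compactness decomposition feeding a Krylov--Bogoliubov argument, and a small-noise event for weak irreducibility. Throughout we work with the first-order form \eqref{eq:first-order} on $\mathbb{H}$, writing $Z_t=(u_t,v_t)$ with $v=\partial_t u$.

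\textbf{Step 1: Lyapunov bound.} Take the modified energy
\[
V(z)=\tfrac12\|v\|^2+\tfrac12\|\nabla u\|^2+\int G(u)\,dx+\varepsilon\langle u,v\rangle+\tfrac{\varepsilon}{2}\|\nabla u\|^2,
\]
where $G'=g$ and $\varepsilon>0$ is small; Assumption~\ref{ass:g} should make $G$ bounded below. Apply It\^o's formula for jump semimartingales to $V$, and also to $e^{\delta V}$ or to $V^p$. The strong damping $-\Delta v$ and the $\varepsilon$-cross term give dissipation of the form $-\kappa V+C$. The compensator of the jumps is controlled by the second moments of the finite-rank operator $Q_m$ together with the exponential moment of $S$ from Assumption~\ref{ass:levy}. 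This yields
\[
\mathbb{E}V(Z_t^z)\le e^{-\kappa t}V(z)+C,
\]
and the analogous bound for higher moments. Passing these bounds through invariance, via $\mu(V\wedge N)=\mu P_t(V\wedge N)$ with $t\to\infty$ and then $N\to\infty$, gives the moment bound \eqref{eq:mu-V} for every invariant measure.

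\textbf{Step 2: Tightness and existence.} Since $V$ controls only the $\mathbb{H}$-norm, compactness has to come from a decomposition $Z_t=Z^{(1)}_t+Z^{(2)}_t$. Here $Z^{(1)}$ solves the linear strongly damped equation with initial datum $z$ and no forcing, so it decays exponentially in $\mathbb{H}$. The part $Z^{(2)}$ starts from zero and carries the nonlinearity and the noise. Using the parabolic smoothing of $-\Delta v$ and the fact that $Q_m$ ranges in smooth (finite) modes, I would bound $\mathbb{E}\|Z^{(2)}_t\|_{\mathbb{H}^{s}}$ for some $s>0$ uniformly in $t\ge1$. The nonlinearity is estimated with Sobolev embeddings on $\mathbb{T}^3$, using $\|u\|_{L^6}\lesssim\|\nabla u\|$ and the growth condition on $g'$, and combined with the moments from Step~1. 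The Ces\`aro averages $\frac1T\int_0^TP_t^*\delta_0\,dt$ are then tight, because they are the sum of a component converging to $0$ and a component bounded in a compactly embedded space. The Feller property follows from continuous dependence on initial data, which holds pathwise given the noise. Krylov--Bogoliubov then produces $\mu$. The hard step is this smoothing bound for $Z^{(2)}$: the cubic-type term must be absorbed through the random-time moments, and I would first try an energy estimate at regularity $\mathbb{H}^{s}$ with $s$ small, against the Lyapunov moments.

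\textbf{Step 3: Weak irreducibility.} The aim is to show that for each $R,\eta>0$ there are $T$ and $p>0$ with $\inf_{V(z)\le R}\mathbb{P}(\|Z_T^z\|<\eta)\ge p$, the form I expect Proposition~\ref{prop:weak-irr} to take. Consider the event $\{\sup_{t\le T}|L_t|\le\delta\}$. It has positive probability, since $S_T$ is small with positive probability and $W$ stays small on a bounded time window. On this event, write $\tilde Z=Z-\int_0^\cdot e^{A(\cdot-s)}Q\,dL_s$, removing the stochastic convolution after integrating by parts. Then $\tilde Z$ satisfies a random PDE with a small perturbation. The deterministic energy dissipation $\frac{d}{dt}V\le-\kappa V+C\delta$, uniform on $\{V\le R\}$, then gives $\|Z_T\|<\eta$ for $T$ large and $\delta$ small. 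Independence of $S$ and $W$ makes $p$ independent of $z$.
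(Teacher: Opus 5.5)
Your proposal is correct and follows essentially the same route as the paper: the $V\wedge N$ invariance argument for \eqref{eq:mu-V}, the splitting $Z_t=\mathbb S(t)z_0+\widetilde Z_t$ with a linear maximal-regularity bound for the forcing $P_0g(u)\in L^2$ plus a finite-dimensional stochastic convolution, and a small-forcing event $\{\sup_{t\le T}\|QW_{S_t}\|_{H^1}<\varepsilon\}$ for irreducibility. The smoothing step you flag as hard needs no absorption, only the $p=3$ moments of Lemma~\ref{lem:poly-moments}, and the remaining differences are cosmetic: the paper subtracts $QL_t$ from the velocity rather than the stochastic convolution and uses a time-averaged $H^2\times H^1$ bound. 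You should also drop the $e^{\delta V}$ option in Step~1, because without the stopping-block clock penalty the quadratic jump term $\tfrac12\|Qy\|^2$ has no exponential moment under $\nu_L$ when $\nu_S$ has unbounded support; polynomial moments suffice.
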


Let $p_0\ge40$ be the moment exponent in Proposition~\ref{prop:high-branch}, and set $\theta_0=4/p_0$. With the constants specified in Section~\ref{subsec:residual-control}, define
\begin{equation*}
\mathfrak q_m(\kappa,\chi,\tilde\eta,\widehat\delta)
:=\mathscr C\exp\{2\theta_0\widehat c\widehat\mu\}
\bigl(\widehat q(\chi)^{\theta_0}+d_m^{\theta_0}\bigr),
\qquad \widehat\mu=\overline\Xi+\widehat\delta.
\end{equation*}

\begin{theorem}[Fixed-rank criterion for unique ergodicity]\label{thm:conditional-ergodicity}
	Assume the hypotheses of Theorem \ref{thm:main}.  Let $\widehat m$ be a threshold beyond which
	Proposition \ref{prop:high-branch} holds, and fix $m\ge \widehat m$.  Suppose there exist a centering margin $\widehat\delta>0$ and admissible stopping/control parameters $\kappa,\chi,\tilde\eta$, as specified in Sections \ref{sec:stopping} and \ref{sec:control}, such that
	\begin{equation*}
	\mathfrak q_m(\kappa,\chi,\tilde\eta,\widehat\delta)<\frac14.
	\end{equation*}
	Equivalently, the fixed-rank balance \eqref{eq:fixed-rank-admissible} holds.  Here $\widehat q(\chi)$ is the deterministic contraction along the high branch, $d_m\to0$ is the wave-specific spectral leakage generated by the high--low coupling, and $\widehat\mu$ is the mean normalized clock cost plus the centering margin.  Then $\{P_t\}_{t\ge0}$ has the e-property on $\mathbb{H}$ in the bounded-set-uniform form \eqref{eq:e-property} and possesses a unique invariant measure $\mu_*$.  In particular, $\mu_*$ is ergodic and satisfies \eqref{eq:mu-V}.  Moreover, for every $z\in\mathbb{H}$,
	\begin{equation}\label{eq:cesaro-convergence}
	\frac1T\int_0^T P_t^*\delta_z\,\mathrm dt
	\Longrightarrow \mu_*,
	\quad\text{as }T\to\infty.
	\end{equation}
\end{theorem}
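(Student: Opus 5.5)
The plan is to derive the e-property from a gradient-type estimate for $P_t\varphi$, and then to combine it with weak irreducibility (Theorem \ref{thm:main}) through the standard Komorowski--Peszat--Szarek / Kapica--Szarek--\'Sl\c{e}czka criterion. That criterion says the following: if a Feller semigroup on a Polish space has the e-property, and there is a point $z_0$ such that every $\varepsilon$-ball around it is visited with positive lower-averaged probability uniformly on bounded sets, and the Lyapunov bound \eqref{eq:mu-V} holds, then there is at most one invariant measure. Moreover, Ces\`aro averages from every initial state converge weakly to it. Existence comes from Theorem \ref{thm:main}, so uniqueness, ergodicity of $\mu_*$ (it is an extreme point of a singleton set) and \eqref{eq:cesaro-convergence} follow once the e-property is established. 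Tightness of the Ces\`aro averages $\frac1T\int_0^T P_t^*\delta_z\,\mathrm dt$ comes from the Lyapunov estimates and the asymptotic compactness decomposition used for Theorem \ref{thm:main}.

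For the e-property I will prove the following bound for $\varphi\in C^1_b(\mathbb H)$, $\|z\|\le R$ and unit $\xi$:
\[
|\nabla P_t\varphi(z)\xi|\le C(R)\bigl(\|\varphi\|_\infty+\mathfrak q_m^{\,n(t)}\|\nabla\varphi\|_\infty\bigr).
\]
Here $n(t)\to\infty$ is the number of completed stopping blocks. Since $\mathfrak q_m<1/4$, this gives $\limsup_t\|\nabla P_t\varphi\|$ bounded by $C(R)\|\varphi\|_\infty$ on bounded sets. Equicontinuity of $\{P_t\varphi\}$ for Lipschitz bounded $\varphi$ follows, which is the form \eqref{eq:e-property}.

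The derivation goes as follows. Condition on the subordinator $S$, so that the noise is a time-changed Gaussian process. Then write $\nabla P_t\varphi(z)\xi=\mathbb E[\nabla\varphi(Z_t)(J_{0,t}\xi-\mathcal A_{0,t}v)]+\mathbb E[\varphi(Z_t)\delta(v)]$, where the control $v$ is built blockwise on the alternating stopping blocks of Section \ref{sec:stopping}. On a low-mode block, $v$ is the Tikhonov-regularized inverse of the conditional Malliavin covariance applied to the low part of the current residual $\rho_k$. The lower-tail estimate from terminal observability and strict clock increments controls its cost. On a high-mode block, $v=0$, and Proposition \ref{prop:high-branch} gives contraction of the high part by $\widehat q(\chi)$. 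The leakage into low modes through $g'(u)$ contributes $d_m$.

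The core computation is a one-step residual recursion of the form $\mathbb E\|\rho_{k+1}\|^{\theta_0}\le \mathfrak q_m\,\mathbb E\|\rho_k\|^{\theta_0}$. The factor $\exp\{2\theta_0\widehat c\widehat\mu\}$ comes from the exponential moments of the inverse-clock occupation costs, using Assumption \ref{ass:levy} and the centering margin $\widehat\delta$. Moment exponent $p_0$ and Hölder with $\theta_0=4/p_0$ handle the energy factors. Iterating the recursion gives geometric decay of $\rho_n$. The Skorohod integrals $\delta(v)$ must be summable in $L^2$. Each is built on its own controlled Gaussian factor conditionally on the clock, so the Itô-type isometry for Skorohod integrals applies blockwise; the bound $<1/4$ rather than $<1$ leaves room for the Cauchy--Schwarz losses in this step.

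The main obstacle is the recursion step itself. The stopping times, the clock costs and the energy moments interact there, and the random block lengths must be shown to be compatible with adaptedness of $v$, as needed for the Skorohod-integral estimates. Here I would first try to bound everything by sub-multiplicative functionals of the clock increments and apply the strong Markov property at the block endpoints.
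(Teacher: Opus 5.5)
\textbf{Closing step.} Your closing step is sound and matches the paper's route through Lemma~\ref{lem:unique-criterion}. That step combines the e-property, irreducibility at the origin, and tightness from Lemma~\ref{lem:compactness} via a Komorowski--Peszat--Szarek type criterion. The single-time bound of Proposition~\ref{prop:weak-irr} upgrades to a lower-averaged one by the Markov property and \eqref{eq:mean-energy}.

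\textbf{The gap: your unlocalized recursion for the e-property.} The hypothesis $\mathfrak q_m<\frac14$ involves $\widehat\mu=\overline\Xi+\widehat\delta$, which is the \emph{mean} one-block clock cost. By contrast, each pair multiplies the residual by the random factor $e^{\theta_0\widehat c\,\Gamma_n}$ in \eqref{eq:one-pair-fourth}. A recursion in expectation would therefore have to absorb $\mathbb E[e^{\theta_0\widehat c\,\Gamma_n}\cdots]$, i.e.\ an exponential moment of $\Xi_n$. None is available.
\begin{itemize}
\item Only the polynomial moments \eqref{eq:Xi-moment} are established, and more is false in general. A large jump $\Delta S$ makes $\omega_n^{-8}$ of size $e^{8c_Q\kappa B_Q\Delta S}$ for a time of order $\Delta S$. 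For the gamma subordinator, for instance, $X_n$ then has only polynomial tails.
\item Even if such a moment were finite, Jensen gives $\mathbb E e^{c\Gamma_n}>e^{c\,\mathbb E\Gamma_n}$ for non-degenerate $\Gamma_n$. A condition phrased through $\overline\Xi$ therefore cannot close your recursion.
\end{itemize}
So the step where $e^{2\theta_0\widehat c\widehat\mu}$ ``comes from the exponential moments of the inverse-clock occupation costs'' is where the argument fails. Your global bound $|\nabla P_t\varphi(z)\xi|\le C(R)(\|\varphi\|_\infty+\mathfrak q_m^{n(t)}\|\nabla\varphi\|_\infty)$ does not follow. With $n(t)$ random, it is also not a deterministic bound as written.

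\textbf{Missing idea: pathwise localization on clock-only events.}
\begin{itemize}
\item Since $\widehat\delta>0$, the strong law makes $\widehat\Theta^{(\widehat\delta)}$ finite almost surely. On $\{\widehat\Theta^{(\widehat\delta)}\le L\}$ one has $\sum_{r<2n}\Xi_r\le 2\widehat\mu n+L$. The clock exponential then leaves the expectation as the deterministic factor $e^{\theta_0\widehat c(2\widehat\mu n+L)}$. Only at this point does $\mathfrak q_m<\frac14$ yield the geometric decay \eqref{eq:block-contraction}.
\item A second envelope $\Xi_n\le K+bn$, with $b$ as in \eqref{eq:b-choice}, is needed. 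It controls the variational factors of the pair containing the deterministic time $t$, and the divergence of the current block.
\item The event $\mathbb G_{L,K}^{\widehat\delta,b}$ depends only on $S$. It therefore has zero Brownian Malliavin derivative, so it may multiply the Skorohod integrands (Lemma~\ref{lem:conditional-block-divergence}). Its probability is also independent of the initial state.
\end{itemize}
What this gives is a $t$-uniform gradient bound only for $P_t^{L,K,\widehat\delta,b}\varphi(z)=\mathbb E_z[\varphi(Z_t)\mathbf 1_{\mathbb G_{L,K}^{\widehat\delta,b}}]$. The e-property then follows from $|P_t\varphi(z)-P_t\varphi(z')|\le C_{R,L,K}(\|\varphi\|_\infty+\operatorname{Lip}(\varphi))\|z-z'\|+2\|\varphi\|_\infty\mathbb P((\mathbb G_{L,K}^{\widehat\delta,b})^c)$. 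Here $L,K$ are chosen before $\|z-z'\|$. No bound on $\nabla P_t\varphi$ itself is produced.

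\textbf{Two smaller corrections.}
\begin{itemize}
\item A one-step recursion in expectation does not close, because the conditional bounds carry the state factors $e^{\tilde\eta V(Z_{\tau_n})}$. The paper instead expands the iterated pathwise bound over words in $\{0,1,2\}^n$. It then uses second moments, Cauchy--Schwarz and \eqref{eq:fixed-block-sum-reduction}; this is where the constant $C_2$ in $\mathscr C$ comes from.
\item The Tikhonov control is anticipating on its block, so adaptedness is neither available nor needed. The blockwise Skorohod estimate must include $\|D^{(n)}h_n^c\|$, which is bounded in \eqref{eq:Dh-beta-summary}.
\end{itemize}
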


\begin{corollary}[Cubic nonlinearity]\label{cor:cubic-ergodicity}
Let $m\ge m_0$ as in Proposition~\ref{prop:cubic-realization}, fix $Q_m$ as in
\eqref{eq:Q}, and suppose Assumption~\ref{ass:levy} holds.
If $g(s)=as^3$ with $0<a<a_*(m,Q_m,S)$, then the semigroup has the e-property
and a unique invariant measure. This measure is ergodic, and
\eqref{eq:cesaro-convergence} holds for every initial state in $\mathbb H$.
\end{corollary}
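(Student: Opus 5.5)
The corollary will follow from Theorem~\ref{thm:conditional-ergodicity} once we check that its hypotheses hold for $g(s)=as^3$ with $a$ small. The plan has three steps.

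\textbf{Step 1 (standing hypotheses).} For $a>0$, the cubic $g(s)=as^3$ satisfies Assumption~\ref{ass:g}: it is odd, has polynomial growth, and is monotone with $g'(s)=3as^2\ge0$. Together with Assumption~\ref{ass:levy} and $Q=Q_m$, this gives the hypotheses of Theorem~\ref{thm:main}. For $m\ge m_0$ we are also beyond the threshold $\widehat m$ of Proposition~\ref{prop:high-branch}. I would take this from Proposition~\ref{prop:cubic-realization}, which I expect is set up so that $m_0\ge\widehat m$ uniformly for $a\in(0,1]$, say.

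\textbf{Step 2 (choice of parameters).} With $m$, $Q_m$ and the subordinator $S$ fixed, I would pick a centering margin $\widehat\delta>0$ and admissible $\kappa,\chi,\tilde\eta$ so that $\mathfrak q_m<\frac14$. The key is how each factor depends on $a$.

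\begin{enumerate}[label=(\roman*)]
\item The leakage $d_m$ comes from multiplication by $g'(u)=3au^2$ moving tangent vectors from high to low modes. Since the variational estimates keep the coefficient $a$, I expect $d_m\le C(m)\,a$ on the relevant bounded-energy sets, possibly with a power of $a$. The Lyapunov moments of $u$ are uniform for $a$ small, because the quartic energy term only helps.
\item The high-branch contraction $\widehat q(\chi)$ is dominated by strong damping of the high modes, $e^{-c\lambda_m\chi}$, plus a nonlinear perturbation of size $O(a)$. So it can be made small by taking $\chi$ large and $a$ small.
\item The clock cost $\widehat\mu=\overline\Xi+\widehat\delta$ depends on $S$ and the stopping design, and enters through $\exp\{2\theta_0\widehat c\widehat\mu\}$.
\end{enumerate}

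I would fix $\widehat\delta=1$ and fix the stopping parameters first. Then I would choose $\chi$ so that the linear part of $\widehat q(\chi)^{\theta_0}$ is at most $\frac1{16}\mathscr C^{-1}e^{-2\theta_0\widehat c\widehat\mu}$. Finally I would shrink $a$ until both the $O(a)$ part of $\widehat q$ and $d_m^{\theta_0}$ fall below the same bound. This determines $a_*(m,Q_m,S)$.

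\textbf{Main obstacle.} The delicate point is circularity. The constants $\mathscr C$, $\widehat c$ and $\overline\Xi$ must not blow up as $a\downarrow0$, and the choice of $\chi$ must not feed back into $\overline\Xi$ or $\mathscr C$ in a way that ruins the smallness. I would handle this by citing Proposition~\ref{prop:cubic-realization} as giving exactly this uniformity:
\begin{itemize}
\item the observability and Malliavin-covariance constants at $a=0$ are those of the linear strongly damped wave equation, which is observable for fixed $Q_m$;
\item they are stable under perturbation in $a$;
\item the tolerance and Tikhonov parameter are chosen afterwards, as the paper indicates.
\end{itemize}
With these uniformities in hand, $\mathfrak q_m<\frac14$ holds. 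Theorem~\ref{thm:conditional-ergodicity} then gives the e-property, a unique ergodic invariant measure, and \eqref{eq:cesaro-convergence} for every initial state in $\mathbb H$.
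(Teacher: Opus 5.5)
Your skeleton (check Assumption~\ref{ass:g}, get the balance \eqref{eq:fixed-rank-admissible} from Proposition~\ref{prop:cubic-realization}, then apply Theorem~\ref{thm:conditional-ergodicity}) is exactly the paper's proof. It is complete without anything further. Proposition~\ref{prop:cubic-realization} already states that for $m\ge m_0$ (which covers the threshold of Proposition~\ref{prop:high-branch}) and $0<a<a_*(m,Q_m,S)$ the parameters can be chosen so that \eqref{eq:fixed-rank-admissible} holds. The $a_*$ in the corollary is that threshold, not one you construct. In Step~1 the relevant checks are $g\in C^2$, $g(0)=0$, $|g'(s)|\le 3a(1+s^2)$, $|g''(s)|\le 6a(1+|s|)$, $G(s)=as^4/4\ge\frac a4|s|^4$ and $g(s)s=4G(s)$, so $\iota=4$. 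Oddness and monotonicity are not what the assumption asks for.

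Your Step~2, however, should be deleted rather than kept as motivation, because as an argument it would fail.
\begin{itemize}
\item \emph{Where smallness comes from.} In the paper neither $d_m$ nor $\widehat q(\chi)=\mathscr H e^{-\mathfrak a\chi}$ depends on $a$. The sequence $d_m$ is a spectral quantity made small by the rank condition $m\ge m_0$. The high-mode rate $\mathfrak c$ is independent of $m$: the slow root of $r^2+\lambda r+\lambda=0$ tends to $-1$, so there is no $e^{-c\lambda_m\chi}$ decay.
\item \emph{Circular choice of $\chi$.} More seriously, $\widehat\mu$ depends on $\chi$. Since $\omega_0(s)\le e^{c_Vs-\chi}$ and $\tau_1\ge\chi/c_V$, one has $\overline\Xi\ge(e^{8\chi}-1)/(8c_V)$. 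Hence, for any fixed $\widehat c>0$, $e^{2\theta_0\widehat c\widehat\mu}\,\widehat q(\chi)^{\theta_0}\to\infty$ as $\chi\to\infty$. Mere non-blow-up of $\mathscr C,\widehat c,\overline\Xi$ in $a$ cannot rescue your scheme.
\item \emph{What the proposition actually uses.} One rescales $w=a^{1/4}u$, which is needed because the quartic dissipation $a\|u\|_{L^4}^4/4$ weakens as $a\downarrow0$, and takes $\tilde\eta_a=a^{1/4}\bar\eta/4$. Then the clock-normalization coefficient satisfies $\widehat c_a\to0$, and $\limsup_{a\downarrow0}\mathscr C_a\le C_g$ with $C_g$ independent of $m,Q_m,\kappa,\chi$.
\item \emph{Correct order of choices.} First take $m_0$ with $C_gd_m^{\theta_0}<1/16$. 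Then take $\chi$ with $C_g\widehat q(\chi)^{\theta_0}<1/16$. Then fix $\kappa$ and $\widehat\delta$, which freezes $\widehat\mu$ independently of $a$. Only then let $a\downarrow0$, so that $e^{2\theta_0\widehat c_a\widehat\mu}\to1$.
\end{itemize}
Finally, the observability and Malliavin-covariance constants never enter $\mathfrak q_m$. They only affect $\delta_2$ and $\beta$, which are chosen afterwards.
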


Theorem~\ref{thm:conditional-ergodicity} requires the fixed-rank balance in addition to Assumptions~\ref{ass:g}--\ref{ass:levy}. Since the clock and moment constants depend on $Q_m$, the limit $d_m\to0$ alone does not verify it; see Remark~\ref{rem:fixed-rank-interpretation}. The criterion quantifies covariance regularization, full-flow leakage, and the clock cost for nonlinearities satisfying Assumption~\ref{ass:g}. Proposition~\ref{prop:cubic-realization} verifies the balance for the cubic family at each fixed forcing operator of sufficiently large rank. It yields unique ergodicity and Ces\`aro convergence for sufficiently small positive cubic coefficients, without a quantitative mixing rate.

For a sufficiently small cubic coefficient, Proposition~\ref{prop:synchronous} gives an independent result: weak convergence of $P_t^*\delta_z$ itself, with no minimum forcing rank, under a separate explicit smallness condition. The two smallness thresholds depend on different estimates, and no ordering between them is established. The cubic realization provides a concrete verification of the general fixed-rank criterion; it does not establish a parameter range beyond synchronous coupling.

The role of degeneracy also differs from sparse-forcing results such as
\cite{PengZhaiZhang2024,HuangPengWangZhang2025}: the general criterion here
forces every mode of a finite forced low-mode subspace and does not use a
H\"ormander bracket condition. Jump coupling provides another approach to
degenerate SPDEs \cite{SunXieXu2019}, but that result assumes a bounded
globally Lipschitz nonlinearity and a self-adjoint dissipative generator.
Those hypotheses do not directly cover the cubic wave equation.

The remainder of this paper is organized as follows. Section~\ref{sec:setting} introduces the functional setting and assumptions and proves well-posedness. Section~\ref{sec:lyapunov} establishes Lyapunov estimates and the existence of invariant measures. Section~\ref{sec:stopping} develops stopping-time estimates, conditional Malliavin calculus, and covariance bounds. Section~\ref{sec:control} proves the e-property under the fixed-rank balance condition and verifies this condition for the cubic nonlinearity. Section~\ref{sec:ergodicity} proves unique ergodicity and gives the independent synchronous coupling argument.

\section{Functional setting, assumptions, and noise}\label{sec:setting}

All function spaces are real, and the eigenfunctions below form a real Fourier basis. The symbol $C$ denotes a positive constant that may change from line to line. Constants in the contraction criterion are fixed before choosing the covariance tolerance and the Tikhonov parameter. Uniformity is asserted only for the parameters specified in each result.

\subsection{Phase space and abstract equation}
Let
\begin{equation*}
H_0=\left\{u\in L^2(\mathbb{T}^3):\int_{\mathbb{T}^3}u(x){}\mathrm{d} x=0\right\},
\qquad A=-\Delta,
\end{equation*}
with periodic boundary conditions.  We denote by $P_0:L^2(\mathbb{T}^3)\to H_0$ the zero-mean
orthogonal projection,
\begin{equation*}
P_0f=f-\frac1{|\mathbb{T}^3|}\int_{\mathbb{T}^3}f(x){}\mathrm{d} x.
\end{equation*}

 Let $(e_k)_{k\ge1}$ be an orthonormal basis of $H_0$ such that
$Ae_k=\lambda_k e_k$,
$0<\lambda_1\le\lambda_2\le\cdots$, with $\lambda_k\to\infty$.
For $r\ge0$, set $H^r=D(A^{r/2})$ with norm $\|u\|_{H^r}=\|A^{r/2}u\|$; in particular,
$H^0=H_0$. We also use $H^{-1}=(H^1)'$ with pivot space $H_0$.
The extension $A:H^1\to H^{-1}$ is defined by
$\langle Au,h\rangle_{H^{-1},H^1}=(A^{1/2}u,A^{1/2}h)$.
The phase space is
\begin{equation*}
\mathbb{H}=H^1\times H_0,
\qquad
\|(u,v)\|_{\mathbb{H}}^2=\|A^{1/2}u\|^2+\|v\|^2.
\end{equation*}
For $N\ge1$, let $P_Nf=\sum_{k=1}^N(f,e_k)e_k$ on $H_0$, and set
\[
 \Pi_N=\operatorname{diag}(P_N,P_N)\quad\text{on }\mathbb H.
\]
The same spectral formula extends $P_N$ to $H^1$ and $H^{-1}$; these projections
are contractions in the corresponding norms.
Writing $v=\partial_t u$ almost everywhere in time, equation \eqref{eq:wave-second-order} becomes
\begin{equation}\label{eq:first-order}
\begin{cases}
\,\mathrm{d} u(t)=v(t){}\mathrm{d} t,\\
\,\mathrm{d} v(t)+Av(t){}\mathrm{d} t+Au(t){}\mathrm{d} t+P_0g(u(t)){}\mathrm{d} t
=Q{}\mathrm{d} L_t,\\
(u(0),v(0))=z_0\in\mathbb{H}.
\end{cases}
\end{equation}
We use the equivalent time notations $u_t=u(t)$ and $v_t=v(t)$, and write
$Z_t=(u_t,v_t)$.

\begin{remark}
	Note that the projection in the nonlinear term is essential: for a general nonlinear $g$, zero mean of
	$u$ does not imply zero mean of $g(u)$.  Since every test function used below belongs to $H_0$,
	\begin{equation*}
	(P_0f,h)=(f,h),\qquad h\in H_0,
	\end{equation*}
	so the projection does not alter any of the energy pairings. 
\end{remark}
\subsection{Assumptions on the nonlinearity}
Let
\begin{equation*}
G(s)=\int_0^s g(r){}\mathrm{d} r.
\end{equation*}
\begin{assumption}\label{ass:g}
	The function $g\in C^2(\mathbb{R})$ satisfies $g(0)=0$, and there exist constants
	$\alpha_1,\alpha_2,\alpha_3,\iota>0$ such that for every $s\in\mathbb{R}$,
	\begin{align}
	|g'(s)|&\le \alpha_1(1+|s|^2),\label{eq:g-growth}\\
	|g''(s)|&\le \alpha_3(1+|s|),\label{eq:g-second-growth}\\
	G(s)&\ge \alpha_2|s|^4,\label{eq:G-coercive}\\
	g(s)s&\ge \iota G(s).\label{eq:g-diss}
	\end{align}
\end{assumption}
By the fundamental theorem of calculus and \eqref{eq:g-second-growth}, we derive
\begin{equation}\label{eq:g-prime-remainder}
|g'(s)-g'(0)|\le C\bigl(|s|+|s|^2\bigr),\qquad s\in\mathbb{R}.
\end{equation}

\subsection{Subordinated Brownian forcing}
Let $(\Omega,\mathcal{F},\mathbb{P})$ be a complete probability space.
Let $W=(W^1,\dots,W^m)$ be an $m$-dimensional standard Brownian motion.
Let $S=(S_t)_{t\ge0}$ be a \emph{driftless} pure-jump subordinator, independent of $W$,
with L\'evy measure $\nu_S$.
For $t\ge0$, define
\begin{equation*}
\mathcal{G}_t
=
\sigma\bigl(S_r:0\le r\le t\bigr)
\vee
\sigma\bigl(W_{s\wedge S_t}:s\ge0\bigr).
\end{equation*}
We denote by $\{\mathcal{F}_t\}_{t\ge0}$ the usual augmentation of
$\{\mathcal{G}_t\}_{t\ge0}$, that is, its complete and right-continuous
augmentation. Put
\begin{equation*}
L_t=W_{S_t},\qquad t\ge0.
\end{equation*}
Then $L=\{L_t\}_{t\ge0}$ is adapted to $\{\mathcal{F}_t\}_{t\ge0}$.
Unless otherwise stated, all stopping times $\tau_n$ introduced below
are understood with respect to this filtration.  Moreover, $L$ is an $m$-dimensional symmetric pure-jump L\'evy process.  Its L\'evy measure is
\begin{equation*}
\nu_L(B)=\int_0^\infty\int_B
(2\pi r)^{-m/2}\exp\!\left(-\frac{|y|^2}{2r}\right)
\,\mathrm{d} y\,\nu_S(\,\mathrm{d} r),
\qquad B\in\operatorname{Bor}(\mathbb{R}^m\setminus\{0\}).
\end{equation*}

\begin{assumption}\label{ass:levy}
	There exists $\zeta_S>0$ such that
	\begin{equation*}
	\nu_S((0,\infty))=\infty,
	\qquad
	\int_0^\infty (e^{\zeta_S r}-1)\,\nu_S(\,\mathrm{d} r)<\infty.
	\end{equation*}
\end{assumption}

We write
\[
 \bar s=\mathbb E S_1=\int_0^\infty r\,\nu_S(\mathrm dr),\qquad
 \Psi_S(\theta)=\int_0^\infty(e^{\theta r}-1)\nu_S(\mathrm dr),
 \quad 0\le\theta\le\zeta_S.
\]
Then $\mathbb E e^{\theta S_t}=e^{t\Psi_S(\theta)}$ and $\bar s<\infty$.

For each fixed forced dimension $m$, let $\{\mathbf{e}_j^{(m)}\}_{j=1}^m$ be the canonical basis of
$\mathbb{R}^m$ and define the finite-rank operator
\begin{equation}\label{eq:Q}
Q_m\mathbf{e}_j^{(m)}=q_j e_j,
\qquad q_j\ne0,
\qquad 1\le j\le m.
\end{equation}
For fixed $m$, we drop the subscript and write $Q=Q_m$.  Constants may depend on the chosen $Q_m$; the admissibility condition below is formulated directly at that fixed rank, with no uniform dependence on $Q_m$ left implicit.  We use the two noise parameters
\begin{equation}\label{eq:BQ-qmin}
B_Q=\|Q\|_{\mathbb{L}_2(\mathbb{R}^m,H_0)}^2=\sum_{j=1}^mq_j^2,
\qquad
q_{\min}=\min_{1\le j\le m}|q_j|>0.
\end{equation}
On the wave phase space, we write
\begin{equation*}
\mathbf{Q} a=(0,Qa),\qquad a\in\mathbb{R}^m.
\end{equation*}
Although the operator in \eqref{eq:Q} has finite rank and is therefore degenerate on the infinite-dimensional phase space, it forces every mode in the forced low-mode subspace
$H_m=\operatorname{span}\{e_1,\dots,e_m\}$.  Thus, the uniqueness theorem below is a finite-rank determining-space result rather than a sparse-forcing H\"ormander theorem.

\subsection{Lyapunov functional}
We introduce the modified energy used throughout the well-posedness, stopping-time, and invariant-measure arguments.
Fix
\begin{equation}\label{eq:delta}
0<\delta<\min\left\{\frac14,\frac{\lambda_1}{4}\right\}.
\end{equation}
For $z=(u,v)\in\mathbb{H}$, we define
\begin{equation}\label{eq:V}
V(z)=\frac12\bigl(\|A^{1/2}u\|^2+\|v\|^2\bigr)
+\delta(u,v)+\int_{\mathbb{T}^3}G(u(x)){}\mathrm{d} x.
\end{equation}
By Poincar\'e's inequality, \eqref{eq:delta} and \eqref{eq:G-coercive}, we have
\begin{equation*}
\delta |(u,v)|
\le \frac{\delta}{2\sqrt{\lambda_1}}
\bigl(\|A^{1/2}u\|^2+\|v\|^2\bigr)
\le \frac18\bigl(\|A^{1/2}u\|^2+\|v\|^2\bigr).
\end{equation*}
Moreover, \eqref{eq:g-growth} and $g(0)=0$ imply
$|G(s)|\le C(|s|^2+|s|^4)$.  Thus, there are constants
$c_{0},C_0>0$ such that
\begin{equation}\label{eq:V-equivalence}
c_{0}\bigl(\|z\|_{\mathbb{H}}^2+\|u\|_{L^4}^4\bigr)
\le V(z)
\le C_0\bigl(1+\|z\|_{\mathbb{H}}^2+\|u\|_{L^4}^4\bigr).
\end{equation}
In particular, we have
\begin{equation}\label{eq:V-nonnegative}
V(z)\ge0,\qquad z\in\mathbb{H}.
\end{equation}

\subsection{Well-posedness, e-property and weak irreducibility}\label{subsec:markov-framework}
We specify the solution concept used in the energy and variational estimates.
Here $D([0,T];E)$ denotes the space of c\`adl\`ag paths with values in $E$.

\begin{definition}\label{def:integral-solution}
An adapted process $Z=(u,v)$ is a solution of \eqref{eq:first-order} with
initial state $z_0=(u_0,v_0)$ if, for every $T>0$, almost surely,
\[
 u\in C([0,T];H^1),\qquad
 v\in D([0,T];H_0)\cap L^2(0,T;H^1),
\]
and, for all $t\in[0,T]$,
\begin{equation}\label{eq:integral-solution}
 \begin{aligned}
 u_t&=u_0+\int_0^t v_r\,\mathrm dr,\\
 v_t&=v_0-\int_0^t(Av_r+Au_r+P_0g(u_r))\,\mathrm dr+QL_t.
 \end{aligned}
\end{equation}
Here the first identity holds in $H^1$ and the second in $H^{-1}$, with Bochner
integrals. 
\end{definition}

The following well-posedness statement provides the Markov framework used throughout the paper; see also \cite{PeszatZabczyk2007,BoShiWang2010}.

\begin{proposition}\label{prop:wellposed}
	Under Assumptions \ref{ass:g}--\ref{ass:levy}, for every $z_0\in\mathbb{H}$, equation \eqref{eq:first-order} has a unique pathwise solution in the sense of Definition~\ref{def:integral-solution}
	\begin{equation*}
	u\in C([0,T];H^1),\qquad v\in D([0,T];H_0)\cap L^2(0,T;H^1),
	\qquad T>0,
	\end{equation*}
	and $Z=(u,v)$ belongs to $D([0,T];\mathbb{H})$.  Moreover, for every $T>0$, every $z_0\in\mathbb{H}$, and every $\varepsilon>0$,
	\begin{equation}\label{eq:continuous-dependence}
	\lim_{z_0'\to z_0}\mathbb{P}\left(
	\sup_{0\le t\le T}\|Z_t^{z_0'}-Z_t^{z_0}\|_{\mathbb{H}}>\varepsilon
	\right)=0.
	\end{equation}
	In addition, for every integer $p\ge1$ and every $T>0$,
	\begin{equation}\label{eq:finite-horizon-sup}
	\mathbb{E}_{z_0}\left[\sup_{0\le s\le T}(1+V(Z_s))^p\right]
	\le C_{T,p}\bigl(1+V(z_0)^p\bigr).
	\end{equation}
	Thus $Z$ is a strong Markov process and $\{P_t\}$ is Feller.
\end{proposition}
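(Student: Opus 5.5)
The plan is to follow the classical route for semilinear SPDEs with additive noise: split off the linear stochastic convolution, solve the resulting random PDE pathwise by energy estimates, and then derive the moment bound, continuous dependence, and the Markov/Feller properties.

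\textbf{Step 1: remove the noise.} Let $\Gamma_t=(\gamma_t,\eta_t)$ solve the linear equation
\[
\mathrm d\gamma=\eta\,\mathrm dt,\qquad \mathrm d\eta+A\eta\,\mathrm dt+A\gamma\,\mathrm dt=Q\,\mathrm dL_t,\qquad \Gamma_0=0.
\]
Since $Q$ has range in $H_m$ and $A$ is diagonal in $(e_k)$, this is a finite system of $m$ decoupled linear $2\times 2$ jump SDEs. Hence $\Gamma$ is c\`adl\`ag with values in $H_m\times H_m$, which is smooth in space. The jumps of $\eta$ are exactly $Q\,\Delta L_t$, and $\gamma$ is continuous.

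\textbf{Step 2: pathwise well-posedness.} The remainder $Y=Z-\Gamma=(w,\xi)$ solves, for each fixed $\omega$, the deterministic equation
\[
w'=\xi,\qquad \xi'+A\xi+Aw+P_0g(w+\gamma)=0,\qquad Y_0=z_0.
\]
Solve it by Galerkin approximation in $(e_k)$. The a priori bound comes from the modified energy $V$ of \eqref{eq:V} applied to $Y$. The cross term $\bigl(g(w+\gamma)-g(w),\xi\bigr)$ is controlled with \eqref{eq:g-growth}, the smoothness of $\gamma$, and the embedding $H^1\subset L^6$. Combined with the coercivity \eqref{eq:G-coercive}--\eqref{eq:g-diss} and the strong damping term $\|A^{1/2}\xi\|^2$, Gronwall gives bounds in $L^\infty(0,T;\mathbb H)\cap L^2(0,T;H^1)$ for $\xi$.

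Passing to the limit uses compactness (Aubin--Lions), and strong convergence of $w$ in $L^2(0,T;L^4)$ handles the nonlinearity. Uniqueness comes from the difference equation for two solutions. By \eqref{eq:g-growth}, H\"older and $H^1\subset L^6$,
\[
\|g(u_1)-g(u_2)\|\le C\bigl(1+\|u_1\|_{L^6}^2+\|u_2\|_{L^6}^2\bigr)\|u_1-u_2\|_{L^6}.
\]
This closes a Gronwall estimate in $\mathbb H$. The same estimate gives pathwise Lipschitz dependence on $z_0$, with a constant depending on $\sup_t\|Z_t\|_{\mathbb H}$. Measurability and adaptedness follow from the Galerkin construction. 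The regularity $Z\in D([0,T];\mathbb H)$ follows from $Y\in C([0,T];\mathbb H)$, which holds by the Lions--Magenes lemma since $\xi\in L^2H^1$ and $\xi'\in L^2H^{-1}$.

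\textbf{Step 3: moments.} For \eqref{eq:finite-horizon-sup}, apply It\^o's formula for jump processes to $(1+V(Z_t))^p$ in the Galerkin approximation. The drift is bounded by $C(1+V)^p$, using \eqref{eq:g-diss} and the choice \eqref{eq:delta}. For the jump part, $\Delta Z_t=\mathbf Q\,\Delta L_t$ only affects $v$, so
\[
V(Z_{t})-V(Z_{t-})=(v_{t-},Q\Delta L_t)+\tfrac12|Q\Delta L_t|^2+\delta\bigl(u_t,Q\Delta L_t\bigr).
\]
Therefore $(1+V)^p$ changes by at most $C\bigl[(1+V)^{p-1/2}|\Delta L|+(1+V)^{p-1}|\Delta L|^2+|\Delta L|^{2p}\bigr]$. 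The moments of $\nu_L$ of every order are finite, because the Gaussian moments of $\nu_L$ reduce to $\int r^{k}\,\nu_S(\mathrm dr)$, which is finite by the exponential moment in Assumption~\ref{ass:levy}. Take the supremum in time, handle the martingale part with BDG (or Kunita's inequality), absorb, and apply Gronwall. Fatou's lemma then passes the bound to the limit.

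\textbf{Step 4: continuous dependence and Markov/Feller.} The bound \eqref{eq:continuous-dependence} follows from the pathwise Lipschitz estimate after localizing on the event $\{\sup_t V\le R\}$, whose complement has small probability uniformly for $z_0'$ near $z_0$ by Chebyshev and \eqref{eq:finite-horizon-sup}. This convergence in probability together with dominated convergence gives the Feller property. The strong Markov property follows from pathwise uniqueness, the flow property of the solution map $Z=\Phi(z_0,L)$, and the strong Markov property of the L\'evy process $L$ relative to $\{\mathcal F_t\}$ (a standard argument, e.g.\ \cite{PeszatZabczyk2007}).

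The main obstacle is the energy estimate for the cubic nonlinearity with the perturbation $\gamma$, together with the jump terms in the $p$-th moment bound. Both are handled by the growth conditions on $g$ and by the smoothness and finite dimensionality of $\Gamma$.
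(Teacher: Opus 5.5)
Your outline is correct, but it takes a different route from the paper at several points.

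\textbf{Decomposition.} The paper does not subtract the linear stochastic convolution. It subtracts only the noise path $\zeta_t=QL_t$ from the velocity, so that $(u,y)=(u,v-\zeta)$ solves $\partial_tu=y+\zeta$, $\partial_ty=-Ay-A\zeta-Au-P_0g(u)$. The nonlinearity stays $g(u)$, so \eqref{eq:g-diss} applies directly and the $\zeta$-terms are absorbed by Young's inequality. This gives the dissipative pathwise bound \eqref{eq:pathwise-galerkin-energy}, with no exponential growth. Your splitting makes the remainder equation forcing-free and continuous. The price is the mean-value estimate for $g(w+\gamma)-g(w)$ and a pathwise Gronwall bound of size $e^{CT}$. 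That is still enough here, because $\gamma$ is finite-dimensional and pathwise bounded.

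\textbf{Galerkin limit.} The paper also proves strong convergence of the Galerkin scheme, \eqref{eq:state-galerkin-convergence}, by a difference estimate. This is what lets the It\^o-formula bounds pass to the limit, and it makes measurability of the limit automatic. With Aubin--Lions compactness alone, you should add the pointwise-in-time lower semicontinuity $V(Z_t)\le\liminf_N V(Z_t^N)$ before invoking Fatou. This follows from weak convergence in $\mathbb H$ at each $t$, strong $L^2$ convergence of $u_t^N$, and $G\ge0$.

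\textbf{Moment bound.} For \eqref{eq:finite-horizon-sup}, the paper applies It\^o's formula only to $V$ and drops the dissipation, obtaining $V(Z_{t\wedge T_*})\le V(z_0)+Ct+M_t$. It then bounds $\mathbb E\sup|M|^p$ by BDG through $\int|H|^2\,\mathrm d\nu_L$ and $\int|H|^p\,\mathrm d\nu_L$. This yields \eqref{eq:maximal-random-horizon} at bounded stopping times, with only polynomial dependence on the horizon. That form is reused in Lemma~\ref{lem:stopped-energy} with $T_*=\tau_1\wedge R$. Your Gronwall argument instead gives a constant exponential in $T$.

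\textbf{Strong Markov property.} The paper derives it from dyadic approximation of the stopping time, right-continuity of $Z$, and the Feller property, rather than from the strong Markov property of $L$. Both routes are standard.

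One correction to your Step 3: $\nu_L$ need not have finite moments of order below two. For instance, $\int|y|\,\nu_L(\mathrm dy)=C_m\int r^{1/2}\,\nu_S(\mathrm dr)$, which diverges for a tempered stable subordinator of index at least $1/2$; such a subordinator is allowed by Assumption~\ref{ass:levy}. So the first-order jump term $p(1+V)^{p-1}D_vV[Qy]$ must stay inside the compensated integral, and only the second-order Taylor remainder may enter the drift. Your bound containing $(1+V)^{p-1/2}|\Delta L|$ is harmless inside BDG, where it appears squared, but not as a drift term. Only moments $\int|y|^{2k}\,\nu_L(\mathrm dy)$ with $k\ge1$ are finite in general, and those are all you need.
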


\begin{proof}
Let
\begin{equation*}
 \zeta_t=QL_t \text{~~~and~~~} y_t=v_t-\zeta_t.
\end{equation*}
For almost every sample path, $\zeta\in D([0,T];H_m)$ and is bounded in every Sobolev norm on compact time intervals.  The pair $(u,y)$ solves the random deterministic system
\begin{equation}\label{eq:shift-wellposed}
 \partial_t u=y+\zeta,
 \qquad
 \partial_t y=-Ay-A\zeta-Au-P_0g(u),
 \qquad (u(0),y(0))=(u_0,v_0).
\end{equation}
For $N\ge m$, solve the spectral Galerkin system with initial state $\Pi_Nz_0$.  Applying the modified energy $V(u,y)$ from
\eqref{eq:V}, \eqref{eq:g-diss}, and Young's inequality gives, for almost every $t$,
\begin{equation}\label{eq:pathwise-galerkin-energy}
 \frac{\,\mathrm{d}}{\,\mathrm{d} t}V(u^N,y^N)
 +c\bigl(\|A^{1/2}y^N\|^2+\|A^{1/2}u^N\|^2+\|u^N\|_{L^4}^4\bigr)
 \le C\bigl(1+\|\zeta_t\|_{H^1}^4\bigr),
\end{equation}
with constants independent of $N$.

Indeed, relative to the unforced energy identity, the additional terms are
\[
 (A^{1/2}u^N,A^{1/2}\zeta)-(A^{1/2}y^N,A^{1/2}\zeta)
 +(g(u^N),\zeta)+\delta(\zeta,y^N)
 -\delta(A^{1/2}u^N,A^{1/2}\zeta).
\]
Since $|g(s)|\le C(|s|+|s|^3)$ and $H^1\hookrightarrow L^4$, their sum in absolute value is at most
\[
 \epsilon\bigl(\|y^N\|_{H^1}^2+\|u^N\|_{H^1}^2+\|u^N\|_{L^4}^4\bigr)
 +C_\epsilon(\|\zeta\|_{H^1}^2+\|\zeta\|_{H^1}^4).
\]
Choose $\epsilon$ below the unforced dissipation coefficients. This also gives
\eqref{eq:pathwise-galerkin-energy} without the constant $1$ on its right-hand side.

Since $\zeta$ is bounded on $[0,T]$, \eqref{eq:pathwise-galerkin-energy} prevents explosion and yields
\begin{equation*}
 u^N\ \hbox{bounded in }L^\infty(0,T;H^1),\qquad
 y^N\ \hbox{bounded in }L^\infty(0,T;H_0)\cap L^2(0,T;H^1).
\end{equation*}
Moreover $\partial_t u^N=y^N+\zeta$ is bounded in $L^2(0,T;H^1)$ and
$\partial_t y^N$ in $L^2(0,T;H^{-1})$. Aubin--Lions compactness gives, along a subsequence,
\begin{equation*}
 u^N\to u\quad\hbox{strongly in }L^2(0,T;L^q(\mathbb{T}^3))
 \quad\text{for every }q<6,
\end{equation*}
and weakly in the energy spaces above. After a further subsequence, $u^N\to u$ almost everywhere. The cubic growth and uniform $L^\infty(0,T;L^6)$ bound imply boundedness of $g(u^N)$ in $L^2(0,T;L^2)$; almost everywhere convergence identifies its weak limit as $g(u)$. Passing to the Galerkin limit gives \eqref{eq:shift-wellposed}. For time continuity,
\begin{equation*}
 y\in L^2(0,T;H^1),\qquad
 \partial_t y=-Ay-A\zeta-Au-P_0g(u)\in L^2(0,T;H^{-1}),
\end{equation*}
so the Lions--Magenes lemma gives $y\in C([0,T];H_0)$.  Moreover,
$\partial_t u=y+\zeta\in L^2(0,T;H^1)$ because $\zeta$ has finite spectral range.  Hence
$u\in W^{1,2}(0,T;H^1)\subset C([0,T];H^1)$.  Thus
\begin{equation*}
 u\in C([0,T];H^1),\qquad y\in C([0,T];H_0).
\end{equation*}
Since $\zeta$ is c\`adl\`ag in the finite-dimensional space $H_m$, $v=y+\zeta$ is c\`adl\`ag in $H_0$, and therefore $Z=(u,v)\in D([0,T];\mathbb{H})$.

For uniqueness, let $(u^{(i)},y^{(i)})$, $i=1,2$, solve \eqref{eq:shift-wellposed} with the same forcing path.  The difference contains no random forcing.  On an interval on which
$\|u^{(1)}\|_{H^1}+\|u^{(2)}\|_{H^1}\le R$, the mean-value theorem, \eqref{eq:g-growth}, and $H^1(\mathbb{T}^3)\hookrightarrow L^6(\mathbb{T}^3)$ give
\begin{equation*}
 \|g(u^{(1)})-g(u^{(2)})\|_{L^2}
 \le C_R\|u^{(1)}-u^{(2)}\|_{H^1}.
\end{equation*}
The modified difference energy then satisfies
\begin{equation*}
 \frac{\,\mathrm{d}}{\,\mathrm{d} t}\|(u^{(1)}-u^{(2)},y^{(1)}-y^{(2)})\|_{\mathbb{H}}^2
 \le C_R\|(u^{(1)}-u^{(2)},y^{(1)}-y^{(2)})\|_{\mathbb{H}}^2.
\end{equation*}
Gronwall's inequality gives pathwise uniqueness.

For strong Galerkin convergence, put $v^N=y^N+\zeta$,
$Z^N=(u^N,v^N)$, and $(h_N,k_N)=Z^N-Z$. On each fixed forcing path,
\eqref{eq:pathwise-galerkin-energy} bounds $\|u^N\|_{H^1}+\|u\|_{H^1}$ by
some $M<\infty$, uniformly in $N$ and $t\le T$. The mean-value formula and
$L^{6/5}\hookrightarrow H^{-1}$ give
\[
 \|P_0(g(u^N)-g(u))\|_{H^{-1}}
 \le C(1+\|u^N\|_{L^6}^2+\|u\|_{L^6}^2)\|h_N\|
 \le C_M\|h_N\|.
\]
Since $N\ge m$, the common noise cancels and
$\partial_t h_N=k_N$, $\partial_t k_N=-Ak_N-Ah_N-f_N$, where
$f_N=P_NP_0(g(u^N)-g(u))-(I-P_N)P_0g(u)$.
Testing by $(Ah_N,k_N)$ in the energy pairing and applying Young's inequality yield
\[
 \frac{\mathrm d}{\mathrm dt}\frac{\|(h_N,k_N)\|_{\mathbb H}^2}{2}
 +\frac12\|k_N\|_{H^1}^2
 \le C_M\|(h_N,k_N)\|_{\mathbb H}^2
      +C\|(I-P_N)P_0g(u)\|_{H^{-1}}^2.
\]
Gronwall's inequality therefore gives
\begin{equation}\label{eq:state-galerkin-convergence}
 \begin{aligned}
 &~~\sup_{t\le T}\|Z_t^N-Z_t\|_{\mathbb H}^2
   +\int_0^T\|v_t^N-v_t\|_{H^1}^2\,\mathrm dt\\
 &\le C_{M,T}\left(\|(I-\Pi_N)z_0\|_{\mathbb H}^2
   +\int_0^T\|(I-P_N)P_0g(u_t)\|_{H^{-1}}^2\,\mathrm dt\right)
   \longrightarrow0.
 \end{aligned}
\end{equation}
Here $g(u)\in L^2(0,T;L^2)$, so dominated convergence removes the projection
remainder. The same argument applies to a flow restarted at any fixed time.

For the maximal estimate, apply the L\'{e}vy It\^o formula to $V$ and discard the positive dissipation. Up to a bounded stopping time $T_*$,
\begin{equation*}
 V(Z_{t\wedge T_*})\le V(z_0)+C(t\wedge T_*)+M_{t\wedge T_*},
\end{equation*}
where the compensated martingale has jump integrand
$H(z,y)=D_vV(z)[Qy]+\|Qy\|^2/2$. For $p\ge2$, Assumption~\ref{ass:levy} gives
\begin{equation}\label{eq:maximal-jump-increment}
 \int|H(z,y)|^2\nu_L(\mathrm dy)\le C(1+V(z)),\qquad
 \int|H(z,y)|^p\nu_L(\mathrm dy)\le C_p(1+V(z)^{p/2}).
\end{equation}
Writing $V_* =\sup_{s\le T_*}V(Z_s)$, the Burkholder--Davis--Gundy and Young inequalities imply, for any $\epsilon>0$,
\begin{align*}
 \mathbb E\sup_{s\le T_*}|M_s|^p
 &\le C_p\mathbb E\left[\left(\int_0^{T_*}(1+V(Z_s))\,\mathrm ds\right)^{p/2}
             +\int_0^{T_*}(1+V(Z_s)^{p/2})\,\mathrm ds\right]\\
 &\le C_p\mathbb E\left[T_*^{p/2}(1+V_*)^{p/2}
                      +T_*(1+V_*^{p/2})\right]\\
 &\le\epsilon\mathbb E V_*^p+C_{p,\epsilon}\mathbb E(1+T_*)^p.
\end{align*}
Choosing $\epsilon$ small enough in the energy inequality and removing the martingale localization yields
\begin{equation}\label{eq:maximal-random-horizon}
 \mathbb E\sup_{s\le T_*}V(Z_s)^p
 \le C_p\left(1+V(z_0)^p+\mathbb E(1+T_*)^p\right).
\end{equation}
Taking $T_*=T$ proves \eqref{eq:finite-horizon-sup} for $p\ge2$; the case $p=1$ follows by Cauchy--Schwarz. This gives
\begin{equation*}
 \sup_{V(z_0)\le R}\mathbb P\left(\sup_{s\le T}V(Z_s)>M\right)
 \le\frac{C_{T,p}(1+R^p)}{(1+M)^p}\longrightarrow0.
\end{equation*}
The localized difference estimate gives \eqref{eq:continuous-dependence} and the Feller property. The measurable solution map, uniqueness, and independent increments of $L$ give the Markov property. For a finite stopping time $\sigma$, approximate it from above by $\sigma_k=2^{-k}\lceil2^k\sigma\rceil$. For $f\in C_b(\mathbb H)$, the Markov identity at $\sigma_k$ passes to $\sigma$ by right continuity of $Z$ and continuity of $P_tf$; a monotone-class argument extends it to bounded Borel $f$. Thus $Z$ is strong Markov. Right continuity at zero gives stochastic continuity.
\end{proof}

Let $\{P_t\}_{t\ge0}$ be the Markov semigroup generated by \eqref{eq:first-order}.  We use the following two notions.

\begin{definition}\label{def:ergodic-notions}
	The semigroup $\{P_t\}$ has the \emph{e-property} if, for every bounded Lipschitz function
	$\varphi$ and every $z\in\mathbb{H}$,
	\begin{equation*}
	\lim_{z'\to z}\sup_{t\ge0}|P_t\varphi(z')-P_t\varphi(z)|=0.
	\end{equation*}
	It is \emph{weakly irreducible at the origin} if, for every $R,r>0$, there exists
	$T=T(R,r)>0$ such that
	\begin{equation*}
	\inf_{V(z)\le R}P_T(z,B_{\mathbb{H}}(0,r))>0.
	\end{equation*}
\end{definition}

\section{Lyapunov estimates and the existence of invariant measures}\label{sec:lyapunov}
Set
\begin{equation}\label{eq:EL}
 \mathcal{E}_L=\int_{\mathbb{R}^m\setminus\{0\}}\|Qy\|^2\,\nu_L(\,\mathrm{d} y)
 =B_Q\bar s<\infty.
\end{equation}
The special form of $V$ is particularly convenient for jumps because a jump changes only the velocity component and
\begin{equation}\label{eq:jump-remainder}
 V(u,v+Qy)-V(u,v)-D_vV(u,v)[Qy]
 =\frac12\|Qy\|^2.
\end{equation}
We use the dissipation functional
\begin{equation*}
 \mathfrak{F}(u,v)=\|A^{1/2}v\|^2+\|A^{1/2}u\|^2
 +\int_{\mathbb{T}^3}G(u)\,\mathrm{d}x.
\end{equation*}
Set $\mathfrak F(u,v)=+\infty$ when $v\notin H^1$.

\begin{lemma}\label{lem:basic-energy}%
There exist constants $c_1,c_2,C>0$ such that for every solution of \eqref{eq:first-order},
\begin{equation}\label{eq:mean-energy}
 \mathbb{E}[V(Z_t)]+c_1\mathbb{E}\left[\int_0^t e^{-c_2(t-s)}V(Z_s){}\mathrm{d} s\right]
 \le C+e^{-c_2t}V(z_0),
 \qquad t\ge0.
\end{equation}
In particular,
\begin{equation}\label{eq:uniform-second}
 \sup_{t\ge0}\mathbb{E}[\|Z_t\|_{\mathbb{H}}^2]
 \le C(1+V(z_0)).
\end{equation}
\end{lemma}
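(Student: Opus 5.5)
We apply the L\'evy--It\^o formula to $V(Z_t)$ and derive a drift inequality $\mathcal L V\le C-c_1 V-\text{(dissipation)}$. We then close it with a weighted Gronwall argument. Because $V$ is only formally smooth on $\mathbb H$ (the drift contains $Av$, $Au$ and $g(u)$), we carry out the computation on the Galerkin approximations $Z^N$ of Proposition~\ref{prop:wellposed}. There $V$ is smooth and the generator is well defined. We pass to $N\to\infty$ at the end, using the strong convergence \eqref{eq:state-galerkin-convergence} together with Fatou's lemma and lower semicontinuity of $V$. The uniform-in-$N$ moment bound \eqref{eq:finite-horizon-sup} justifies that the martingale part has zero expectation after localization.

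\textbf{Drift computation.} For the deterministic part we test the $v$-equation by $v+\delta u$ and use $\frac{d}{dt}\int G(u)=(g(u),v)$. This yields
\[
-\|A^{1/2}v\|^2-\delta\|A^{1/2}u\|^2-\delta(A^{1/2}v,A^{1/2}u)-\delta(g(u),u)+\delta\|v\|^2 .
\]
We bound the cross term by $\frac12\|A^{1/2}v\|^2+\frac{\delta^2}{2}\|A^{1/2}u\|^2$. We bound $\delta\|v\|^2$ by $\frac{\delta}{\lambda_1}\|A^{1/2}v\|^2\le\frac14\|A^{1/2}v\|^2$, which uses \eqref{eq:delta}. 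For the nonlinearity, \eqref{eq:g-diss} gives $\delta(g(u),u)\ge\delta\iota\int G(u)$. The result is $-c\,\mathfrak F(Z)$. By Poincar\'e and \eqref{eq:V-equivalence}, $\mathfrak F\ge c' V$: indeed $\|v\|^2\le\lambda_1^{-1}\|A^{1/2}v\|^2$, and $\int G$ appears in both $V$ and $\mathfrak F$. For the jump part, \eqref{eq:jump-remainder} shows that the compensated second-order remainder equals exactly $\frac12\int\|Qy\|^2\nu_L(dy)=\frac12\mathcal E_L$. The linear term $D_vV[Qy]$ is a martingale integrand, which also uses the symmetry of $\nu_L$ and the finiteness of its second moment. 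Together these give $\mathcal L V\le \frac12\mathcal E_L-c_2' V-c_3\mathfrak F$. One subtlety is the projection $P_0$ in the nonlinearity, which is harmless by the earlier Remark.

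\textbf{Gronwall.} Apply It\^o's formula to $e^{c_2 t}V(Z_t)$ with $c_2<c_2'$, and take expectations (after localization and monotone convergence). This gives
\[
e^{c_2t}\mathbb E V(Z_t)+(c_2'-c_2)\,\mathbb E\!\int_0^te^{c_2s}V(Z_s)\,ds\le V(z_0)+\tfrac{\mathcal E_L}{2c_2}e^{c_2t}.
\]
Multiplying by $e^{-c_2t}$ gives \eqref{eq:mean-energy} with $c_1=c_2'-c_2$. Then \eqref{eq:uniform-second} follows from the lower bound in \eqref{eq:V-equivalence}.

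\textbf{Main obstacle.} The main obstacle is the rigorous justification of It\^o's formula and of the limit: $V$ involves $\int G(u)$, and the drift lives in $H^{-1}$. I would handle this entirely at the Galerkin level, using $N\ge m$ so that the noise is unprojected. The constants there are independent of $N$. I would then pass to the limit pointwise in $t$ via \eqref{eq:state-galerkin-convergence}, which gives $V(Z^N_t)\to V(Z_t)$ almost surely since $H^1\hookrightarrow L^4$. The integral term is handled by Fatou's lemma.
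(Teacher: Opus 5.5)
Your proposal is correct and follows the paper's proof: it uses the same modified-energy drift computation at the Galerkin level, with Young's inequality on the cross term, Poincar\'e's inequality via $\delta<\lambda_1/4$, \eqref{eq:g-diss} for the nonlinearity, and the exact jump compensator $\tfrac12\mathcal E_L$ from \eqref{eq:jump-remainder}, followed by $\mathfrak F\ge\hat c V$ and an exponential Gronwall step. The only cosmetic difference is that you take $c_2<c_2'$ and $c_1=c_2'-c_2$, whereas the paper splits $a_0\mathfrak F$ into two halves and takes $c_1=c_2=a_0\hat c/2$.
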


\begin{proof}
Applying It\^o's formula to the spectral Galerkin approximations and passing to the limit give the corresponding energy identity. The It\^o formula for square-integrable L\'evy processes is standard; see, for instance, \cite{Applebaum2009}. The drift terms are obtained by the usual deterministic modified-energy calculation:
\begin{align}\label{eq:D-controls-3.6}
 \begin{split}
 &\,\mathrm{d} V(Z_t)
 +\|A^{1/2}v_t\|^2{}\mathrm{d} t
 +\delta\|A^{1/2}u_t\|^2{}\mathrm{d} t
 +\delta(P_0g(u_t),u_t){}\mathrm{d} t\\
 &=\delta\|v_t\|^2{}\mathrm{d} t
 -\delta(A^{1/2}u_t,A^{1/2}v_t){}\mathrm{d} t
 +\,\mathrm{d} M_t+\frac12\mathcal{E}_L{}\mathrm{d} t,
 \end{split}
\end{align}
where $M$ is a square-integrable martingale.  
The compensator is exactly \eqref{eq:EL} because of \eqref{eq:jump-remainder}.  
By Young's inequality we have
\begin{equation*}
 \delta(A^{1/2}u,A^{1/2}v)
 \le \frac12\|A^{1/2}v\|^2+\frac{\delta^2}{2}\|A^{1/2}u\|^2.
\end{equation*}
Since $\delta<\lambda_1/4$, using Poincar\'e's inequality yields
\begin{equation*}
 \delta\|v\|^2\le\frac14\|A^{1/2}v\|^2.
\end{equation*}
By the preceding Young estimate, the three dissipative coefficients are bounded below by $1/4$, $\delta(1-\delta/2)$, and $\delta\iota$, respectively.  Set
\begin{equation*}
 a_0=\min\left\{\frac14,\delta\left(1-\frac\delta2\right),\delta\iota\right\}>0.
\end{equation*}
By Poincar\'e's inequality,
\eqref{eq:G-coercive}, and the cross-term estimate obtained before \eqref{eq:V-equivalence}, there exists a constant $\hat{c}>0$, depending only on the structural coefficients, such that
\begin{equation}\label{eq:D-controls-V}
 \mathfrak{F}(z)\ge \hat{c} V(z),\qquad z\in\mathbb{H}.
\end{equation}
Indeed, the velocity term is bounded via  $\|v\|^2\le\lambda_1^{-1}\|A^{1/2}v\|^2$, the $u$-energy is already included in
$\mathfrak{F}$, and $\int G(u)$ controls $\|u\|_{L^4}^4$; the cross term $\delta(u,v)$ is absorbed by the two quadratic terms.  Writing
$a_0\mathfrak{F}=(a_0/2)\mathfrak{F}+(a_0/2)\mathfrak{F}$, and using
 \eqref{eq:D-controls-3.6}  and \eqref{eq:D-controls-V}, we obtain
\begin{equation}\label{Diffineq}
 \frac{\,\mathrm{d}}{\,\mathrm{d} t}\mathbb{E}[V(Z_t)]
 +\frac{a_0\hat{c}}2\mathbb{E}[V(Z_t)]
 +\frac{a_0}2\mathbb{E}[\mathfrak{F}(Z_t)]
 \le \frac12\mathcal{E}_L.
\end{equation}
Take $c_1=c_2=a_0\hat c/2$. Multiplying \eqref{Diffineq} by $e^{c_2t}$, integrating, and using \eqref{eq:D-controls-V} yields
\begin{equation*}
 \mathbb{E}[V(Z_t)]+c_1\int_0^t e^{-c_2(t-s)}\mathbb{E}[V(Z_s)]\,\mathrm ds
 \le e^{-c_2t}V(z_0)+\frac{\mathcal{E}_L}{2c_2},
\end{equation*}
which proves \eqref{eq:mean-energy}.
Moreover,
\eqref{eq:uniform-second} follows from \eqref{eq:V-equivalence}. %
\end{proof}

\begin{lemma}\label{lem:poly-moments}%
For every integer $p\ge1$, there exists $C_p>0$ such that
\begin{equation}\label{eq:poly-moments}
 \sup_{t\ge0}\mathbb{E}[(1+V(Z_t))^p]
 \le C_p\bigl(1+V(z_0)^p\bigr).
\end{equation}
Consequently,
\begin{equation}\label{eq:poly-time-average}
 \sup_{T\ge1}\frac1T\int_0^T\mathbb{E}[(1+V(Z_s))^p]{}\mathrm{d} s
 \le C_p\bigl(1+V(z_0)^p\bigr).
\end{equation}
\end{lemma}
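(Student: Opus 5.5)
We apply the L\'evy It\^o formula to $\Phi(Z_t)=(1+V(Z_t))^p$, first for the spectral Galerkin approximations and then in the limit, as in the proof of Lemma~\ref{lem:basic-energy}. The aim is a differential inequality of the form
\[
\frac{\mathrm d}{\mathrm dt}\mathbb E(1+V)^p+c\,\mathbb E(1+V)^p\le C_p .
\]
Gronwall's inequality then gives \eqref{eq:poly-moments}. Averaging in time gives \eqref{eq:poly-time-average} immediately, since each integrand is bounded by $C_p(1+V(z_0)^p)$ uniformly in $s$.

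\textbf{Step 1 (drift).} The drift part of $\mathrm d\Phi$ equals $p(1+V)^{p-1}$ times the drift of $V$. By \eqref{eq:D-controls-3.6} and \eqref{eq:D-controls-V}, that drift is bounded by $-\tfrac{a_0}{2}\mathfrak F-\tfrac{a_0\hat c}{2}V+\tfrac12\mathcal E_L$. This yields the dissipative term $-c\,p(1+V)^{p-1}V$, plus a remainder of size $C(1+V)^{p-1}$.

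\textbf{Step 2 (jumps).} This is the main point. A jump $y$ changes only the velocity, so by \eqref{eq:jump-remainder}
\[
V(u,v+Qy)=V+D_vV[Qy]+\tfrac12\|Qy\|^2,\qquad |D_vV[Qy]|\le C(1+V)^{1/2}|y|.
\]
The compensated jump contribution is
\[
\int\bigl[(1+V+h)^p-(1+V)^p-p(1+V)^{p-1}D_vV[Qy]\bigr]\nu_L(\mathrm dy),
\qquad h=D_vV[Qy]+\tfrac12\|Qy\|^2 .
\]
We expand by Taylor's theorem with the second-order remainder $\tfrac{p(p-1)}{2}(1+V+\theta h)^{p-2}h^2$. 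The term $p(1+V)^{p-1}\tfrac12\|Qy\|^2$ integrates to $\tfrac p2(1+V)^{p-1}\mathcal E_L$. The remainder is bounded by
\[
C_p\bigl[(1+V)^{p-2}h^2+|h|^p\bigr]\le C_p\bigl[(1+V)^{p-1}(|y|^2+|y|^4)+(1+V)^{p/2}|y|^p+|y|^{2p}\bigr].
\]
All moments $\int(|y|^2+|y|^{2p})\,\nu_L(\mathrm dy)$ are finite. Indeed, conditioned on $r$ the Gaussian moment $\int|y|^{k}(2\pi r)^{-m/2}e^{-|y|^2/2r}\,\mathrm dy$ is $C_k r^{k/2}$, and $\int (r+r^{p})\,\nu_S(\mathrm dr)<\infty$ follows from the exponential moment in Assumption~\ref{ass:levy}. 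The small-$r$ part only requires $\int_0^1 r\,\nu_S(\mathrm dr)<\infty$, which holds for any subordinator. Note also $r^{k/2}\le r$ for $r\le1$ and $k\ge2$. These are the same bounds as \eqref{eq:maximal-jump-increment}. The jump contribution is therefore at most $C_p(1+V)^{p-1}$, since $p/2\le p-1$ for $p\ge2$.

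\textbf{Step 3 (closing).} By Young's inequality, $C_p(1+V)^{p-1}\le \tfrac{c p}{2}(1+V)^{p}+C'_p$, which is absorbed into the dissipation because $(1+V)^{p-1}V\ge(1+V)^p-(1+V)^{p-1}$. We localize with the stopping times $\inf\{t:V(Z_t)>n\}$, so that the martingale parts have zero mean; the finite-horizon bound \eqref{eq:finite-horizon-sup} justifies passing $n\to\infty$ by dominated or monotone convergence. This gives $\mathbb E(1+V(Z_t))^p\le e^{-ct}(1+V(z_0))^p+C_p/c$, which implies \eqref{eq:poly-moments}. The case $p=1$ is Lemma~\ref{lem:basic-energy}. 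The only delicate point is the uniform integrability needed to justify the It\^o formula for $\Phi$ in the Galerkin limit. We handle it with the strong Galerkin convergence \eqref{eq:state-galerkin-convergence} and Fatou's lemma, which suffice because we only need an upper bound.
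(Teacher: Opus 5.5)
Your proposal is correct and follows essentially the same route as the paper: the L\'evy--It\^o formula for $(1+V)^p$ at the Galerkin level, a Taylor bound on the jump remainder using \eqref{eq:jump-remainder} and the finite moments of $\nu_L$, and absorption of the $(1+V)^{p-1}$ terms into the dissipation via \eqref{eq:D-controls-V}, followed by Gronwall, removal of the localization, and Fatou. (The $\tfrac12\mathcal E_L$ you keep in Step~1 is already the jump compensator, so Step~2 counts it a second time; this is harmless for an upper bound.)
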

\begin{proof} 
	We apply the L\'evy-It\^o formula to $(1+V)^p$ at the Galerkin level.  Since the jumps affect only $v$ and $V$ is quadratic in $v$, using Taylor's formula and \eqref{eq:jump-remainder} yields
	\begin{equation*}
	\begin{split}
	&(1+V(u,v+Qy))^p-(1+V(u,v))^p\\
	&\quad-p(1+V(u,v))^{p-1}D_vV(u,v)[Qy]\\
	&\le C_p(1+V(u,v))^{p-1}\|Qy\|^2+C_p\|Qy\|^{2p}.
	\end{split}
	\end{equation*}
	Assumption \ref{ass:levy} implies
	$\int(\|Qy\|^2+\|Qy\|^{2p})\nu_L(\,\mathrm{d} y)<\infty$.
	Let $x=1+V$. The drift estimate from \eqref{eq:D-controls-3.6}, the jump
	remainder above, and \eqref{eq:D-controls-V} yield at the Galerkin level
	\begin{align*}
	\mathcal Lx^p
	&\le -pa_0x^{p-1}\mathfrak F+C_px^{p-1}+C_p\\
	&\le -pa_0\hat c\,x^{p-1}(x-1)+C_px^{p-1}+C_p
	\le -c_px^p+C'_p.
	\end{align*}
	In the last step, the term of order $x^{p-1}$ is absorbed outside a
	fixed bounded interval. Gronwall's inequality at the Galerkin level, removal of localization, and Fatou's lemma yield
\begin{equation*}
\mathbb E[(1+V(Z_t))^p]
\le e^{-c_pt}(1+V(z_0))^p+\frac{C'_p}{c_p}(1-e^{-c_pt}).
\end{equation*}
This proves \eqref{eq:poly-moments} and \eqref{eq:poly-time-average}.
\end{proof}

\begin{lemma}\label{lem:compactness}%
Let $\mathbb{S}(t)$ be the linear strongly damped wave semigroup associated with
$\partial_t (u,v)=(v,-Au-Av)$.  Write 
\begin{equation*}
 Z_t=\hat{Z}_t+\widetilde{Z}_t,
 \qquad \hat{Z}_t=\mathbb{S}(t)z_0,
\end{equation*}
where $\widetilde{Z}_0=0$.  Then there exist $c,C>0$ such that
\begin{equation}\label{eq:lin-decay}
 \|\hat{Z}_t\|_{\mathbb{H}}\le Ce^{-ct}\|z_0\|_{\mathbb{H}},
 \qquad t\ge0.
\end{equation}
Furthermore, for every $R>0$,
\begin{equation}\label{eq:rem-compact-average}
 \sup_{T\ge1}\frac{1}{T}\mathbb{E}\left[\int_0^T
 \bigl(\|\widetilde{Z}_s^{(1)}\|_{H^2}^2+\|\widetilde{Z}_s^{(2)}\|_{H^1}^2\bigr){}\mathrm{d} s\right]
 \le C_{R,m,Q},
 \qquad V(z_0)\le R.
\end{equation}
The operator $Q=Q_m$ is fixed. Consequently, the measures
\begin{equation*}
 \mu_T=\frac1T\int_0^TP_s^*\delta_{z_0}{}\mathrm{d} s
\end{equation*}
form a tight family as $T\to\infty$ in $\mathbb{H}$.
\end{lemma}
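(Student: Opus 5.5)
The plan has three parts: linear decay of $\hat Z$, a time-averaged smoothing bound for $\widetilde Z$, and tightness of $\mu_T$ from these two.

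\emph{Linear decay.} We diagonalize $\mathbb S(t)$ in the eigenbasis. On the mode $e_k$ the system is $\dot u=v$, $\dot v=-\lambda_k u-\lambda_k v$. The characteristic roots are $\frac{-\lambda_k\pm\sqrt{\lambda_k^2-4\lambda_k}}{2}$, and both have real part $\le -c$ uniformly in $k$. For large $k$ the roots are approximately $-1$ and $-\lambda_k$. Uniform decay in the energy norm $\lambda_k|u|^2+|v|^2$ follows from a modal modified energy $\lambda_k|u|^2+|v|^2+\delta(u,v)\lambda_k^{\,0}$. Alternatively, repeat the computation of Lemma~\ref{lem:basic-energy} with $g=0$ and no noise, which gives $\frac{\mathrm d}{\mathrm dt}V_0+cV_0\le0$ for the quadratic part $V_0$. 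Since $V_0$ is equivalent to $\|\cdot\|_{\mathbb H}^2$, this yields \eqref{eq:lin-decay}.

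\emph{Smoothing of the remainder.} The remainder $\widetilde Z=(\tilde u,\tilde v)$ solves
\[
\mathrm d\tilde u=\tilde v\,\mathrm dt,\qquad \mathrm d\tilde v+A\tilde v\,\mathrm dt+A\tilde u\,\mathrm dt=-P_0g(u)\,\mathrm dt+Q\,\mathrm dL_t,
\]
with zero initial data. We apply the modified energy one level higher, i.e.\ to $\|A\tilde u\|^2+\|A^{1/2}\tilde v\|^2+\delta(A^{1/2}\tilde u,A^{1/2}\tilde v)$. This is done at the Galerkin level with the L\'evy--It\^o formula. The dissipation provides $\|A\tilde v\|^2+\delta\|A\tilde u\|^2$. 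The noise contributes $\frac12\int\|A^{1/2}Qy\|^2\nu_L(\mathrm dy)\le\lambda_m B_Q\bar s$, which is finite because $Q$ has range in $H_m$. This is the source of the dependence of the constant on $m$ and $Q$.

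The forcing term is $(P_0g(u),A\tilde v)+\delta(P_0g(u),A\tilde u)$. We bound it by $\epsilon(\|A\tilde v\|^2+\|A\tilde u\|^2)+C_\epsilon\|g(u)\|^2$. Then $\|g(u)\|^2\le C(\|u\|_{L^2}^2+\|u\|_{L^6}^6)\le C(1+V(Z))^3$ by $H^1\hookrightarrow L^6$ and \eqref{eq:V-equivalence}. Taking expectations, integrating over $[0,T]$ and dividing by $T$, we use \eqref{eq:poly-time-average} with $p=3$. This gives
\[
\frac1T\mathbb E\int_0^T\bigl(\|A\tilde u\|^2+\|A^{1/2}\tilde v\|^2\bigr)\,\mathrm ds\le C(1+R^3)+\lambda_mB_Q\bar s,
\]
where the lower-order velocity term is controlled via Poincar\'e. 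Strictly, the dissipation controls $\|A\tilde v\|^2$ and $\|A\tilde u\|^2$, and that is more than needed. The Galerkin limit is passed by Fatou's lemma.

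\emph{Tightness.} For $\epsilon>0$ let $K_M=\{\hat z+\tilde z:\|\hat z\|_{\mathbb H}\le\epsilon,\ \|\tilde z\|_{H^2\times H^1}\le M\}$. Its closure is contained in the $\epsilon$-neighbourhood of the set $\{\|\tilde z\|_{H^2\times H^1}\le M\}$, which is compact in $\mathbb H$ by Rellich. By \eqref{eq:lin-decay}, $\|\hat Z_s\|_{\mathbb H}\le\epsilon$ for $s\ge s_\epsilon$. Hence
\[
\mu_T(K_M^c)\le \frac{s_\epsilon}{T}+\frac{1}{TM^2}\,\mathbb E\int_0^T\|\widetilde Z_s\|_{H^2\times H^1}^2\,\mathrm ds\le \frac{s_\epsilon}{T}+\frac{C}{M^2},
\]
using Chebyshev together with the smoothing bound. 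Taking $M$ large and a countable sequence $\epsilon_j\to0$, we obtain a totally bounded closed set with mass at least $1-\eta$ for all large $T$. The finitely many small $T$ are handled by the Feller continuity of $s\mapsto P_s^*\delta_{z_0}$ on compact time intervals. This proves tightness.

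\emph{Main obstacle.} The delicate step is the higher-order estimate. The forcing pairing $(g(u),A\tilde v)$ must be controlled using only polynomial moments of $V$ and the strong damping, and the computation must be justified at the Galerkin level, since $\tilde v$ is only c\`adl\`ag.
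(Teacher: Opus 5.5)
Your linear-decay argument and your proof of \eqref{eq:rem-compact-average} are correct. The latter takes a different route from the paper. You apply the L\'evy--It\^o formula once to the higher-order modified energy of the whole remainder, so the noise enters only through the compensator $\tfrac12\int\|A^{1/2}Qy\|^2\,\nu_L(\mathrm dy)\le\tfrac12\lambda_mB_Q\bar s$. The paper instead splits $\widetilde Z=Z^g+Z^L$. It proves the deterministic bound \eqref{eq:linear-max-reg} mode by mode; the multiplier $\lambda_kx_k'+\varepsilon\lambda_kx_k$ is exactly your higher-order energy written in Fourier coefficients. It then treats $Z^L$ as an exponentially stable process on $H_m\times H_m$ via the L\'evy isometry. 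The split needs no It\^o calculus above the energy level; your version needs no decomposition. Both give a constant depending on $R,m,Q$.

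The gap is in the tightness step. Each set $\overline{K_{M,\epsilon}}$ is only a closed $\epsilon$-neighbourhood of a compact set. To obtain a compact set you must intersect over $\epsilon_j\to0$, and this requires $\sum_j\mu_T\bigl(\overline{K_{M_j,\epsilon_j}}^{\,c}\bigr)\le\eta$. Your bound for the $j$-th set contains $s_{\epsilon_j}/T$, and $s_{\epsilon_j}\to\infty$ when $z_0\neq0$. Hence $\sum_j s_{\epsilon_j}/T=\infty$ for every fixed $T$, and there is no $T$ beyond which your intersection carries mass $1-\eta$.

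Handling ``finitely many small $T$'' separately does not repair this, because the bad range $T\lesssim s_{\epsilon_j}$ grows with $j$. You could rescue the argument by treating, for each $j$ separately, the whole interval $T\le T_j$. On that interval use tightness of $\{P_s^*\delta_{z_0}:s\le T_j\}$, which comes from stochastic continuity, not the Feller property, and only then intersect. That reorganization is missing from your proposal.

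The paper avoids the issue altogether. The set $\widehat K=\overline{\{\mathbb S(s)z_0:s\ge0\}}^{\,\mathbb H}$ is compact, because $s\mapsto\mathbb S(s)z_0$ is continuous and tends to $0$ by \eqref{eq:lin-decay}. So $\hat Z_s\in\widehat K$ for \emph{all} $s\ge0$, not just for $s\ge s_\epsilon$. Let $B_M$ be the $\mathbb H$-closure of the radius-$M$ ball of $H^2\times H^1$. Then $\widehat K+B_M$ is compact, and Chebyshev gives $\sup_{T\ge1}\mu_T\bigl((\widehat K+B_M)^c\bigr)\le C/M^2$ with no $s_\epsilon/T$ term.
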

\begin{proof}
	For each eigenvalue $\lambda_k$, the two roots of
	$r^2+\lambda_kr+\lambda_k=0$ have real parts bounded above by a negative constant independent of $k$. Hence, the standard modified energy estimate for the linear system yields \eqref{eq:lin-decay}.
	
	The remainder $\widetilde{Z}=Z-\hat{Z}$ has zero initial data and can be decomposed as
	\begin{equation*}
\widetilde{Z}_t=Z_t^{g}+Z_t^{L},
	\qquad
	Z_t^{g}=\int_0^t\mathbb{S}(t-s)(0,-P_0g(u_s)){}\mathrm{d} s,
	\qquad
	Z_t^{L}=\int_0^t\mathbb{S}(t-s)\mathbf{Q}{}\mathrm{d} L_s.
	\end{equation*}
	Since $u_t\in H^1\hookrightarrow L^6$, \eqref{eq:g-growth} implies
	$\|P_0g(u_t)\|_{L^2}\le \|g(u_t)\|_{L^2}\le C(1+\|u_t\|_{H^1}^3)$. It follows from Lemma \ref{lem:poly-moments} that the deterministic forcing belongs to
	$L^2_{\mathrm{loc}}(0,\infty;H_0)$ in mean square. For the deterministic problem with zero initial data,
	\begin{equation*}
	\partial_t x_1=x_2,\qquad
	\partial_t x_2=-Ax_1-Ax_2+f,
	\end{equation*}
	an eigenmode calculation gives
	\begin{equation}\label{eq:linear-max-reg}
	\int_0^T\bigl(\|x_1(s)\|_{H^2}^2+\|x_2(s)\|_{H^1}^2\bigr){}\mathrm{d} s
	\le C\int_0^T\|f(s)\|^2{}\mathrm{d} s.
	\end{equation}
	Indeed, after diagonalizing $A$, each coefficient solves
	\begin{equation}\label{eq:linear-mode-forcing}
	x_k''+\lambda_kx_k'+\lambda_kx_k=f_k,
	\qquad x_k(0)=x_k'(0)=0.
	\end{equation}
	Multiplying \eqref{eq:linear-mode-forcing} by $\lambda_kx_k'+\varepsilon\lambda_kx_k$, where
	$0<\varepsilon<\varepsilon_0(\lambda_1)$ is fixed independently of $k$, and collecting the total derivative terms, we obtain
	\begin{align}\label{eq:linear-mode-energy}
	\begin{split}
	&~~\frac{\mathrm d}{\mathrm dt}\left[
	\frac{\lambda_k}{2}|x_k'|^2
	+\varepsilon\lambda_k x_kx_k'
	+\frac{(1+\varepsilon)\lambda_k^2}{2}|x_k|^2\right]
	+(\lambda_k^2-\varepsilon\lambda_k)|x_k'|^2
	+\varepsilon\lambda_k^2|x_k|^2  \\
	&=\lambda_k f_kx_k'+\varepsilon\lambda_k f_kx_k.
	\end{split}
	\end{align}
	For this choice of $\varepsilon$, the quantity in brackets is nonnegative and uniformly equivalent to
	$\lambda_k|x_k'|^2+\lambda_k^2|x_k|^2$. Moreover, Young's inequality gives, with constants independent of
	$k$,
	\begin{equation}\label{eq:linear-mode-young}
	\lambda_k|f_kx_k'|+\varepsilon\lambda_k|f_kx_k|
	\le \frac14(\lambda_k^2|x_k'|^2+\varepsilon\lambda_k^2|x_k|^2)
	+C|f_k|^2.
	\end{equation}
	Integrating \eqref{eq:linear-mode-energy} in $t$ from $0$ to $T$,  using
	$\lambda_k\ge\lambda_1$, \eqref{eq:linear-mode-young}, and summing over $k$, we obtain
	\eqref{eq:linear-max-reg}.  In particular, the constant in this estimate is uniform with respect to the eigenmodes.
	
	We treat the stochastic convolution separately. Since $Q$ has finite spectral range,
	$Z_t^L$ takes values in the finite-dimensional space $H_m\times H_m$.  The linear semigroup is
	exponentially stable on this space, and the L\'evy isometry gives
	\begin{equation*}
	\sup_{t\ge0}\mathbb{E}[\|Z_t^L\|_{H^2\times H^1}^2]
	\le C_{m,Q}\int_{\mathbb{R}^m\setminus\{0\}}|y|^2\nu_L(\,\mathrm{d} y)<\infty.
	\end{equation*}
	Thus, no stochastic maximal regularity estimate for a distributional-in-time forcing term is required. Applying \eqref{eq:linear-max-reg} only to $Z^g$, using
	$\|P_0g(u)\|^2\le C(1+V(Z)^3)$, and combining this with the preceding finite-dimensional estimate, we obtain
	\begin{equation*}
	\frac1T\mathbb{E}\left[\int_0^T\bigl(\|A\widetilde{Z}_s^{(1)}\|^2+\|A^{1/2}\widetilde{Z}_s^{(2)}\|^2\bigr){}\mathrm{d} s\right]
	\le C\left(1+\frac1T\int_0^T\mathbb{E}[V(Z_s)^3]{}\mathrm{d} s\right)+C_{m,Q}.
	\end{equation*}
	Lemma \ref{lem:poly-moments} with $p=3$ now yields \eqref{eq:rem-compact-average}.  The decomposition is needed because the rough part $\mathbb{S}(t)z_0$ need not enter $H^2\times H^1$ at a fixed positive time, although it decays exponentially in the energy space.
	
	Let $B_M$ be a bounded ball in $H^2\times H^1$, and let $K_M$ denote its closure in
	$\mathbb{H}=H^1\times H_0$.  Since the embedding $H^2\times H^1\hookrightarrow\mathbb{H}$ is compact,
	$K_M$ is compact in $\mathbb{H}$.  By \eqref{eq:rem-compact-average} and Chebyshev's inequality, for every
	$\varepsilon>0$, one can choose $M$ sufficiently large such that
	\begin{equation}\label{eq:rem-tight-average}
	\sup_{T\ge1}\frac1T\int_0^T
	\mathbb{P}(\widetilde{Z}_s\notin K_M){}\mathrm{d} s<\varepsilon.
	\end{equation}
Set
\begin{equation*}
 \widehat K=\overline{\{\mathbb S(s)z_0:s\ge0\}}^{\,\mathbb H}.
\end{equation*}
This set is compact because $s\mapsto\mathbb S(s)z_0$ is continuous and tends to zero. Hence $K_M+\widehat K$ is compact and, by \eqref{eq:rem-tight-average},
\begin{equation*}
 \sup_{T\ge1}\mu_T((K_M+\widehat K)^c)
 \le\sup_{T\ge1}\frac1T\int_0^T
 \mathbb P(\widetilde Z_s\notin K_M)\,\mathrm ds<\varepsilon.
\end{equation*}
This proves tightness.
\end{proof}

\begin{proposition}\label{prop:existence}
Under Assumptions \ref{ass:g}--\ref{ass:levy}, $\{P_t\}$ has at least one invariant measure $\mu$. Moreover, every invariant measure satisfies
\begin{equation}\label{eq:mu-V}
 \int_{\mathbb{H}}V(z)\,\mu(\,\mathrm{d} z)<\infty.
\end{equation}
\end{proposition}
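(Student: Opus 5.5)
Existence follows from the Krylov--Bogoliubov procedure. Fix any $z_0\in\mathbb{H}$, for instance $z_0=0$, and consider the Ces\`aro averages $\mu_T=\frac1T\int_0^TP_s^*\delta_{z_0}\,\mathrm ds$. By Lemma~\ref{lem:compactness} the family $\{\mu_T\}_{T\ge1}$ is tight in $\mathbb{H}$. Prokhorov's theorem then gives a sequence $T_n\to\infty$ and a probability measure $\mu$ with $\mu_{T_n}\Rightarrow\mu$.

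To see that $\mu$ is invariant, fix $t>0$ and $\varphi\in C_b(\mathbb{H})$. By the Feller property in Proposition~\ref{prop:wellposed}, $P_t\varphi\in C_b(\mathbb{H})$. We write
\[
\int P_t\varphi\,\mathrm d\mu_{T_n}-\int\varphi\,\mathrm d\mu_{T_n}
=\frac1{T_n}\Bigl(\int_{T_n}^{T_n+t}-\int_0^t\Bigr)P_s\varphi(z_0)\,\mathrm ds .
\]
The right-hand side is bounded by $2t\|\varphi\|_\infty/T_n\to0$. Passing to the limit on both sides gives $\int P_t\varphi\,\mathrm d\mu=\int\varphi\,\mathrm d\mu$, hence $P_t^*\mu=\mu$. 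Measurability of $s\mapsto P_s\varphi(z_0)$ follows from stochastic continuity, which is part of Proposition~\ref{prop:wellposed}.

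For the moment bound \eqref{eq:mu-V}, let $\mu$ be an arbitrary invariant measure. We do not yet know that $\int V\,\mathrm d\mu<\infty$, so we truncate. Set $V_M=V\wedge M$. This function is bounded and lower semicontinuous, and in fact continuous on $\mathbb{H}$ since $H^1\hookrightarrow L^4$. By invariance and Jensen's inequality (the map $x\mapsto x\wedge M$ is concave), for every $t\ge0$,
\[
\int V_M\,\mathrm d\mu=\int P_tV_M\,\mathrm d\mu
\le\int \bigl(\mathbb{E}V(Z_t^z)\bigr)\wedge M\,\mu(\mathrm dz)
\le\int \bigl(C+e^{-c_2t}V(z)\bigr)\wedge M\,\mu(\mathrm dz),
\]
where the last step uses \eqref{eq:mean-energy} from Lemma~\ref{lem:basic-energy}. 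For fixed $M$ the integrand is bounded by $M$ and converges pointwise to $C\wedge M$ as $t\to\infty$, because $V(z)<\infty$ for every $z\in\mathbb{H}$. Dominated convergence therefore gives $\int V_M\,\mathrm d\mu\le C$, and monotone convergence in $M$ yields $\int V\,\mathrm d\mu\le C$, which is \eqref{eq:mu-V}.

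The only delicate point is this last step. One must avoid assuming integrability of $V$ under $\mu$ in advance, and the bound must hold for every invariant measure, not only those obtained as Krylov--Bogoliubov limits. The truncation-plus-Jensen argument above handles this directly. Everything else reduces to the tightness established in Lemma~\ref{lem:compactness} and the Feller property from Proposition~\ref{prop:wellposed}.
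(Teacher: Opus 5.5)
Your proof is correct and follows the paper's argument essentially verbatim: Krylov--Bogoliubov using the tightness from Lemma~\ref{lem:compactness} and the Feller property, then the truncation $V\wedge M$, Jensen's inequality combined with \eqref{eq:mean-energy}, dominated convergence as $t\to\infty$, and monotone convergence in $M$. The only difference is that you write out the Krylov--Bogoliubov invariance check, which the paper simply cites from Da Prato--Zabczyk.
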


\begin{proof}
The Feller property and tightness follow from Proposition~\ref{prop:wellposed} and Lemma~\ref{lem:compactness}. The Krylov--Bogoliubov argument therefore gives an invariant measure; see \cite{DaPratoZabczyk1996}.

To prove the moment bound for an arbitrary invariant measure $\mu$, set $V_N=V\wedge N$. By \eqref{eq:mean-energy} and Jensen's inequality,
\begin{equation*}
 \int V_N\,\mathrm d\mu
 =\int P_tV_N\,\mathrm d\mu
 \le\int\bigl(C+e^{-c_2t}V(z)\bigr)\wedge N\,\mu(\mathrm dz).
\end{equation*}
Since $V(z)<\infty$ for every $z\in\mathbb H$, dominated convergence as $t\to\infty$ gives $\int V_N\,\mathrm d\mu\le C$. Monotone convergence as $N\to\infty$ proves \eqref{eq:mu-V}.
\end{proof}

\section{Stopping times and conditional Malliavin calculus}\label{sec:stopping}

\subsection{Stopping blocks and centered clock localization}
For a fixed subordinator path $\ell$ and
$\varepsilon\in(0,1]$, we set
\begin{equation}\label{eq:clock-regularization}
 \ell_t^\varepsilon=\frac1\varepsilon\int_t^{t+\varepsilon}\ell_r\mathrm{d} r+\varepsilon t.
\end{equation}
Then $\ell^\varepsilon$ is continuous and strictly increasing and
$\ell_t^\varepsilon\rightarrow\ell_t$ for every $t$.  Since $\ell_0^\varepsilon$ need not vanish, we define the centered continuous approximation
\begin{equation*}
 L_t^\varepsilon=W_{\ell_t^\varepsilon}-W_{\ell_0^\varepsilon},\qquad L_0^\varepsilon=0,
\end{equation*}
and let $Z^\varepsilon$ denote the solution of  \eqref{eq:first-order} with $L$ replaced by $L^\varepsilon$.  By It\^o's formula, there exist constants
$c_V,c_D,C_V>0$, independent of $\varepsilon$, such that
\begin{equation}\label{eq:approx-energy-structure}
 \,\mathrm{d} V(Z_t^\varepsilon)+c_VV(Z_t^\varepsilon)\mathrm{d} t
 +c_D\mathfrak{F}(Z_t^\varepsilon)\mathrm{d} t
 \le C_V\mathrm{d} t+\,\mathrm{d}M_t^\varepsilon
 +C_VB_Q\mathrm{d}\ell_t^\varepsilon,
\end{equation}
where
\begin{equation*}
 M_t^\varepsilon
 =\int_0^t(v_s^\varepsilon+\delta u_s^\varepsilon,Q\mathrm{d} W_{\ell_s^\varepsilon})
\end{equation*}
and
\begin{equation}\label{eq:approx-energy-bracket}
 \,\mathrm{d}\langle M^\varepsilon\rangle_t
 \le C B_Q\bigl(1+V(Z_t^\varepsilon)\bigr)\mathrm{d}\ell_t^\varepsilon.
\end{equation}

\begin{lemma}\label{lem:clock-stability}
Fix a subordinator path $\ell$ and a Brownian path $W$ such that the quantities below are finite on every compact interval. Set
\begin{equation*}
 \zeta_t=QW_{\ell_t},
 \qquad
 \zeta_t^\varepsilon
 =Q\bigl(W_{\ell_t^\varepsilon}-W_{\ell_0^\varepsilon}\bigr).
\end{equation*}
Then, for every $T>0$ and $r\ge0$,
\begin{equation}\label{eq:zeta-L2-conv}
 \int_0^T\|\zeta_t^\varepsilon-\zeta_t\|_{H^r}^2\mathrm{d} t
 \longrightarrow0.
\end{equation}
Let $Z^\varepsilon=(u^\varepsilon,v^\varepsilon)$ and $Z=(u,v)$ be two solutions of \eqref{eq:first-order} with the same initial condition, and put
$y^\varepsilon=v^\varepsilon-\zeta^\varepsilon$, $y=v-\zeta$.  Then we have
\begin{align}
 &~~\sup_{0\le t\le T}
 \bigl(\|u_t^\varepsilon-u_t\|_{H^1}^2
       +\|y_t^\varepsilon-y_t\|^2\bigr)
 +\int_0^T\|y_t^\varepsilon-y_t\|_{H^1}^2\,\,\mathrm{d} t\le C_T
 \int_0^T\|\zeta_t^\varepsilon-\zeta_t\|_{H^1}^2\,\,\mathrm{d} t.
 \label{eq:shifted-stability}
\end{align}
Moreover, if $t_\varepsilon\to t$ and $\ell_{t_\varepsilon}^\varepsilon\to\ell_t$, then $Z_{t_\varepsilon}^\varepsilon\to Z_t$ in $\mathbb H$ for these fixed paths.
\end{lemma}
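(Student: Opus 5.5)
We prove the three claims in order: convergence of the forcing paths, a pathwise stability estimate for the shifted system, and the terminal-time convergence.

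\emph{Step 1: the forcing paths, \eqref{eq:zeta-L2-conv}.} Since $Q$ has range in $H_m$, all $H^r$ norms of $\zeta^\varepsilon_t-\zeta_t$ are equivalent to the Euclidean norm of $W_{\ell^\varepsilon_t}-W_{\ell^\varepsilon_0}-W_{\ell_t}$, up to a factor depending on $m,r,Q$. It therefore suffices to prove convergence in $L^2(0,T;\mathbb R^m)$.

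We first collect the clock convergence. The regularization \eqref{eq:clock-regularization} satisfies $\ell^\varepsilon_t\to\ell_{t}$ for every $t$, by right continuity of $\ell$. In particular $\ell^\varepsilon_0\to\ell_0=0$.

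Next, $W$ is continuous, so $W_{\ell^\varepsilon_t}\to W_{\ell_t}$ pointwise and $W_{\ell^\varepsilon_0}\to W_0=0$. For $\varepsilon\le1$ we have $\ell^\varepsilon_t\le \ell_{T+1}+T$ uniformly on $[0,T]$. Hence all these quantities are bounded by $\sup_{s\le \ell_{T+1}+T}|W_s|<\infty$. Dominated convergence then gives \eqref{eq:zeta-L2-conv}.

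\emph{Step 2: the stability estimate \eqref{eq:shifted-stability}.} Both $(u^\varepsilon,y^\varepsilon)$ and $(u,y)$ solve the random system \eqref{eq:shift-wellposed}, with forcings $\zeta^\varepsilon$ and $\zeta$ respectively. Set $h=u^\varepsilon-u$, $k=y^\varepsilon-y$ and $\eta=\zeta^\varepsilon-\zeta$. The differences satisfy
\[
\partial_t h=k+\eta,\qquad \partial_t k=-Ak-A\eta-Ah-P_0\bigl(g(u^\varepsilon)-g(u)\bigr),\qquad h(0)=k(0)=0.
\]
Test this system in the energy pairing by $(Ah,k)$. The terms $(A^{1/2}h,A^{1/2}k)$ cancel, leaving
\[
\tfrac12\tfrac{d}{dt}\bigl(\|h\|_{H^1}^2+\|k\|^2\bigr)+\|k\|_{H^1}^2=(A^{1/2}h,A^{1/2}\eta)-(A^{1/2}\eta,A^{1/2}k)-(g(u^\varepsilon)-g(u),k).
\]
The first two terms on the right are bounded by $\|h\|_{H^1}^2+\tfrac14\|k\|_{H^1}^2+C\|\eta\|_{H^1}^2$.

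For the nonlinear term we need a uniform bound $\|u^\varepsilon\|_{H^1}+\|u\|_{H^1}\le M$ on $[0,T]$, independent of $\varepsilon$. It comes from the pathwise energy inequality \eqref{eq:pathwise-galerkin-energy}, whose right-hand side involves $\sup_{t\le T}\|\zeta^\varepsilon_t\|_{H^1}$. By Step 1 this supremum is bounded uniformly in $\varepsilon\le1$.

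With this bound, the mean-value estimate from the uniqueness proof of Proposition~\ref{prop:wellposed} gives
\[
|(g(u^\varepsilon)-g(u),k)|\le C_M\|h\|_{H^1}\|k\|\le C_M\|h\|_{H^1}^2+\tfrac14\|k\|_{H^1}^2,
\]
where we used Poincaré's inequality on $\|k\|$. Gronwall's inequality with zero initial data now yields \eqref{eq:shifted-stability}, with a constant $C_T$ that depends on the fixed paths through $M$.

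\emph{Step 3: convergence at moving times.} Write
\[
Z^\varepsilon_{t_\varepsilon}-Z_t=\bigl(Z^\varepsilon_{t_\varepsilon}-Z_{t_\varepsilon}\bigr)+\bigl(Z_{t_\varepsilon}-Z_t\bigr),
\]
and expand each $v$ as $y+\zeta$. For the first difference, the $u$ and $y$ components are controlled by \eqref{eq:shifted-stability} together with Step 1. The remaining velocity piece is $\zeta^\varepsilon_{t_\varepsilon}-\zeta_{t_\varepsilon}$.

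This is the main obstacle. Here $\ell$ may jump at $t$, so $\zeta$ is not continuous and a sup-norm bound on the forcing is unavailable. The fix is to avoid splitting at $t_\varepsilon$ for the noise part. The quantities $u$ and $y$ are continuous, so $u_{t_\varepsilon}\to u_t$ and $y_{t_\varepsilon}\to y_t$. For the noise we compare directly:
\[
\zeta^\varepsilon_{t_\varepsilon}=Q\bigl(W_{\ell^\varepsilon_{t_\varepsilon}}-W_{\ell^\varepsilon_0}\bigr)\to QW_{\ell_t}=\zeta_t.
\]
This uses the hypothesis $\ell^\varepsilon_{t_\varepsilon}\to\ell_t$, continuity of $W$, and $\ell^\varepsilon_0\to0$. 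Combining the three convergences gives $Z^\varepsilon_{t_\varepsilon}\to Z_t$ in $\mathbb H$.
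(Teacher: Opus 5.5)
Your proposal is correct and follows essentially the same route as the paper. Both arguments use dominated convergence on the finite-dimensional forcing for \eqref{eq:zeta-L2-conv}, then a Gronwall estimate for the shifted difference system based on the $\varepsilon$-uniform pathwise energy bound and the local Lipschitz bound of $g$ from $H^1$ to $L^2$, and finally continuity of $u,y$ combined with direct convergence of $\zeta^\varepsilon_{t_\varepsilon}$ for the moving-time claim; your version differs only in writing out the difference energy identity explicitly.
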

\begin{proof}
	First, for \(t\le T\) and \(\varepsilon\le1\), the quantity \(\ell_t^\varepsilon\) is bounded by a pathwise finite constant that depends only on \(\ell_{T+1}\) and \(T\).  Since \(Q\) has finite spectral range, all \(H^r\) norms are equivalent on \(H_m\), and the Brownian path is bounded on the relevant compact operational-time interval. Hence, by dominated convergence, \eqref{eq:zeta-L2-conv} follows.
	
	The shifted variables solve
	\begin{equation*}
	\partial_t u=y+\zeta,
	\qquad
	\partial_t y=-Ay-A\zeta-Au-P_0g(u),
	\end{equation*}
	and the analogous equation with $\zeta^\varepsilon$.  The pathwise energy bound in \eqref{eq:pathwise-galerkin-energy} is uniform in $\varepsilon$ on each compact interval. Thus \eqref{eq:g-growth} and $H^1\hookrightarrow L^6$ give
	\begin{equation*}
	\|P_0(g(u^\varepsilon)-g(u))\|_{L^2}
	\le C_T\|u^\varepsilon-u\|_{H^1}.
	\end{equation*}
	The two forcing terms containing $\zeta^\varepsilon-\zeta$ are bounded by a small multiple of the $H^1$ dissipation of $y^\varepsilon-y$ plus
	$C_T\|\zeta^\varepsilon-\zeta\|_{H^1}^2$.  By Gronwall's inequality we deduce \eqref{eq:shifted-stability}. 
	
	 Finally, $u$ and $y$ are continuous in time, while
	\begin{equation*}
	\zeta_{t_\varepsilon}^\varepsilon
	=Q\bigl(W_{\ell_{t_\varepsilon}^\varepsilon}-W_{\ell_0^\varepsilon}\bigr)
	\longrightarrow QW_{\ell_t}=\zeta_t
	\end{equation*}
	whenever $t_\varepsilon\to t$ and
	$\ell_{t_\varepsilon}^\varepsilon\to\ell_t$.  Together with \eqref{eq:shifted-stability}, this proves the asserted pathwise convergence.
\end{proof}

Fix $c_Q>0$ sufficiently large compared with the constants in
\eqref{eq:approx-energy-structure}--\eqref{eq:approx-energy-bracket}.  We also fix a block height
$\chi>1$.  The value of $\chi$ will be chosen in Section \ref{sec:control} so that one uncontrolled block
produces a genuine deterministic high-mode contraction.  
For $\kappa>0$, define the stopping times \(\tau_n\) by
\begin{equation}\label{eq:stopping-times}
 \tau_0=0,
 \qquad
 \tau_{n+1}=\inf\left\{t\ge\tau_n:
 c_V(t-\tau_n)-c_Q\kappa B_Q(S_t-S_{\tau_n})\ge\chi\right\}.
\end{equation}
The quantity inside the braces has positive continuous drift and jumps only downward. For \(\kappa\) sufficiently small, it drifts to \(+\infty\). We use the closed threshold "\(\ge\chi\)" so that the process stays below \(\chi\) before \(\tau_{n+1}\); since upward motion is continuous, the first passage is a continuous contact with \(\chi\). Therefore,
\begin{equation}\label{eq:clock-block-bound}
 c_V(\tau_{n+1}-\tau_n)-c_Q\kappa B_Q
 (S_{\tau_{n+1}}-S_{\tau_n})=\chi,
\end{equation}
and in particular
\begin{equation*}
 S_{\tau_{n+1}}-S_{\tau_n}
 \le \frac{c_V}{c_Q\kappa B_Q}(\tau_{n+1}-\tau_n).
\end{equation*}

For the regularized clock \eqref{eq:clock-regularization}, we define recursively
\begin{equation}\label{eq:tau-eps}
 \tau_0^\varepsilon=0,\qquad
 \tau_{n+1}^\varepsilon=\inf\left\{t\ge\tau_n^\varepsilon:
 c_V(t-\tau_n^\varepsilon)-c_Q\kappa B_Q
 (\ell_t^\varepsilon-\ell_{\tau_n^\varepsilon}^\varepsilon)\ge\chi\right\}.
\end{equation}

\begin{lemma}\label{lem:clock-hitting-stability}
Choose $\kappa>0$ so small that
\begin{equation}\label{eq:positive-clock-drift}
 c_V-c_Q\kappa B_Q\,\mathbb{E}[S_1]>0.
\end{equation}
Then, for almost every subordinator path and every fixed $n\ge0$, as $\varepsilon\rightarrow0$,
\begin{equation}\label{eq:tau-eps-conv}
 \tau_n^\varepsilon\longrightarrow\tau_n
 \text{~~~and~~~}
 \ell_{\tau_n^\varepsilon}^\varepsilon\longrightarrow S_{\tau_n}.
\end{equation}
\end{lemma}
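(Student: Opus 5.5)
The plan is to argue pathwise by induction on $n$. First I will isolate a full-measure set of subordinator paths on which three properties hold: every $\tau_n$ is finite, $S$ is continuous at each $\tau_n$, and the process overshoots strictly right after each passage. Then I will sandwich $\ell^\varepsilon$ between shifts of $\ell$.

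Write $X_t=c_Vt-c_Q\kappa B_Q S_t$. This is a L\'evy process with positive drift and only downward jumps.

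\emph{Finiteness.} By \eqref{eq:positive-clock-drift} and the strong law of large numbers, $X_t/t\to c_V-c_Q\kappa B_Q\bar s>0$ a.s. Hence every $\tau_n$ is finite.

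\emph{No jump at the passage times.} Each $\tau_{n+1}$ is a first-passage time of the strong Markov process $X-X_{\tau_n}$ over the level $\chi$. By quasi-left-continuity, $X$ has a.s.\ no jump at $\tau_{n+1}$. So $S_{\tau_{n+1}-}=S_{\tau_{n+1}}$, and the contact identity \eqref{eq:clock-block-bound} holds with $X_{\tau_{n+1}-}=X_{\tau_{n+1}}$.

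\emph{Strict overshoot.} $S$ is driftless, so $S_h/h\to0$ a.s.\ as $h\downarrow0$ (Bertoin's small-time result). By the strong Markov property this also holds after each $\tau_{n+1}$. Therefore $X_{\tau_{n+1}+h}-X_{\tau_{n+1}}=c_Vh-o(h)>0$ for all small $h>0$.

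\emph{No earlier approach to the level.} For every $\eta>0$,
\[
\sup_{[\tau_n,\tau_{n+1}-\eta]}\bigl(X_t-X_{\tau_n}\bigr)<\chi .
\]
Indeed, a c\`adl\`ag path on a compact interval attains its supremum as a value or a left limit. A left limit equal to $\chi$ before $\tau_{n+1}$ would force a passage time with a simultaneous jump, which the quasi-left-continuity step excludes.

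\emph{Sandwich bounds.} Since $\ell$ is nondecreasing and right-continuous, for all $t$,
\[
\ell_t+\varepsilon t\le \ell_t^\varepsilon\le \ell_{t+\varepsilon}+\varepsilon t .
\]
In particular $\ell_t^\varepsilon\downarrow\ell_t$ locally uniformly in the sense that $\ell^\varepsilon_{s}\to\ell_t$ whenever $s\downarrow t$ or $s\to t$ at a continuity point of $\ell$.

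\emph{Induction.} The case $n=0$ is trivial except that $\ell^\varepsilon_0\to\ell_0=0$, which follows from the sandwich. Assume $\tau_n^\varepsilon\to\tau_n$ and $\ell^\varepsilon_{\tau_n^\varepsilon}\to S_{\tau_n}$. Set $X^\varepsilon_t=c_V(t-\tau^\varepsilon_n)-c_Q\kappa B_Q(\ell^\varepsilon_t-\ell^\varepsilon_{\tau^\varepsilon_n})$.

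For the upper bound, fix $h>0$ small with $X_{\tau_{n+1}+h}-X_{\tau_n}>\chi$, which is possible by the overshoot property. Choose $h$ also to be a continuity point of $S$, which is possible since jumps are countable. Then
\[
X^\varepsilon_{\tau_{n+1}+h}\ge c_V(\tau_{n+1}+h-\tau_n^\varepsilon)-c_Q\kappa B_Q\bigl(\ell_{\tau_{n+1}+h+\varepsilon}+\varepsilon(\tau_{n+1}+h)-\ell^\varepsilon_{\tau^\varepsilon_n}\bigr)
\]
converges to $X_{\tau_{n+1}+h}-X_{\tau_n}>\chi$. Hence $\limsup\tau^\varepsilon_{n+1}\le\tau_{n+1}+h$.

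For the lower bound, on $[\tau^\varepsilon_n,\tau_{n+1}-\eta]$ use $\ell^\varepsilon_t\ge\ell_t$ to get
\[
X^\varepsilon_t\le c_V(t-\tau_n^\varepsilon)-c_Q\kappa B_Q(\ell_t-\ell^\varepsilon_{\tau_n^\varepsilon}).
\]
This is uniformly close to $X_t-X_{\tau_n}$, up to the induction errors and a time shift $|\tau_n^\varepsilon-\tau_n|$ at the left endpoint. The left endpoint needs care when $\tau_n^\varepsilon<\tau_n$. There $\ell_t\ge\ell_{\tau_n^\varepsilon}$, and $\ell$ is continuous at $\tau_n$, so the bound still converges. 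By the no-earlier-approach property this stays below $\chi$ minus a fixed margin, so $\liminf\tau^\varepsilon_{n+1}\ge\tau_{n+1}-\eta$.

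\emph{Clock value at the new time.} Since $\ell^\varepsilon$ is continuous, the contact identity $X^\varepsilon_{\tau^\varepsilon_{n+1}}=\chi$ holds exactly. Passing to the limit, or equivalently using the sandwich together with the continuity of $S$ at $\tau_{n+1}$, gives $\ell^\varepsilon_{\tau^\varepsilon_{n+1}}\to S_{\tau_{n+1}}$. This closes the induction and proves \eqref{eq:tau-eps-conv}.

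The main obstacle I expect is the probabilistic input that no passage happens with a jump and that the overshoot is strict. Without it, the limit of the regularized hitting times could land on a later passage. I would handle this first with Bertoin's small-time law $S_h/h\to0$ for driftless subordinators, combined with the strong Markov property at the stopping times $\tau_n$.
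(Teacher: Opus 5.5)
Your structure matches the paper's proof: the strong law for finiteness, the sandwich $\ell_t\le\ell^\varepsilon_t\le\ell_{t+\varepsilon}+\varepsilon t$, a strict pre-hit gap giving $\liminf\tau^\varepsilon_{n+1}\ge\tau_{n+1}$, a strict post-hit overshoot giving $\limsup\tau^\varepsilon_{n+1}\le\tau_{n+1}$, and induction on $n$. Your small-time law $S_h/h\to0$ plays the role of the paper's ``zero is regular for $(0,\infty)$''. Using the exact contact identity for $X^\varepsilon$ to get the clock values is a harmless variant of the paper's sandwich-plus-continuity step. The one genuinely different ingredient is how you exclude a \emph{prejump contact}, meaning a jump time $r$ with $X_{r-}-X_{\tau_n}=\chi$. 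The paper gets $\mathbb E\int_0^T\mathbf 1_{\{X_{r-}=\chi\}}\,\mathrm dr=0$ from atomlessness of $S_r$ (infinite L\'evy measure), then applies the compensation formula to jumps larger than $1/k$. You invoke quasi-left-continuity instead, which is shorter and does not need infinite activity.

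As written, however, that step is aimed at the wrong time. A jump at $\tau_{n+1}$ is already excluded deterministically, because the threshold is closed and $X$ has no upward jumps: $\chi\le X_{\tau_{n+1}}-X_{\tau_n}\le X_{\tau_{n+1}-}-X_{\tau_n}\le\chi$. The dangerous time $r$ has $X_r-X_{\tau_n}<\chi$. So it lies strictly before $\tau_{n+1}$ and is not a passage time in the sense of \eqref{eq:stopping-times}, and ``no jump at $\tau_{n+1}$'' says nothing about it. Moreover, quasi-left-continuity only forbids jumps at predictable times, and you give no announcing sequence.

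The repair is to apply it to $\sigma_{n+1}=\inf\{t\ge\tau_n:\sup_{\tau_n\le s\le t}(X_s-X_{\tau_n})\ge\chi\}$. Since $X$ has no upward jumps, this running supremum is continuous. Hence $\sigma_{n+1}$ is announced by the hitting times of $\chi-1/k$, is predictable, and $\Delta X_{\sigma_{n+1}}=0$ almost surely. The supremum is not attained before $\sigma_{n+1}$, so it is reached as a left limit: $X_{\sigma_{n+1}-}-X_{\tau_n}=\chi$. Having no jump there gives $X_{\sigma_{n+1}}-X_{\tau_n}=\chi$, hence $\sigma_{n+1}=\tau_{n+1}$. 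Any prejump contact $r$ would force $\sigma_{n+1}\le r<\tau_{n+1}$, which is now impossible. With this correction your ``no earlier approach'' step holds, and the rest of your argument goes through.
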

\begin{proof}
Put $d_\kappa=c_Q\kappa B_Q$. The strong law and
\eqref{eq:positive-clock-drift} imply $c_Vt-d_\kappa S_t\to\infty$.
For sufficiently small $\varepsilon$, $c_V-d_\kappa(\bar s+\varepsilon)>0$; since $\ell_t^\varepsilon/t\to\bar s+\varepsilon$, the regularized hitting times are finite.
Almost surely, no jump time $r$ satisfies
\begin{equation}\label{eq:no-prejump-contact}
 c_V(r-a)-d_\kappa(S_{r-}-S_a)=\chi,\qquad a=\tau_n.
\end{equation}
To justify this assertion, first take $a=0$ and write
$X_{r-}=c_Vr-d_\kappa S_{r-}$. For each deterministic $r>0$,
$S_{r-}=S_r$ almost surely; the infinite L\'evy measure makes $S_r$
atomless \cite[Theorem~27.4]{Sato1999}. Consequently, Fubini's theorem
gives $\mathbb E\int_0^T\mathbf1_{\{X_{r-}=\chi\}}\,\mathrm dr=0$.
For each $k\ge1$, the compensation formula
for jumps with size greater than $1/k$ therefore gives
\[
 \mathbb E\sum_{r>0}\mathbf1_{\{r\le T,\,\Delta S_r>1/k,\,X_{r-}=\chi\}}
 =\nu_S((1/k,\infty))\,
   \mathbb E\int_0^T\mathbf1_{\{X_{r-}=\chi\}}\,\mathrm dr=0.
\]
Taking the union over $k$ and integer $T$ excludes every jump at such a
prejump contact. The strong Markov property gives the same conclusion for
each $a=\tau_n$; a countable intersection covers all $n$.

Suppose $a_\varepsilon=\tau_n^\varepsilon\to a=\tau_n$ and
$\ell_{a_\varepsilon}^\varepsilon\to S_a$. Write
\[
 X(t)=c_V(t-a)-d_\kappa(S_t-S_a),\qquad
 X_\varepsilon(t)=c_V(t-a_\varepsilon)
             -d_\kappa(\ell_t^\varepsilon-\ell_{a_\varepsilon}^\varepsilon).
\]
At $\tau=\tau_{n+1}$, upward passage is continuous, so $X(\tau)=\chi$ and
$\Delta S_\tau=0$. Exclusion of \eqref{eq:no-prejump-contact} implies
\begin{equation}\label{eq:strict-prehit-gap}
 \sup_{a\le t\le\tau-d}X(t)<\chi\qquad(d>0,\ a<\tau-d).
\end{equation}
Otherwise a maximizing sequence would produce an earlier contact or a left limit
equal to $\chi$ at a jump. Monotonicity of $S$ gives
\begin{equation}\label{eq:clock-sandwich}
 S_t\le\ell_t^\varepsilon\le S_{t+\varepsilon}+\varepsilon t,
\end{equation}
and hence
\[
 X_\varepsilon(t)\le X(t)+c_V|a_\varepsilon-a|
                  +d_\kappa|\ell_{a_\varepsilon}^\varepsilon-S_a|.
\]
Thus $\liminf\tau_{n+1}^\varepsilon\ge\tau$. After $\tau$, the process
$c_Vh-d_\kappa(S_{\tau+h}-S_\tau)$ has bounded variation and positive drift
coefficient. Zero is regular for $(0,\infty)$, so in every $(\tau,\tau+d)$
there is a continuity point $t_+$ with $X(t_+)>\chi$.
By \eqref{eq:clock-sandwich}, $X_\varepsilon(t_+)\to X(t_+)$; hence
$\limsup\tau_{n+1}^\varepsilon\le\tau$.
Finally, applying \eqref{eq:clock-sandwich} at $\tau_{n+1}^\varepsilon$ and using
continuity at $\tau$ proves convergence of the clock values. Induction starts at zero.
\end{proof}

For $s\in[\tau_n,\tau_{n+1}]$, we set
\begin{equation*}
 \omega_n(s)=
 \exp\left\{-c_V(\tau_{n+1}-s)
 +c_Q\kappa B_Q(S_{\tau_{n+1}}-S_s)\right\}.
\end{equation*}
By the definition of $\tau_{n+1}$, we find that $0<\omega_n(s)\le1$ and $\omega_n(\tau_n)=e^{-\chi}$.

\begin{lemma}\label{lem:stopped-energy}
There exist constants \(c_*>0\) and \(\mathscr K>0\), depending only on the modified-energy and martingale-bracket coefficients and independent of \(m,\chi,\kappa\). Choose
$p_*>\max\{100,\mathscr K/c_Q\}$ and then \(\kappa_0>0\) such that, for every \(\kappa\in(0,\kappa_0]\), the following holds.
There are
\(C_{{\rm wt},\kappa}<\infty\), \(\mathscr S_{\kappa,\chi}<\infty\), and
\(\widehat C_{\kappa,\chi}<\infty\) such that, for every \(n\ge0\),
\begin{align}
 &\mathbb{E}[e^{p_*c_V\tau_1}]\le \widehat C_{\kappa,\chi},\label{eq:tau-exp}\\
 &\mathbb{E}\left[e^{\eta_* V(Z_{\tau_{n+1}})}\mid\mathcal{F}_{\tau_n}\right]
 \le \mathscr S_{\kappa,\chi}
 \exp\!\left(\eta_*e^{-\chi}V(Z_{\tau_n})\right),
 \label{eq:stopped-recursion}\\
 &\mathbb{E}\Bigg[
 \exp\Bigg\{
 \begin{aligned}
 &\qquad \kappa V(Z_{\tau_{n+1}})-\kappa e^{-\chi}V(Z_{\tau_n})\\
 &+c_*\kappa\int_{\tau_n}^{\tau_{n+1}}\omega_n(s)\mathfrak{F}(Z_s)\,\,\mathrm{d} s
 -\mathscr K\kappa B_QY_n
 \end{aligned}
 \Bigg\}\,\Bigg|\,\mathcal{F}_{\tau_n}\Bigg]
 \le C_{{\rm wt},\kappa},
 \label{eq:weighted-stopped-energy}
\end{align}
where $\eta_*=\frac{\kappa}{2}$ and
\begin{equation}\label{eq:Yn}
 Y_n=S_{\tau_{n+1}}-S_{\tau_n}.
\end{equation}
Moreover, for any $p\ge1$,
\begin{equation}\label{eq:stopped-sup}
 \mathbb{E}_{z_0}\left[\sup_{0\le s\le\tau_1}V(Z_s)^p\right]
 \le \widetilde C_{p,\kappa,\chi}(1+V(z_0)^p),
\end{equation}
here the constants $\mathscr S_{\kappa,\chi}$ and $\widetilde C_{p,\kappa,\chi}$ may depend on $\chi$.  In particular, the same estimates hold after the shift $\tau_n$ with initial state $Z_{\tau_n}$.
\end{lemma}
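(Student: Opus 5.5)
The plan is to work on each stopping block $[\tau_n,\tau_{n+1}]$ through the regularized clock $\ell^\varepsilon$ and then pass to the limit using Lemmas~\ref{lem:clock-stability} and \ref{lem:clock-hitting-stability}. On the block we use the time-dependent weight $\omega_n(s)$ as an integrating factor. Set $\omega^\varepsilon_n(s)=\exp\{-c_V(\tau^\varepsilon_{n+1}-s)+c_Q\kappa B_Q(\ell^\varepsilon_{\tau^\varepsilon_{n+1}}-\ell^\varepsilon_s)\}$ and apply It\^o's formula to $\kappa\,\omega^\varepsilon_n(s)V(Z^\varepsilon_s)$. This requires a little care, because $\omega_n$ is anticipating through $\tau_{n+1}$. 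We therefore write $\omega_n(s)=e^{-\chi}\exp\{c_V(s-\tau_n)-c_Q\kappa B_Q(\ell_s-\ell_{\tau_n})\}$ by \eqref{eq:clock-block-bound}, which is adapted.

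Differentiating, the $c_V V\,\mathrm ds$ term from \eqref{eq:approx-energy-structure} cancels against the derivative of the weight. The weight produces $-c_Q\kappa B_Q\,\omega V\,\mathrm d\ell$, and this term dominates the bracket $\tfrac{\kappa^2}{2}\omega^2\,\mathrm d\langle M\rangle\le C\kappa^2 B_Q\omega^2(1+V)\,\mathrm d\ell$ provided $c_Q\ge C\kappa$. Here we use $\omega\le1$, and $c_Q$ is chosen large compared with these constants. The leftover linear pieces give $C_V B_Q\,\mathrm d\ell$ and $C_V\,\mathrm ds$. After absorbing, integration over the block gives, for the exponential martingale $\mathcal E$,
\[
\kappa V(Z_{\tau_{n+1}})-\kappa e^{-\chi}V(Z_{\tau_n})+c_*\kappa\!\int\omega_n\mathfrak F\,\mathrm ds\le C\kappa(\tau_{n+1}-\tau_n)+\mathscr K\kappa B_Q Y_n+\log\mathcal E.
\]
We then use \eqref{eq:clock-block-bound} again to trade $\tau_{n+1}-\tau_n=(\chi+c_Q\kappa B_QY_n)/c_V$. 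The term $\chi$ contributes a constant, and the $Y_n$ part is absorbed into $\mathscr K\kappa B_Q Y_n$ after enlarging $\mathscr K$. This $\mathscr K$ is independent of $m,\chi,\kappa$, because only structural constants enter. Optional stopping for the supermartingale $\mathcal E$ (localized, then Fatou) yields \eqref{eq:weighted-stopped-energy}, with $C_{{\rm wt},\kappa}=e^{C\kappa\chi/c_V}$. This bound is actually $\chi$-dependent; if the statement requires independence, we would instead put the $\chi$ term into $\mathscr S$. Finally we pass $\varepsilon\to0$ using pathwise convergence of $Z^\varepsilon_{\tau^\varepsilon}$ and of the clock values, together with Fatou.

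For \eqref{eq:tau-exp}, by \eqref{eq:clock-block-bound}, $\{\tau_1>t\}\subset\{c_Vt-c_Q\kappa B_QS_t<\chi\}$. A Chernoff bound with $\mathbb E e^{\theta S_t}=e^{t\Psi_S(\theta)}$ gives $\mathbb P(\tau_1>t)\le e^{\theta\chi/(c_Q\kappa B_Q)}\exp\{-t(\theta c_V/(c_Q\kappa B_Q)-\Psi_S(\theta))\}$. We take $\theta=\zeta_S$ fixed. Then for $\kappa\le\kappa_0$ small the rate exceeds $p_*c_V$, which proves \eqref{eq:tau-exp}. For \eqref{eq:stopped-recursion} we drop the nonnegative $\mathfrak F$ term and take $\eta_*=\kappa/2$. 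By Cauchy--Schwarz, $\mathbb E[e^{\frac\kappa2 V_{\tau_{n+1}}-\frac\kappa2 e^{-\chi}V_{\tau_n}}]\le C_{\rm wt}^{1/2}\,(\mathbb E e^{\mathscr K\kappa B_QY_n})^{1/2}$. Since $Y_n\le \frac{c_V}{c_Q\kappa B_Q}(\tau_{n+1}-\tau_n)$, the last factor is bounded by $\mathbb E e^{(\mathscr K/c_Q)c_V\tau_1}$, which is finite by \eqref{eq:tau-exp} because $p_*>\mathscr K/c_Q$. The strong Markov property (Proposition~\ref{prop:wellposed}) makes the estimate conditional on $\mathcal F_{\tau_n}$.

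For \eqref{eq:stopped-sup} we use \eqref{eq:maximal-random-horizon} with $T_*=\tau_1\wedge T$, let $T\to\infty$, and bound $\mathbb E(1+\tau_1)^p$ by \eqref{eq:tau-exp}. The main obstacle is the first step: we must justify It\^o calculus with the block weight and optional stopping at the stopping times. Moreover, the cancellation $-c_Q\kappa B_Q\omega V\,\mathrm d\ell$ versus the bracket is done on the continuous approximation, where $\mathrm d\ell^\varepsilon$ is absolutely continuous. Transferring back through \eqref{eq:tau-eps-conv} requires lower semicontinuity of $\int\omega\mathfrak F$, which is handled by Fatou together with the $L^2(H^1)$ convergence of $v^\varepsilon$.
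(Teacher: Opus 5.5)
Your overall route matches the paper's, and most of it is fine. The paper also uses the regularized clock and the block weight as an integrating factor: the $c_VV\,\mathrm ds$ term cancels, and the $-c_Q\kappa^2B_Q\omega V\,\mathrm d\ell$ term absorbs the bracket. It then uses an exponential supermartingale and passes to the limit $\varepsilon\to0$. Your Chernoff bound for \eqref{eq:tau-exp}, the Cauchy--Schwarz argument with $Y_n\le c_V(\tau_{n+1}-\tau_n)/(c_Q\kappa B_Q)$ for \eqref{eq:stopped-recursion}, and \eqref{eq:maximal-random-horizon} for \eqref{eq:stopped-sup} are all as in the paper.

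\textbf{The gap: $\chi$-dependence of $C_{{\rm wt},\kappa}$.} The lemma asserts a constant $C_{{\rm wt},\kappa}$ independent of $\chi$; only $\mathscr S_{\kappa,\chi}$, $\widehat C_{\kappa,\chi}$ and $\widetilde C_{p,\kappa,\chi}$ are allowed to depend on $\chi$. You bound $C\kappa\int_{\tau_n}^{\tau_{n+1}}\omega_n\,\mathrm ds$ by $C\kappa(\tau_{n+1}-\tau_n)=C\kappa(\chi+c_Q\kappa B_QY_n)/c_V$. This leaves a factor $e^{C\kappa\chi/c_V}$ in $C_{\rm wt}$. Your suggested remedy, moving the $\chi$ term into $\mathscr S$, does not work. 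Estimate \eqref{eq:weighted-stopped-energy} is a standalone claim with its own constant, and it is invoked directly later, in Lemma~\ref{lem:normalized-block-exp} and in the cubic realization.

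\textbf{The fix.} The missing observation is that the weight's own differential controls its time integral, so you should not use the crude bound $\omega_n\le1$ there. On the regularized block $[a,T]=[\tau_n^\varepsilon,\tau_{n+1}^\varepsilon]$ you have $\mathrm d\omega_n^\varepsilon=c_V\omega_n^\varepsilon\,\mathrm ds-c_Q\kappa B_Q\omega_n^\varepsilon\,\mathrm d\ell^\varepsilon_s$, with $\omega_n^\varepsilon(a)=e^{-\chi}$ and $\omega_n^\varepsilon(T)=1$. Hence
\[
 c_V\int_a^T\omega_n^\varepsilon\,\mathrm ds
 =1-e^{-\chi}+c_Q\kappa B_Q\int_a^T\omega_n^\varepsilon\,\mathrm d\ell^\varepsilon_s
 \le 1+c_Q\kappa B_Q(\ell^\varepsilon_T-\ell^\varepsilon_a).
\]
This gives $C\kappa\int\omega\,\mathrm ds\le C\kappa/c_V+(Cc_Q\kappa^2B_Q/c_V)(\ell^\varepsilon_T-\ell^\varepsilon_a)$. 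The clock part goes into $\mathscr K$ exactly as you did, and the deterministic part yields $C_{{\rm wt},\kappa}=e^{C\kappa/c_V}$, which is free of $\chi$.

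\textbf{A secondary issue: anticipation at the regularized level.} Rewriting $\omega_n$ in the form $e^{-\chi}\exp\{c_V(s-\tau_n)-c_Q\kappa B_Q(\ell_s-\ell_{\tau_n})\}$ does not remove the anticipation for the regularized clock. Here $\ell^\varepsilon_s$ depends on $\ell$ over $[s,s+\varepsilon]$, and $\tau^\varepsilon_n$ is not a stopping time. Even $W_{\ell^\varepsilon_t}$ is not adapted to the original filtration, so It\^o's formula cannot be applied in the joint filtration. The clean device, which the paper uses, is to condition on the entire subordinator path. By independence of $S$ and $W$, the block endpoints and weights then become deterministic. The It\^o and exponential-supermartingale argument runs for the Brownian motion in operational time. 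You then pass $\varepsilon\to0$ on each clock fiber and use the strong Markov property at $\tau_n$.
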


\begin{proof}
Decrease $\kappa_0$, if necessary, so that \eqref{eq:positive-clock-drift} holds for every $\kappa\le\kappa_0$; the cumulant choice at
the end of the proof further decreases the same threshold.  Let $\tau_n^\varepsilon$ be the
regularized first-passage blocks from
\eqref{eq:tau-eps}.  Lemma \ref{lem:clock-hitting-stability} gives \eqref{eq:tau-eps-conv}; in
particular, the passage to the discontinuous clock uses convergence of the first-passage endpoints,
not a monotonicity assertion in $\varepsilon$.
Fix the subordinator path $\ell$.  The regularized clock $\ell^\varepsilon$ uses future values of $\ell$, so $\tau_n^\varepsilon$ need not be a stopping time for the original joint filtration.  It is, however, a deterministic physical time once $\ell$ is frozen.  Put
\begin{equation*}
 a_n^\varepsilon=\ell_{\tau_n^\varepsilon}^\varepsilon-\ell_0^\varepsilon,
 \qquad
 W_r^\varepsilon=W_{\ell_0^\varepsilon+r}-W_{\ell_0^\varepsilon},
 \qquad
 \mathcal{F}_r^{W,\varepsilon}=\sigma\{W_q^\varepsilon:0\le q\le r\}.
\end{equation*}
For each fixed clock, $W^\varepsilon$ is a standard Brownian motion and
$L_t^\varepsilon=W_{\ell_t^\varepsilon-\ell_0^\varepsilon}^\varepsilon$.  On
$[\tau_n^\varepsilon,\tau_{n+1}^\varepsilon]$ apply
\eqref{eq:approx-energy-structure} conditionally on the fixed clock.  With
\begin{equation*}
 \omega_n^\varepsilon(s)=
 \exp\{-c_V(\tau_{n+1}^\varepsilon-s)
 +c_Q\kappa B_Q(\ell_{\tau_{n+1}^\varepsilon}^\varepsilon-
 \ell_s^\varepsilon)\},
\end{equation*}
write $a=\tau_n^\varepsilon$ and $T=\tau_{n+1}^\varepsilon$.  Then
$\omega_n^\varepsilon(T)=1$, $\omega_n^\varepsilon(a)=e^{-\chi}$ and
\begin{equation*}
 \mathrm d\omega_n^\varepsilon
 =c_V\omega_n^\varepsilon\,\mathrm ds
 -c_Q\kappa B_Q\omega_n^\varepsilon\,\mathrm d\ell_s^\varepsilon.
\end{equation*}
Consequently,
\begin{equation*}
 c_V\int_a^T\omega_n^\varepsilon(s)\,\mathrm ds
 =1-e^{-\chi}
 +c_Q\kappa B_Q\int_a^T\omega_n^\varepsilon(s)\,\mathrm d\ell_s^\varepsilon
 \le 1+c_Q\kappa B_Q(\ell_T^\varepsilon-\ell_a^\varepsilon).
\end{equation*}
Let $N_T^\varepsilon=\kappa\int_a^T\omega_n^\varepsilon\,\mathrm dM_s^\varepsilon$.
Integration by parts in \eqref{eq:approx-energy-structure} gives
\begin{align}
 &\kappa V(Z_T^\varepsilon)-\kappa e^{-\chi}V(Z_a^\varepsilon)
 +c_D\kappa\int_a^T\omega_n^\varepsilon\mathfrak{F}(Z_s^\varepsilon)\,\mathrm ds
 +c_Q\kappa^2B_Q\int_a^T\omega_n^\varepsilon
 V(Z_s^\varepsilon)\,\mathrm d\ell_s^\varepsilon\notag\\
 &\qquad\le
 C\kappa\int_a^T\omega_n^\varepsilon\,\mathrm ds
 +N_T^\varepsilon
 +C\kappa B_Q\int_a^T\omega_n^\varepsilon\,\mathrm d\ell_s^\varepsilon .
 \label{eq:weighted-energy-before-bracket}
\end{align}
Since $0<\omega_n^\varepsilon\le1$, \eqref{eq:approx-energy-bracket} yields
\begin{align*}
 \langle N^\varepsilon\rangle_T
 &\le C\kappa^2B_Q\int_a^T
 (\omega_n^\varepsilon)^2(1+V(Z_s^\varepsilon))\,\mathrm d\ell_s^\varepsilon\notag\\
 &\le C\kappa^2B_Q\int_a^T\omega_n^\varepsilon\,\mathrm d\ell_s^\varepsilon
 +C\kappa^2B_Q\int_a^T\omega_n^\varepsilon
 V(Z_s^\varepsilon)\,\mathrm d\ell_s^\varepsilon .
\end{align*}
Choose $c_Q$ larger than twice the last structural constant.  Writing
$N_T^\varepsilon=(N_T^\varepsilon-\frac12\langle N^\varepsilon\rangle_T)
+\frac12\langle N^\varepsilon\rangle_T$, the $V\,\mathrm d\ell^\varepsilon$ term in the
second summand is absorbed by at most one half of the corresponding positive term in
\eqref{eq:weighted-energy-before-bracket}.  Moreover,
\begin{equation*}
 C\kappa\int_a^T\omega_n^\varepsilon\,\mathrm ds
 \le C\kappa
 +C\kappa^2B_Q\int_a^T\omega_n^\varepsilon\,\mathrm d\ell_s^\varepsilon
\end{equation*}
by the preceding weight identity.  Taking $\kappa_0\le1$, using
$\omega_n^\varepsilon\le1$, and retaining half of the deterministic dissipation, all remaining
clock terms are bounded by
$\mathscr K\kappa B_Q(\ell_T^\varepsilon-\ell_a^\varepsilon)$.  Consequently,
\begin{align*}
 &\kappa V(Z_T^\varepsilon)-\kappa e^{-\chi}V(Z_a^\varepsilon)
 +c_*\kappa\int_a^T\omega_n^\varepsilon(s)\mathfrak{F}(Z_s^\varepsilon)\,\mathrm ds
 -\mathscr K\kappa B_Q(\ell_T^\varepsilon-\ell_a^\varepsilon)
 \le C\kappa+N_T^\varepsilon-\frac12\langle N^\varepsilon\rangle_T.
\end{align*}
Thus $c_*$ and $\mathscr K$ depend only on
\eqref{eq:approx-energy-structure}--\eqref{eq:approx-energy-bracket}, not on $m$, $\chi$, or
$\kappa$.  Exponentiating the last display, applying the conditional exponential-martingale
inequality, and removing the localization by Fatou's lemma gives, uniformly in $\varepsilon$ and
for almost every fixed $\ell$,
\begin{align*}
 &\mathbb{E}_W\Bigg[\exp\Bigg\{
 \kappa V(Z_{\tau_{n+1}^\varepsilon}^\varepsilon)
 -\kappa e^{-\chi}V(Z_{\tau_n^\varepsilon}^\varepsilon)
 +c_*\kappa\int_{\tau_n^\varepsilon}^{\tau_{n+1}^\varepsilon}
 \omega_n^\varepsilon(s)\mathfrak{F}(Z_s^\varepsilon)\mathrm{d} s
 -\mathscr K\kappa B_Q(\ell_{\tau_{n+1}^\varepsilon}^\varepsilon-
 \ell_{\tau_n^\varepsilon}^\varepsilon)\Bigg\}
 \;\Big|\;\mathcal{F}_{a_n^\varepsilon}^{W,\varepsilon}\Bigg]\le C_{{\rm wt},\kappa}.
\end{align*}
We pass to the limit on each fixed clock fiber. Put $a_n=\ell_{\tau_n}$.
Lemma~\ref{lem:clock-hitting-stability} gives $a_n^\varepsilon\to a_n$.
By Lemma~\ref{lem:clock-stability}, on each finite interval,
\[
 u^\varepsilon\to u\text{ in }C(H^1),\qquad
 y^\varepsilon\to y\text{ in }L^2(H^1),\qquad
 \zeta^\varepsilon\to\zeta\text{ in }L^2(H^1).
\]
Hence $v^\varepsilon\to v$ in $L^2(H^1)$ and
$G(u^\varepsilon)\to G(u)$ in space-time $L^1$.
The endpoints converge at continuity points of $\ell$, and the weights converge
almost everywhere, with values in $[0,1]$. Consequently,
\[
 \int_{\tau_n^\varepsilon}^{\tau_{n+1}^\varepsilon}
 \omega_n^\varepsilon\mathfrak F(Z_r^\varepsilon)\,\mathrm dr
 \longrightarrow
 \int_{\tau_n}^{\tau_{n+1}}\omega_n\mathfrak F(Z_r)\,\mathrm dr.
\]
The terminal energies converge as well. This uses convergence of the shifted
variables, not uniform convergence of continuous paths to a jump path.

Let $H_n^\varepsilon$ denote the exponential in the conditional estimate above
and $H_n$ its limit. For $a_n>0$, take a bounded nonnegative continuous cylinder
$G=\Phi(W_{r_1},\ldots,W_{r_k})$ with $r_i\le a_n-d$, $d>0$.
Then $G^\varepsilon=\Phi(W_{r_1}^\varepsilon,\ldots,W_{r_k}^\varepsilon)$ is measurable
at time $a_n^\varepsilon$ for small $\varepsilon$, and converges boundedly to $G$.
Fatou's lemma gives
\[
 \mathbb E_W[G H_n]
 \le\liminf_{\varepsilon\to0}\mathbb E_W[G^\varepsilon H_n^\varepsilon]
 \le C_{{\rm wt},\kappa}\mathbb E_W G.
\]
A monotone-class argument and continuity of the Brownian filtration, as
$d\downarrow0$, give $\mathbb E_W[H_n\mid\mathcal F^W_{a_n}]\le C_{{\rm wt},\kappa}$.
For $n=0$, take $G$ constant. Integration over the clock and the strong Markov
property at the limiting stopping time $\tau_n$ prove
\eqref{eq:weighted-stopped-energy}.
For the unweighted recursion, we also need the following block-length moment.  Put
$d_\kappa=c_Q\kappa B_Q$.  Assumption \ref{ass:levy} gives
$\Psi_S(\theta)<\infty$ for $0\le\theta\le\zeta_S$.  With the $p_*$ fixed in the lemma, decrease
$\kappa_0$ so that for some $\theta>p_*$,
$\theta d_\kappa\le\zeta_S$ and
\begin{equation*}
 \theta c_V-\Psi_S(\theta d_\kappa)>p_*c_V
\end{equation*}
for all $\kappa\le\kappa_0$.  Since $\{\tau_1>t\}$ implies
$c_Vt-d_\kappa S_t\le\chi$,
\begin{equation*}
 \mathbb{P}(\tau_1>t)
 \le e^{\theta\chi}\mathbb{E}[e^{-\theta(c_Vt-d_\kappa S_t)}]
 =e^{\theta\chi}\exp\{-t[\theta c_V-\Psi_S(\theta d_\kappa)]\}.
\end{equation*}
This proves \eqref{eq:tau-exp}; its finite prefactor grows at most exponentially in the fixed height
$\chi$.

We now derive the unweighted recursion without discarding the clock penalty.  Let
\begin{equation*}
 H_n=\exp\left\{\kappa V(Z_{\tau_{n+1}})-\kappa e^{-\chi}V(Z_{\tau_n})
 -\mathscr K\kappa B_QY_n\right\}.
\end{equation*}
After dropping only the positive dissipation term, the weighted estimate gives
$\mathbb{E}[H_n\mid\mathcal{F}_{\tau_n}]\le C_{{\rm wt},\kappa}$.  Since
$\eta_*=\kappa/2$, conditional Cauchy--Schwarz yields
\begin{align*}
 &\mathbb{E}\left[e^{\eta_*V(Z_{\tau_{n+1}})-\eta_*e^{-\chi}V(Z_{\tau_n})}
 \mid\mathcal{F}_{\tau_n}\right]\le C_{{\rm wt},\kappa}^{1/2}
 \left(\mathbb{E}[e^{\mathscr K\kappa B_QY_n}\mid\mathcal{F}_{\tau_n}]\right)^{1/2}.
\end{align*}
The second factor is finite.  The stopping identity gives
\begin{equation*}
 Y_n\le \frac{c_V}{c_Q\kappa B_Q}(\tau_{n+1}-\tau_n),
\end{equation*}
and therefore
\begin{equation*}
 e^{\mathscr K\kappa B_QY_n}
 \le \exp\left\{\frac{\mathscr Kc_V}{c_Q}(\tau_{n+1}-\tau_n)\right\}.
\end{equation*}
Because $p_*>\mathscr K/c_Q$, \eqref{eq:tau-exp} controls this exponential moment.  By the strong Markov
property the post-$\tau_n$ block length has the same law as $\tau_1$ and is independent of the
pre-$\tau_n$ history.  Absorbing the resulting finite factor into
$\mathscr S_{\kappa,\chi}$ proves \eqref{eq:stopped-recursion}.  This is exactly where the fixed-height
dependence enters; no uniformity in $\chi$ is asserted.

Apply \eqref{eq:maximal-random-horizon} to $T_* =\tau_1\wedge R$. The moment \eqref{eq:tau-exp} bounds its right-hand side independently of $R$. Letting $R\to\infty$ and using Fatou proves \eqref{eq:stopped-sup} for $p\ge2$; lower positive orders follow by H\"older's inequality.
\end{proof}

We now state the exponent reduction and the stopping-time sums that follow.

\begin{lemma}\label{lem:fixed-block-exponent-reduction}
Fix $\kappa\in(0,\kappa_0]$ and the stopping blocks defined by \eqref{eq:stopping-times}.  For every
$\eta\in(0,\eta_*]$, let $\theta=\eta/\eta_*$.  Then we have
\begin{equation}\label{eq:fixed-block-exp-reduction}
 \mathbb{E}\left[e^{\eta V(Z_{\tau_{n+1}})}\mid\mathcal{F}_{\tau_n}\right]
 \le \mathscr S_{\kappa,\chi}^{\theta}
 e^{\eta e^{-\chi}V(Z_{\tau_n})}.
\end{equation}
Set
$\bar\eta=\frac{1-e^{-1}}{2}\eta_*$. Then, for every \(\eta\in(0,\bar\eta]\), there exists \(\bar C_{\eta,\kappa,\chi}<\infty\) such that
\begin{equation}\label{eq:fixed-block-sum-reduction}
 \mathbb{E}_{z_0}\left[\exp\left\{\eta\sum_{j=1}^nV(Z_{\tau_j})
 -\bar C_{\eta,\kappa,\chi}n\right\}\right]
 \le e^{\eta V(z_0)},\qquad n\ge1.
\end{equation}
\end{lemma}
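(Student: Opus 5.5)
The first estimate \eqref{eq:fixed-block-exp-reduction} is a conditional Jensen (or H\"older) interpolation of \eqref{eq:stopped-recursion}. Since $\theta=\eta/\eta_*\in(0,1]$, the map $x\mapsto x^\theta$ is concave, so
\[
 \mathbb{E}\bigl[e^{\eta V(Z_{\tau_{n+1}})}\mid\mathcal{F}_{\tau_n}\bigr]
 =\mathbb{E}\bigl[(e^{\eta_* V(Z_{\tau_{n+1}})})^{\theta}\mid\mathcal{F}_{\tau_n}\bigr]
 \le\bigl(\mathbb{E}[e^{\eta_* V(Z_{\tau_{n+1}})}\mid\mathcal{F}_{\tau_n}]\bigr)^{\theta}.
\]
Inserting \eqref{eq:stopped-recursion} then gives $\mathscr S_{\kappa,\chi}^{\theta}\exp(\theta\eta_*e^{-\chi}V(Z_{\tau_n}))$, and $\theta\eta_*=\eta$. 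Nothing else is needed, since $V\ge0$ by \eqref{eq:V-nonnegative}.

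For the sum estimate \eqref{eq:fixed-block-sum-reduction}, build a supermartingale by telescoping with geometrically inflated exponents. Since $\chi>1$, we have $e^{-\chi}\le e^{-1}$. Set $\gamma=1/(1-e^{-\chi})\le 1/(1-e^{-1})$ and $\eta'=\gamma\eta$. The choice $\eta\le\bar\eta$ gives $\eta'\le \eta_*/2\le\eta_*$, so the first part applies with $\eta'$. Define
\[
 M_n=\exp\Bigl\{\eta\sum_{j=1}^nV(Z_{\tau_j})+\eta' e^{-\chi}V(Z_{\tau_n})-n\log\mathscr S_{\kappa,\chi}^{\eta'/\eta_*}\Bigr\},
\]
with $M_0=e^{\eta'e^{-\chi}V(z_0)}$. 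Note that $\eta+\eta'e^{-\chi}=\eta'$ by the definition of $\gamma$. Hence, conditioning on $\mathcal{F}_{\tau_n}$, the new factor is $e^{\eta' V(Z_{\tau_{n+1}})}$, whose conditional expectation is bounded by \eqref{eq:fixed-block-exp-reduction} (with $\eta'$) by $\mathscr S^{\eta'/\eta_*}_{\kappa,\chi}e^{\eta'e^{-\chi}V(Z_{\tau_n})}$. This yields $\mathbb{E}[M_{n+1}\mid\mathcal{F}_{\tau_n}]\le M_n$.

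Iterating and dropping the nonnegative term $\eta'e^{-\chi}V(Z_{\tau_n})$ gives the bound $\mathbb{E}_{z_0}\exp\{\eta\sum_{j\le n} V(Z_{\tau_j})-n\bar C\}\le e^{\eta'e^{-\chi}V(z_0)}$ with $\bar C_{\eta,\kappa,\chi}=(\eta'/\eta_*)\log^+\mathscr S_{\kappa,\chi}$. Finally, $\eta'e^{-\chi}=\eta e^{-\chi}/(1-e^{-\chi})\le\eta/(e-1)\le\eta$, so the right side is at most $e^{\eta V(z_0)}$, as claimed.

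The only delicate point is integrability, so that the conditional expectations make sense. This is handled by working with nonnegative quantities, for which conditional expectations are always defined in $[0,\infty]$; the recursion itself shows finiteness inductively. The bound $\bar\eta$ is exactly what guarantees $\eta'\le\eta_*$.
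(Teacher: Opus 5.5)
Your proof is correct and follows the paper's argument: conditional Jensen with $\theta=\eta/\eta_*$ gives \eqref{eq:fixed-block-exp-reduction}, and successive conditioning lets the exponents accumulate geometrically up to $\eta/(1-e^{-\chi})\le\eta_*/2$, which yields the same constant $\bar C_{\eta,\kappa,\chi}=\eta\log(\mathscr S_{\kappa,\chi}\vee1)/(\eta_*(1-e^{-\chi}))$ and the final bound via $e^{-\chi}/(1-e^{-\chi})<1$. The only difference is cosmetic: you package the iteration as a forward supermartingale with the fixed limiting exponent $\eta'$, whereas the paper conditions backward with the partial sums $r_k=\eta(1+q+\cdots+q^{k-1})$.
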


\begin{proof}
Conditional Jensen gives \eqref{eq:fixed-block-exp-reduction}. Put $q=e^{-\chi}$ and
$r_k=\eta(1+q+\cdots+q^{k-1})\le\eta/(1-q)\le\eta_*/2$. Successive conditioning gives
\[
 \mathbb E e^{\eta\sum_{j=1}^nV(Z_{\tau_j})}
 \le \mathscr S_{\kappa,\chi}^{\sum_{k=1}^n r_k/\eta_*}
       e^{q r_n V(z_0)}.
\]
Since $q/(1-q)<1$, one may take
\begin{equation}\label{eq:sum-constant-explicit}
 \bar C_{\eta,\kappa,\chi}
 =\frac{\eta\log(\mathscr S_{\kappa,\chi}\vee1)}{\eta_*(1-e^{-\chi})}.
\end{equation}
This proves \eqref{eq:fixed-block-sum-reduction}.
\end{proof}

\begin{corollary}\label{cor:stopped-exp-uniform}
Fix $\kappa\in(0,\kappa_0]$ and the corresponding stopping blocks.  For every
$\eta\in(0,\eta_*]$, there exists $C_{\eta,\kappa,\chi}<\infty$ such that
\begin{equation*}
 \sup_{n\ge0}\mathbb{E}_{z_0}[e^{\eta V(Z_{\tau_n})}]
 \le C_{\eta,\kappa,\chi} e^{\eta V(z_0)}.
\end{equation*}
\end{corollary}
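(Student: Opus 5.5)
The argument iterates the one-step conditional bound \eqref{eq:fixed-block-exp-reduction} of Lemma~\ref{lem:fixed-block-exponent-reduction}. The point is that the effective exponent contracts geometrically by the factor $q=e^{-\chi}<1$ at each step. Because each step only lowers the exponent below $\eta_*$, the lemma remains applicable throughout.

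Fix $\eta\in(0,\eta_*]$ and $n\ge1$. Conditioning on $\mathcal F_{\tau_{n-1}}$ and applying \eqref{eq:fixed-block-exp-reduction} with exponent $\eta$ gives
\[
\mathbb E_{z_0}\bigl[e^{\eta V(Z_{\tau_n})}\bigr]\le \mathscr S_{\kappa,\chi}^{\eta/\eta_*}\,\mathbb E_{z_0}\bigl[e^{\eta q V(Z_{\tau_{n-1}})}\bigr].
\]
Since $\eta q\le\eta_*$, the same estimate applies again, now at $\mathcal F_{\tau_{n-2}}$ with exponent $\eta q$. After $n$ steps we obtain
\[
\mathbb E_{z_0}\bigl[e^{\eta V(Z_{\tau_n})}\bigr]\le \bigl(\mathscr S_{\kappa,\chi}\vee1\bigr)^{\frac{\eta}{\eta_*}\sum_{k=0}^{n-1}q^k}\,e^{\eta q^n V(z_0)}
\le \bigl(\mathscr S_{\kappa,\chi}\vee1\bigr)^{\frac{\eta}{\eta_*(1-e^{-\chi})}}\,e^{\eta V(z_0)}.
\]
The second inequality uses $q^n\le1$ and $V\ge0$, which is \eqref{eq:V-nonnegative}.

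The case $n=0$ is trivial, since $\tau_0=0$ and the left side equals $e^{\eta V(z_0)}$. Setting
\[
C_{\eta,\kappa,\chi}=\bigl(\mathscr S_{\kappa,\chi}\vee1\bigr)^{\eta/(\eta_*(1-e^{-\chi}))}
\]
gives a bound uniform in $n$, which proves the corollary.

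The only point that needs care is whether iterated conditioning is legitimate, since the expectations could a priori be infinite. This causes no difficulty: all quantities are nonnegative, so the tower property holds in $[0,\infty]$. In addition, the strong Markov shift stated at the end of Lemma~\ref{lem:stopped-energy} provides the conditional bound at every $\tau_k$. Finiteness of the final bound then follows directly from the computation, because $V(z_0)<\infty$.
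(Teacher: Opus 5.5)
Your proof is correct and is essentially the paper's argument. The paper also iterates \eqref{eq:fixed-block-exp-reduction}, but it closes each step with Jensen's inequality to obtain the scalar recursion $M_{n+1}\le\mathscr S_{\kappa,\chi}^{\eta/\eta_*}M_n^{q}$ for $M_n=\mathbb E_{z_0}[e^{\eta V(Z_{\tau_n})}]$, whereas you reapply the lemma at the shrinking exponents $\eta q^k$; both routes give the same constant $(\mathscr S_{\kappa,\chi}\vee1)^{\eta/(\eta_*(1-e^{-\chi}))}$.
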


\begin{proof}
Set $q=e^{-\chi}\in(0,1)$ and iterate \eqref{eq:fixed-block-exp-reduction}.  If
$M_n=\mathbb{E}[e^{\eta V(Z_{\tau_n})}]$, then
\begin{equation*}
 M_{n+1}\le \mathscr S_{\kappa,\chi}^{\eta/\eta_*}M_n^q.
\end{equation*}
Iteration gives a bound independent of $n$, with a finite constant depending on the fixed block height.
\end{proof}

Fix the state exponent in the control iteration so that
\begin{equation}\label{eq:state-exponent-choice}
 0<\tilde\eta\le\frac14\bar\eta.
\end{equation}
All control constants are computed for this choice. In particular,
$2\theta_0\tilde\eta\le\bar\eta$. Allowing a smaller exponent will be useful in
Proposition~\ref{prop:cubic-realization}; the stopping times are unchanged.

The weighted estimate is useful since the linearized wave equation admits growth bounds with powers of \(u\) strictly below the quartic coercive power. With \(Y_n\) as in \eqref{eq:Yn}, set
\begin{equation}\label{eq:XnXi}
 X_n=\int_{\tau_n}^{\tau_{n+1}}\omega_n(s)^{-8}\,\,\mathrm{d} s,
 \qquad
 \Xi_n=X_n+Y_n.
\end{equation}
By
\begin{equation*}
 \omega_n(s)^{-8}\le \exp\{8c_V(\tau_{n+1}-\tau_n)\},
\end{equation*}
we obtain
\begin{equation*}
 X_n\le(\tau_{n+1}-\tau_n)e^{8c_V(\tau_{n+1}-\tau_n)}.
\end{equation*}
Since $p_*>100$, we may choose $p_{\Xi}\in(10,p_*/8)$.  The estimate
\eqref{eq:tau-exp} then yields
\begin{equation}\label{eq:Xi-moment}
 \mathbb{E}[\Xi_0^{p_{\Xi}}]<\infty.
\end{equation}

\begin{lemma}\label{lem:normalized-block-exp}
Let $r_*=32/9$.  Fix $\Lambda>0$, $p\ge1$, and $\kappa\in(0,\kappa_0]$.  There are
constants $C_{\Lambda,p,\kappa},c_{\Lambda,p,\kappa}>0$ such that for each $r\in[1,r_*]$,
each $\theta\in(0,1]$, and each $n\ge0$, 
\begin{equation}\label{eq:normalized-block-exp}
 \mathbb{E}\left[
 \exp\left\{
   p\Lambda\theta\int_{\tau_n}^{\tau_{n+1}}
       (1+\|u_s\|_{L^4}^{r})\,\,\mathrm{d} s
   -c_{\Lambda,p,\kappa}\theta\Xi_n
 \right\}
 \Bigm|\mathcal{F}_{\tau_n}
 \right]
 \le C_{\Lambda,p,\kappa}^{\,\theta}\exp\{\theta\kappa V(Z_{\tau_n})\}.
\end{equation}
The constants in the preceding estimate are independent of $n$ and $\theta$.  Moreover, for every
$\Lambda_*>0$, they can be chosen uniformly on $0<\Lambda\le\Lambda_*$.
\end{lemma}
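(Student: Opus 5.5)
The argument reduces \eqref{eq:normalized-block-exp} to the weighted stopped-energy estimate \eqref{eq:weighted-stopped-energy} at block $n$. The integrand $\|u_s\|_{L^4}^r$, with $r\le r_*=32/9<4$, sits strictly below the quartic power controlled by $\int G(u)\ge\alpha_2\|u\|_{L^4}^4$ inside $\mathfrak F$. The only obstruction is the weight $\omega_n(s)$ multiplying $\mathfrak F$ in the dissipation integral, and the sub-quartic gap is what absorbs the inverse weight.

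First, a pointwise Young inequality with exponents $4/r$ and $4/(4-r)$:
\[
 p\Lambda\|u_s\|_{L^4}^r
 \le \frac{c_*\kappa}{\alpha_2^{-1}}\,\omega_n(s)\|u_s\|_{L^4}^4
 + C\,\omega_n(s)^{-r/(4-r)} .
\]
Since $r\le 32/9$, we have $r/(4-r)\le 8$, and because $\omega_n\le1$ the right-hand side is at most a constant times $\omega_n(s)^{-8}$. The constant depends on $\Lambda,p,\kappa$ and is uniform over $r\in[1,r_*]$ and $\Lambda\le\Lambda_*$. This is exactly why $X_n$ is defined with the exponent $-8$. The additive $1$ in the integrand contributes $\tau_{n+1}-\tau_n\le X_n$, again using $\omega_n\le1$. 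Integrating over the block gives
\[
 p\Lambda\int_{\tau_n}^{\tau_{n+1}}(1+\|u_s\|_{L^4}^r)\,\mathrm ds
 \le c_*\kappa\int_{\tau_n}^{\tau_{n+1}}\omega_n\mathfrak F(Z_s)\,\mathrm ds + C'X_n .
\]

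Second, add to the exponent the terms $\kappa V(Z_{\tau_{n+1}})\ge0$, by \eqref{eq:V-nonnegative}, and $-\kappa e^{-\chi}V(Z_{\tau_n})+\kappa e^{-\chi}V(Z_{\tau_n})$. Set $c_{\Lambda,p,\kappa}=C'+\mathscr K\kappa B_Q$, so that $-c\,\Xi_n\le -C'X_n-\mathscr K\kappa B_QY_n$. The case $\theta=1$ then follows from \eqref{eq:weighted-stopped-energy}, with $C_{\Lambda,p,\kappa}=C_{{\rm wt},\kappa}$ and prefactor $\exp\{\kappa e^{-\chi}V(Z_{\tau_n})\}\le\exp\{\kappa V(Z_{\tau_n})\}$.

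Third, general $\theta\in(0,1]$ follows from conditional Jensen: $\mathbb E[e^{\theta F}\mid\mathcal F_{\tau_n}]\le(\mathbb E[e^F\mid\mathcal F_{\tau_n}])^\theta$. This produces $C^\theta e^{\theta\kappa V(Z_{\tau_n})}$ with constants independent of $n$ and $\theta$.

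The main point requiring care is the Young step. One must check that the power of $\omega_n^{-1}$ stays at most $8$ uniformly over $r\in[1,r_*]$, and that the coefficient $c_*\kappa\alpha_2$ extracted from $\mathfrak F$ is compatible with $\int G\ge\alpha_2\|u\|_{L^4}^4$ (both via \eqref{eq:G-coercive}). The resulting constant may blow up as $\kappa\to0$, which is acceptable because $\kappa$ is fixed.
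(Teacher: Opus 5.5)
Your proof is correct and follows the paper's argument. You use Young's inequality with $r/(4-r)\le 8$ to bound $\|u\|_{L^4}^r$ by $\omega_n\|u\|_{L^4}^4$ plus $\omega_n^{-8}$, absorb the inverse weight and the $Y_n$ penalty into $c_{\Lambda,p,\kappa}\Xi_n$, drop the terminal energy using $V\ge0$, and apply \eqref{eq:weighted-stopped-energy} with conditional Jensen for $\theta\in(0,1]$. The only difference is cosmetic: the paper inserts an auxiliary small factor $\eta$ in the Young step and raises the weighted estimate to the power $\theta\eta$, while you take $\eta=1$ directly, which works equally well.
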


\begin{proof}
For $r\le r_*$,
\begin{equation*}
 \|u\|_{L^4}^{r}
 =\bigl(\omega_n\|u\|_{L^4}^{4}\bigr)^{r/4}
  \omega_n^{-r/4}.
\end{equation*}
Thanks to \(r/(4-r)\le8\), Young's inequality gives, for any fixed sufficiently small \(\eta>0\),
\begin{equation*}
 p\Lambda(1+\|u\|_{L^4}^{r})
 \le \eta c_*\kappa\,\omega_n\mathfrak{F}(Z)
      +C_{\Lambda,p,\eta}\omega_n^{-8}.
\end{equation*}
Choose the coefficient of \(X_n\) in \eqref{eq:normalized-block-exp} large enough to absorb both the inverse-weight term and the positive \(Y_n\) term appearing when \eqref{eq:weighted-stopped-energy} is rearranged. Raising the exponential in \eqref{eq:weighted-stopped-energy} to the power \(\theta\eta\) and applying conditional Jensen's inequality gives
\begin{equation*}
 \mathbb{E}\left[
 e^{p\Lambda\theta\int(1+\|u\|_{L^4}^{r})\,\mathrm{d} s-c_{\Lambda,p,\kappa}\theta\Xi_n}
 \Bigm|\mathcal{F}_{\tau_n}
 \right]
 \le C_{{\rm wt},\kappa}^{\,\theta\eta}
 e^{\theta\eta\kappa e^{-\chi}V(Z_{\tau_n})}
 \le C_{\Lambda,p,\kappa}^{\,\theta}e^{\theta\kappa V(Z_{\tau_n})},
\end{equation*}
where \(C_{\Lambda,p,\kappa}\ge1\) absorbs the fixed power \(C_{\mathrm{wt},\kappa}^{\eta}\).
By \eqref{eq:V-nonnegative}, \(V\ge0\), so the negative terminal-energy factor produced in this rearrangement is at most one and may be discarded. No exponential integrability of \(X_n\) has been used. If \(0<\Lambda\le\Lambda_*\), the Young constant above is bounded by the one obtained with \(\Lambda_*\); choosing the clock coefficient and prefactor with this upper bound proves the asserted local uniformity in \(\Lambda\).
\end{proof}
\begin{lemma}\label{lem:Xi-iid}
The sequence $\{\Xi_n\}_{n\ge0}$ defined in \eqref{eq:XnXi} is independent and identically
distributed.  In particular,
\begin{equation}\label{eq:Xi-bar}
 \overline\Xi=\mathbb{E}[\Xi_0]<\infty.
\end{equation}
\end{lemma}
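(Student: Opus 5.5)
The key observation is that each $\Xi_n$ is a fixed measurable functional of the increments of the subordinator alone on the block $[\tau_n,\tau_{n+1}]$. We therefore plan to obtain independence and identical distribution from the strong Markov property of the Lévy process $S$ at the stopping times $\tau_n$. The integrability statement \eqref{eq:Xi-bar} then follows from \eqref{eq:Xi-moment}.

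First, we check that $\tau_n$ is determined by $S$ only. By \eqref{eq:stopping-times}, $\tau_{n+1}$ is the first time after $\tau_n$ at which the process
\[
t\mapsto c_V(t-\tau_n)-c_Q\kappa B_Q(S_t-S_{\tau_n})
\]
reaches $\chi$. Hence $\tau_n$ is a stopping time for the natural filtration of $S$, and therefore also for $\{\mathcal F_t\}$.

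Next, define the shifted clock $S^{(n)}_h=S_{\tau_n+h}-S_{\tau_n}$ for $h\ge0$. Then
\[
\tau_{n+1}-\tau_n=\inf\{h\ge0:\ c_Vh-c_Q\kappa B_Q S^{(n)}_h\ge\chi\},\qquad Y_n=S^{(n)}_{\tau_{n+1}-\tau_n}.
\]
For $s=\tau_n+h$ we have $\omega_n(s)=\exp\{-c_V(\sigma-h)+c_Q\kappa B_Q(S^{(n)}_\sigma-S^{(n)}_h)\}$, where $\sigma=\tau_{n+1}-\tau_n$. It follows that
\[
\Xi_n=\Phi(S^{(n)})\qquad\text{with}\qquad \Phi(\ell)=\int_0^{\sigma(\ell)}\omega(\ell;h)^{-8}\,\mathrm dh+\ell_{\sigma(\ell)} .
\]
The same functional $\Phi$ works for every $n$, and $\Phi$ is measurable on the Skorokhod space $D([0,\infty);\mathbb R_+)$ because hitting times of càdlàg processes are measurable. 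In particular, $\Xi_0=\Phi(S)$.

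The strong Markov property of the Lévy process $S$ at the almost surely finite stopping time $\tau_n$ states two things: $S^{(n)}$ has the same law as $S$, and $S^{(n)}$ is independent of $\mathcal F^S_{\tau_n}$. Finiteness of $\tau_n$ comes from Lemma \ref{lem:clock-hitting-stability} and \eqref{eq:tau-exp}. The same property applied to $W$ gives independence from $\mathcal F_{\tau_n}$ as well, although only the $S$-part is needed.

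From this we get identical distribution directly: $\Xi_n=\Phi(S^{(n)})\overset{d}{=}\Phi(S)=\Xi_0$. For independence, note that $\Xi_0,\dots,\Xi_{n-1}$ depend only on $(S_t)_{t\le\tau_n}$, so they are $\mathcal F^S_{\tau_n}$-measurable, while $\Xi_n$ depends only on $S^{(n)}$. We then induct on $n$: for bounded measurable $f_0,\dots,f_n$,
\[
\mathbb E\prod_{k\le n}f_k(\Xi_k)=\mathbb E\Bigl[\prod_{k<n}f_k(\Xi_k)\Bigr]\,\mathbb E f_n(\Xi_0),
\]
which yields mutual independence.

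Finally, since $p_\Xi>10\ge1$, Jensen's inequality applied to \eqref{eq:Xi-moment} gives $\overline\Xi=\mathbb E\Xi_0\le(\mathbb E\Xi_0^{p_\Xi})^{1/p_\Xi}<\infty$.

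The only delicate point is the measurability bookkeeping. One must confirm that the stopping times are generated by $S$ alone, so that no Brownian information or state dependence enters $\Xi_n$, and that $\Phi$ is a measurable path functional. This is where the proof will require care; the remaining steps are routine.
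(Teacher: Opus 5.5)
Your proposal is correct and follows the paper's argument. The paper likewise writes $\sigma_n=\tau_{n+1}-\tau_n$ and $\Xi_n$ as one fixed functional of the shifted clock $S^{(n)}_r=S_{\tau_n+r}-S_{\tau_n}$, using \eqref{eq:clock-block-bound} only to simplify the weight to $e^{8(\chi-c_Vr+c_Q\kappa B_QS^{(n)}_r)}$; it then gets independence and equality in law from the strong Markov property of $S$ at $\tau_n$, and \eqref{eq:Xi-bar} from \eqref{eq:Xi-moment}, while your induction over bounded test functions and your measurability remark just spell out details the paper leaves implicit.
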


\begin{proof}
Put $d_\kappa=c_Q\kappa B_Q$ and $S_r^{(n)}=S_{\tau_n+r}-S_{\tau_n}$. Then
\begin{equation*}
 \sigma_n:=\tau_{n+1}-\tau_n
 =\inf\{r\ge0:c_Vr-d_\kappa S_r^{(n)}\ge\chi\}.
\end{equation*}
By \eqref{eq:clock-block-bound},
\begin{equation*}
 \Xi_n=\int_0^{\sigma_n}e^{8(\chi-c_Vr+d_\kappa S_r^{(n)})}\,\mathrm dr
       +S_{\sigma_n}^{(n)}.
\end{equation*}
Thus $\Xi_n$ is the same functional of the stopped increment path on each block. The strong Markov property of $S$ at $\tau_n$ gives independence from the preceding blocks and equality in law. Finiteness of \eqref{eq:Xi-bar} follows from \eqref{eq:Xi-moment}.
\end{proof}

\begin{proposition}\label{prop:clock-mean-bound}
Let $d_\kappa=c_Q\kappa B_Q$. Assume
$
 8d_\kappa\le\zeta_S$,
 $\Delta_8:=8c_V-\Psi_S(8d_\kappa)>0$.
Then  $c_V-d_\kappa\bar s>0$ and
\begin{equation}\label{eq:Xi-explicit-upper}
 \overline\Xi\le \overline\Xi^{\sharp}(\kappa,\chi,B_Q)
 :=\frac{e^{8\chi}}{\Delta_8}
 +\frac{\bar s\,\chi}{c_V-d_\kappa\bar s}.
\end{equation}
In particular, if a family of forcing operators satisfies $B_Q\le B_\infty$ and
\begin{equation*}
 8c_Q\kappa B_\infty\le\zeta_S,
 \qquad
 \Delta_{8,\infty}:=8c_V-\Psi_S(8c_Q\kappa B_\infty)>0,
\end{equation*}
then
\begin{equation*}
 \overline\Xi\le
 \frac{e^{8\chi}}{\Delta_{8,\infty}}
 +\frac{\bar s\,\chi}{c_V-c_Q\kappa B_\infty\bar s}
\end{equation*}
uniformly over the whole family.
\end{proposition}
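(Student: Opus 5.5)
We bound the two parts of $\overline\Xi=\mathbb E[X_0]+\mathbb E[Y_0]$ separately, using the representation from Lemma~\ref{lem:Xi-iid}. Write $\sigma=\tau_1$ and $R_r=c_Vr-d_\kappa S_r$. By \eqref{eq:clock-block-bound}, $R_\sigma=\chi$ and $R_r<\chi$ for $r<\sigma$. First, $c_V-d_\kappa\bar s>0$ follows from convexity of $\Psi_S$ with $\Psi_S(0)=0$. Indeed, $\Psi_S(8d_\kappa)\ge 8d_\kappa\Psi_S'(0)=8d_\kappa\bar s$, so
\[
8(c_V-d_\kappa\bar s)\ge 8c_V-\Psi_S(8d_\kappa)=\Delta_8>0 .
\]

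\emph{The $X$-part.} We have $\omega_0(r)^{-8}=e^{8(\chi-R_r)}$ on $[0,\sigma]$, and $\chi-R_r\le\chi-R_r$ trivially. On $\{r<\sigma\}$ this means $e^{8(\chi-R_r)}\le e^{8\chi}e^{-8R_r}$. Enlarging the integration range to $[0,\infty)$ is allowed because the integrand is nonnegative. Tonelli then gives
\[
\mathbb E X_0\le e^{8\chi}\int_0^\infty \mathbb E e^{-8c_Vr+8d_\kappa S_r}\,\mathrm dr
=e^{8\chi}\int_0^\infty e^{-r(8c_V-\Psi_S(8d_\kappa))}\,\mathrm dr=\frac{e^{8\chi}}{\Delta_8}.
\]
Here $\mathbb E e^{\theta S_r}=e^{r\Psi_S(\theta)}$ is valid since $8d_\kappa\le\zeta_S$. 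This gives the first term of \eqref{eq:Xi-explicit-upper}.

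\emph{The $Y$-part.} Here we use Wald's identity for the compensated subordinator. The process $S_t-\bar s t$ is a martingale, and $\sigma$ is a stopping time. By \eqref{eq:tau-exp}, $\sigma$ has every moment. Optional stopping at $\sigma\wedge n$, followed by monotone convergence, therefore yields $\mathbb E S_\sigma=\bar s\,\mathbb E\sigma$.

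Next, take expectations in the identity $c_V\sigma-d_\kappa S_\sigma=\chi$. This gives
\[
c_V\mathbb E\sigma-d_\kappa\bar s\,\mathbb E\sigma=\chi,
\qquad\text{so}\qquad
\mathbb E\sigma=\frac{\chi}{c_V-d_\kappa\bar s}.
\]
Hence $\mathbb E Y_0=\mathbb E S_\sigma=\bar s\chi/(c_V-d_\kappa\bar s)$. This is the second term of the bound, and in fact it holds with equality.

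\emph{The uniform statement.} It follows by monotonicity in $B_Q$. The quantity $d_\kappa$ is increasing in $B_Q$. Both $\Psi_S(8d_\kappa)$ and $d_\kappa\bar s$ increase with $d_\kappa$. So replacing $B_Q$ by $B_\infty$ only decreases both denominators, which keeps them positive and enlarges the bound.

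\emph{Main obstacle.} The only delicate point is justifying optional stopping and the finiteness of $\mathbb E\sigma$. Finiteness comes from \eqref{eq:tau-exp}, or directly from the Chernoff bound in the proof of Lemma~\ref{lem:stopped-energy}. One also needs that \eqref{eq:clock-block-bound} holds exactly, i.e. that the first passage is a continuous contact with $\chi$. This holds because $R$ has only downward jumps.
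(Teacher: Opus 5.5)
Your proposal is correct and follows the paper's proof essentially step by step. It uses the same Tonelli/cumulant occupation bound for $X_0$, the same inequality $\Psi_S(8d_\kappa)\ge 8d_\kappa\bar s$, and Wald's identity combined with the exact crossing $c_V\tau_1-d_\kappa S_{\tau_1}=\chi$ for $Y_0$; the paper derives Wald from the compensation formula for the jump measure rather than from optional stopping at $\tau_1\wedge n$, which is an inessential difference.

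The one thing to tighten is the finiteness of $\mathbb E\tau_1$. Citing \eqref{eq:tau-exp} requires $\kappa\le\kappa_0$, which is not among the stated hypotheses. Use instead the Chernoff bound with $\theta=8$, $\mathbb P(\tau_1>t)\le e^{8\chi-\Delta_8t}$, as the paper does, or simply $\tau_1\le X_0$, since $\omega_0\le1$.
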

\begin{proof}
	Write $T=\tau_1$ and
	$\mathscr{C}_t=c_Vt-d_\kappa S_t$.
We have
	\begin{equation}\label{eq:RT-continuous-crossing}
	\mathscr{C}_T=\chi,\qquad \Delta S_T=0.
	\end{equation}
	For $0\le s\le T$, the block weight satisfies the  identity
	\begin{equation*}
	\omega_0(s)
	=\exp\{-c_V(T-s)+d_\kappa(S_T-S_s)\}
	=\exp\{\mathscr{C}_s-\chi\}.
	\end{equation*}
	Consequently, we have
	\begin{align}
	\mathbb{E}[X_0]
	&=e^{8\chi}\mathbb{E}\left[\int_0^T e^{-8\mathscr{C}_s}\,\,\mathrm{d} s\right]\le e^{8\chi}\int_0^\infty \mathbb{E}[e^{-8\mathscr{C}_s}]\,\,\mathrm{d} s\notag\\
	&=e^{8\chi}\int_0^\infty
	e^{-s[8c_V-\Psi_S(8d_\kappa)]}\,\,\mathrm{d} s
	=\frac{e^{8\chi}}{\Delta_8}.
	\label{eq:X0-occupation-bound}
	\end{align}

Since $\Psi_S(8d_\kappa)\ge8d_\kappa\bar s$, we have $c_V-d_\kappa\bar s>0$, and $T<\infty$ by the strong law. Moreover,
\begin{equation*}
 \mathbb P(T>t)\le e^{8\chi}\mathbb E e^{-8\mathscr C_t}
 =e^{8\chi-\Delta_8t},
\end{equation*}
so $\mathbb E T<\infty$ under the hypotheses of this proposition. To estimate $Y_0=S_T$, use \eqref{eq:RT-continuous-crossing} to write
	\begin{equation*}
	S_T=S_{T-}
	=\int_0^\infty\int_0^\infty
	r\,\mathbf{1}_{\{s\le T\}}N(\,\mathrm{d} s,\,\mathrm{d} r),
	\end{equation*}
	where $N$ is the jump measure of the subordinator. The process \(s\mapsto\mathbf{1}_{\{s\le T\}}\) is adapted and left-continuous, hence predictable; since there is no jump at \(T\) in \eqref{eq:RT-continuous-crossing}, the displayed representation is exact. The compensation formula therefore gives
	\begin{equation}\label{eq:ST-wald}
	\mathbb{E}[S_T]=\bar s\,\mathbb{E}[T].
	\end{equation}
	Taking expectations in $\mathscr{C}_T=\chi$ and using \eqref{eq:ST-wald} yields
	\begin{equation*}
	(c_V-d_\kappa\bar s)\mathbb{E}[T]=\chi.
	\end{equation*}
	Moreover, $\Psi_S(8d_\kappa)\ge8d_\kappa\bar s$, hence
	$\Delta_8>0$ implies $c_V-d_\kappa\bar s>0$.  Thus, we obtain
	\begin{equation*}
	\mathbb{E}[Y_0]=\bar s\mathbb{E}[T]
	=\frac{\bar s\,\chi}{c_V-d_\kappa\bar s}.
	\end{equation*}
	Combining this with \eqref{eq:X0-occupation-bound} proves
	\eqref{eq:Xi-explicit-upper}.  The uniform estimate follows from the monotonicity of
	$\Psi_S$ and $B_Q\le B_\infty$.
\end{proof}

Fix $\widehat\delta>0$ and define
\begin{equation*}
 \widehat\Theta^{(\widehat\delta)}
 =\sup_{N\ge1}\left(
   \sum_{j=0}^{N-1}\Xi_j-(\overline\Xi+\widehat\delta)N
 \right)_+.
\end{equation*}
By the strong law of large numbers,
\begin{equation}\label{eq:Theta-centered-finite}
 \widehat\Theta^{(\widehat\delta)}<\infty\quad\text{almost surely}.
\end{equation}
Hence,
\begin{equation}\label{eq:Theta-centered-tail}
 \mathbb{P}\bigl(\widehat\Theta^{(\widehat\delta)}>L\bigr)\longrightarrow0
 \quad\text{as }L\to\infty.
\end{equation}
On the event $\{\widehat\Theta^{(\widehat\delta)}\le L\}$, one has, for every $N\ge1$,
\begin{equation*}
 \sum_{j=0}^{N-1}\Xi_j
 \le (\overline\Xi+\widehat\delta)N+L.
\end{equation*}
For $b>0$, let
\begin{equation*}
 \widetilde\Theta^{(b)}
 =\sup_{n\ge0}(\Xi_n-bn)_+.
\end{equation*}
For every $q\in(1,p_{\Xi}]$ and $K>1$,
\begin{align}
 \mathbb{P}\bigl(\widetilde\Theta^{(b)}>K\bigr)
 &\le\sum_{n=0}^\infty\mathbb{P}(\Xi_n>K+bn)\notag\\
 &\le C_q\sum_{n=0}^\infty(K+bn)^{-q}
 \longrightarrow0
 \quad\text{as }K\to\infty.
 \label{eq:Theta-local-tail}
\end{align}
Thus $\widetilde\Theta^{(b)}<\infty$ almost surely.

For $L,K>0$, define the two-scale good-clock event
\begin{equation*}
 \mathbb{G}_{L,K}^{\widehat\delta,b}
 =\{\widehat\Theta^{(\widehat\delta)}\le L\}
  \cap\{\widetilde\Theta^{(b)}\le K\}.
\end{equation*}
On $\mathbb{G}_{L,K}^{\widehat\delta,b}$,
\begin{equation*}
 \sum_{j=0}^{N-1}\Xi_j\le(\overline\Xi+\widehat\delta)N+L,
 \qquad
 \Xi_n\le K+bn,
\end{equation*}
for all $N\ge1$ and $n\ge0$, while
\begin{equation}\label{eq:good-clock-tail}
 \mathbb{P}\bigl((\mathbb{G}_{L,K}^{\widehat\delta,b})^c\bigr)\longrightarrow0
 \quad\text{as }L,K\to\infty
\end{equation}
for every fixed $\widehat\delta,b>0$.  Both envelopes depend only on \(S\), so their Brownian Malliavin derivatives vanish.

\subsection{Linearization and conditional Malliavin calculus}
Let $\mathbb{S}(t)$ be the semigroup introduced in
Lemma \ref{lem:compactness}.  For $0\le s\le t$, the derivative of the solution map is defined by the following
mild variational equation:
\begin{equation}\label{eq:linearized}
 J_{s,t}\xi
 =\mathbb{S}(t-s)\xi
 -\int_s^t\mathbb{S}(t-r)
 \left(0,P_0\!\left(g'(u_r)(J_{s,r}\xi)_1\right)\right)\,\mathrm dr,
 \qquad J_{s,s}\xi=\xi.
\end{equation}
For $\eta=(h,k)$ sufficiently regular, we set
\begin{equation*}
 \mathcal{L}_t\eta
 =\bigl(k,-Ak-Ah-P_0(g'(u_t)h)\bigr)
\end{equation*}
and denote $K_{s,t}=J_{s,t}^*$.  The identity \(\partial_tJ_{s,t}\xi=\mathcal{L}_tJ_{s,t}\xi\) holds on smooth vectors.

Let \(J^{(2)}_{s,t}(\xi,\eta)\) denote the second variation with respect to the initial condition. We use
\begin{equation}\label{eq:J2-equation}
 \partial_t J^{(2)}_{s,t}=\mathcal{L}_tJ^{(2)}_{s,t}
 +\left(0,-P_0\!\left(g''(u_t)(J_{s,t}\xi)_1(J_{s,t}\eta)_1\right)\right),
 \qquad J^{(2)}_{s,s}=0,
\end{equation}
We also define the bilinear second nonlinear variation by
\begin{equation}\label{eq:D2F}
 \mathcal{B}_t[(a,b),(c,d)]
 =\left(0,-P_0(g''(u_t)a c)\right).
\end{equation}
After freezing a clock path $S=\ell$, let $D_r^j$ denote Brownian Malliavin differentiation at
 time $r$ and put $\gamma_r=\inf\{q\ge0:\ell_q\ge r\}$.  For $t\ge s\vee\gamma_r$, the
directional Malliavin variation is described by
\begin{equation}\label{eq:DJ-equation}
 \partial_t (D_r^jJ_{s,t}\xi)
 =\mathcal{L}_tD_r^j(J_{s,t}\xi)
 +\mathcal{B}_t[D_r^jZ_t,J_{s,t}\xi],
\end{equation}
with zero initial value at $t=s\vee\gamma_r$; for $s\le t<\gamma_r$ the derivative vanishes.

\begin{proposition}\label{prop:differentiability}
Let Assumptions \ref{ass:g}--\ref{ass:levy} hold and fix $T>0$.  For almost every forcing path,
$z\mapsto Z_t^z$ is twice Fr\'echet differentiable on bounded subsets of $\mathbb{H}$, uniformly for
$0\le t\le T$.  Its first and second derivatives are $J_{0,t}$ and $J_{0,t}^{(2)}$. For the flow restarted at $s$,
they are $J_{s,t}$ and $J_{s,t}^{(2)}$, defined by
\eqref{eq:linearized} and \eqref{eq:J2-equation}.  For every $R>0$, they are bounded on
$\{V(z)\le R,\ 0\le s\le t\le T\}$ by a finite path-dependent constant.

Condition on $S=\ell$. The solution is locally $C^2$ along Cameron--Martin
lines. Moreover, $Z_t$ and $J_{s,t}\xi$ belong to $\mathbb D_{W,\mathrm{loc}}^{1,p}$
for every finite $p$ and deterministic $\xi\in\mathbb H$, with the variational
bounds homogeneous in $\|\xi\|_{\mathbb H}$.  If $\gamma_r=\inf\{q\ge0:\ell_q\ge r\}$, then
\begin{equation}\label{eq:DZ-formula-main}
 D_r^j Z_t
 =\mathbf{1}_{\{r\le \ell_t\}}J_{\gamma_r,t}\mathbf{Q}\mathbf{e}_j^{(m)}.
\end{equation}
For deterministic $\xi\in\mathbb{H}$, $D_r^j(J_{s,t}\xi)=0$ for $t<s\vee\gamma_r$ and satisfies
\eqref{eq:DJ-equation} for $t\ge s\vee\gamma_r$.  The derivative is linear and continuous in $\xi$
under the localized variational estimates. {Clock-measurable endpoints and events have zero Brownian Malliavin derivative. Energy cutoffs are retained in the local Sobolev argument; the weighted bounds needed for the block controls are proved in Lemma~\ref{lem:variational-subcritical}.}
\end{proposition}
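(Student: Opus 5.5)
The plan is to work pathwise on the shifted variables $(u,y)$ with $y=v-\zeta$, as in the proof of Proposition~\ref{prop:wellposed}, and then handle the Brownian dependence after conditioning on the clock. Once $S=\ell$ and $W$ are fixed, \eqref{eq:shift-wellposed} is a deterministic strongly damped wave system with a finite-dimensional forcing $\zeta\in D([0,T];H_m)$. The map $z\mapsto Z^z_t$ is then the flow of a semilinear equation whose Nemytskii term $u\mapsto P_0g(u)$ is $C^2$ from $H^1$ to $H_0$. This follows from \eqref{eq:g-growth}, \eqref{eq:g-second-growth}, $H^1\hookrightarrow L^6$, and Hölder's inequality. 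For example, $\|g''(u)ab\|_{L^2}\le C(1+\|u\|_{L^6})\|a\|_{L^6}\|b\|_{L^6}$, and $\|(g'(u_1)-g'(u_2))h\|_{L^2}\le C(1+\|u_1\|_{H^1}+\|u_2\|_{H^1})\|u_1-u_2\|_{H^1}\|h\|_{H^1}$.

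\textbf{Step 1 (first variation).} I solve \eqref{eq:linearized} by a Picard iteration in $C([s,T];\mathbb H)$, using \eqref{eq:lin-decay}. The pathwise bound $\sup_{t\le T}\|u_t\|_{H^1}\le M(R,\text{path})$ comes from \eqref{eq:pathwise-galerkin-energy}, uniformly over $V(z)\le R$. With it, Gronwall gives $\|J_{s,t}\xi\|_{\mathbb H}\le C_M\|\xi\|_{\mathbb H}$.

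\textbf{Step 2 (Fréchet derivative).} Let $\rho_t=Z^{z+\xi}_t-Z^z_t-J_{0,t}\xi$. Its nonlinear term is $P_0\bigl(g(u+h)-g(u)-g'(u)h\bigr)+P_0\bigl(g'(u)(h-(J\xi)_1)\bigr)$. Taylor's formula with \eqref{eq:g-second-growth} gives $\|g(u+h)-g(u)-g'(u)h\|_{L^2}\le C_M\|h\|_{H^1}^2$. The Lipschitz difference estimate from the uniqueness proof gives $\|h\|_{H^1}\le C_M\|\xi\|$. Gronwall then yields $\sup_t\|\rho_t\|\le C_M\|\xi\|^2$, uniformly in $t\le T$ and in $z$ on bounded sets.

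\textbf{Step 3 (second variation).} I solve \eqref{eq:J2-equation} in the same way; its forcing is bounded by $C_M\|\xi\|\|\eta\|$. Continuity of $z\mapsto J_{0,t}$ in operator norm comes from the Lipschitz estimate for $g'$ above. The second-order Taylor remainder uses that $g''$ is continuous, with growth $\le C(1+|s|)$, so Nemytskii continuity holds in $L^6\to L^{3}$ and $g''(u)ab\in L^2$. The restarted flows $J_{s,t}$ and $J_{s,t}^{(2)}$ are handled identically, because the bound on $M$ holds on all of $[0,T]$.

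For the Malliavin part I condition on $S=\ell$, so that $L_t=W_{\ell_t}$ is a functional of a standard Brownian motion $W$ on $[0,\ell_T]$. A Cameron--Martin shift $W+\epsilon\int_0^\cdot \dot\phi$ changes the forcing to $\zeta+\epsilon Q\phi(\ell_\cdot)$. Since $\phi$ is continuous, this is again a finite-dimensional càdlàg forcing. The solution therefore depends on $\epsilon$ exactly like an initial-data perturbation injected through the forcing. Repeating Steps 1--3 with the forcing as the variable gives local $C^2$ along Cameron--Martin lines. Duhamel's formula gives the derivative in direction $\phi$ as $\int_0^t J_{\gamma_r,t}\mathbf Q\dot\phi_r\,\mathbf 1_{\{r\le\ell_t\}}\,dr$, after the change of variables $r=\ell_q$. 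Here the jump-time substitution is read through $\gamma_r$, so a jump of $\ell$ injects the whole increment at time $\gamma_r$. This identifies \eqref{eq:DZ-formula-main}. Differentiating \eqref{eq:linearized} along the same line gives \eqref{eq:DJ-equation}, with the source $\mathcal B_t[D_rZ_t,J_{s,t}\xi]$ and zero value before $s\vee\gamma_r$.

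For $\mathbb D^{1,p}_{W,\mathrm{loc}}$ I localize on $\Omega_M=\{\sup_{t\le T}V(Z_t)\le M\}$. Using Kusuoka's criterion (local $H$-differentiability plus moments), the pathwise derivative bounds $C_M$ and \eqref{eq:finite-horizon-sup} supply all finite moments on $\Omega_M$. Homogeneity in $\|\xi\|$ is immediate from linearity. Clock-measurable objects are functions of $\ell$ alone and are constant along Cameron--Martin lines, so their derivatives vanish.

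I expect the main obstacle to be the jump discontinuities of $\ell$, which appear in two places. First, $\gamma_r$ is only right-continuous in $r$, so the substitution must be done on the operational-time side, with $\ell$ monotone. Second, the pathwise constant $M$ depends on $\sup\|\zeta\|$, which a Cameron--Martin shift perturbs. I would handle this by taking $\epsilon$ small relative to $\|\dot\phi\|_{L^2}$ and using the $H^1$-stability estimate \eqref{eq:shifted-stability}, which only needs $L^2$-in-time closeness of the forcings.
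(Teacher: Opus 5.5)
Your proof is correct. Its pathwise part matches the paper's: Nemytskii bounds from $H^1\hookrightarrow L^6$, Taylor plus Gronwall for the first and second variations on energy balls, the observation that a Cameron--Martin shift only adds a bounded finite-rank forcing, and variation of constants read through $\gamma_r$ to get \eqref{eq:CM-variation} and \eqref{eq:DJ-equation}.

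\textbf{Where you differ.} The genuine difference is how you obtain $Z_t,\,J_{s,t}\xi\in\mathbb D^{1,p}_{W,\mathrm{loc}}$ and identify the Malliavin derivative with the line derivative.
\begin{itemize}
\item The paper works through the spectral Galerkin system. Finite-dimensional Malliavin calculus gives \eqref{eq:DZ-galerkin}. Smooth cutoffs on the Galerkin coefficients give uniform local bounds. Closability of $D$, together with \eqref{eq:J-galerkin-convergence} and weak closedness plus Mazur for $D(J_{s,t}\xi)$, identifies the limit.
\item You apply Kusuoka's $H$-$C^1$ criterion directly to the infinite-dimensional solution map. This is legitimate here. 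With $\ell$ frozen, the solution is defined for every continuous Brownian path. A shift $h\in L^2(0,\ell_T;\mathbb R^m)$ enters linearly through a forcing of size at most $\|Q\|\,\ell_T^{1/2}\|h\|$. Hence $h\mapsto Z_t(\omega+h)$ and $h\mapsto J_{s,t}(\omega+h)\xi$ are Fr\'echet $C^1$ on the whole Cameron--Martin space. For $Z_t$ this uses local Lipschitz continuity of $g'$; for $J_{s,t}\xi$ it uses $L^6$-continuity of $g''$.
\end{itemize}
Your route is shorter, needs no uniform-in-$N$ Malliavin estimates, and gives $D$ as the $H$-derivative in one step. The paper's route also yields \eqref{eq:J-galerkin-convergence}, which it reuses in Lemma~\ref{lem:terminal-observability} and Lemma~\ref{lem:block-control-sobolev-stability}. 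It also fits the closure-defined partial spaces $\mathbb D_{(n)}^{1,2}$ used later.

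\textbf{Minor points.}
\begin{itemize}
\item Under Kusuoka's theorem the localizing sequence is supplied by the theorem itself, so your localization on $\Omega_M$ with moments is superfluous. On its own it would not be a valid localization: that requires some $F_M\in\mathbb D^{1,p}$ equal to $Z_t$ on $\Omega_M$, for instance the solution of a truncated equation.
\item Invoking \eqref{eq:shifted-stability} for the shifted energy bound is unnecessary. The bound $\sup_{t\le T}\|\zeta_t+\epsilon Q\phi(\ell_t)\|\le\sup_{t\le T}\|\zeta_t\|+|\epsilon|\,\|Q\|\,\ell_T^{1/2}\|\dot\phi\|_{L^2}$ already makes \eqref{eq:pathwise-galerkin-energy} uniform for $\epsilon$ in compacts, which is how the paper argues.
\item The Duhamel integral should run over operational time: $\int_0^{\ell_t}J_{\gamma_r,t}\mathbf Q\dot\phi(r)\,\mathrm dr$.
\item Continuity of $g''$ only into $L^3$ places the second-order remainder in $L^{3/2}\subset H^{-1}$, which then needs the $H^{-1}$--$H^1$ energy pairing. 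Continuity $L^6\to L^6$, as in \eqref{eq:gsecond-L6-continuity}, keeps the remainder in $L^2$.
\end{itemize}
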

\begin{proof}
	We first justify differentiability with respect to the initial condition. After subtracting the additive noise, the resulting equation is a deterministic nonautonomous strongly damped wave equation. On an energy ball 
$\{V\le M\}$, Assumption \ref{ass:g} and
	$H^1(\mathbb{T}^3)\hookrightarrow L^6(\mathbb{T}^3)$ give
	\begin{align}
	\|P_0(g'(u)h)\|_{L^2}
	&\le C_M\|h\|_{H^1},\label{eq:nemytskii-first-local}\\
	\|P_0(g''(u)h_1h_2)\|_{L^2}
	&\le C_M\|h_1\|_{H^1}\|h_2\|_{H^1}.\label{eq:nemytskii-second-local}
	\end{align}
If $u_j\to u$ in $H^1$, the growth bound on $g''$ and Vitali's theorem give
\begin{equation}\label{eq:gsecond-L6-continuity}
 \|g''(u_j)-g''(u)\|_{L^6}\to0.
\end{equation}
Indeed, strong $L^6$ convergence makes $|g''(u_j)|^6$ uniformly integrable. H\"older's inequality then gives continuity of the second derivative as a bilinear map from $H^1\times H^1$ to $L^2$.

Set $\mathcal N(u,v)=(0,-P_0g(u))$. The mild equation is
\begin{equation}\label{eq:pathwise-mild-flow}
 Z_t^z=\mathbb S(t)z+\int_0^t\mathbb S(t-r)\mathcal N(Z_r^z)\,\mathrm dr
       +\int_0^t\mathbb S(t-r)\mathbf Q\,\mathrm dL_r.
\end{equation}
The last term is independent of $z$. On bounded subsets,
\begin{equation}\label{eq:gprime-difference-operator}
 \|[g'(u)-g'(\widetilde u)]h\|_{L^2}
 \le C_M\|u-\widetilde u\|_{H^1}\|h\|_{H^1}.
\end{equation}
Thus $\mathcal N$ is $C^2$ with locally bounded derivatives. The pathwise estimate \eqref{eq:pathwise-galerkin-energy} bounds all trajectories with $V(z)\le R$ on $[0,T]$. Subtracting \eqref{eq:pathwise-mild-flow} for two initial states and applying Gronwall gives local Lipschitz dependence. Taylor's formula and \eqref{eq:gprime-difference-operator} further give
\begin{equation*}
 \sup_{t\le T}\|Z_t^{z+h}-Z_t^z-J_{0,t}h\|_{\mathbb H}
 \le C_{R,T}(\omega)\|h\|_{\mathbb H}^2
\end{equation*}
for sufficiently small $h$. Differentiating the linearized integral equation, using \eqref{eq:gsecond-L6-continuity}, yields \eqref{eq:J2-equation} and
\begin{equation}\label{eq:localized-J-C2-bound}
 \sup_{V(z)\le R}\sup_{0\le s\le t\le T}
 \left(\|J_{s,t}\|_{\mathbb L(\mathbb H)}
 +\|J^{(2)}_{s,t}\|_{\mathcal L^{(2)}(\mathbb H;\mathbb H)}\right)
 \le C_{R,T}(\omega)<\infty.
\end{equation}

For a fixed clock $\ell$, take $h\in L^2(0,\ell_T;\mathbb R^m)$, extended by
zero, and put $H_r=\int_0^r h(q)\,\mathrm dq$.
The shift $W+\varepsilon H$ changes the convolution in
\eqref{eq:pathwise-mild-flow} by
$\varepsilon F_t^h$, where
\[
 F_t^h=\int_0^{\ell_t}\mathbb S(t-\gamma_q)\mathbf Qh(q)\,\mathrm dq,
 \qquad
 \sup_{t\le T}\|F_t^h\|_{\mathbb H}
 \le C_T\|\mathbf Q\|\ell_T^{1/2}\|h\|_{L^2(0,\ell_T)}.
\]
The shifted forcing is bounded on $[0,T]$, uniformly for $\varepsilon$ in a
compact interval. The local $C^2$ bounds for $\mathcal N$ and the preceding
pathwise energy estimate thus permit differentiation of the parameterized
mild equation. If $Z^{\varepsilon,h}$ is the shifted solution, its first
variation satisfies
\[
 X_t^h=F_t^h+\int_0^t\mathbb S(t-r)D\mathcal N(Z_r)X_r^h\,\mathrm dr.
\]
Variation of constants gives
\begin{equation}\label{eq:CM-variation}
 \left.\partial_\varepsilon Z_t^{\varepsilon,h}\right|_{\varepsilon=0}
 =X_t^h
 =\int_0^{\ell_t}J_{\gamma_q,t}\mathbf Qh(q)\,\mathrm dq.
\end{equation}
Its second variation $Y_t^h=\left.\partial_\varepsilon^2
Z_t^{\varepsilon,h}\right|_{\varepsilon=0}$ satisfies
\[
 Y_t^h=\int_0^t\mathbb S(t-r)
 \bigl(D\mathcal N(Z_r)Y_r^h+\mathcal B_r[X_r^h,X_r^h]\bigr)\,\mathrm dr.
\]
Estimate \eqref{eq:nemytskii-second-local} bounds the inhomogeneous term.
Differentiating $J_{s,t}\xi$ along the same line gives \eqref{eq:DJ-equation}
integrated against $h$. Consequently both fields are locally absolutely
continuous along these lines, also when the initial state and direction
are fixed external parameters.

For Galerkin convergence, write $J^N_{s,r}\Pi_N\xi=(h^N_r,k^N_r)$ and
$J_{s,r}\xi=(h_r,k_r)$. On a common energy ball, set
\[
 F_N(r)=P_NP_0\{[g'(u_r^N)-g'(u_r)]h_r\}
       -(I-P_N)P_0(g'(u_r)h_r).
\]
By \eqref{eq:gprime-difference-operator},
\[
 \|F_N(r)\|
 \le C_M\|u_r^N-u_r\|_{H^1}\|h_r\|_{H^1}
       +\|(I-P_N)P_0(g'(u_r)h_r)\|.
\]
Testing the difference equation and applying Gronwall yield
\begin{align*}
 &\sup_{s\le r\le T}\|(h^N_r-h_r,k^N_r-k_r)\|_{\mathbb H}^2
   +\int_s^T\|k^N_r-k_r\|_{H^1}^2\,\mathrm dr\\
 &\quad\le C_{M,T}\left(\|(I-\Pi_N)\xi\|_{\mathbb H}^2
                       +\int_s^T\|F_N(r)\|^2\,\mathrm dr\right)
 \longrightarrow0.
\end{align*}
The first part of $F_N$ vanishes by \eqref{eq:state-galerkin-convergence};
the second by dominated convergence and \eqref{eq:nemytskii-first-local}.
Thus, for each fixed direction,
\begin{equation}\label{eq:J-galerkin-convergence}
 J^N_{s,\cdot}\Pi_N\xi\longrightarrow J_{s,\cdot}\xi
 \quad\text{in }C([s,T];\mathbb H).
\end{equation}

	Fix the subordinator path $S=\ell$. For the Galerkin equation, the driving path is the
	finite-dimensional Gaussian family $W_{\ell_t}$, so that finite-dimensional Malliavin calculus gives,
	for almost every $r$,
	\begin{equation}\label{eq:DZ-galerkin}
	D_r^j Z_t^N
	=\mathbf{1}_{\{r\le\ell_t\}}
	J^N_{\gamma_r,t}\mathbf{Q}\mathbf{e}_j^{(m)}.
	\end{equation}
Apply smooth cutoffs to the Galerkin coefficients before using Malliavin closability. On each energy-localized set, \eqref{eq:localized-J-C2-bound} gives uniform bounds for the cutoff derivatives in $L^p(\Omega_W;L^2(0,\ell_T;\mathbb H))$. The closedness of the Malliavin derivative and \eqref{eq:J-galerkin-convergence} identify the local limit with \eqref{eq:DZ-formula-main}.

Differentiating the Galerkin first variation gives \eqref{eq:DJ-equation}, with forcing
\begin{equation*}
 (0,-P_NP_0[g''(u_t^N)(D_r^ju_t^N)(J_{s,t}^N\xi)_1]).
\end{equation*}
Estimate \eqref{eq:nemytskii-second-local} and the same cutoff argument yield local $L^p$ bounds for this derivative, homogeneous in $\|\xi\|_{\mathbb H}$. Weak closedness identifies the limit, and Mazur's lemma gives strong convergence of convex combinations in the local graph norm. Lemma~\ref{lem:external-random-direction} extends the construction to external random directions. The cutoff sets exhaust the sample space by \eqref{eq:finite-horizon-sup}; this establishes local Sobolev regularity. The weighted global bounds used below follow separately from \eqref{eq:variational-block-moment}.
\end{proof}

For a fixed path $\ell$, use the inverse clock $r\mapsto\gamma_r$ defined above.  For
$h\in L^2([\ell_s,\ell_t];\mathbb{R}^m)$, the Malliavin derivative in the Brownian direction $h$ is
\begin{equation}\label{eq:A-control}
 \mathbb{A}_{s,t}h
 =\int_{\ell_s}^{\ell_t}J_{\gamma_r,t}\mathbf{Q} h(r)\mathrm{d} r.
\end{equation}
Its adjoint is
\begin{equation*}
 (\mathbb{A}_{s,t}^*\phi)(r)=\mathbf{Q}^*K_{\gamma_r,t}\phi,
 \qquad r\in[\ell_s,\ell_t].
\end{equation*}
The Malliavin covariance is
\begin{equation*}
 \mathbb{M}_{s,t}=\mathbb{A}_{s,t}\mathbb{A}_{s,t}^*,
\end{equation*}
and therefore
\begin{equation}\label{eq:M-quadratic}
 \langle\mathbb{M}_{s,t}\phi,\phi\rangle_{\mathbb{H}}
 =\sum_{j=1}^m q_j^2\int_s^t
   \bigl\langle K_{r,t}\phi,(0,e_j)\bigr\rangle_{\mathbb{H}}^2\mathrm{d}\ell_r.
\end{equation}

We next present the variational estimates.
\begin{lemma}\label{lem:variational-subcritical}
Let $J_{s,t}$ be the first variational flow. Then there exists a constant $C>0$ such that
\begin{equation}\label{eq:J-subcritical}
 \|J_{s,t}\xi\|_{\mathbb{H}}^2
 \le C\|\xi\|_{\mathbb{H}}^2
 \exp\left\{C(t-s)+C\int_s^t\|u_r\|_{L^4}^{16/5}\mathrm{d} r\right\}.
\end{equation}
The second variation $J^{(2)}_{s,t}$ is defined by \eqref{eq:J2-equation}.
For the stopping block set
\begin{equation*}
 \mathcal{H}_n=L^2([S_{\tau_n},S_{\tau_{n+1}}];\mathbb{R}^m),
\end{equation*}
let \(D^{[n]}\) be the Malliavin derivative with respect to the Brownian increments on this operational-time interval, after conditioning on the subordinator. Operators defined on a smaller operational subinterval are extended by zero to \(\mathcal{H}_n\). Fix a countable dense \(\mathbb{Q}\)-linear tangent core \(\mathcal{T}\subset\mathbb{H}\), and define the directional seminorm
\begin{equation}\label{eq:directional-DJ-seminorm}
 \mathfrak d_n(J_{s,t})
 :=\sup_{\xi\in\mathcal{T}\setminus\{0\}}
 \frac{\|D^{[n]}(J_{s,t}\xi)\|_{\mathcal{H}_n\otimes\mathbb{H}}}{\|\xi\|_{\mathbb{H}}}.
\end{equation}
The seminorm is finite almost surely and extends by density to deterministic directions.  Malliavin
derivatives of $\mathbb{A}_{s,t}$ and $\mathbb{A}_{s,t}^*$ are taken in the corresponding
Hilbert--Schmidt spaces $\mathbb{L}_2$.  Set
\begin{align}\label{eq:J-block-definition}
 \mathbb{J}_n:=1+\sup_{\tau_n\le s\le t\le\tau_{n+1}}&\Big(
 \|J_{s,t}\|_{\mathbb{L}(\mathbb{H})}
 +\|J^{(2)}_{s,t}\|_{\mathcal{L}^{(2)}(\mathbb{H};\mathbb{H})}
 +\|\mathbb{A}_{s,t}\|_{\mathbb{L}(\mathcal{H}_n;\mathbb{H})}
 +\|\mathbb{A}_{s,t}^*\|_{\mathbb{L}(\mathbb{H};\mathcal{H}_n)}\notag\\
 &+\mathfrak d_n(J_{s,t})
 +\|D^{[n]}\mathbb{A}_{s,t}\|_{\mathcal{H}_n\otimes\mathbb{L}_2(\mathcal{H}_n;\mathbb{H})}+\|D^{[n]}\mathbb{A}_{s,t}^*\|_{\mathcal{H}_n\otimes\mathbb{L}_2(\mathbb{H};\mathcal{H}_n)}\Big).
\end{align}
Here $\mathcal{L}^{(2)}(\mathbb{H};\mathbb{H})$ is the space of bounded bilinear maps.
For every $p\ge1$ and $\eta_V\in(0,\kappa]$, there exist constants
$c_{p,\eta_V,\kappa},C_{p,\eta_V,\kappa}>0$ such that
\begin{equation}\label{eq:variational-block-moment}
 \mathbb{E}\left[
   \mathbb{J}_n^p\exp\{-c_{p,\eta_V,\kappa}(X_n+Y_n)\}
   \,\middle|\,\mathcal{F}_{\tau_n}
 \right]
 \le C_{p,\eta_V,\kappa}\exp\{\eta_V V(Z_{\tau_n})\}.
\end{equation}
The stopping blocks use the original $\kappa$.  Within this lemma, the largest state power requiring inverse-clock
normalization is $\|u\|_{L^4}^{16/5}$; the coupled high--low estimate later uses $32/9$. No larger power is needed for the second or Malliavin variations here.
\end{lemma}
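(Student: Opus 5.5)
The plan is to prove \eqref{eq:J-subcritical} by a modified energy estimate for the linearized system, and then to reduce \eqref{eq:variational-block-moment} to Lemma~\ref{lem:normalized-block-exp}. All higher-order objects in $\mathbb J_n$ will be bounded by polynomials in exponentials of $\int\|u\|_{L^4}^{16/5}$ and in the clock length $Y_n$.

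\emph{First variation.} Write $J_{s,t}\xi=(h,k)$, so that $\partial_th=k$ and $\partial_tk=-Ak-Ah-P_0(g'(u)h)$. Work at the Galerkin level and pass to the limit with \eqref{eq:J-galerkin-convergence}. Test with $(Ah+\delta' h,\ k+\delta' h)$ for a small $\delta'$, exactly as in the modified energy \eqref{eq:V}. This gives dissipation $c(\|k\|_{H^1}^2+\|h\|_{H^1}^2)$, and the only bad term is $(g'(u)h,k+\delta'h)$. By \eqref{eq:g-growth} it is bounded by $C\int(1+|u|^2)|h||k|$. Estimate $\int|u|^2|h||k|\le\|u\|_{L^4}^2\|h\|_{L^{p}}\|k\|_{L^{q}}$ with $1/p+1/q=1/2$. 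Interpolate $\|h\|_{L^p}$ between $L^2$ (which is controlled by $\|h\|_{H^1}$) and $L^6$, and place $k$ in $L^6\subset H^1$, where the strong damping provides $\|k\|_{H^1}^2$ dissipation. The $h$-component lives at the $H^1$ level, so the extra gain must come from $k$: use $\|k\|_{L^{q}}\le\|k\|^{1-\theta}\|k\|_{H^1}^\theta$ with $\theta<1$. Young's inequality with exponent $2/\theta$ on the $\|k\|_{H^1}$ factor then leaves $\|u\|_{L^4}^{2\cdot 2/(2-\theta)}\|(h,k)\|_{\mathbb H}^2$. Choose $\theta=3/4$, which corresponds to $q=4$ and $p=4$, the latter controlled by $H^1\subset L^4$. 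This gives the power $4/(5/4)=16/5$. Gronwall then yields \eqref{eq:J-subcritical}.

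\emph{Second and Malliavin variations.} Apply Duhamel in \eqref{eq:J2-equation} and bound the forcing by \eqref{eq:nemytskii-second-local} with the explicit constant $C(1+\|u\|_{L^6})\|h_1\|_{H^1}\|h_2\|_{H^1}$. This gives $\|J^{(2)}_{s,t}\|\le C\int_s^t(1+\|u_r\|_{H^1})\|J_{s,r}\|\,\|J_{r,t}\|\|J_{s,r}\|\,dr$, i.e.\ at most a cubic power of $\sup\|J\|$ times $\int(1+V^{1/2})$. For $\mathbb A_{s,t}$ and $\mathbb A_{s,t}^*$, note that $\|\mathbb A_{s,t}\|\le\|\mathbf Q\|\sup\|J\|\,(\ell_t-\ell_s)^{1/2}$, and this is bounded by $Y_n^{1/2}\sup\|J\|$. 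For $\mathfrak d_n$ and $D^{[n]}\mathbb A$, use \eqref{eq:DJ-equation} with \eqref{eq:DZ-formula-main}. This gives a Duhamel bound $\|D_rJ_{s,t}\xi\|\le\int J\,\mathcal B[J\mathbf Q,J\xi]$. After squaring and integrating over $r\in[S_{\tau_n},S_{\tau_{n+1}}]$, it is bounded by $C\,Y_n\,(\sup\|J\|)^{6}(\tau_{n+1}-\tau_n)^2\sup_s(1+V(Z_s))$. The Hilbert--Schmidt norms of $D\mathbb A$ pick up one more factor $Y_n$. Consequently,
\[
\mathbb J_n\le C\,(1+Y_n)^{2}(1+\tau_{n+1}-\tau_n)^{2}\bigl(1+\sup_{[\tau_n,\tau_{n+1}]}V\bigr)\exp\Bigl\{C(\tau_{n+1}-\tau_n)+C\int_{\tau_n}^{\tau_{n+1}}\|u\|_{L^4}^{16/5}\Bigr\}^{C'} .
\]

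\emph{Moments.} Raise this bound to the power $p$. Absorb the polynomial factors in $Y_n$ and $\tau_{n+1}-\tau_n$ into $e^{c(X_n+Y_n)}$, using $X_n\ge\tau_{n+1}-\tau_n$ since $\omega_n\le1$. Split the product by H\"older. The exponential of $\int\|u\|_{L^4}^{16/5}$ is handled by Lemma~\ref{lem:normalized-block-exp} with $r=16/5\le r_*$, choosing $\theta$ small so that $\theta\kappa\le\eta_V/2$. The factor $\sup V^{p}$ is controlled, after Cauchy--Schwarz, by \eqref{eq:stopped-sup} restarted at $\tau_n$ together with $1+V^{2p}\le C_{\eta}e^{\eta_V V/2}$.

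\emph{Main obstacle.} The delicate step is the interpolation exponent bookkeeping. The exponent must stay strictly below $4$ so that the Young argument in Lemma~\ref{lem:normalized-block-exp} (which needs $r/(4-r)\le8$) applies. A naive bound $\|u\|_{L^6}^2$ instead of $\|u\|_{L^4}$ would not be normalized by the weighted dissipation. If $\theta=3/4$ fails to balance, I would put $h$ in $L^6$, $k$ in $L^{12}$ (unavailable), or shift interpolation weight between $h$ and $k$ to reach a power at most $32/9$.
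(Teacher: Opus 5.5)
Your proposal is correct. The first-variation step is the paper's argument: the $L^4$--$L^4$ splitting, $\|k\|_{L^4}\le C\|k\|^{1/4}\|k\|_{H^1}^{3/4}$, and Young with exponents $8/3,8/5$ give exactly the power $16/5$. So the fallback in your last paragraph is unnecessary. One small correction: the $\delta'(g'(u)h,h)$ term contributes $\int(1+|u|^2)|h|^2\le C(1+\|u\|_{L^4}^2)\mathcal W$, not a $|h||k|$ term, which is harmless.

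For the second and Malliavin variations you take a different route. You use the Duhamel representation through the propagator $J_{r,t}$ together with the $L^2$-valued bound $\|P_0(g''(u)h_1h_2)\|\le C(1+\|u\|_{L^6})\|h_1\|_{H^1}\|h_2\|_{H^1}$. This brings the pointwise quantity $\sup_{[\tau_n,\tau_{n+1}]}V$ into $\mathbb J_n$, so you need a second ingredient: the stopped polynomial moment bound \eqref{eq:stopped-sup}, restarted at $\tau_n$ and combined with Lemma~\ref{lem:normalized-block-exp} by Cauchy--Schwarz.

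The paper instead tests the $J^{(2)}$ and $D^{[n]}J$ equations in the modified energy and uses the $H^{-1}$ bound \eqref{eq:gsecond-Hminus1-subcritical}, whose state factor is only $1+\|u\|_{L^4}$. Absorbing that forcing into the velocity dissipation gives the master bound \eqref{eq:block-variational-master}. That bound involves only $Y_n$, the exponential $\exp\{C\int_{\tau_n}^{\tau_{n+1}}(1+\|u_r\|_{L^4}^{16/5})\,\mathrm dr\}$, and $\int(1+\|u_r\|_{L^4}^2)\,\mathrm dr$. A single application of Lemma~\ref{lem:normalized-block-exp} then suffices.

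Your route is more elementary, since it needs no separate energy estimate for the higher variations. The cost is that your constants inherit the $\chi$-dependence of $\widetilde C_{p,\kappa,\chi}$ in \eqref{eq:stopped-sup}. The paper's constants in \eqref{eq:variational-block-moment} are independent of $\chi$, as their subscripts indicate. This does not affect the rest of the paper. Estimate \eqref{eq:variational-block-moment} is only used after $\chi$ has been fixed, through $\widetilde c$, $\widetilde q$ (hence $b$) and local integrability, and never in the fixed-rank balance. Strictly, though, your argument proves a slightly weaker constant dependence than the one stated.
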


\begin{proof}
Write $J_{s,t}\xi=(h,k)$ and use the energy $\mathcal W$ in \eqref{eq:res-energy}.
The nonlinear velocity pairing satisfies
\begin{align*}
 \int |u|^2|h||k|
 &\le \|u\|_{L^4}^2\|h\|_{L^4}\|k\|_{L^4}\\
 &\le C\|u\|_{L^4}^2\|h\|_{H^1}\|k\|^{1/4}\|k\|_{H^1}^{3/4}\\
 &\le\tfrac14\|k\|_{H^1}^2+C\|u\|_{L^4}^{16/5}\mathcal W(h,k).
\end{align*}
The other pairings are bounded by $C(1+\|u\|_{L^4}^2)\mathcal W$.
Gronwall proves \eqref{eq:J-subcritical}. On a stopping block,
\begin{equation}\label{eq:weighted-young-variational}
 C\|u\|_{L^4}^{16/5}
 \le\tfrac14c_*\kappa\alpha_2\omega_n\|u\|_{L^4}^4
       +C(\kappa)\omega_n^{-4}.
\end{equation}
For the second variation, pair against $\psi\in H^1$. H\"older with exponents $4,6,6,12/5$ and the Sobolev embeddings give
\begin{equation}\label{eq:gsecond-Hminus1-subcritical}
 \|P_0(g''(u)h_1h_2)\|_{H^{-1}}
 \le C(1+\|u\|_{L^4})\|h_1\|_{H^1}\|h_2\|_{H^1}.
\end{equation}

If $w=J^{(2)}_{s,r}(\xi,\eta)$, testing its equation in the modified energy gives
\[
 \frac{\mathrm d}{\mathrm dr}\mathcal W(w)
 +c\|w_2\|_{H^1}^2
 \le C(1+\|u\|_{L^4}^{16/5})\mathcal W(w)
 +C(1+\|u\|_{L^4}^2)\|J_{s,r}\xi\|^2\|J_{s,r}\eta\|^2.
\]
This estimate uses the $H^{-1}$--$H^1$ pairing and therefore requires no
pointwise $H^1$ regularity of the initial velocity.

Set
\[
 \mathcal K_n=\exp\left\{C\int_{\tau_n}^{\tau_{n+1}}
                    (1+\|u_r\|_{L^4}^{16/5})\,\mathrm dr\right\}.
\]
Integration of the last two energy inequalities yields
\begin{align*}
 \sup_{s,t}\|J_{s,t}\|&\le C\mathcal K_n,\\
 \sup_{s,t}\|J^{(2)}_{s,t}\|
 &\le C\mathcal K_n^3
       \left(1+\int_{\tau_n}^{\tau_{n+1}}(1+\|u_r\|_{L^4}^2)\,\mathrm dr\right).
\end{align*}
All suprema in this proof are over $\tau_n\le s\le t\le\tau_{n+1}$.
For the frozen clock,
\begin{equation}\label{eq:DZ-operational}
 D_r^jZ_t=\mathbf1_{\{r\le\ell_t\}}J_{\gamma_r,t}\mathbf Q\mathbf e_j^{(m)}.
\end{equation}
The equation for $D_r^jJ_{s,t}\xi$ has the same inhomogeneous energy estimate,
with the two factors $D_r^jZ$ and $J_{s,\cdot}\xi$. Integrating in $r$ contributes
only powers of $Y_n$. Thus, for every $\xi\in\mathbb H$,
\begin{equation}\label{eq:DJ-simultaneous-bound}
 \sup_{s,t}\|D^{[n]}(J_{s,t}\xi)\|_{\mathcal H_n\otimes\mathbb H}
 \le\mathcal R_n\|\xi\|,\qquad
 \mathcal R_n=C(1+Y_n)^N\mathcal K_n^N
 \left(1+\int_{\tau_n}^{\tau_{n+1}}(1+\|u_r\|_{L^4}^2)\,\mathrm dr\right)^N.
\end{equation}
The directional supremum is justified by this common pathwise bound, followed by
density; it is not obtained by interchanging a supremum and an expectation.

Differentiating the finite-rank kernel in \eqref{eq:A-control} gives
\begin{equation}\label{eq:DA-operational}
 D_r^j(\mathbb A_{s,t}h)
 =\int_{\ell_s}^{\ell_t}D_r^j(J_{\gamma_q,t}\mathbf Qh(q))\,\mathrm dq.
\end{equation}
Consequently,
\begin{equation}\label{eq:DA-norm}
 \|D^{[n]}\mathbb A_{s,t}\|_{\mathcal H_n\otimes\mathbb L_2(\mathcal H_n;\mathbb H)}
 =\|D^{[n]}\mathbb A_{s,t}^*\|_{\mathcal H_n\otimes\mathbb L_2(\mathbb H;\mathcal H_n)}
 \le C Y_n^{1/2}\mathcal R_n.
\end{equation}
These are derivatives in separable Hilbert--Schmidt spaces, obtained first for
Galerkin kernels and then by local closability. The product and resolvent rules give
\begin{align}
 D^{[n]}\mathbb M&=(D^{[n]}\mathbb A)\mathbb A^*
                  +\mathbb A D^{[n]}\mathbb A^*,\label{eq:DM-norm}\\
 D^{[n]}G_\beta&=-G_\beta(D^{[n]}\mathbb M)G_\beta,
 \qquad G_\beta=(\mathbb M+\beta I)^{-1},\quad\|G_\beta\|\le\beta^{-1}.
 \label{eq:DG-resolvent}
\end{align}
The first identity is valid in the trace-class norm, square-integrated over the
operational variable. Since $\|\mathbb A\|+\|\mathbb A^*\|\le
C Y_n^{1/2}\sup_{s,t}\|J_{s,t}\|$, the preceding bounds yield, for a fixed integer $N$,
\begin{equation}\label{eq:block-variational-master}
 \mathbb J_n\le C(1+Y_n)^N\mathcal K_n^N
 \left(1+\int_{\tau_n}^{\tau_{n+1}}(1+\|u_r\|_{L^4}^2)\,\mathrm dr\right)^N.
\end{equation}
Using $Y_n\le\Xi_n$, $\tau_{n+1}-\tau_n\le X_n$ and
$(1+x)^a\le C_{a,\epsilon}e^{\epsilon x}$, we obtain
\begin{equation}\label{eq:block-variational-single-exponential}
 \mathbb J_n^p\le C_{p,\epsilon}e^{\epsilon\Xi_n}
 \exp\left\{A_p\int_{\tau_n}^{\tau_{n+1}}(1+\|u_r\|_{L^4}^{16/5})\,\mathrm dr\right\}.
\end{equation}
Apply Lemma~\ref{lem:normalized-block-exp} with moment parameter $1$,
$\theta=\eta_V/\kappa$, and $\Lambda=A_p/\theta$.
Taking $c_{p,\eta_V,\kappa}=\epsilon+c_{\Lambda,1,\kappa}\theta$ proves
\eqref{eq:variational-block-moment}. The block definition remains fixed.
\end{proof}

\subsection{Terminal observability and Malliavin covariance}
Recall $H_m$ from \eqref{eq:Q} and set
\begin{equation*}
 \mathbb{H}_m=H_m\times H_m,
\end{equation*}
Thus $\Pi_m$ is the orthogonal projection of $\mathbb{H}$ onto $\mathbb{H}_m$.
For $\alpha\in(0,1]$, we define
\begin{equation*}
 \mathbb{U}_{\alpha,m}
 =\left\{\phi\in\mathbb{H}:\|\phi\|_{\mathbb{H}}=1,
 \ \|\Pi_m\phi\|_{\mathbb{H}}\ge\alpha\right\}.
\end{equation*}

\begin{lemma}\label{lem:terminal-observability}
Fix $t>0$ and let $\phi=(\phi_1,\phi_2)\in\mathbb{H}$.  For $1\le j\le m$, set
\begin{equation*}
 f_j(r)=\bigl\langle \phi,J_{r,t}(0,e_j)\bigr\rangle_{\mathbb{H}}
       =\bigl\langle K_{r,t}\phi,(0,e_j)\bigr\rangle_{\mathbb{H}},
 \qquad 0\le r\le t.
\end{equation*}
Then the left derivatives at the time \(t\) satisfy
\begin{equation}\label{eq:terminal-data-f}
 f_j(t)=(\phi_2,e_j),
 \qquad
 f_j'(t)=-\lambda_j\bigl((\phi_1,e_j)-(\phi_2,e_j)\bigr).
\end{equation}
Therefore, there exists a constant $c_m>0$ such that
\begin{equation}\label{eq:finite-observability}
 \sum_{j=1}^m\bigl(|f_j(t)|^2+|f_j'(t)|^2\bigr)
 \ge c_m\|\Pi_m\phi\|_{\mathbb{H}}^2.
\end{equation}
Moreover, for every $h<t/2$ and every path for which
$\sup_{r\in[t-2h,t]}\|J_{r,t}\|_{\mathbb{L}(\mathbb{H})}<\infty$, the functions $f_j$ are $C^2$ on
$[t-2h,t]$ and
\begin{equation}\label{eq:fsecond-mild-identity}
 f_j''(r)
 =\bigl\langle\phi,
 J_{r,t}\mathcal{L}_r(e_j,-Ae_j)\bigr\rangle_{\mathbb{H}},
 \qquad r\in[t-2h,t].
\end{equation}
\end{lemma}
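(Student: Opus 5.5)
The plan is to obtain all three assertions from one identity, the backward (Kolmogorov-type) derivative of the variational flow in its initial time:
\begin{equation*}
 \partial_r J_{r,t}\xi=-J_{r,t}\mathcal{L}_r\xi ,\qquad \xi\in\mathbb{H}_m,
\end{equation*}
together with $J_{t,t}=I$. The vectors involved are finite combinations of eigenfunctions, so $\mathcal{L}_r\xi$ is well defined in $\mathbb{H}$.

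\textbf{Step 1: the backward derivative.} I would use the cocycle property $J_{r,t}=J_{r+h,t}J_{r,r+h}$, which follows from uniqueness for \eqref{eq:linearized}. This gives
\begin{equation*}
 \frac{J_{r+h,t}\xi-J_{r,t}\xi}{h}
 =-J_{r+h,t}\,\frac{J_{r,r+h}\xi-\xi}{h}.
\end{equation*}
From the mild formula \eqref{eq:linearized}, for $\xi\in\mathbb{H}_m$ one has $h^{-1}(J_{r,r+h}\xi-\xi)\to\mathcal{L}_r\xi$ in $\mathbb{H}$. Here $\mathbb{S}$ is differentiable on $\mathbb{H}_m$, and the nonlinear term is continuous in $r$. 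The key observation is that $\mathcal{L}_r$ involves only $u_r$, and $u\in C([0,T];H^1)$, even though $v$ jumps. By \eqref{eq:nemytskii-first-local} and \eqref{eq:gprime-difference-operator}, the map $r\mapsto P_0(g'(u_r)h)$ is continuous into $H_0$.

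The operators $J_{r+h,t}$ are uniformly bounded on the interval, by hypothesis or by \eqref{eq:J-subcritical}. They are also strongly continuous in $r$. Hence we get the two-sided derivative on $[t-2h,t)$ and the left derivative at $t$.

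\textbf{Step 2: the terminal data.} Apply Step 1 with $\xi=(0,e_j)$. Since the first component vanishes, the $g'$ term drops out, and
\begin{equation*}
 \mathcal{L}_r(0,e_j)=(e_j,-\lambda_je_j),
\end{equation*}
independently of $r$. Thus $f_j'(r)=-\langle\phi,J_{r,t}(e_j,-Ae_j)\rangle_{\mathbb{H}}$.

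At $r=t$, the $\mathbb{H}$ inner product gives
\begin{equation*}
 \langle\phi,(e_j,-\lambda_je_j)\rangle_{\mathbb{H}}=\lambda_j(\phi_1,e_j)-\lambda_j(\phi_2,e_j).
\end{equation*}
This yields the formula for $f_j'(t)$ in \eqref{eq:terminal-data-f}, while $f_j(t)=(\phi_2,e_j)$ is immediate from $J_{t,t}=I$.

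\textbf{Step 3: the second derivative.} The vector $(e_j,-Ae_j)$ is constant in $r$ and lies in $\mathbb{H}_m$. Differentiating $f_j'$ again with Step 1 therefore gives \eqref{eq:fsecond-mild-identity}.

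Continuity of $f_j''$ follows from two facts. First, $\mathcal{L}_r(e_j,-Ae_j)$ contains $P_0(g'(u_r)e_j)$, which is continuous in $r$ by Step 1. Second, $J_{r,t}$ is strongly continuous and locally bounded. Hence $f_j\in C^2([t-2h,t])$.

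\textbf{Step 4: observability \eqref{eq:finite-observability}.} Write $a_j=(\phi_1,e_j)$ and $b_j=(\phi_2,e_j)$, so that
\begin{equation*}
 \|\Pi_m\phi\|_{\mathbb{H}}^2=\sum_{j\le m}\bigl(\lambda_ja_j^2+b_j^2\bigr).
\end{equation*}
Then $|f_j(t)|^2+|f_j'(t)|^2=b_j^2+\lambda_j^2(a_j-b_j)^2$. Using $a_j^2\le 2(a_j-b_j)^2+2b_j^2$, we get
\begin{equation*}
 \lambda_ja_j^2+b_j^2\le 2\lambda_j^{-1}\cdot\lambda_j^2(a_j-b_j)^2+(2\lambda_j+1)b_j^2 .
\end{equation*}
Summing over $j\le m$ gives \eqref{eq:finite-observability} with $c_m^{-1}=2\lambda_1^{-1}+2\lambda_m+1$.

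\textbf{Main obstacle.} I expect the main difficulty to be Step 1, namely the rigorous backward differentiation of a mild, nonautonomous flow whose coefficient is only continuous in time. I would handle it with the cocycle identity and differentiate only on the finite-dimensional space $\mathbb{H}_m$, where $\mathbb{S}$ is smooth. If needed, I would confirm the result through the Galerkin approximations \eqref{eq:J-galerkin-convergence}, passing to the limit in the integrated form $f_j(t)-f_j(r)=\int_r^t(\cdots)$.
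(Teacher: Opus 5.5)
Your proof is correct. It rests on the same central identity as the paper, $\partial_rJ_{r,t}\eta=-J_{r,t}\mathcal L_r\eta$ for the fixed vectors $\eta=(0,e_j)$ and $\eta=(e_j,-Ae_j)$, but you justify that identity by a different route.

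\textbf{The paper's route.} The paper proves the backward identity only for the spectral Galerkin flows $J^N_{r,t}$, where it is an elementary fact about finite-dimensional linear ODEs. It then passes to the limit in the integrated identities, using \eqref{eq:J-galerkin-convergence} applied at each restart time, \eqref{eq:state-galerkin-convergence}, and common operator bounds. Continuity of the limiting right-hand sides then gives $C^2$.

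\textbf{Your route.} You work directly in $\mathbb H$. The cocycle identity reduces the backward derivative to the forward derivative of $q\mapsto J_{r,q}\xi$ at $q=r$. That derivative exists because $\mathbb H_m$ lies in the domain of the generator of $\mathbb S$, and because $r\mapsto P_0(g'(u_r)h)$ is continuous into $H_0$. Only $u\in C([0,T];H^1)$ enters here, not the jumping velocity. What this buys you is a self-contained infinite-dimensional argument with no Galerkin limit. What it requires is the cocycle property and strong continuity of $J_{s,t}$ in $s$. Both follow from uniqueness and Gronwall for the linear mild equation \eqref{eq:linearized}, together with the bound \eqref{eq:J-subcritical} on $J_{r,q}$ near the diagonal. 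Your Step~4 also gives an explicit constant, $c_m^{-1}=2\lambda_1^{-1}+2\lambda_m+1$, where the paper only invokes invertibility and norm equivalence.

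\textbf{A point to tidy.} The difference quotient you display uses $r<r+h\le t$. It therefore yields right derivatives on $[t-2h,t)$, but it says nothing directly about the left derivative at $r=t$, which the statement requires. There are two ways to close this.
\begin{itemize}
\item Use the other factorization $J_{r-h,t}=J_{r,t}J_{r-h,r}$. Then you need $h^{-1}(J_{r-h,r}\xi-\xi)\to\mathcal L_r\xi$. This follows from the mild formula, since $J_{s,q}\xi\to\xi$ uniformly as $q-s\downarrow0$.
\item Use the elementary fact that a continuous function whose right derivative is continuous on $[t-2h,t)$ and extends continuously to $t$ is $C^1$ on $[t-2h,t]$. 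Its left derivative at $t$ is then the limit $-\langle\phi,J_{t,t}\mathcal L_t\xi\rangle$.
\end{itemize}
Either way, continuity of $f_j$ at $t$ comes from strong continuity of $r\mapsto J_{r,t}\xi$.
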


\begin{proof}
The identity is first verified for a spectral Galerkin equation containing the \(j\)th mode. In finite dimension, the evolution property yields, for every smooth deterministic vector \(\eta\),
\begin{equation*}
 \partial_r J_{r,t}^N\eta=-J_{r,t}^N\mathcal{L}_r^N\eta.
\end{equation*}
For $\eta=(0,e_j)$, one has
$\mathcal{L}_r^N(0,e_j)=(e_j,-Ae_j)$, which is independent of $r$.  Thus,
\begin{equation*}
 \partial_r^2J_{r,t}^N(0,e_j)
 =J_{r,t}^N\mathcal{L}_r^N(e_j,-Ae_j).
\end{equation*}
Apply \eqref{eq:J-galerkin-convergence} to the fixed smooth vectors
\((0,e_j)\) and \((e_j,-Ae_j)\). Together with
\eqref{eq:state-galerkin-convergence} and the common local operator bounds,
this permits passage to the limit in the integrated identities; the resulting
right-hand sides are continuous in $r$. Hence \eqref{eq:fsecond-mild-identity} follows without invoking an adjoint of the unbounded operator
\(\mathcal{L}_r\) on the whole phase space.

Since \(J_{t,t}=I\) at \(r=t\), we have \(f_j(t)=(\phi_2,e_j)\). The first initial-time derivative yields
\begin{equation*}
 f_j'(t)
 =-\langle\phi,(e_j,-Ae_j)\rangle_{\mathbb{H}}
 =-\lambda_j\bigl((\phi_1,e_j)-(\phi_2,e_j)\bigr),
\end{equation*}
which shows \eqref{eq:terminal-data-f}.  On $\mathbb{H}_m$, the map
$
 ((a_j),(b_j))\longmapsto ((b_j),(\lambda_j(a_j-b_j)))
$
is invertible, and \eqref{eq:finite-observability} follows by norm equivalence.
\end{proof}

\begin{lemma}\label{lem:strict-clock-increments}
Under Assumption \ref{ass:levy}, there is an event $\widehat\Omega$ of probability one such that for every $\omega\in\widehat\Omega$ and every $0\le a<b<\infty$,
$
 S_b(\omega)-S_a(\omega)>0$.
Consequently, if $[A_k,B_k]$, $1\le k\le N$, is any finite family of random intervals satisfying $A_k<B_k$ pointwise on $\widehat\Omega$, then
\begin{equation*}
 \min_{1\le k\le N}\bigl(S_{B_k}-S_{A_k}\bigr)>0
 \quad\text{on }\widehat\Omega.
\end{equation*}
\end{lemma}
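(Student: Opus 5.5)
The statement concerns a subordinator, which is a Lévy process with nondecreasing paths. The strategy is to show that a driftless subordinator with infinite Lévy measure has no flat stretch on any rational interval, almost surely. Monotonicity then upgrades this from rational intervals to all intervals.

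\emph{Step 1: a single fixed interval.} Fix rationals $0\le a<b$. By stationarity of increments, $S_b-S_a$ has the law of $S_{b-a}$. Since $S$ is nondecreasing and starts at zero, it suffices to show $\mathbb P(S_h=0)=0$ for each fixed $h>0$. I would compute this from the Laplace transform:
\[
\mathbb E e^{-\lambda S_h}=\exp\Bigl\{-h\int_0^\infty(1-e^{-\lambda r})\,\nu_S(\mathrm dr)\Bigr\}.
\]
As $\lambda\to\infty$, the left side decreases to $\mathbb P(S_h=0)$ by dominated convergence. On the right, the integrand $1-e^{-\lambda r}$ increases to $1$ for every $r>0$. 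Monotone convergence gives $\int_0^\infty(1-e^{-\lambda r})\,\nu_S(\mathrm dr)\to\nu_S((0,\infty))=\infty$ by Assumption~\ref{ass:levy}. Hence the exponential tends to $0$, and $\mathbb P(S_h=0)=0$.

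A second route to the same conclusion uses the jump measure. Since $\nu_S((\epsilon,\infty))\uparrow\infty$ as $\epsilon\downarrow0$, the number of jumps of $S$ in $(a,b]$ is Poisson with infinite mean, hence almost surely infinite; in particular $S_b-S_a>0$. Equivalently, one may cite the atomlessness from \cite[Theorem~27.4]{Sato1999}, as was done in Lemma~\ref{lem:clock-hitting-stability}. Either argument works; the Laplace-transform computation is self-contained and I would present that one.

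\emph{Step 2: all intervals at once.} Set
\[
\widehat\Omega=\bigcap_{a<b,\ a,b\in\mathbb Q_{\ge0}}\{S_b-S_a>0\}.
\]
This is a countable intersection of probability-one events, so $\mathbb P(\widehat\Omega)=1$. Intersecting further with the full-measure event on which paths are nondecreasing loses nothing.

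Now take arbitrary reals $0\le a<b$. Choose rationals $p<q$ with $a\le p<q\le b$. Monotonicity of the path gives $S_b-S_a\ge S_q-S_p>0$ on $\widehat\Omega$. The consequence for finitely many random intervals $[A_k,B_k]$ is then immediate: it is a pointwise statement for each $\omega\in\widehat\Omega$, applied to the realized endpoints $A_k(\omega)<B_k(\omega)$, and a minimum of finitely many positive numbers is positive.

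\emph{Main point.} The only nontrivial step is ensuring that the exceptional null set does not depend on the interval. In particular, it must not depend on the random endpoints $A_k,B_k$, which may be correlated with $S$. The rational reduction together with monotonicity handles exactly this, so no measurability issue arises for the random intervals.
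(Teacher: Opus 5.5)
Your proposal is correct and follows essentially the same route as the paper. Both arguments use the Laplace transform, with $\nu_S((0,\infty))=\infty$ forcing $\mathbb P(S_q-S_p=0)=0$ for rational $p<q$, then take a countable intersection over rational pairs and use monotonicity to handle arbitrary real (hence random) endpoints; your dominated and monotone convergence justifications just spell out steps the paper leaves implicit.
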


\begin{proof}
For fixed $0\le p<q$, the increment $S_q-S_p$ has Laplace transform
\begin{equation*}
 \mathbb{E}[e^{-\lambda(S_q-S_p)}]
 =\exp\left\{-(q-p)\int_0^\infty(1-e^{-\lambda r})\nu_S(\mathrm dr)\right\}.
\end{equation*}
Since $\nu_S((0,\infty))=\infty$, the exponent $\to-\infty$ as $\lambda\to\infty$, hence $\mathbb{P}(S_q-S_p=0)=0$ for rational $p<q$. A countable intersection over rational pairs yields an a.s.\ event where all rational increments are strictly positive. For arbitrary $a<b$, pick rationals $a<p<q<b$; then
$
S_b-S_a\ge S_q-S_p>0$
by monotonicity, and the finite-family assertion follows pathwise.
\end{proof}

\begin{lemma}\label{lem:terminal-interval}
Fix $h,a,L,M>0$.  Let $f\in C^2([t-2h,t])$ satisfy
\begin{equation*}
 \sup_{r\in[t-2h,t]}|f''(r)|\le L,
 \qquad |f'(t)|\le M,
 \qquad \max\{|f(t)|,|f'(t)|\}\ge a.
\end{equation*}
Then there exist constants $\hat{\textbf{c}}=\hat{\textbf{c}}(h,a,L,M)>0$ and $\ell_*=\ell_*(h,a,L,M)>0$, and an interval $J\subset[t-2h,t]$ with $|J|\ge\ell_*$, such that
\begin{equation*}
 |f(r)|\ge \hat{\textbf{c}},\qquad r\in J.
\end{equation*}
\end{lemma}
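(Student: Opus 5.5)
We argue by a two-case analysis on which terminal datum is large, using only Taylor's formula with the bound on $f''$. Since $|f''|\le L$ on $[t-2h,t]$, Taylor's theorem at the right endpoint gives, for $r=t-s$ with $0\le s\le 2h$,
\begin{equation*}
 \bigl|f(t-s)-f(t)+sf'(t)\bigr|\le \tfrac12Ls^2,
 \qquad
 |f'(t-s)-f'(t)|\le Ls .
\end{equation*}
All constants will be explicit in $h,a,L,M$.

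\emph{Case 1: $|f(t)|\ge a/2$.} For $0\le s\le s_1$ we have $|f(t-s)|\ge a/2-Ms-\tfrac12Ls^2$. Choose
\[
s_1=\min\Bigl\{2h,\ \frac{a}{8M},\ \sqrt{\frac{a}{4L}}\Bigr\}.
\]
With this choice $Ms_1\le a/8$ and $\tfrac12Ls_1^2\le a/8$, so $|f|\ge a/4$ on $J=[t-s_1,t]$, an interval of length $s_1$.

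\emph{Case 2: $|f(t)|<a/2$.} Then the hypothesis $\max\{|f(t)|,|f'(t)|\}\ge a$ forces $|f'(t)|\ge a$. For $s\le s_2:=\min\{2h,a/(2L)\}$ we get $|f'(t-s)|\ge a/2$, so $f$ is strictly monotone on $[t-s_2,t]$ with slope of modulus at least $a/2$. A monotone function with slope at least $a/2$ on an interval of length $s_2$ can be smaller than $\epsilon$ in modulus only on a subinterval of length at most $4\epsilon/a$.

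Take $\epsilon=as_2/16$. The set $\{|f|<\epsilon\}\cap[t-s_2,t]$ is then contained in a single subinterval of length at most $s_2/4$. Its complement in $[t-s_2,t]$ consists of at most two intervals of total length at least $3s_2/4$, so one of them has length at least $3s_2/8$. Take $J$ to be that interval; on it $|f|\ge as_2/16$.

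\emph{Conclusion.} Set
\[
\hat{\textbf c}=\min\{a/4,\ as_2/16\},\qquad \ell_*=\min\{s_1,\ 3s_2/8\}.
\]
Both are positive and depend only on $(h,a,L,M)$, and in each case the chosen interval lies in $[t-2h,t]$.

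The only point requiring care is Case 2. There $f(t)$ may be close to zero, so $f$ may cross zero near $t$; the slope lower bound is what ensures that the crossing removes only a short piece of the interval. Everything else is routine, and I expect no real obstacle.
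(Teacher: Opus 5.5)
Your proof is correct and has the same two-case structure as the paper's: continuity near $t$ when $|f(t)|$ is large, and strict monotonicity near $t$, from $|f''|\le L$, when $|f'(t)|\ge a$. The paper uses the global Lipschitz bound $|f'|\le M+2hL$ in both cases; in the second case it spreads a large endpoint value of $|f|$ over a short adjacent interval. You instead use second-order Taylor in Case~1 and a sublevel-set argument in Case~2, which makes your Case~2 constants independent of $M$.
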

\begin{proof}
Let $M_L=M+2hL$. Then $|f'|\le M_L$ on $[t-2h,t]$. If $|f(t)|\ge a$, take
	\begin{equation*}
	\ell_1=\min\left\{h,\frac{a}{2M_L}\right\},
	\end{equation*}
	with $a/(2M_L)=+\infty$ when $M_L=0$.
	Then, for $r\in[t-\ell_1,t]$,
	$|f(r)-f(t)|\le M_L\ell_1\le a/2$, and hence $|f(r)|\ge a/2$.
	
	Assume $|f'(t)|\ge a$. Set $a/(2L)=+\infty$ if $L=0$, and
	\[
	\ell_2=\min\left\{h,\frac{a}{2L}\right\}.
	\]
	Then $f'$ keeps constant sign on $[t-\ell_2,t]$ with $|f'|\ge a/2$, so
	\[
	|f(t)-f(t-\ell_2)|\ge\frac{a\ell_2}{2}.
	\]
	Hence some $r_0\in\{t-\ell_2,t\}$ has $|f(r_0)|\ge a\ell_2/4$.  On the interval adjacent to $r_0$ inside
	$[t-\ell_2,t]$ of length
	\begin{equation*}
	\ell_3=\min\left\{\frac{\ell_2}{2},\frac{a\ell_2}{8M_L}\right\},
	\end{equation*}
	the bound $|f'|\le M_L$ gives $|f(r)|\ge a\ell_2/8$.
	Let
	\begin{equation*}
	\ell_*=\min\{\ell_1,\ell_3\}
	\text{~~~and~~~}
	\hat{\textbf{c}}=\min\left\{\frac a2,\frac{a\ell_2}{8}\right\},
	\end{equation*}
	then the desired conclusion holds.
\end{proof}

\begin{lemma}\label{lem:null-direction}
	Under Assumption \ref{ass:levy} and \eqref{eq:Q}, for fixed $0<s<t$ and almost every subordinator path and corresponding Brownian solution path,
\begin{equation}\label{eq:no-null-S}
 \inf_{\phi\in\mathbb{U}_{\alpha,m}}
 \langle\mathbb{M}_{s,t}\phi,\phi\rangle_{\mathbb{H}}>0
 \quad\text{for every }\alpha\in(0,1].
\end{equation}
In particular, if $\langle\mathbb{M}_{s,t}\phi,\phi\rangle=0$, then $\Pi_m\phi=0$.
\end{lemma}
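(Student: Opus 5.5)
We argue by contradiction combined with a compactness argument on the set $\mathbb{U}_{\alpha,m}$, working entirely on the terminal window $[t-2h,t]\subset[s,t]$ with $h<(t-s)/2$. Fix a full-measure event on which three things hold: the strict clock increments of Lemma~\ref{lem:strict-clock-increments}; the local bound $\Lambda:=\sup_{r\in[s,t]}\|J_{r,t}\|_{\mathbb L(\mathbb H)}<\infty$ from Proposition~\ref{prop:differentiability}; and $\sup_{r\le t}V(Z_r)<\infty$.

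By \eqref{eq:M-quadratic},
\[
 \langle\mathbb M_{s,t}\phi,\phi\rangle\ge q_{\min}^2\sum_{j=1}^m\int_{t-2h}^t|f_j(r)|^2\,\mathrm d\ell_r ,
\]
where $f_j$ is as in Lemma~\ref{lem:terminal-observability}. For $\phi\in\mathbb U_{\alpha,m}$, estimate \eqref{eq:finite-observability} shows that some index $j$ satisfies $\max\{|f_j(t)|,|f_j'(t)|\}\ge a:=(c_m\alpha^2/(2m))^{1/2}$. Moreover, \eqref{eq:terminal-data-f} gives $|f_j'(t)|\le 2\lambda_m=:M$.

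For the second derivative, \eqref{eq:fsecond-mild-identity} gives
\[
 |f_j''(r)|\le\Lambda\,\|\mathcal L_r(e_j,-Ae_j)\|_{\mathbb H}.
\]
Here $\mathcal L_r(e_j,-Ae_j)=(-\lambda_je_j,\lambda_j^2e_j-\lambda_je_j-P_0(g'(u_r)e_j))$, and $\|g'(u_r)e_j\|\le C\|e_j\|_{L^\infty}(1+\|u_r\|_{L^4}^2)$ is bounded on the energy-bounded path. This gives a pathwise bound $L$ that is uniform in $\phi$ with $\|\phi\|=1$. Lemma~\ref{lem:terminal-interval} then yields an interval $J_\phi\subset[t-2h,t]$ of length at least $\ell_*$ on which $|f_j|\ge\hat{\mathbf c}$. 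The constants $\ell_*$ and $\hat{\mathbf c}$ depend only on $(h,a,L,M)$, not on $\phi$. Hence
\[
 \langle\mathbb M_{s,t}\phi,\phi\rangle\ge q_{\min}^2\hat{\mathbf c}^2\,(\ell_{B}-\ell_{A}),\qquad J_\phi=[A,B].
\]

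The main obstacle is to make the clock increment bounded below uniformly in $\phi$, since $J_\phi$ moves with $\phi$. To handle this, partition $[t-2h,t]$ into finitely many deterministic subintervals $I_1,\dots,I_K$ of length $\ell_*/2$. Every $J_\phi$ contains one of them. Lemma~\ref{lem:strict-clock-increments} then gives
\[
 \langle\mathbb M\phi,\phi\rangle\ge q_{\min}^2\hat{\mathbf c}^2\min_k(\ell_{\sup I_k}-\ell_{\inf I_k})>0 ,
\]
uniformly on $\mathbb U_{\alpha,m}$. A subtle point is that $a$, $\ell_*$ and the partition depend on $\alpha$. We therefore take the countable intersection over rational $\alpha$ to get the statement for all $\alpha\in(0,1]$, using monotonicity of $\mathbb U_{\alpha,m}$ in $\alpha$.

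The final assertion follows directly. If $\Pi_m\phi\neq0$ with $\phi\neq 0$, normalize $\phi$; it then lies in $\mathbb U_{\alpha,m}$ for $\alpha=\|\Pi_m\phi\|/\|\phi\|$, so the quadratic form is strictly positive.
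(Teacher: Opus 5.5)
Your proposal is correct and follows the paper's proof essentially step for step. The shared steps are: a lower bound restricted to the terminal window $[t-2h,t]$, terminal observability combined with Lemma~\ref{lem:terminal-interval} under a $\phi$-independent pathwise bound on $f_j''$, a finite mesh that removes the dependence of $J_\phi$ on $\phi$, and Lemma~\ref{lem:strict-clock-increments}. The differences are cosmetic: you get the final assertion by normalizing $\phi$ rather than through the paper's direct full-support argument, and your bound $|f_j'(t)|\le 2\lambda_m$ tacitly assumes $\lambda_j\ge1$, so use $\lambda_m+\sqrt{\lambda_m}$ in general.
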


\begin{proof}
On the probability-one event of Lemma \ref{lem:strict-clock-increments}, if
$\langle\mathbb{M}_{s,t}\phi,\phi\rangle=0$, then \eqref{eq:M-quadratic} together with continuity of the $f_j$ and full support of $\mathrm d\ell$ forces $f_j(r)=0$ for all $r\in[s,t]$ and $j\le m$. Hence $f_j(t)=f_j'(t)=0$, so Lemma \ref{lem:terminal-observability} yields $\Pi_m\phi=0$.

For the uniform bound, choose
$0<h<(t-s)/2$.  For the fixed solution path, the continuity of $u$ in $H^1$, the boundedness of
$J_{r,t}$ on $[t-2h,t]$, and \eqref{eq:fsecond-mild-identity} give a constant
$L=L(\omega,s,t,m)<\infty$ such that
\begin{equation*}
 \max_{j\le m}\sup_{r\in[t-2h,t]}|f_j''(r)|\le L\|\phi\|_{\mathbb{H}}.
\end{equation*}
For unit $\phi$, \eqref{eq:terminal-data-f} also gives 
$|f_j'(t)|\le M_m:=\max_{j\le m}(\lambda_j+\sqrt{\lambda_j})$.  If
$\phi\in\mathbb{U}_{\alpha,m}$, \eqref{eq:finite-observability} supplies an index $j\le m$ with
\begin{equation}\label{eq:a-m-alpha}
 \max\{|f_j(t)|,|f_j'(t)|\}
 \ge a_{m,\alpha}:=\alpha\sqrt{\frac{c_m}{2m}}.
\end{equation}
By Lemma \ref{lem:terminal-interval} with parameters $(h,a_{m,\alpha},L,M_m)$, there exist constants $\hat{\textbf{c}},\ell_*>0$, independent of $\phi$, such that for each $\phi\in\mathbb{U}_{\alpha,m}$, some $j\le m$ and $J_\phi\subset[t-2h,t]$ of length $\ge\ell_*$ satisfy $|f_j|\ge \hat{\textbf{c}}$ on $J_\phi$.

Choose an integer $N$ so large that the equal mesh size $\delta=2h/N$ satisfies
$3\delta<\ell_*$.  Let $I_k=[t-2h+(k-1)\delta,t-2h+k\delta]$, $1\le k\le N$.  Every interval of length $\ge\ell_*$ contains a whole mesh cell $I_k$. Hence for every $\phi\in\mathbb{U}_{\alpha,m}$,
\begin{align*}
 \langle\mathbb{M}_{s,t}\phi,\phi\rangle_{\mathbb{H}}
 &\ge q_{\min}^2\hat{\textbf{c}}^2
 \min_{1\le k\le N}\bigl(\ell_{I_k^+}-\ell_{I_k^-}\bigr).
\end{align*}
The cells are finite in number and each has positive length, so Lemma \ref{lem:strict-clock-increments} makes the minimum strictly positive for the fixed clock path. This lower bound is independent of $\phi$, and the probability-one event is independent of $\alpha$. Equivalently, intersect first over rational $\alpha\in(0,1]$ and use $\mathbb{U}_{\alpha,m}\subset\mathbb{U}_{\alpha',m}$ for $0<\alpha'<\alpha$. Thus \eqref{eq:no-null-S} holds for all $\alpha\in(0,1]$.
\end{proof}

\begin{proposition}\label{prop:covariance}
Fix $R>0$ and $\alpha\in(0,1]$, and let $\tau_1$ be the first stopping time in \eqref{eq:stopping-times}.  Define
\begin{equation}\label{eq:r-eps}
 r(\varepsilon,\alpha,R,m)
 =\sup_{V(z_0)\le R}
 \mathbb{P}\left(
 \inf_{\phi\in\mathbb{U}_{\alpha,m}}
 \langle\mathbb{M}_{0,\tau_1}\phi,\phi\rangle_{\mathbb{H}}<\varepsilon
 \right).
\end{equation}
Then
\begin{equation}\label{eq:r-to-zero}
 \lim_{\varepsilon\to0}r(\varepsilon,\alpha,R,m)=0.
\end{equation}
\end{proposition}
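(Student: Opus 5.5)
The plan is to make the qualitative argument of Lemma~\ref{lem:null-direction} quantitative. Every constant in that argument should be controlled by a finite list of random quantities. Each of these quantities will have tails that are uniform over $\{V(z_0)\le R\}$, or will depend only on the clock and hence not on $z_0$ at all. The terminal time is the random stopping time $t=\tau_1$, and we work on the terminal window $[\tau_1-2h,\tau_1]$. The functions $f_j(r)=\langle K_{r,\tau_1}\phi,(0,e_j)\rangle_{\mathbb H}$ are defined exactly as in Lemma~\ref{lem:terminal-observability}. Since $\tau_1$ is clock-measurable, the identities \eqref{eq:terminal-data-f}--\eqref{eq:fsecond-mild-identity} apply pathwise on the frozen clock.

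\emph{Step 1: good event.} For parameters $h_0>0$, $T>2h_0$, $M\ge1$ we introduce
\[
E=\{2h_0\le\tau_1\le T\}\cap\Bigl\{\mathbb J_0\le M,\ \sup_{0\le r\le\tau_1}V(Z_r)\le M\Bigr\}.
\]
The event $\{2h_0\le \tau_1\le T\}$ depends only on $S$. Moreover $\tau_1>0$ a.s. by \eqref{eq:clock-block-bound}, since the threshold $\chi$ is reached by continuous upward motion. Hence its complement has small probability for $h_0$ small and $T$ large, by \eqref{eq:tau-exp}. For the state and variational part, \eqref{eq:stopped-sup} with Chebyshev bounds the energy supremum uniformly in $V(z_0)\le R$. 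For $\mathbb J_0$ we use \eqref{eq:variational-block-moment}:
\[
\mathbb P(\mathbb J_0>M)\le \mathbb P(\Xi_0>K)+e^{cK}M^{-p}\,C e^{\eta_V R}.
\]
We first choose $K$ large using \eqref{eq:Xi-moment}, and then $M$ large. This yields $\sup_{V(z_0)\le R}\mathbb P(E^c)\to0$ in the stated order of limits.

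\emph{Step 2: deterministic lower bound on $E$.} We take $h=h_0/2$. By \eqref{eq:fsecond-mild-identity},
\[
|f_j''(r)|\le \|J_{r,\tau_1}\|\,\|\mathcal L_r(e_j,-Ae_j)\|_{\mathbb H}.
\]
The second component of $\mathcal L_r(e_j,-Ae_j)$ is $\lambda_j^2 e_j-\lambda_j e_j-P_0(g'(u_r)e_j)$. Since $e_j$ is smooth and bounded, \eqref{eq:g-growth} gives $\|P_0(g'(u_r)e_j)\|\le C_m(1+\|u_r\|_{L^4}^2)$. Together with \eqref{eq:V-equivalence}, this bounds $\|\mathcal L_r(e_j,-Ae_j)\|_{\mathbb H}$ by $C_m(1+M)$. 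Hence on $E$ we have $L\le C_mM^2$ for unit $\phi$. The bound $|f_j'(\tau_1)|\le M_m$ is deterministic. Lemma~\ref{lem:terminal-interval}, with parameters $(h,a_{m,\alpha},C_mM^2,M_m)$ and $a_{m,\alpha}$ from \eqref{eq:a-m-alpha}, then produces deterministic constants $\hat{\mathbf c},\ell_*>0$ depending only on $(h_0,\alpha,M,m)$. Following the proof of Lemma~\ref{lem:null-direction}, every $\phi\in\mathbb U_{\alpha,m}$ has an interval $J_\phi\subset[\tau_1-2h,\tau_1]$ of length at least $\ell_*$ on which some $|f_j|\ge\hat{\mathbf c}$. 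This gives
\[
\langle\mathbb M_{0,\tau_1}\phi,\phi\rangle\ge q_{\min}^2\hat{\mathbf c}^2\,\rho(\ell_*,T),
\qquad
\rho(\delta,T):=\inf_{0\le a\le T-\delta}(S_{a+\delta}-S_a).
\]
Using the clock modulus $\rho$ avoids the random location of the mesh cells near $\tau_1$.

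\emph{Step 3: the clock modulus.} It remains to show that $\rho(\delta,T)>0$ a.s. for fixed $\delta,T$, and this is the step needing care. Every interval $[a,a+\delta]\subset[0,T]$ contains a cell of the fixed rational grid of mesh $\delta/3$. Therefore $\rho(\delta,T)$ is at least the minimum of finitely many grid increments, and each of these is positive on $\widehat\Omega$ by Lemma~\ref{lem:strict-clock-increments}. Hence $\mathbb P(\rho(\delta,T)<\varepsilon')\to0$ as $\varepsilon'\to0$, independently of $z_0$.

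\emph{Step 4: conclusion.} Combining the steps gives
\[
r(\varepsilon,\alpha,R,m)\le\sup_{V(z_0)\le R}\mathbb P(E^c)+\mathbb P\bigl(\rho(\ell_*,T)<\varepsilon/(q_{\min}^2\hat{\mathbf c}^2)\bigr).
\]
Given $\epsilon_0>0$, we fix $h_0,T,K,M$ so that the first term is below $\epsilon_0$; this freezes $\ell_*$ and $\hat{\mathbf c}$. Letting $\varepsilon\to0$ then makes the second term vanish, which proves \eqref{eq:r-to-zero}.

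The main obstacle is Step 2. It must be checked that the $C^2$ bound on $f_j$ over the window, specifically $\sup_r\|J_{r,\tau_1}\|$ together with the $H^1$-size of $u$, is controlled by $\mathbb J_0$ and the energy supremum alone, with no extra path-dependent constant.
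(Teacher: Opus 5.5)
Your proposal is correct and follows the paper's proof: a good event bounding $\tau_1$ and the energy supremum, a deterministic $C^2$ bound on the $f_j$ fed into Lemma~\ref{lem:terminal-interval}, and a clock-only lower bound made positive by Lemma~\ref{lem:strict-clock-increments}. The differences are cosmetic. The paper gets $\tau_1\ge\chi/c_V$ deterministically from \eqref{eq:clock-block-bound}, bounds $\sup_r\|J_{r,\tau_1}\|$ on the good event directly via \eqref{eq:J-subcritical} rather than through tails of $\mathbb J_0$, and uses mesh cells translated to $\tau_1$ instead of your fixed-grid modulus $\rho(\ell_*,T)$; the ``obstacle'' you flag in Step~2 is already resolved, since $\mathbb J_0$ dominates $\sup_{r\le\tau_1}\|J_{r,\tau_1}\|$ by definition.
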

\begin{proof}
Fix $0<h<\chi/(4c_V)$; then $\tau_1\ge\chi/c_V>4h$.
For $t=\tau_1$ and unit $\phi$, Lemma~\ref{lem:terminal-observability} gives
\begin{equation}\label{eq:fsecond-bound}
 \max_{j\le m}\sup_{r\in[t-2h,t]}|f_j''(r)|
 \le C_m\left(1+\sup_{r\le t}V(Z_r)\right)
       \sup_{r\in[t-2h,t]}\|J_{r,t}\|.
\end{equation}

Let $F_M=\{\tau_1\le M,\ \sup_{r\le\tau_1}V(Z_r)\le M\}$.
The moment bounds \eqref{eq:tau-exp} and \eqref{eq:stopped-sup} imply
\[
 \sup_{V(z)\le R}\mathbb P_z(F_M^c)\longrightarrow0.
\]
On $F_M$, \eqref{eq:J-subcritical} bounds the right-hand side of
\eqref{eq:fsecond-bound} by a deterministic $L=L(M,m,h)$.

Also $|f_j'(t)|\le M_m$ and, for every $\phi\in\mathbb U_{\alpha,m}$,
some $j$ satisfies \eqref{eq:a-m-alpha}.
It follows from Lemma~\ref{lem:terminal-interval} that there are deterministic $\hat{\textbf{c}},\ell_*>0$
such that $|f_j|\ge \hat{\textbf{c}}$ on an interval of length $\ell_*$ in $[t-2h,t]$.
Partition $[-2h,0]$ into finitely many cells of mesh less than $\ell_*/3$.
Each such interval contains a translated cell, so on $F_M$,
\begin{equation}\label{eq:cov-clock-lower}
 \inf_{\phi\in\mathbb U_{\alpha,m}}\langle\mathbb M_{0,t}\phi,\phi\rangle
 \ge q_{\min}^2 \hat{\textbf{c}}^2\min_{I\text{ a reference cell}}
             (S_{t+I^+}-S_{t+I^-})=:Z_M.
\end{equation}
The random time $t=\tau_1$ and $Z_M$ depend only on the clock.
Every cell has positive length; Lemma~\ref{lem:strict-clock-increments} gives
$Z_M>0$ almost surely, independently of the initial state. Thus
\[
 r(\varepsilon,\alpha,R,m)
 \le\sup_{V(z)\le R}\mathbb P_z(F_M^c)+\mathbb P(Z_M<\varepsilon).
\]
First let $\varepsilon\downarrow0$ and then $M\to\infty$.
\end{proof}

\section{Residual control and the e-property}\label{sec:control}

\subsection{High- and low-mode estimates}
Let $\Pi_m^\perp=I-\Pi_m$.  For $\rho=(\rho_1,\rho_2)$, we define
\begin{equation}\label{eq:res-energy}
 \mathcal{W}(\rho)=\frac12\bigl(\|A^{1/2}\rho_1\|^2+\|\rho_2\|^2\bigr)
 +\delta_0(\rho_1,\rho_2),
\end{equation}
with $\delta_0>0$ small enough, and write
$\widetilde\rho=\Pi_m\rho$ and $\widehat\rho=\Pi_m^\perp\rho$.

\begin{lemma}\label{lem:high-squeeze}
Let $\rho_r=J_{s,r}\rho_s$ be a solution of the homogeneous linearized wave equation on $[s,t]$. There exist constants $\mathfrak{c},C>0$ and a sequence $a_m\to0$ with
\begin{equation}\label{eq:am-rate}
 a_m\le C\bigl(\lambda_{m+1}^{-1/5}+\lambda_{m+1}^{-1/4}+\lambda_{m+1}^{-1/2}\bigr),
\end{equation}
such that
\begin{align}
 \frac{\,\mathrm{d}}{\,\mathrm{d} r}\mathcal{W}(\widehat\rho_r)
 &+\mathfrak c\mathcal{W}(\widehat\rho_r)
 +\frac14\|A^{1/2}\widehat\rho_{2,r}\|^2\le a_m\bigl(1+\|u_r\|_{L^4}^{16/5}\bigr)\mathcal{W}(\widehat\rho_r)
 +\mathcal{C}_m(u_r)\mathcal{W}(\widetilde\rho_r),
 \label{eq:high-projection-diff}
\end{align}
where $\mathcal{C}_m(u)$ is locally bounded for $u\in H^1$.  In particular, if the low component vanishes on the whole interval, then
\begin{align}
 \mathcal{W}(\widehat\rho_t)
 \le C\mathcal{W}(\widehat\rho_s)
 \exp\left\{-\mathfrak c(t-s)+a_m\int_s^t
  (1+\|u_r\|_{L^4}^{16/5})\mathrm{d} r\right\}.
 \label{eq:high-squeeze}
\end{align}
On a stopping block,
\begin{equation}\label{eq:high-weight-absorb}
 a_m\|u_r\|_{L^4}^{16/5}
 \le \frac{c_*\kappa\alpha_2}{8}\,\omega_n(r)\|u_r\|_{L^4}^4
 +C(\kappa) a_m^5\omega_n(r)^{-4}.
\end{equation}
\end{lemma}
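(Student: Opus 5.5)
The plan is a modified-energy computation for the high-mode component of the linearized equation, mirroring the proof of Lemma~\ref{lem:variational-subcritical} but exploiting the spectral gap above $\lambda_m$. Write $\rho_r=(h,k)$ with $\partial_r h=k$ and $\partial_r k=-Ak-Ah-P_0(g'(u_r)h)$. Apply $\Pi_m^\perp$; since $A$ commutes with $P_m$, the high component $(\widehat h,\widehat k)$ satisfies the same linear wave system with forcing $-(I-P_m)P_0(g'(u)h)$, and $h=\widetilde h+\widehat h$. Differentiating $\mathcal W(\widehat\rho)$ gives the linear part
\[
-\|A^{1/2}\widehat k\|^2-\delta_0\|A^{1/2}\widehat h\|^2+\delta_0\|\widehat k\|^2-\delta_0(A^{1/2}\widehat h,A^{1/2}\widehat k).
\]
Using $\|\widehat k\|^2\le\lambda_{m+1}^{-1}\|A^{1/2}\widehat k\|^2$ and Young, and taking $\delta_0$ small (independently of $m$, as in Lemma~\ref{lem:basic-energy}), this is at most $-\tfrac12\|A^{1/2}\widehat k\|^2-\mathfrak c\mathcal W(\widehat\rho)$ with $\mathfrak c$ independent of $m$. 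One copy of $\tfrac14\|A^{1/2}\widehat k\|^2$ is kept on the left, and the other is reserved for absorbing the nonlinear terms.

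The nonlinear term is $-(g'(u)h,\widehat k+\delta_0\widehat h)$. I split it by \eqref{eq:g-prime-remainder} into $g'(0)$ and the remainder, and split $h=\widetilde h+\widehat h$.
\begin{itemize}
\item Every term containing $\widetilde h$ is bounded by $C(1+\|u\|_{H^1}^2)\|\widetilde h\|_{H^1}(\|\widehat k\|_{L^4}+\|\widehat h\|_{L^4})$. Since $\widetilde h$ lives in $H_m$, Young gives $\epsilon\|A^{1/2}\widehat k\|^2+\epsilon\mathcal W(\widehat\rho)+\mathcal C_m(u)\mathcal W(\widetilde\rho)$, with $\mathcal C_m$ locally bounded on $H^1$.
\item The term $g'(0)(\widehat h,\widehat k)$ is bounded by $|g'(0)|\lambda_{m+1}^{-1/2}\|A^{1/2}\widehat h\|\|\widehat k\|$, which is $\le C\lambda_{m+1}^{-1/2}\mathcal W(\widehat\rho)$.
\item For the term with $|u|+|u|^2$, bound the quadratic part as in Lemma~\ref{lem:variational-subcritical}: $\int|u|^2|\widehat h||\widehat k|\le\|u\|_{L^4}^2\|\widehat h\|_{L^4}\|\widehat k\|_{L^4}$. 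The gain comes from inverse Poincar\'e on high modes, $\|\widehat k\|\le\lambda_{m+1}^{-1/2}\|\widehat k\|_{H^1}$. Interpolating, $\|\widehat k\|_{L^4}\le C\|\widehat k\|^{1/4}\|\widehat k\|_{H^1}^{3/4}$, and Young with exponents $8/5$ and $8/3$ yields
\[
\epsilon\|\widehat k\|_{H^1}^2+C\lambda_{m+1}^{-1/5}\|u\|_{L^4}^{16/5}\|\widehat h\|_{H^1}^2 .
\]
Alternatively, one keeps a small factor on the $\|\widehat k\|^{1/4}$ piece to extract a power of $\lambda_{m+1}^{-1/8}$, which Young's inequality converts to $\lambda_{m+1}^{-1/5}$. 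The linear-in-$u$ part gives, similarly, factors $\lambda_{m+1}^{-1/4}$ times $(1+\|u\|_{L^4}^{16/5})\mathcal W$.
\end{itemize}
The $\delta_0\widehat h$ pairings are easier, since $\widehat h\in H^1$ with $\|\widehat h\|\le\lambda_{m+1}^{-1/2}\|\widehat h\|_{H^1}$. Collecting terms defines $a_m$ as a sum of these three powers, which gives \eqref{eq:am-rate} and \eqref{eq:high-projection-diff}. The main obstacle is exactly this step: keeping the $\|u\|_{L^4}$ power at $16/5$ while extracting a genuine negative power of $\lambda_{m+1}$. I will first try placing the small factor on the low-regularity $L^2$ part of $\widehat k$ before applying Young.

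If $\widetilde\rho\equiv0$ on $[s,t]$, I drop the dissipation term and the $\mathcal C_m$ term and apply Gronwall to \eqref{eq:high-projection-diff}. This gives \eqref{eq:high-squeeze}; the constant $C$ only accounts for the equivalence of $\mathcal W$ with the norm. Finally, \eqref{eq:high-weight-absorb} is Young's inequality with exponents $5/4$ and $5$. Write
\[
a_m\|u\|_{L^4}^{16/5}=\bigl(\omega_n\|u\|_{L^4}^4\bigr)^{4/5}\cdot a_m\omega_n^{-4/5};
\]
then the second factor raised to the fifth power gives $a_m^5\omega_n^{-4}$, with a constant depending on $c_*\kappa\alpha_2/8$.
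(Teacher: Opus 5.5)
\textbf{There is a genuine gap at exactly the step you flag as the main obstacle.} The placement you propose to try first cannot work.

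Any spectral gain taken from the velocity turns an energy factor into a dissipation factor. Write
$\|\widehat k\|^{1/4}\le\lambda_{m+1}^{-\theta/8}\|\widehat k\|^{(1-\theta)/4}\|A^{1/2}\widehat k\|^{\theta/4}$
and keep $\|\widehat h\|_{L^4}\le C\|\widehat h\|_{H^1}$. Then the dissipative power in $\|u\|_{L^4}^2\|\widehat h\|_{L^4}\|\widehat k\|_{L^4}$ becomes $(3+\theta)/4$. Absorbing it into $\epsilon\|A^{1/2}\widehat k\|^2$ requires Young with exponent $8/(3+\theta)$, and leaves
$C\lambda_{m+1}^{-\theta/(5-\theta)}\|u\|_{L^4}^{16/(5-\theta)}\mathcal W(\widehat\rho)$.

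So the exponent $16/5$ survives only for $\theta=0$, that is, with no gain at all. The two cases you describe give:
\begin{itemize}
\item full inverse Poincar\'e on $\widehat k$ ($\theta=1$): $\lambda_{m+1}^{-1/4}\|u\|_{L^4}^4$, which is the coercive power;
\item the half split ($\theta=1/2$): $\lambda_{m+1}^{-1/9}\|u\|_{L^4}^{32/9}$. This is what the paper uses in Proposition~\ref{prop:high-branch} for the low-to-high transfer, where no displacement gain is available.
\end{itemize}
In particular, your claim that a $\lambda_{m+1}^{-1/8}$ on $\|\widehat k\|^{1/4}$ ``converts to $\lambda_{m+1}^{-1/5}$'' does not follow from the inequalities you describe. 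Neither does your displayed bound with $\|\widehat h\|_{H^1}^2$ and exponents $8/5$, $8/3$.

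\textbf{The missing idea is to take the gain from the high displacement.} Its $H^1$ norm is part of $\mathcal W$, so the gain costs no dissipation:
$\|\widehat h\|_{L^4}\le C\|\widehat h\|^{1/4}\|A^{1/2}\widehat h\|^{3/4}\le C\lambda_{m+1}^{-1/8}\|A^{1/2}\widehat h\|$.
Meanwhile the velocity keeps the plain interpolation $\|\widehat k\|_{L^4}\le C\|\widehat k\|^{1/4}\|A^{1/2}\widehat k\|^{3/4}$. Young with exponents $8/3$ (on $\|A^{1/2}\widehat k\|^{3/4}$) and $8/5$ then puts the $\lambda_{m+1}^{-1/8}$ into the factor raised to $8/5$. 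This gives
$\tfrac18\|A^{1/2}\widehat k\|^2+C\lambda_{m+1}^{-1/5}\|u\|_{L^4}^{16/5}\|A^{1/2}\widehat h\|^{8/5}\|\widehat k\|^{2/5}$,
and the last product is bounded by $C\mathcal W(\widehat\rho)$.

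You already use the displacement Poincar\'e inequality for the $\delta_0$-pairings; it has to be used in the main $\widehat k$-pairing as well. With this change, the rest of your plan goes through and matches the paper's argument:
\begin{itemize}
\item the linear dissipation with $\mathfrak c$ independent of $m$;
\item the $g'(0)$ term and the linear-in-$u$ term, which are harmless because of their low $u$-power;
\item the low-mode terms collected into $\mathcal C_m(u)\mathcal W(\widetilde\rho)$;
\item the Gronwall step;
\item the $5/4$--$5$ Young inequality for \eqref{eq:high-weight-absorb}.
\end{itemize}
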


\begin{proof}
For the high--high contribution, by \eqref{eq:g-prime-remainder},
\begin{equation*}
 g'(u)=g'(0)+\bigl(g'(u)-g'(0)\bigr),
 \qquad |g'(u)-g'(0)|\le C(|u|+|u|^2).
\end{equation*}
Setting $h=\widehat\rho_1$, $k=\widehat\rho_2$, the quadratic part satisfies
\begin{equation*}
 \int_{\mathbb{T}^3}|u|^2|h||k|\mathrm{d} x
 \le \|u\|_{L^4}^2\|h\|_{L^4}\|k\|_{L^4}.
\end{equation*}
For the high displacement, 
\begin{equation*}
 \|h\|_{L^4}
 \le C\|h\|^{1/4}\|A^{1/2}h\|^{3/4}
 \le C\lambda_{m+1}^{-1/8}\|A^{1/2}h\|,
\end{equation*}
and for the velocity retain the energy--dissipation interpolation,
\begin{equation*}
 \|k\|_{L^4}\le C\|k\|^{1/4}\|A^{1/2}k\|^{3/4}.
\end{equation*}
By Young's inequality with exponents $8/3$ and $8/5$, we can derive
\begin{align*}
 \int |u|^2|h||k|
 &\le \frac18\|A^{1/2}k\|^2
 +C\lambda_{m+1}^{-1/5}\|u\|_{L^4}^{16/5}
   \|A^{1/2}h\|^{8/5}\|k\|^{2/5}\\
 &\le \frac18\|A^{1/2}k\|^2
 +C\lambda_{m+1}^{-1/5}\|u\|_{L^4}^{16/5}
   \mathcal{W}(\widehat\rho).
\end{align*}
For the linear part, we have
\begin{align*}
 \int_{\mathbb{T}^3}|u||h||k|\mathrm{d} x
 &\le \|u\|_{L^4}\|h\|\|k\|_{L^4}\\
 &\le C\lambda_{m+1}^{-1/2}\|u\|_{L^4}\|A^{1/2}h\|
       \|k\|^{1/4}\|A^{1/2}k\|^{3/4}\\
 &\le \frac18\|A^{1/2}k\|^2
 +C\lambda_{m+1}^{-4/5}(1+\|u\|_{L^4}^{8/5})\mathcal{W}(\widehat\rho),
\end{align*}
which is absorbed by the $\lambda_{m+1}^{-1/5}$ term in \eqref{eq:am-rate} when $\lambda_{m+1}\ge1$.  The constant part $g'(0)$ contributes $C\lambda_{m+1}^{-1/2}\mathcal{W}(\widehat\rho)$. The term from $g'(u)-g'(0)$ involves linear and quadratic powers of $u$, and interpolation bounds it by $C\lambda_{m+1}^{-1/4}(1+\|u\|_{L^4}^2)\mathcal{W}(\widehat\rho)$. Collecting the $\widetilde\rho$-terms in $\mathcal{C}_m(u)\mathcal{W}(\widetilde\rho)$ yields \eqref{eq:high-projection-diff} and \eqref{eq:am-rate}.

For \eqref{eq:high-weight-absorb}, write
\begin{equation*}
 a_m\|u\|_{L^4}^{16/5}
 =\bigl(\omega_n\|u\|_{L^4}^{4}\bigr)^{4/5}
  \bigl(a_m\omega_n^{-4/5}\bigr)
\end{equation*}
and apply Young's inequality with exponents $5/4$ and $5$. This yields the quartic term and the remainder $C(\kappa)a_m^5\omega_n^{-4}$. Since $1\le\omega_n^{-8}$, the constant $a_m$ integrates to at most $a_mX_n$, and the $\omega_n^{-4}$ remainder is also bounded by $X_n$.
\end{proof}

\subsection{Tikhonov control and residual contraction}\label{subsec:residual-control}

Fix a unit initial tangent direction $\xi\in\mathbb{H}$, $\|\xi\|_{\mathbb{H}}=1$, and set $\rho_0=\xi$.  
For any $n\ge0$, define
\begin{equation*}
 R_{n,\beta}=\beta(\mathbb{M}_{\tau_n,\tau_{n+1}}+\beta I)^{-1}.
\end{equation*}
For even $n$, define the controlled Brownian direction on $[S_{\tau_n},S_{\tau_{n+1}}]$ by
\begin{equation*}
 h_n
 =\mathbb{A}_{\tau_n,\tau_{n+1}}^*
   (\mathbb{M}_{\tau_n,\tau_{n+1}}+\beta I)^{-1}
   J_{\tau_n,\tau_{n+1}}\rho_{\tau_n}.
\end{equation*}

For $t\in[\tau_n,\tau_{n+1}]$, let
\begin{equation*}
 \rho_t
 =J_{\tau_n,t}\rho_{\tau_n}
 -\mathbb{A}_{\tau_n,t}
   \bigl(h_n\mathbf{1}_{[S_{\tau_n},S_t]}\bigr),
\end{equation*}
while on the following uncontrolled block $[\tau_{n+1},\tau_{n+2}]$, set
\begin{equation*}
 \rho_t=J_{\tau_{n+1},t}\rho_{\tau_{n+1}}.
\end{equation*}
At the controlled endpoint, by 
$I-\mathbb{M}(\mathbb{M}+\beta I)^{-1}=\beta(\mathbb{M}+\beta I)^{-1}$ we have
$
 \rho_{\tau_{n+1}}
 =R_{n,\beta}J_{\tau_n,\tau_{n+1}}\rho_{\tau_n}$,
and hence
\begin{equation}\label{eq:res-recursion}
 \rho_{\tau_{n+2}}
 =J_{\tau_{n+1},\tau_{n+2}}R_{n,\beta}
  J_{\tau_n,\tau_{n+1}}\rho_{\tau_n},
 \qquad n\in2\mathbb{N}_0.
\end{equation}
The finite control through pair $N$ is
\begin{equation*}
 h^{[N]}=\sum_{j=0}^{N}h_{2j}
 \mathbf{1}_{[S_{\tau_{2j}},S_{\tau_{2j+1}}]},
 \qquad
 \rho_t^{[N]}=J_{0,t}\xi-\mathbb{A}_{0,t}
 \bigl(h^{[N]}\mathbf{1}_{[0,S_t]}\bigr).
\end{equation*}
The recursive residual agrees with $\rho^{[N]}$ up to $\tau_{2N+2}$.

\begin{lemma}\label{lem:regularized-low}
Let $p\ge2$, $\kappa\in(0,\kappa_0]$, $\chi>1$, and $\delta_1\in(0,1)$.  For fixed $m$, there exists
$\beta=\beta(p,\kappa,\chi,\delta_1,m)>0$ such that
\begin{equation}\label{eq:regularized-low}
 \mathbb{E}\left[
 \|\Pi_mR_{n,\beta}\|_{\mathbb{L}(\mathbb{H})}^p
 \mid\mathcal{F}_{\tau_n}
 \right]
 \le \delta_1 e^{\tilde{\eta} V(Z_{\tau_n})},
 \qquad n\ge0.
\end{equation}
\end{lemma}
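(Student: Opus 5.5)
Since $\mathbb M=\mathbb M_{\tau_n,\tau_{n+1}}$ is self-adjoint and nonnegative, $R_{n,\beta}=\beta(\mathbb M+\beta I)^{-1}$ has norm at most one. By the strong Markov property at $\tau_n$ (the block $[\tau_n,\tau_{n+1}]$ restarted from $Z_{\tau_n}$ has the law of $[0,\tau_1]$ started there, since the clock increments are i.i.d. and independent of $\mathcal F_{\tau_n}$), it suffices to prove the case $n=0$:
\[
\mathbb E_{z}\|\Pi_mR_{0,\beta}\|^p\le\delta_1e^{\tilde\eta V(z)}
\]
for every $z\in\mathbb H$. We split according to whether $V(z)\le R$ or $V(z)>R$, with $R$ chosen first.

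\emph{Large energies.} Because $\|\Pi_mR\|\le1$, the left-hand side is at most $1$. Choose $R$ with $e^{\tilde\eta R}\ge1/\delta_1$. Then for $V(z)>R$ the bound holds trivially, since $1\le\delta_1e^{\tilde\eta V(z)}$. This is exactly where the exponential weight on the right-hand side is used, so no uniformity over unbounded energy sets is needed.

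\emph{Bounded energies.} Fix $\phi\in\mathbb H$ with $\|\phi\|=1$ and set $\psi=R_{0,\beta}^*\phi=R_{0,\beta}\phi$. We have $\|\Pi_mR\|=\sup_{\|\phi\|=1}\|\Pi_m\psi\|$, and $\|\psi\|\le1$. From $(\mathbb M+\beta)\psi=\beta\phi$ we get
\[
\langle\mathbb M\psi,\psi\rangle+\beta\|\psi\|^2=\beta\langle\phi,\psi\rangle\le\beta,
\]
hence $\langle\mathbb M\psi,\psi\rangle\le\beta$.

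Now fix $\alpha=\tfrac12\delta_1^{1/p}$ and work on the event
\[
E_\varepsilon=\Bigl\{\inf_{\mathbb U_{\alpha,m}}\langle\mathbb M\phi',\phi'\rangle\ge\varepsilon\Bigr\}.
\]
Suppose $\|\Pi_m\psi\|\ge\alpha$. Put $\phi'=\psi/\|\psi\|$; then $\|\Pi_m\phi'\|\ge\alpha$, so $\phi'\in\mathbb U_{\alpha,m}$. Therefore
\[
\varepsilon\|\psi\|^2\le\langle\mathbb M\psi,\psi\rangle\le\beta,
\]
and since $\|\psi\|\ge\|\Pi_m\psi\|\ge\alpha$, this gives $\varepsilon\alpha^2\le\beta$. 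Taking $\beta<\varepsilon\alpha^2$ makes this impossible. Hence on $E_\varepsilon$ we have $\|\Pi_mR\|\le\alpha$.

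It follows that
\[
\mathbb E_z\|\Pi_mR\|^p\le\alpha^p+r(\varepsilon,\alpha,R,m).
\]
By Proposition~\ref{prop:covariance} we can choose $\varepsilon$ so that $r(\varepsilon,\alpha,R,m)\le\delta_1/2$. Since $\alpha^p=2^{-p}\delta_1\le\delta_1/4$ for $p\ge2$, the total is at most $\delta_1\le\delta_1e^{\tilde\eta V(z)}$.

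\emph{Choice of constants.} The order is $R\to\alpha\to\varepsilon\to\beta$, and each depends only on $(p,\kappa,\chi,\delta_1,m)$ and $\tilde\eta$, as required.

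\emph{Measurability.} One technical point remains. The supremum over $\phi$ can be taken over a countable dense set, so $\|\Pi_mR\|$ is measurable. Conditioning at $\tau_n$ reduces to the unconditioned case through the shifted initial state $Z_{\tau_n}$ and Proposition~\ref{prop:covariance}, which is uniform over $V(z_0)\le R$.
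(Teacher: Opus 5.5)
Your proposal is correct and follows the paper's proof. It uses the same split at an energy level $R$ with $e^{\tilde\eta R}\ge 1/\delta_1$, relying on $\|R_{n,\beta}\|\le1$ above that level, and the same covariance event on $\mathbb U_{\alpha,m}$, controlled through the strong Markov property and Proposition~\ref{prop:covariance}, with the same order of choices $\alpha\to\varepsilon\to\beta$. The differences are cosmetic: you bound the quadratic form directly from $(\mathbb M+\beta I)\psi=\beta\phi$ and obtain $\|\Pi_mR_{n,\beta}\|\le\alpha$ on the good event once $\beta<\varepsilon\alpha^2$, whereas the paper uses the spectral bound $\|\mathbb M^{1/2}R_\beta\|\le\sqrt\beta/2$ to get the bound $\alpha+\tfrac12\sqrt{\beta/\varepsilon}$.
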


\begin{proof}
For $\varepsilon,\alpha\in(0,1]$, let
\begin{equation*}
 A_{\varepsilon,\alpha}
 =\left\{\inf_{\phi\in\mathbb{U}_{\alpha,m}}
 \langle\mathbb{M}_{\tau_n,\tau_{n+1}}\phi,\phi\rangle\ge\varepsilon\right\}.
\end{equation*}
Write
$\mathbb{M}=\mathbb{M}_{\tau_n,\tau_{n+1}}$,
 $R_\beta=\beta(\mathbb{M}+\beta I)^{-1}$,
$\eta=R_\beta\xi$.
Clearly the case $\eta=0$ is trivial. If $\|\Pi_m\eta\|<\alpha\|\eta\|$, then $\|R_\beta\|\le1$ gives
\begin{equation*}
 \|\Pi_m\eta\|\le\alpha\|\xi\|.
\end{equation*}
Otherwise, $\eta/\|\eta\|\in\mathbb{U}_{\alpha,m}$ and, on
$A_{\varepsilon,\alpha}$,
\begin{equation*}
 \varepsilon\|\eta\|^2\le\langle\mathbb{M}\eta,\eta\rangle
 =\|\mathbb{M}^{1/2}R_\beta\xi\|^2.
\end{equation*}
The spectral theorem gives
\begin{equation*}
 \|\mathbb{M}^{1/2}R_\beta\|
 =\sup_{\lambda\ge0}\frac{\sqrt\lambda\,\beta}{\lambda+\beta}
 \le\frac{\sqrt\beta}{2},
\end{equation*}
and hence
\begin{equation*}
 \|\Pi_m\eta\|\le\|\eta\|
 \le\frac12\sqrt{\frac\beta\varepsilon}\,\|\xi\|.
\end{equation*}
Therefore, on $A_{\varepsilon,\alpha}$, 
\begin{equation}\label{eq:spectral-regularization}
 \|\Pi_mR_{n,\beta}\xi\|
 \le \left(\alpha+\frac12\sqrt{\frac\beta\varepsilon}\right)\|\xi\|,
\end{equation}
while $\|R_{n,\beta}\|\le1$ everywhere. Let $R_*=\frac1{\tilde{\eta}}\log\frac{2}{\delta_1}$. On $\{V(Z_{\tau_n})>R_*\}$, the trivial bound yields
\begin{equation*}
 \mathbb{E}\left[\|\Pi_mR_{n,\beta}\|^p\mid\mathcal{F}_{\tau_n}\right]
 \le1\le\delta_1e^{\tilde{\eta} V(Z_{\tau_n})}.
\end{equation*}

It remains to verify \eqref{eq:regularized-low} on $\{V(Z_{\tau_n})\le R_*\}$.  By the strong Markov property at $\tau_n$ for the future Brownian and subordinator increments, Proposition \ref{prop:covariance} gives
\begin{equation*}
 \mathbb{P}\left(A_{\varepsilon,\alpha}^c\mid\mathcal{F}_{\tau_n}\right)
 \le r(\varepsilon,\alpha,R_*,m)
 \qquad\text{on }\{V(Z_{\tau_n})\le R_*\},
\end{equation*}
where $r$ is as in \eqref{eq:r-eps} and $r\to0$ as $\varepsilon\to0$ for each fixed $\alpha$. Thus, applying \eqref{eq:spectral-regularization} on $A_{\varepsilon,\alpha}$ and $\|R_{n,\beta}\|\le1$ on the complement,
\begin{align*}
 \mathbb{E}\left[\|\Pi_mR_{n,\beta}\|^p\mid\mathcal{F}_{\tau_n}\right]
 &\le C^p\left(\alpha+\sqrt{\frac\beta\varepsilon}\right)^p
 +r(\varepsilon,\alpha,R_*,m)
\quad\text{on }\{V(Z_{\tau_n})\le R_*\}.
\end{align*}
Choose $\alpha>0$ with $C^p(2\alpha)^p<\delta_1/4$, then $\varepsilon>0$ with $r(\varepsilon,\alpha,R_*,m)<\delta_1/4$, and finally $\beta>0$ with $\sqrt{\beta/\varepsilon}\le\alpha$. The last display is then $\le\delta_1/2$, hence $\le\delta_1e^{\tilde{\eta}V(Z_{\tau_n})}$. Combining the two energy regions proves \eqref{eq:regularized-low}.
\end{proof}

For an even $n$, set
\begin{equation*}
 \Gamma_n=\Xi_n+\Xi_{n+1}.
\end{equation*}
Here $\Gamma_n$ denotes the two-block cost, while $\Xi_n$ in \eqref{eq:XnXi} is the one-block quantity. We now present the high-frequency branch estimate.

\begin{proposition}\label{prop:high-branch}
There exist $p_0\ge40$, constants $\mathscr H,\mathfrak a>0$, and $\chi_0>1$, all independent of $m,n$, such that for every $\chi\ge\chi_0$ one may set
\begin{equation}\label{eq:qh-explicit}
 \widehat q(\chi)=\mathscr H e^{-\mathfrak a\chi},
 \qquad \mathfrak a=\frac{p_0\mathfrak c}{2c_V},
\end{equation}
with $\widehat q(\chi)<1$.  For each fixed forcing operator $Q_m$ below, there are constants $0<\widehat c,\widehat C<\infty$, independent of $n$. There are positive sequences $\varepsilon_m\to0$ and $d_m\to0$ such that, for all large $m$ and every even $n$, there exist nonnegative random factors $\widehat U_{n+1},\widehat V_{n+1}$, independent of $\eta$, such that for every $\eta\in\Pi_m^\perp\mathbb{H}$,
\begin{equation}\label{eq:high-branch-closure}
 \|J_{\tau_{n+1},\tau_{n+2}}\eta\|_{\mathbb{H}}^{p_0}
 \le
 \left[
 \widehat q(\chi)\widehat U_{n+1} e^{\varepsilon_m \widehat c\Xi_{n+1}}
 +d_m\widehat V_{n+1} e^{\widehat c\Xi_{n+1}}
 \right]\|\eta\|_{\mathbb{H}}^{p_0},
\end{equation}
 and
\begin{align}
 \mathbb{E}[(\widehat U_{n+1})^4\mid\mathcal{F}_{\tau_{n+1}}]
 &\le \widehat C e^{\varepsilon_m\tilde{\eta} V(Z_{\tau_{n+1}})},\label{eq:Uh-moment}\\
 \mathbb{E}[(\widehat V_{n+1})^4\mid\mathcal{F}_{\tau_{n+1}}]
 &\le \widehat C e^{\tilde{\eta} V(Z_{\tau_{n+1}})}.\label{eq:Vh-moment}
\end{align}
Furthermore, there is a deterministic constant $C$, independent of $m$ and $n$, such that
\begin{equation}\label{eq:eps-dm-rates}
 \varepsilon_m+d_m
 \le C\bigl(\lambda_{m+1}^{-1/9}
 +\lambda_{m+1}^{-1/8}+\lambda_{m+1}^{-1/5}\bigr).
\end{equation}
\end{proposition}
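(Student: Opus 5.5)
The plan is to run a coupled high--low Gronwall argument on the uncontrolled block $[\tau_{n+1},\tau_{n+2}]$. Fix $\eta\in\Pi_m^\perp\mathbb H$ and write $\rho_r=J_{\tau_{n+1},r}\eta$, split as $\widetilde\rho=\Pi_m\rho$ and $\widehat\rho=\Pi_m^\perp\rho$. At $r=\tau_{n+1}$ the low part vanishes. It is then generated only by the leakage term $\Pi_mP_0(g'(u)\widehat\rho_1)$. Since the low modes are finite dimensional and $\widehat\rho_1$ lives above $\lambda_{m+1}$, this leakage can be bounded by a negative power of $\lambda_{m+1}$. The bound uses the same interpolation as in Lemma~\ref{lem:high-squeeze}, for instance $\|\widehat\rho_1\|_{L^4}\le C\lambda_{m+1}^{-1/8}\|A^{1/2}\widehat\rho_1\|$. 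This yields a differential inequality of the form
\[
\frac{\mathrm d}{\mathrm dr}\mathcal W(\widetilde\rho_r)\le C(1+\|u_r\|_{L^4}^{16/5})\mathcal W(\widetilde\rho_r)+b_m(1+\|u_r\|_{L^4}^{32/9})\mathcal W(\widehat\rho_r),\qquad b_m\lesssim\lambda_{m+1}^{-1/9}.
\]
The exponent $32/9$ is the one flagged after Lemma~\ref{lem:normalized-block-exp}. The high part is handled by \eqref{eq:high-projection-diff}, which couples back through $\mathcal C_m(u)\mathcal W(\widetilde\rho)$.

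The first step is to integrate the low inequality with zero initial data. This gives $\sup_r\mathcal W(\widetilde\rho_r)\le b_m\,\mathcal E_{n+1}\sup_r\mathcal W(\widehat\rho_r)$, where $\mathcal E_{n+1}$ is an exponential of $\int(1+\|u\|_{L^4}^{32/9})$ over the block. The second step inserts this into \eqref{eq:high-projection-diff}. The homogeneous part, via \eqref{eq:high-squeeze} and \eqref{eq:high-weight-absorb}, gives
\[
\mathcal W(\widehat\rho_{\tau_{n+2}})\le C e^{-\mathfrak c(\tau_{n+2}-\tau_{n+1})}\exp\Bigl\{a_m\!\int(\ldots)\Bigr\}\mathcal W(\eta).
\]
The inhomogeneous part is of order $b_m$ times a path-dependent exponential. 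Raising everything to the power $p_0/2$ and using $\tau_{n+2}-\tau_{n+1}\ge\chi/c_V$, which follows from \eqref{eq:clock-block-bound} since $S$ is increasing, the main term acquires the factor $e^{-p_0\mathfrak c\chi/(2c_V)}=e^{-\mathfrak a\chi}$. This fixes $\widehat q(\chi)=\mathscr He^{-\mathfrak a\chi}$, and $\chi_0$ is chosen so that $\widehat q<1$. The leakage contributes the $d_m$ term; the terminal low part is also $O(b_m)$ times the same kind of factor, so it is absorbed into $d_m$.

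The third step builds the random factors. Take $\widehat U_{n+1}$ to be the exponential $\exp\{\tfrac{p_0}{2}a_m\int(1+\|u\|_{L^4}^{16/5})\}$ with the normalizing clock term subtracted. By \eqref{eq:high-weight-absorb} the prefactor is $a_m$, so the clock penalty needed is of size $\varepsilon_m\widehat c\,\Xi_{n+1}$ with $\varepsilon_m\sim a_m$. Apply Lemma~\ref{lem:normalized-block-exp} with $\theta\propto\varepsilon_m$ and fourth-moment parameter $p=4$. This gives \eqref{eq:Uh-moment} with state exponent $\varepsilon_m\tilde\eta$. For $\widehat V_{n+1}$, the exponentials have $O(1)$ coefficients. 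Lemma~\ref{lem:normalized-block-exp} with $r=32/9\le r_*$ and $\theta=\tilde\eta/\kappa$ then gives \eqref{eq:Vh-moment}, at the cost of $e^{\widehat c\Xi_{n+1}}$. Collecting the rates $a_m$ from \eqref{eq:am-rate} and $b_m$ gives \eqref{eq:eps-dm-rates}. Since $\eta$ enters linearly, the factors are independent of $\eta$.

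The main obstacle is the bookkeeping that keeps $\widehat q$ free of $m$, $Q_m$ and the state, while every state- and clock-dependence sits either in $\widehat U$ with exponent $\varepsilon_m$ or in $d_m\widehat V$. This needs two things. First, $\mathfrak c$ and the decay from $\chi$ must be uniform in $m$. Second, the Young splitting must be fine enough that the $32/9$ power is absorbed by the quartic dissipation $\omega_n\mathfrak F$ in \eqref{eq:weighted-stopped-energy}, uniformly. The only $m$-dependent constant, $\mathcal C_m$, is multiplied by $\mathcal W(\widetilde\rho)=O(b_m)$ and is therefore harmless.
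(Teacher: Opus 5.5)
Your architecture matches the paper's: vanishing initial low part, coupled Gronwall, variation of constants, $\tau_{n+2}-\tau_{n+1}\ge\chi/c_V$, and normalization by Lemma~\ref{lem:normalized-block-exp}. But there is a genuine gap at the step where you declare the back-coupling $\mathcal C_m(u)\mathcal W(\widetilde\rho)$ in \eqref{eq:high-projection-diff} harmless because $\mathcal W(\widetilde\rho)=O(b_m)$.

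Your first two steps are circular. You bound $\widetilde\rho$ by $\sup_r\mathcal W(\widehat\rho_r)$, and $\widehat\rho$ is fed by $\widetilde\rho$ through $\mathcal C_m$. Closing the loop requires a Gronwall bound for $\mathcal W(\widehat\rho)+\mathcal W(\widetilde\rho)$ whose exponent contains $\int_{\tau_{n+1}}^{\tau_{n+2}}\mathcal C_m(u_r)\,\mathrm dr$, and that exponential then sits inside $\widehat V_{n+1}$.

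Lemma~\ref{lem:high-squeeze} only says that $\mathcal C_m$ is locally bounded on $H^1$. Lemma~\ref{lem:normalized-block-exp}, by contrast, normalizes only integrands $1+\|u\|_{L^4}^r$ with $r\le r_*=32/9$. In fact, by homogeneity, any estimate that places the whole low-to-high term $\int(1+|u|^2)|\widetilde\rho_1||\widehat\rho_2|$ on $\mathcal W(\widetilde\rho)$ gives a coefficient of order $(1+\|u\|_{L^4}^2)^2$. This holds whether the $\widehat\rho$-factor is absorbed into the damping or into the velocity dissipation. That is the critical quartic power, which cannot be traded against $\omega_n\mathfrak F$ by Young's inequality because the powers coincide. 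So \eqref{eq:Vh-moment} is not obtained.

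Even granting a subcritical bound $\mathcal C_m\le C_m\Phi(u)$, the leakage branch would carry $b_mC_m$ and an exponent $e^{C_mR}$ with an $m$-dependent $C_m$. Then \eqref{eq:eps-dm-rates}, with a leakage constant independent of $m$, would still not follow.

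The missing idea is to take the spectral gain in the low-to-high direction from the high \emph{velocity}. The interpolation $\|\widehat\rho_2\|_{L^4}\le C\lambda_{m+1}^{-1/16}\|\widehat\rho_2\|^{1/8}\|A^{1/2}\widehat\rho_2\|^{7/8}$ and Young's inequality with exponents $16/7$ and $16/9$ give
\[
 \int_{\mathbb T^3}|u|^2|\widetilde\rho_1||\widehat\rho_2|\,\mathrm dx
 \le\tfrac18\|A^{1/2}\widehat\rho_2\|^2
 +C\lambda_{m+1}^{-1/9}\Phi(u)\bigl(\mathcal W(\widehat\rho)+\mathcal W(\widetilde\rho)\bigr),
 \qquad \Phi(u)=1+\|u\|_{L^4}^{32/9},
\]
with analogous bounds for the linear and $\delta_0$ pieces. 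This is where $32/9$ and $\lambda_{m+1}^{-1/9}$ actually arise. The high-to-low leakage you estimate yields the better rates $\lambda_{m+1}^{-1/5}$ and $\lambda_{m+1}^{-1/8}$, with powers at most $16/5$.

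Both cross couplings then carry $d_m\Phi$ with $m$-independent constants, and $\mathcal C_m$ is never needed. Writing $\widehat E=\mathcal W(\widehat\rho)$ and $\widetilde E=\mathcal W(\widetilde\rho)$, the sum satisfies $\widehat E+\widetilde E\le Ce^{CR}\widehat E(s)$ with $R=\int\Phi(u_r)\,\mathrm dr$. Only subcritical powers enter the exponents, and variation of constants gives \eqref{eq:full-high-energy}. Finally, the extra term $\lambda_{m+1}^{-1/9}\Phi\,\widehat E$ is absorbed into $\varepsilon_m\Phi\,\widehat E$. So $\widehat U_{n+1}$ must also be normalized with $r=32/9$, not $16/5$.
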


\begin{proof}
Let $s=\tau_{n+1}$, let $\eta\in\Pi_m^\perp\mathbb{H}$, and put
$\rho_r=J_{s,r}\eta$.  Write $\widehat E(r)=\mathcal{W}(\widehat\rho_r)$ and $\widetilde E(r)=\mathcal{W}(\widetilde\rho_r)$.  Set
\begin{equation*}
 \Phi(u)=1+\|u\|_{L^4}^{32/9}.
\end{equation*}
Since $16/5<32/9<4$, all lower powers below are bounded by $C\Phi(u)$.  We again use
\eqref{eq:g-prime-remainder} and separate $g'(0)$ from the remainder.  The constant part preserves the spectral decomposition, while the remainder is bounded by $C(|u|+|u|^2)$.

Put $P=P_m$, $P^\perp=I-P_m$, $\rho^0=\widetilde\rho$, $\rho^1=\widehat\rho$,
and $P^0=P$, $P^1=P^\perp$.  If $\rho^i=(h^i,k^i)$, then
\begin{equation*}
 \partial_r h^i=k^i,\qquad
 \partial_r k^i=-Ak^i-Ah^i-P^iP_0\bigl(g'(u_r)(h^0+h^1)\bigr),\qquad i=0,1.
\end{equation*}
Hence, at the Galerkin level,
\begin{align}\label{eq:projected-modified-energy}
 \frac{\mathrm d}{\mathrm dr}\mathcal{W}(\rho^i)
 &+\|A^{1/2}k^i\|^2+\delta_0\|A^{1/2}h^i\|^2
 -\delta_0\|k^i\|^2+\delta_0(A^{1/2}h^i,A^{1/2}k^i)\notag\\
 &=-\bigl(P^iP_0(g'(u)(h^0+h^1)),k^i\bigr)
   -\delta_0\bigl(P^iP_0(g'(u)(h^0+h^1)),h^i\bigr),\qquad i=0,1.
\end{align}
The left-hand side controls $\mathcal{W}(\rho^i)$ and a fixed fraction of $\|A^{1/2}k^i\|^2$.

For the quadratic part of the low-to-high transfer,
\begin{equation*}
 \|\widehat\rho_2\|_{L^4}
 \le C\|\widehat\rho_2\|^{1/4}\|A^{1/2}\widehat\rho_2\|^{3/4}
 \le C\lambda_{m+1}^{-1/16}
 \|\widehat\rho_2\|^{1/8}\|A^{1/2}\widehat\rho_2\|^{7/8},
\end{equation*}
so Young's inequality with exponents $16/7$ and $16/9$ yields
\begin{align*}
 \int_{\mathbb{T}^3}|u|^2|\widetilde\rho_1||\widehat\rho_2|\mathrm{d} x&\le \frac18\|A^{1/2}\widehat\rho_2\|^2
 +C\lambda_{m+1}^{-1/9}\|u\|_{L^4}^{32/9}
   \|A^{1/2}\widetilde\rho_1\|^{16/9}\|\widehat\rho_2\|^{2/9}\notag\\
 &\le \frac18\|A^{1/2}\widehat\rho_2\|^2
 +C\lambda_{m+1}^{-1/9}\Phi(u)(\widehat E+\widetilde E).
\end{align*}
The linear part of the same transfer is controlled with the same enhanced high-velocity interpolation:
\begin{align*}
 \int_{\mathbb{T}^3}|u||\widetilde\rho_1||\widehat\rho_2|\mathrm{d} x\
 &\le C\lambda_{m+1}^{-1/16}\|u\|_{L^4}\|\widetilde\rho_1\|
   \|\widehat\rho_2\|^{1/8}\|A^{1/2}\widehat\rho_2\|^{7/8}\notag\\
 &\le \frac18\|A^{1/2}\widehat\rho_2\|^2
 +C\lambda_{m+1}^{-1/9}\bigl(1+\|u\|_{L^4}^{16/9}\bigr)(\widehat E+\widetilde E)\notag\\
 &\le \frac18\|A^{1/2}\widehat\rho_2\|^2
 +C\lambda_{m+1}^{-1/9}\Phi(u)(\widehat E+\widetilde E).
\end{align*}
For the quadratic part of the reverse transfer,
$\|\widehat\rho_1\|_{L^4}\le C\lambda_{m+1}^{-1/8}\|A^{1/2}\widehat\rho_1\|$ gives
\begin{align*}
 \int_{\mathbb{T}^3}|u|^2|\widehat\rho_1||\widetilde\rho_2|\mathrm{d} x
 &\le \frac18\|A^{1/2}\widetilde\rho_2\|^2
 +C\lambda_{m+1}^{-1/5}\|u\|_{L^4}^{16/5}\widehat E^{4/5}\widetilde E^{1/5}\notag\\
 &\le \frac18\|A^{1/2}\widetilde\rho_2\|^2
 +C\lambda_{m+1}^{-1/5}\Phi(u)(\widehat E+\widetilde E).
\end{align*}
The linear reverse-transfer term satisfies
\begin{align*}
 \int_{\mathbb{T}^3}|u||\widehat\rho_1||\widetilde\rho_2|\mathrm{d} x
 &\le \|u\|_{L^4}\|\widehat\rho_1\|_{L^4}\|\widetilde\rho_2\|\notag\\
 &\le C\lambda_{m+1}^{-1/8}(1+\|u\|_{L^4}^2)(\widehat E+\widetilde E)
 \le C\lambda_{m+1}^{-1/8}\Phi(u)(\widehat E+\widetilde E).
\end{align*}
The $\delta_0$-pairings in \eqref{eq:projected-modified-energy} produce no worse transfer.  Indeed, the linear and quadratic pieces of \eqref{eq:g-prime-remainder}, with the $L^4$--$L^4$--$L^2$ and $L^2$--$L^4$--$L^4$ H\"older splittings, give
\begin{align*}
 &\left|\bigl(PP_0((g'(u)-g'(0))\widehat\rho_1),\widetilde\rho_1\bigr)\right|
 +\left|\bigl(P^\perp P_0((g'(u)-g'(0))\widetilde\rho_1),\widehat\rho_1\bigr)\right|\notag\\
 &\qquad\le C\Bigl(
 \|u\|_{L^4}\|\widehat\rho_1\|_{L^4}\|\widetilde\rho_1\|
 +\|u\|_{L^4}^2\|\widehat\rho_1\|_{L^4}\|\widetilde\rho_1\|_{L^4}\Bigr)\notag\\
 &\qquad\le C\lambda_{m+1}^{-1/8}\Phi(u)(\widehat E+\widetilde E).
\end{align*}
The same estimate, with the roles of $P$ and $P^\perp$ interchanged, covers all mixed terms created by the cross part of $\mathcal{W}$.
The constant part $g'(0)$ is diagonal in the spectral basis and therefore creates no low--high
transfer.  The high--high terms are exactly those estimated in Lemma \ref{lem:high-squeeze}; after
absorbing the velocity dissipation they contribute
$-\mathfrak c\widehat E+\varepsilon_m\Phi(u)\widehat E$, with $\mathfrak c$ independent of $m$.  Each mixed term above carries
one of the vanishing powers $\lambda_{m+1}^{-1/9}$, $\lambda_{m+1}^{-1/8}$, or
$\lambda_{m+1}^{-1/5}$, so their maximum may be absorbed into one coefficient $d_m\to0$.
The remaining low--low terms require no spectral smallness and are bounded by
$C\Phi(u)\widetilde E$.  Consequently,
\begin{align}
 \frac{\,\mathrm{d}}{\,\mathrm{d} r}\widehat E(r)
 &\le -\mathfrak c\widehat E(r)+\varepsilon_m\Phi(u_r)\widehat E(r)
       +d_m\Phi(u_r)\widetilde E(r),\label{eq:coupled-high}\\
 \frac{\,\mathrm{d}}{\,\mathrm{d} r}\widetilde E(r)
 &\le C\Phi(u_r)\widetilde E(r)+d_m\Phi(u_r)\widehat E(r),\label{eq:coupled-low}
\end{align}
where \eqref{eq:eps-dm-rates} holds and $\widetilde E(s)=0$.

To pass to the energy solution, take $N>m$ and solve the Galerkin variational
equation from $\Pi_N\eta$.  The preceding identities hold with constants independent of $N$ and, after time integration, control
\begin{equation*}
 \sup_{q\in[s,r]}\|\rho_q^N\|_{\mathbb{H}}^2
 +\int_s^r\!\|A^{1/2}\rho_{2,q}^N\|^2\,\mathrm dq.
\end{equation*}
Using \eqref{eq:J-galerkin-convergence}, strong convergence in $C([s,r];\mathbb{H})$ and weak
$L^2(s,r;H^1)$ convergence of the velocities, lower semicontinuity passes the integrated inequalities to the limit; removing the energy localization gives
\eqref{eq:coupled-high}--\eqref{eq:coupled-low}.  Only the time integral of
$\|A^{1/2}\rho_2\|^2$ occurs, so no $A^{1/2}\rho_2(s)$ regularity beyond
$\mathbb{H}=H^1\times H_0$ is used.

Put $R_{a,b}=\int_a^b\Phi(u_r)\mathrm{d} r$.  Since $\varepsilon_m+d_m\le1$ after increasing the minimal rank if necessary, adding \eqref{eq:coupled-high} and \eqref{eq:coupled-low} and discarding the negative term gives
\begin{equation}\label{eq:total-high-low-rough}
 \widehat E(r)+\widetilde E(r)
 \le C\exp\{CR_{s,r}\}\widehat E(s),\qquad r\in[s,t],
\end{equation}
because $\widetilde E(s)=0$.

Variation
of constants in \eqref{eq:coupled-low} gives, for $r\in[s,t]$,
\begin{align}
 \widetilde E(r)
 &\le d_m\int_s^r
 \exp\left\{C\int_q^r\Phi(u_a)\,\mathrm da\right\}
 \Phi(u_q)\widehat E(q)\,\mathrm dq \notag\\
 &\le Cd_m \widehat E(s)e^{CR_{s,r}}\int_s^r\Phi(u_q)\,\mathrm dq \notag\\
 &\le Cd_m R_{s,r}e^{CR_{s,r}}\widehat E(s)
 \le Cd_me^{C_1R_{s,r}}\widehat E(s),
 \label{eq:leakage-energy}
\end{align}
where \eqref{eq:total-high-low-rough} was used in the second line and
$x e^{Cx}\le C'e^{C_1x}$ for $x\ge0$ in the last one.

Apply variation of constants also to \eqref{eq:coupled-high}.  We obtain
\begin{align*}
 \widehat E(t)
 &\le e^{-\mathfrak c(t-s)+\varepsilon_mR_{s,t}}\widehat E(s)+d_m\int_s^t
 e^{-\mathfrak c(t-r)+\varepsilon_mR_{r,t}}
 \Phi(u_r)\widetilde E(r)\,\mathrm dr.
\end{align*}
Using \eqref{eq:leakage-energy}, dropping the favorable factor $e^{-\mathfrak c(t-r)}$, and using
$R_{s,r}+R_{r,t}=R_{s,t}$ yields
\begin{equation*}
 d_m\int_s^t
 e^{-\mathfrak c(t-r)+\varepsilon_mR_{r,t}}
 \Phi(u_r)\widetilde E(r)\,\mathrm dr
 \le Cd_m^2R_{s,t}e^{C_2R_{s,t}}\widehat E(s).
\end{equation*}
For all sufficiently large $m$, $d_m\le1$, so the last term is bounded by
$Cd_me^{C_3R_{s,t}}\widehat E(s)$.  Combining this estimate with \eqref{eq:leakage-energy} at $r=t$ gives
\begin{equation}\label{eq:full-high-energy}
 \widehat E(t)+\widetilde E(t)
 \le C\left[e^{-\mathfrak c(t-s)+\varepsilon_mR_{s,t}}+d_me^{C R_{s,t}}\right]\widehat E(s).
\end{equation}
Raise \eqref{eq:full-high-energy} to the power $p_0/2$ and use norm equivalence.  Since
$\tau_{n+2}-\tau_{n+1}\ge\chi/c_V$, the deterministic part of the first branch is bounded by
$\mathscr H e^{-p_0\mathfrak c\chi/(2c_V)}$, which is \eqref{eq:qh-explicit}.  Increase a deterministic $\chi_0>1$, if necessary, so that $\mathscr H e^{-\mathfrak a\chi_0}<1$.  Then $\widehat q(\chi)<1$ for every $\chi\ge\chi_0$.  Write
\begin{equation*}
 R_{n+1}=\int_{\tau_{n+1}}^{\tau_{n+2}}\Phi(u_r)\mathrm{d} r
\end{equation*}
and choose constants $c_1,c_2>0$ from Lemma \ref{lem:normalized-block-exp}.  Up to fixed norm-equivalence constants, one may take
\begin{align*}
 \widehat U_{n+1}
 &=\exp\left\{\frac{p_0}{2}\varepsilon_mR_{n+1}-c_1\varepsilon_m\Xi_{n+1}\right\},\\
 \widehat V_{n+1}
 &=\exp\left\{\frac{p_0C}{2}R_{n+1}-c_2\Xi_{n+1}\right\}.
\end{align*}
Choose these constants from Lemma \ref{lem:normalized-block-exp} using the smaller state exponent $\tilde{\eta}$.  For $\widehat U_{n+1}$, take
\begin{equation*}
 p=4,\qquad r=32/9,\qquad
 \theta=\frac{\varepsilon_m\tilde{\eta}}{\kappa},\qquad
 \Lambda=\frac{p_0\kappa}{2\tilde{\eta}},
\end{equation*}
and for $\widehat V_{n+1}$ take
\begin{equation*}
 p=4,\qquad r=32/9,\qquad
 \theta=\frac{\tilde{\eta}}{\kappa},\qquad
 \Lambda=\frac{p_0C\kappa}{2\tilde{\eta}}.
\end{equation*}
Let $\widehat c$ and $\widehat C$ dominate the corresponding normalization coefficients and moment constants.  Then
\begin{equation*}
 \mathbb{E}[(\widehat U_{n+1})^4\mid\mathcal{F}_{\tau_{n+1}}]
 \le \widehat C e^{\varepsilon_m\tilde{\eta} V(Z_{\tau_{n+1}})},\qquad
 \mathbb{E}[(\widehat V_{n+1})^4\mid\mathcal{F}_{\tau_{n+1}}]
 \le \widehat C e^{\tilde{\eta} V(Z_{\tau_{n+1}})},
\end{equation*}
which are exactly \eqref{eq:Uh-moment}--\eqref{eq:Vh-moment}.  For the leakage branch, the factor produced directly by
\eqref{eq:full-high-energy} is $d_m^{p_0/2}$; since $d_m\le1$ for large $m$, it may be replaced by the larger quantity $d_m$.  This proves \eqref{eq:high-branch-closure}.  The operator bounds are uniform in $\eta\in\Pi_m^\perp\mathbb{H}$.
\end{proof}

\begin{lemma}\label{lem:controlled-variational-factor}
For the same $p_0$ as in Proposition \ref{prop:high-branch}, there are nonnegative random variables $U_n^c$ such that
\begin{equation}\label{eq:controlled-variational-factor}
 \|J_{\tau_n,\tau_{n+1}}\|_{\mathbb{L}(\mathbb{H})}^{p_0}
 \le U_n^c e^{\widehat c\Xi_n},
\end{equation}
and
\begin{equation}\label{eq:controlled-variational-factor-moment}
 \mathbb{E}[(U_n^c)^4\mid\mathcal{F}_{\tau_n}]
 \le \widehat C e^{\tilde{\eta} V(Z_{\tau_n})}.
\end{equation}
\end{lemma}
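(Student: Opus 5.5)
The plan mirrors the construction of $\widehat V_{n+1}$ in Proposition~\ref{prop:high-branch}, now applied to the full flow on the controlled block $[\tau_n,\tau_{n+1}]$ with no spectral splitting.

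First, the subcritical Gronwall bound \eqref{eq:J-subcritical} gives, for $\tau_n\le\tau_{n+1}$,
\begin{equation*}
 \|J_{\tau_n,\tau_{n+1}}\|_{\mathbb L(\mathbb H)}^{p_0}
 \le C^{p_0/2}\exp\Bigl\{\tfrac{p_0C}{2}\int_{\tau_n}^{\tau_{n+1}}\bigl(1+\|u_r\|_{L^4}^{16/5}\bigr)\,\mathrm dr\Bigr\}.
\end{equation*}
The constant term $C(\tau_{n+1}-\tau_n)$ is already inside this integral, and it is also bounded by $X_n\le\Xi_n$. Accordingly, I set
\begin{equation*}
 U_n^c:=C^{p_0/2}\exp\Bigl\{\tfrac{p_0C}{2}\int_{\tau_n}^{\tau_{n+1}}\bigl(1+\|u_r\|_{L^4}^{16/5}\bigr)\,\mathrm dr-\widehat c\,\Xi_n\Bigr\}.
\end{equation*}
With this choice, \eqref{eq:controlled-variational-factor} holds identically. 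The only task is to choose $\widehat c$ large enough for the moment bound.

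For \eqref{eq:controlled-variational-factor-moment}, note that
\begin{equation*}
 (U_n^c)^4=C^{2p_0}\exp\Bigl\{2p_0C\int(1+\|u\|_{L^4}^{16/5})\,\mathrm dr-4\widehat c\,\Xi_n\Bigr\}.
\end{equation*}
I then apply Lemma~\ref{lem:normalized-block-exp} with $r=16/5\in[1,r_*]$, $p=4$, $\theta=\tilde\eta/\kappa\in(0,1]$ (recall $\tilde\eta\le\bar\eta/4<\kappa$), and $\Lambda=p_0C\kappa/(2\tilde\eta)$. This choice gives $p\Lambda\theta=2p_0C$. The lemma yields
\begin{equation*}
 \mathbb E\Bigl[\exp\Bigl\{2p_0C\int(1+\|u\|_{L^4}^{16/5})-c_{\Lambda,4,\kappa}\theta\,\Xi_n\Bigr\}\Bigm|\mathcal F_{\tau_n}\Bigr]\le C_{\Lambda,4,\kappa}^{\theta}e^{\tilde\eta V(Z_{\tau_n})}.
\end{equation*}
Since $\Xi_n\ge0$, it suffices to take $4\widehat c\ge c_{\Lambda,4,\kappa}\theta$. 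Then $\widehat C\ge C^{2p_0}C_{\Lambda,4,\kappa}^{\theta}$ gives the claim.

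The main point of care is consistency of constants. The $\widehat c,\widehat C$ in Proposition~\ref{prop:high-branch} were defined as dominating certain normalization and moment constants. I will enlarge both to also dominate those produced here, which is harmless because they were only required to be finite and independent of $n$. Independence of $n$ holds because Lemma~\ref{lem:normalized-block-exp} is uniform in $n$ and $\theta$, and the Gronwall constant in \eqref{eq:J-subcritical} is deterministic.

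No Malliavin or covariance information is needed, since this is a pure operator-norm bound. If one prefers the power $32/9$ for uniformity with $\Phi$, the same argument runs with $r=32/9$ after bounding $\|u\|^{16/5}\le\Phi(u)$.
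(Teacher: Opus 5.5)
Your proposal is correct and follows essentially the same route as the paper: bound $\|J_{\tau_n,\tau_{n+1}}\|^{p_0}$ via \eqref{eq:J-subcritical}, normalize by $e^{-\widehat c\,\Xi_n}$, control the fourth moment with Lemma~\ref{lem:normalized-block-exp} at $r=16/5$ and state exponent $\tilde\eta$, and enlarge $\widehat c,\widehat C$ as needed. The only cosmetic difference is that the paper defines $U_n^c=\|J_{\tau_n,\tau_{n+1}}\|^{p_0}e^{-\widehat c\,\Xi_n}$ directly, so that \eqref{eq:controlled-variational-factor} is an identity, whereas you define $U_n^c$ through the Gronwall majorant.
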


\begin{proof}
Use only the first-variation bound \eqref{eq:J-subcritical}.
Lemma~\ref{lem:normalized-block-exp}, with $r=16/5$ and the state exponent
$\tilde\eta$, gives constants $c,C$ such that
\[
 \mathbb E[\|J_{\tau_n,\tau_{n+1}}\|^{4p_0}e^{-c\Xi_n}
                    \mid\mathcal F_{\tau_n}]
 \le C e^{\tilde\eta V(Z_{\tau_n})}.
\]
Increase $\widehat c$ and $\widehat C$ to dominate $c/4$ and $C$, respectively,
and set $U_n^c=\|J_{\tau_n,\tau_{n+1}}\|^{p_0}e^{-\widehat c\Xi_n}$.
Both assertions follow. Constants for second variations and
derivatives of the controls are chosen separately and do not enlarge $\widehat c$.
\end{proof}

\begin{lemma}\label{lem:one-pair}
Fix $\kappa\in(0,\kappa_0]$ and $\chi>1$, and let the factors be those appearing in
Proposition \ref{prop:high-branch} and Lemma \ref{lem:controlled-variational-factor}. There exists a constant $\mathcal{Q}=\mathcal{Q}(g,Q_m,S,p_0,\kappa,\chi,\tilde\eta)<\infty$, independent of $n,\delta_2$ and $\beta$, such that for every
$\delta_2\in(0,1)$, one can choose
$\beta=\beta(p_0,\kappa,\chi,\delta_2,m)>0$ so that for every even $n$, there are nonnegative random variables
$\vartheta_n^{(0)},\vartheta_n^{(1)},\vartheta_n^{(2)}$ satisfying
\begin{align}
 &\|\rho_{\tau_{n+2}}\|_{\mathbb{H}}^{p_0}
 \le C e^{\widehat c\Gamma_n}
 \bigl[\vartheta_n^{(0)}+\vartheta_n^{(1)}+\vartheta_n^{(2)}\bigr]
 \|\rho_{\tau_n}\|_{\mathbb{H}}^{p_0},\label{eq:one-pair-path}\\
 &\mathbb{E}[\vartheta_n^{(0)}\mid\mathcal{F}_{\tau_n}]
 \le\delta_2e^{\tilde{\eta} V(Z_{\tau_n})},\label{eq:one-pair-low}\\
 &\mathbb{E}[\vartheta_n^{(1)}\mid\mathcal{F}_{\tau_n}]
 \le \widehat q(\chi)\mathcal{Q} e^{\tilde{\eta} V(Z_{\tau_n})},\\
 &\mathbb{E}[\vartheta_n^{(2)}\mid\mathcal{F}_{\tau_n}]
 \le d_m\mathcal{Q} e^{\tilde{\eta} V(Z_{\tau_n})}.\label{eq:one-pair-high2}
\end{align}
\end{lemma}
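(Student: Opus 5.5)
Starting from the recursion \eqref{eq:res-recursion}, I write $\eta_n:=R_{n,\beta}J_{\tau_n,\tau_{n+1}}\rho_{\tau_n}$ and split it into its low and high parts, $\eta_n=\Pi_m\eta_n+\Pi_m^\perp\eta_n$. Then
\[
 \|\rho_{\tau_{n+2}}\|^{p_0}\le 2^{p_0-1}\bigl(\|J_{\tau_{n+1},\tau_{n+2}}\Pi_m\eta_n\|^{p_0}+\|J_{\tau_{n+1},\tau_{n+2}}\Pi_m^\perp\eta_n\|^{p_0}\bigr).
\]

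For the low part I use only the operator norm of the uncontrolled flow. Lemma~\ref{lem:controlled-variational-factor}, applied on block $n+1$ by the strong Markov property, gives a factor $U_{n+1}^ce^{\widehat c\Xi_{n+1}}$. It also bounds $\|J_{\tau_n,\tau_{n+1}}\|^{p_0}\le U_n^ce^{\widehat c\Xi_n}$. Together these give
\[
 \|J_{\tau_{n+1},\tau_{n+2}}\Pi_m\eta_n\|^{p_0}\le e^{\widehat c\Gamma_n}\,U^c_{n+1}U^c_n\,\|\Pi_mR_{n,\beta}\|^{p_0}\,\|\rho_{\tau_n}\|^{p_0},
\]
and I set $\vartheta^{(0)}_n:=U^c_{n+1}U^c_n\|\Pi_mR_{n,\beta}\|^{p_0}$. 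For the high part, $\Pi_m^\perp\eta_n\in\Pi_m^\perp\mathbb H$, $\|R_{n,\beta}\|\le1$, and $\|\Pi_m^\perp\|\le1$. Proposition~\ref{prop:high-branch} therefore gives the bound $[\widehat q\widehat U_{n+1}e^{\varepsilon_m\widehat c\Xi_{n+1}}+d_m\widehat V_{n+1}e^{\widehat c\Xi_{n+1}}]\,U^c_ne^{\widehat c\Xi_n}\|\rho_{\tau_n}\|^{p_0}$. Since $\varepsilon_m\le1$, both exponentials are at most $e^{\widehat c\Gamma_n}$. I set $\vartheta^{(1)}_n=\widehat q\,\widehat U_{n+1}U^c_n$ and $\vartheta^{(2)}_n=d_m\widehat V_{n+1}U^c_n$, which gives \eqref{eq:one-pair-path} with $C=2^{p_0-1}$.

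The moment bounds come from conditioning first on $\mathcal F_{\tau_{n+1}}$ and then on $\mathcal F_{\tau_n}$. For $\vartheta^{(1)}$, the conditional Jensen inequality and \eqref{eq:Uh-moment} give $\mathbb E[\widehat U_{n+1}\mid\mathcal F_{\tau_{n+1}}]\le\widehat C^{1/4}e^{\tilde\eta V(Z_{\tau_{n+1}})/4}$. It then remains to bound $\mathbb E[U^c_n e^{\tilde\eta V(Z_{\tau_{n+1}})/4}\mid\mathcal F_{\tau_n}]$. For this I apply Cauchy--Schwarz, use \eqref{eq:controlled-variational-factor-moment} (the fourth moment controls the second), and use \eqref{eq:fixed-block-exp-reduction} with $\eta=\tilde\eta/2\le\eta_*$. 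This bounds the expression by $\mathcal Q e^{\tilde\eta V(Z_{\tau_n})(1/4+e^{-\chi}/4)}\le\mathcal Qe^{\tilde\eta V(Z_{\tau_n})}$. The constant $\mathcal Q$ collects $\widehat C$ and $\mathscr S_{\kappa,\chi}$, and it does not involve $\beta$ or $\delta_2$. The same computation, with \eqref{eq:Vh-moment} in place of \eqref{eq:Uh-moment}, gives the bound for $\vartheta^{(2)}$.

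The main obstacle is $\vartheta^{(0)}$, because smallness must come from $\|\Pi_mR_{n,\beta}\|$ while the factors $U^c_n,U^c_{n+1}$ are only moment-bounded. I use H\"older's inequality with exponents $(4,4,2)$: conditioning on $\mathcal F_{\tau_{n+1}}$ handles $U^c_{n+1}$ as above, and the remaining factor is
\[
 \mathbb E\bigl[\|\Pi_mR_{n,\beta}\|^{2p_0}\mid\mathcal F_{\tau_n}\bigr]^{1/2}\le\delta_1^{1/2}e^{\tilde\eta V(Z_{\tau_n})/2},
\]
by Lemma~\ref{lem:regularized-low} with $p=2p_0$. The exponents then add to at most $\tilde\eta\,(1/4+1/4+1/2)$ (using $e^{-\chi}\le1$ in the recursion step), so no factor larger than $e^{\tilde\eta V(Z_{\tau_n})}$ appears. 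Finally I choose $\delta_1$ with $\mathcal Q\delta_1^{1/2}\le\delta_2$ and take $\beta=\beta(2p_0,\kappa,\chi,\delta_1,m)$ from that lemma. The only delicate point is this bookkeeping of state exponents, so that the Markov recursion through $\tau_{n+1}$ does not exceed $\tilde\eta$; this is where the factors $1/4$ from the fourth moments are used.
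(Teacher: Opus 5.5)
Your proposal is correct and follows the paper's proof essentially step for step. You use the same splitting $\eta_n=\Pi_m\eta_n+\Pi_m^\perp\eta_n$ and the same three factors $U^c_{n+1}U^c_n\|\Pi_mR_{n,\beta}\|^{p_0}$, $\widehat q(\chi)\,U^c_n\widehat U_{n+1}$ and $d_mU^c_n\widehat V_{n+1}$; the low branch uses Lemma~\ref{lem:regularized-low} at $p=2p_0$ with a $(4,2,4)$ conditional H\"older split, giving total state exponent $\tilde\eta(3/4+e^{-\chi}/4)$, and the high branches use two-stage conditioning at $\tau_{n+1}$ and $\tau_n$ together with \eqref{eq:fixed-block-exp-reduction}. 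The only difference is cosmetic: for the high branches you use Cauchy--Schwarz, after discarding the factor $\varepsilon_m$ in \eqref{eq:Uh-moment}, where the paper uses a slightly different H\"older bookkeeping, and this changes nothing.
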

\begin{proof}
	Throughout this proof, let $n\in2\mathbb{N}_0$.  Write
	\begin{equation*}
	J_n=J_{\tau_n,\tau_{n+1}},\qquad
	J_{n+1}=J_{\tau_{n+1},\tau_{n+2}},\qquad
	\eta_n=R_{n,\beta}J_n\rho_{\tau_n},
	\end{equation*}
	and decompose $\eta_n=\widetilde\eta_n+\widehat\eta_n$ with
	$\widetilde\eta_n=\Pi_m\eta_n$ and $\widehat\eta_n=\Pi_m^\perp\eta_n$.  By
	\eqref{eq:res-recursion} we have
	\begin{equation*}
	\rho_{\tau_{n+2}}=J_{n+1}\widetilde\eta_n+J_{n+1}\widehat\eta_n.
	\end{equation*}
	
	We first treat the low branch.  Applying Lemma \ref{lem:controlled-variational-factor} to both blocks yields normalized factors $U_n^c,U_{n+1}^c$ such that
	\begin{equation*}
	\|J_{n+1}\widetilde\eta_n\|^{p_0}
	\le e^{\widehat c\Gamma_n}U_{n+1}^cU_n^c
	\|\Pi_mR_{n,\beta}\|^{p_0}\|\rho_{\tau_n}\|^{p_0}.
	\end{equation*}
	Take $p=2p_0$ in Lemma~\ref{lem:regularized-low}.  By \eqref{eq:controlled-variational-factor-moment} we obtain
	\begin{equation*}
	\mathbb{E}[U_{n+1}^c\mid\mathcal{F}_{\tau_{n+1}}]
	\le \widehat C^{1/4} e^{\tilde{\eta} V(Z_{\tau_{n+1}})/4}.
	\end{equation*}
	The remaining factors are $\mathcal{F}_{\tau_{n+1}}$-measurable. Applying conditional H\"older at
	$\tau_n$, together with \eqref{eq:regularized-low} and \eqref{eq:stopped-recursion}, yields
	\begin{align*}
	&~~\mathbb{E}\left[
	U_{n+1}^cU_n^c\|\Pi_mR_{n,\beta}\|^{p_0}
	\mid\mathcal{F}_{\tau_n}\right]\notag\\
	&\le C
	\mathbb{E}[(U_n^c)^4\mid\mathcal{F}_{\tau_n}]^{1/4}
	\mathbb{E}[\|\Pi_mR_{n,\beta}\|^{2p_0}\mid\mathcal{F}_{\tau_n}]^{1/2}
	\mathbb{E}[e^{\tilde{\eta} V(Z_{\tau_{n+1}})}\mid\mathcal{F}_{\tau_n}]^{1/4}\notag\\
	&\le C\delta_1^{1/2}
	\exp\left\{\tilde{\eta}\left(\frac34+\frac{e^{-\chi}}4\right)
	V(Z_{\tau_n})\right\}
	\le C\delta_1^{1/2}e^{\tilde{\eta} V(Z_{\tau_n})}.
	\end{align*}
	Choose $\delta_1$ so that $C\delta_1^{1/2}\le\delta_2$, and define
	\begin{equation*}
	\vartheta_n^{(0)}
	=C U_{n+1}^cU_n^c\|\Pi_mR_{n,\beta}\|^{p_0}
	\end{equation*}
	which shows \eqref{eq:one-pair-low}.
	
	For the high branch, Proposition \ref{prop:high-branch} gives
	\begin{equation*}
	\|J_{n+1}\widehat\eta_n\|^{p_0}
	\le\left[\widehat q(\chi)\widehat U_{n+1}e^{\varepsilon_m \widehat c\Xi_{n+1}}
	+d_m\widehat V_{n+1}e^{\widehat c\Xi_{n+1}}\right]\|\widehat\eta_n\|^{p_0}.
	\end{equation*}
	Since $\|R_{n,\beta}\|\le1$, Lemma \ref{lem:controlled-variational-factor} yields
	\begin{equation*}
	\|\widehat\eta_n\|^{p_0}
	\le U_n^ce^{\widehat c\Xi_n}\|\rho_{\tau_n}\|^{p_0}.
	\end{equation*}
	Thus, since $\varepsilon_m\le1$ for large $m$, the two high contributions are bounded by
	$e^{\widehat c\Gamma_n}$ times
	\begin{equation*}
	\vartheta_n^{(1)}=C\widehat q(\chi)U_n^c\widehat U_{n+1},
	\qquad
	\vartheta_n^{(2)}=Cd_mU_n^c\widehat V_{n+1}.
	\end{equation*}
	It follows from
	\eqref{eq:Uh-moment}--\eqref{eq:Vh-moment} that
	\begin{equation*}
	\mathbb{E}[\widehat U_{n+1}\mid\mathcal{F}_{\tau_{n+1}}]
	\le \widehat C^{1/4} e^{\varepsilon_m\tilde{\eta} V(Z_{\tau_{n+1}})/4},
	\qquad
	\mathbb{E}[\widehat V_{n+1}\mid\mathcal{F}_{\tau_{n+1}}]
	\le \widehat C^{1/4} e^{\tilde{\eta} V(Z_{\tau_{n+1}})/4}.
	\end{equation*}
	A second conditional H\"older inequality at $\tau_n$, together with
	\eqref{eq:controlled-variational-factor-moment} and the fixed-block exponent reduction
	\eqref{eq:fixed-block-exp-reduction}, the latter applied at exponents $\varepsilon_m\tilde{\eta}/3$ and $\tilde{\eta}/3$
	without changing the stopping times, gives
	\begin{equation*}
	\mathbb{E}[U_n^c\widehat U_{n+1}\mid\mathcal{F}_{\tau_n}]
	+\mathbb{E}[U_n^c\widehat V_{n+1}\mid\mathcal{F}_{\tau_n}]
	\le \mathcal{Q} e^{\tilde{\eta} V(Z_{\tau_n})},
	\end{equation*}
	after enlarging $\mathcal{Q}(g,Q_m,S,p_0,\kappa,\chi,\tilde\eta)$ so as to absorb the stopped-recursion prefactor
	$\mathscr S_{\kappa,\chi}$ and the height-independent variational constants. Consequently,
	\begin{equation*}
	\mathbb{E}[\vartheta_n^{(1)}\mid\mathcal{F}_{\tau_n}]
	\le \widehat q(\chi)\mathcal{Q} e^{\tilde{\eta} V(Z_{\tau_n})},
	\qquad
	\mathbb{E}[\vartheta_n^{(2)}\mid\mathcal{F}_{\tau_n}]
	\le d_m\mathcal{Q} e^{\tilde{\eta} V(Z_{\tau_n})}.
	\end{equation*}
Finally, \eqref{eq:one-pair-path}--\eqref{eq:one-pair-high2} follow from the inequality
$\|x+y\|^{p_0}\le2^{p_0-1}(\|x\|^{p_0}+\|y\|^{p_0})$.
\end{proof}

Since $x\mapsto x^{\theta_0}$ is concave, \eqref{eq:one-pair-path} and conditional Jensen's inequality imply
\begin{align}
 &\|\rho_{\tau_{n+2}}\|_{\mathbb{H}}^{4}
 \le C_1e^{\theta_0\widehat c\Gamma_n}
 \bigl[(\vartheta_n^{(0)})^{\theta_0}+(\vartheta_n^{(1)})^{\theta_0}+(\vartheta_n^{(2)})^{\theta_0}\bigr]
 \|\rho_{\tau_n}\|_{\mathbb{H}}^{4},\label{eq:one-pair-fourth}\\
 &\mathbb{E}[(\vartheta_n^{(0)})^{\theta_0}\mid\mathcal{F}_{\tau_n}]
 \le\delta_2^{\theta_0}e^{\theta_0\tilde{\eta} V(Z_{\tau_n})},\notag\\
 &\mathbb{E}[(\vartheta_n^{(1)})^{\theta_0}\mid\mathcal{F}_{\tau_n}]
 \le(\widehat q(\chi)\mathcal{Q})^{\theta_0}e^{\theta_0\tilde{\eta} V(Z_{\tau_n})},\notag\\
 &\mathbb{E}[(\vartheta_n^{(2)})^{\theta_0}\mid\mathcal{F}_{\tau_n}]
 \le(d_m\mathcal{Q})^{\theta_0}e^{\theta_0\tilde{\eta} V(Z_{\tau_n})}.\label{eq:one-pair-fourth-moments}
\end{align}
Here $C_1$ is independent of $n$, the localization intercept, and the initial state. To iterate \eqref{eq:one-pair-fourth}, use second moments rather than multiplying conditional first moments. Since $p_0\ge40$, we have $2\theta_0\le1$. If
\begin{equation*}
 B_{n,0}=(\vartheta_n^{(0)})^{\theta_0},\qquad
 B_{n,1}=(\vartheta_n^{(1)})^{\theta_0},\qquad
 B_{n,2}=(\vartheta_n^{(2)})^{\theta_0},
\end{equation*}
then conditional Jensen's inequality strengthens \eqref{eq:one-pair-fourth-moments} to
\begin{equation}\label{eq:reduced-second-moments}
 \mathbb{E}[B_{n,\varsigma}^2\mid\mathcal{F}_{\tau_n}]
 \le \widehat a_\varsigma^2e^{2\theta_0\tilde{\eta} V(Z_{\tau_n})},
 \qquad \varsigma\in\{0,1,2\},
\end{equation}
where, by conditional Jensen, there is a deterministic constant $C_3\ge1$, independent of $n$ and of the numerical tolerance $\delta_2$, such that
\begin{equation*}
 \widehat a_0\le C_3\delta_2^{\theta_0},\qquad
\widehat a_1\le C_3\widehat q(\chi)^{\theta_0},\qquad
 \widehat a_2\le C_3d_m^{\theta_0}.
\end{equation*}
For $\varsigma=(\varsigma_0,\dots,\varsigma_{n-1})$, define
$\widehat B_{2j,\varsigma_j}=B_{2j,\varsigma_j}
 e^{-\theta_0\tilde{\eta} V(Z_{\tau_{2j}})}$.  By \eqref{eq:reduced-second-moments} and successive conditioning,
\begin{equation*}
 \mathbb{E}\left[\prod_{j=0}^{n-1}\widehat B_{2j,\varsigma_j}^{\,2}\right]
 \le\prod_{j=0}^{n-1}\widehat a_{\varsigma_j}^{2}.
\end{equation*}
By Cauchy--Schwarz, for a deterministic constant $C_2\ge1$ independent of $n$, we obtain
\begin{align}
 \mathbb{E}\left[\prod_{j=0}^{n-1}B_{2j,\varsigma_j}\right]
 &\le \prod_{j=0}^{n-1}\widehat a_{\varsigma_j}
 \left(\mathbb{E}\left[\exp\left\{2\theta_0\tilde{\eta}
 \sum_{j=0}^{n-1}V(Z_{\tau_{2j}})\right\}\right]\right)^{1/2}\notag\\
 &\le C_2^{\,n}e^{C\tilde{\eta} V(z_0)}
 \prod_{j=0}^{n-1}\widehat a_{\varsigma_j}.\label{eq:weighted-word-product}
\end{align}
The last step is \eqref{eq:fixed-block-sum-reduction} with
$\eta=2\theta_0\tilde{\eta}\le\bar\eta$, using
$\sum_{j<n}V(Z_{\tau_{2j}})\le V(z_0)+\sum_{r=1}^{2n}V(Z_{\tau_r})$. 

Set $\widehat\mu=\overline\Xi+\widehat\delta$ and $\mathscr C=C_1C_2C_3\ge1$. With $g,Q_m,S,p_0,\kappa,\chi,\tilde\eta$ fixed, these constants and
$\widehat c$ are independent of $n,L,K,\delta_2,\beta$. They are evaluated before
the covariance tolerance and the Tikhonov parameter are chosen.
The constant $C_2$ comes from \eqref{eq:fixed-block-sum-reduction} at exponent
$2\theta_0\tilde\eta$.

\begin{proposition}\label{prop:block-contraction}
Fix $\widehat\delta>0$ and assume the hypotheses of Proposition \ref{prop:high-branch}.  Let $m$ be sufficiently large and suppose
\begin{equation}\label{eq:fixed-rank-admissible}
 \mathfrak q_m(\kappa,\chi,\tilde\eta,\widehat\delta)
 =\mathscr Ce^{2\theta_0\widehat c\widehat\mu}
 \bigl(\widehat q(\chi)^{\theta_0}+d_m^{\theta_0}\bigr)<\frac14.
\end{equation}
Then $\beta>0$ may be chosen independently of $L$ so that, for some $\widehat\gamma>0$ and every $L>0$,
\begin{equation}\label{eq:block-contraction}
 \mathbb{E}_{z_0}\left[\|\rho_{\tau_{2n}}\|_{\mathbb{H}}^{4}
 \mathbf{1}_{\{\widehat\Theta^{(\widehat\delta)}\le L\}}\right]
 \le C_Le^{C\kappa V(z_0)-\widehat\gamma n},\qquad n\ge0.
\end{equation}
Moreover, there exists $\widetilde c>0$ such that, if
$0<b<\widehat\gamma/(8\widetilde c)$, then for every $K>0$, $n\ge0$, and
deterministic $t\ge0$,
\begin{equation}\label{eq:rho-between-blocks}
 \mathbb{E}_{z_0}\left[\|\rho_t\|_{\mathbb{H}}^{4}
 \mathbf{1}_{\mathbb{G}_{L,K}^{\widehat\delta,b}}
 \mathbf{1}_{\{t\in[\tau_{2n},\tau_{2n+2})\}}\right]
 \le C_{L,K}e^{C\kappa V(z_0)-\widehat\gamma n/2}.
\end{equation}
The constants $\widehat\gamma$, $\widetilde c$, and $\beta$ are independent of $L$ and $K$.
\end{proposition}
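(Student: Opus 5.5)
The plan is to iterate the one-pair bound \eqref{eq:one-pair-fourth} over $n$ pairs and expand the product of the sums $B_{2j,0}+B_{2j,1}+B_{2j,2}$ into $3^n$ words $\varsigma$. Pathwise this gives
\[
\|\rho_{\tau_{2n}}\|^4\le C_1^n\exp\Bigl\{\theta_0\widehat c\sum_{j<n}\Gamma_{2j}\Bigr\}\sum_{\varsigma}\prod_{j<n}B_{2j,\varsigma_j}.
\]
On $\{\widehat\Theta^{(\widehat\delta)}\le L\}$ the clock factor is deterministically controlled:
\[
\sum_{j<n}\Gamma_{2j}=\sum_{i<2n}\Xi_i\le 2n\widehat\mu+L,
\]
so the exponential is at most $e^{\theta_0\widehat c L}e^{2n\theta_0\widehat c\widehat\mu}$. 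The indicator is clock-measurable, and we simply drop it after bounding the exponential. This is exactly why the centering event is used. We then avoid taking conditional expectations of the clock exponential itself, which would produce $\mathbb E e^{c\Xi}$, possibly infinite or large. Applying \eqref{eq:weighted-word-product} to each word and summing with the multinomial theorem gives
\[
\mathbb E[\cdots]\le e^{\theta_0\widehat cL}e^{C\tilde\eta V(z_0)}\Bigl(C_1C_2e^{2\theta_0\widehat c\widehat\mu}(\widehat a_0+\widehat a_1+\widehat a_2)\Bigr)^n.
\]
Using the bounds on $\widehat a_\varsigma$, the base is at most $\mathscr C e^{2\theta_0\widehat c\widehat\mu}\delta_2^{\theta_0}+\mathfrak q_m$. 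The point to check carefully is that the exponent $2\theta_0\widehat c\widehat\mu$ (rather than $\theta_0$) is consistent with the bookkeeping. Here $\Gamma$ covers two blocks, so $\theta_0\widehat c\cdot 2\widehat\mu$ per pair, which matches.

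Since $\mathscr C,\widehat c,\widehat\mu$ were fixed before $\delta_2$, we choose $\delta_2$ so that $\mathscr C e^{2\theta_0\widehat c\widehat\mu}\delta_2^{\theta_0}<1/4$. Lemma~\ref{lem:one-pair} then gives $\beta$, independent of $L$. The base is then $<1/2$, so \eqref{eq:block-contraction} holds with $\widehat\gamma=\log 2$, $C_L=e^{\theta_0\widehat cL}$ and $C\tilde\eta\le C\kappa$ (using $\tilde\eta\le\kappa$).

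For \eqref{eq:rho-between-blocks}, we write $\rho_t$ for $t\in[\tau_{2n},\tau_{2n+2})$ as either $J_{\tau_{2n},t}\rho_{\tau_{2n}}-\mathbb A_{\tau_{2n},t}(h_{2n}\mathbf 1)$ or $J_{\tau_{2n+1},t}\rho_{\tau_{2n+1}}$. In either case $\|\rho_t\|\le C\mathbb J_{2n}^{a}\mathbb J_{2n+1}^{a}\beta^{-1/2}\|\rho_{\tau_{2n}}\|$, because $\|h_{2n}\|\le\|\mathbb A^*(\mathbb M+\beta)^{-1}\|\|J\rho\|\le\beta^{-1/2}\|J\rho\|$. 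We bound $\|\rho_t\|^4\mathbf 1_{\{t\in[\tau_{2n},\tau_{2n+2})\}}\mathbf 1_{\mathbb G}$ via Cauchy--Schwarz: the first factor is $\|\rho_{\tau_{2n}}\|^8$ restricted to $\{\widehat\Theta\le L\}$, and the second is a variational factor. The difficulty is that \eqref{eq:block-contraction} is only for fourth powers. I would therefore instead keep $\|\rho_{\tau_{2n}}\|^4$ to the first power and put the $\mathbb J$ factor on the conditional expectation at $\tau_{2n}$, using \eqref{eq:variational-block-moment}. That estimate carries $e^{-c(X+Y)}$, so we multiply and divide by $e^{c\,\Xi_{2n}+c\,\Xi_{2n+1}}$. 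On $\mathbb G_{L,K}^{\widehat\delta,b}$ we have $\Xi_{2n}+\Xi_{2n+1}\le 2K+b(4n+1)$, so this costs $e^{\widetilde c(2K+5bn)}$ with $\widetilde c$ the relevant normalization constant.

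The main obstacle is the order of operations. The indicator of $\mathbb G$ is not $\mathcal F_{\tau_{2n}}$-measurable, but it is clock-measurable and only upper-bounded away. The exponential moment $e^{\eta V(Z_{\tau_{2n}})}$ produced by \eqref{eq:variational-block-moment} must also be combined with the contraction at $\tau_{2n}$. I would handle this by using H\"older with a small exponent and reproving \eqref{eq:block-contraction} with the extra weight $e^{\eta V(Z_{\tau_{2n}})}$. This goes through because \eqref{eq:weighted-word-product} already tolerates such weights via \eqref{eq:fixed-block-sum-reduction} and Corollary~\ref{cor:stopped-exp-uniform}, at the price of halving the rate. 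The condition $b<\widehat\gamma/(8\widetilde c)$ ensures that $e^{5\widetilde c bn}$ is dominated by $e^{-\widehat\gamma n/2}$ after this halving, leaving $C_{L,K}e^{C\kappa V(z_0)-\widehat\gamma n/2}$.
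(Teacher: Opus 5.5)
Your proof of \eqref{eq:block-contraction} is the paper's. You iterate \eqref{eq:one-pair-fourth}, bound $\sum_{j<n}\Gamma_{2j}\le 2\widehat\mu n+L$ pathwise on $\{\widehat\Theta^{(\widehat\delta)}\le L\}$ before dropping the indicator, apply \eqref{eq:weighted-word-product} to each word, and fix $\delta_2$ (hence $\beta$) before $L$ enters. For \eqref{eq:rho-between-blocks} your architecture also matches the paper's. It uses the pathwise bound $\|\rho_t\|^4\le C_\beta\mathbb J_{2n}^8\mathbb J_{2n+1}^4\|\rho_{\tau_{2n}}\|^4$, clock-normalized moments from \eqref{eq:variational-block-moment} conditioned first at $\tau_{2n+1}$ and then at $\tau_{2n}$, the bound $\Xi_{2n}+\Xi_{2n+1}\le 2K+(4n+1)b$ on the good-clock event, and an energy-weighted endpoint residual. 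However, the rate bookkeeping at the end does not close.

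The failing step is your claim that the weighted endpoint estimate comes at the price of halving the rate. Suppose $\mathbb E[\|\rho_{\tau_{2n}}\|^4e^{\eta V(Z_{\tau_{2n}})}\mathbf 1_{E}]$ only decays like $e^{-\widehat\gamma n/2}$. Then any clock cost $e^{cbn}$ with $c>0$ leaves a rate strictly worse than $\widehat\gamma/2$, so \eqref{eq:rho-between-blocks} cannot follow with the same $\widehat\gamma$ as in \eqref{eq:block-contraction}. With your cost $e^{5\widetilde cbn}$ and $b<\widehat\gamma/(8\widetilde c)$, the exponent $-\widehat\gamma/2+5\widetilde cb$ can even be positive. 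Nor does the H\"older step you sketch produce the halving: a H\"older split at the level of $\|\rho_{\tau_{2n}}\|^4$ needs moments above the fourth, or else the weighted estimate itself.

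The missing observation is that the weight costs nothing, in four steps:
\begin{itemize}
\item Take $\eta=\eta_0=\theta_0\tilde\eta$.
\item Choose $\eta_V$ in \eqref{eq:variational-block-moment} so that, after conditioning at $\tau_{2n+1}$, Cauchy--Schwarz and \eqref{eq:fixed-block-exp-reduction}, the exponent left at $\tau_{2n}$ is at most $\eta_0$.
\item Replace $\mathbb G_{L,K}^{\widehat\delta,b}$ by the $\mathcal F_{\tau_{2n}}$-measurable event $E_n^L=\{\sum_{j<2n}\Xi_j\le 2\widehat\mu n+L\}$, which it implies.
\item Put the weight inside the Cauchy--Schwarz step of \eqref{eq:weighted-word-product}.
\end{itemize}
The energy factor then becomes $\mathbb E\exp\{2\theta_0\tilde\eta(\sum_{j<n}V(Z_{\tau_{2j}})+V(Z_{\tau_{2n}}))\}\le e^{2\theta_0\tilde\eta V(z_0)}\mathbb E\exp\{2\theta_0\tilde\eta\sum_{r=1}^{2n}V(Z_{\tau_r})\}$. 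This is exactly what \eqref{eq:fixed-block-sum-reduction} bounds by $C_2^n$, so the weighted residual decays at the full rate $\widehat\gamma$.

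With the exact cost $e^{\widetilde c(4n+1)b}$ (not the cruder $e^{5\widetilde cbn}$) you then get $-\widehat\gamma+4\widetilde cb<-\widehat\gamma/2$. The paper normalizes $H_n^2$ and takes a square root, so its cost is $e^{\widetilde c(4n+1)b/2}$ and the exponent is below $-3\widehat\gamma/4$. Since $\widehat\gamma$ and $\widetilde c$ are existential, a genuinely halved rate could be rescued by shrinking $\widehat\gamma$ and enlarging $\widetilde c$. That is not the argument you wrote, and the halving it would rest on is unjustified.
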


\begin{proof}
On $\{\widehat\Theta^{(\widehat\delta)}\le L\}$,
\begin{equation*}
 \sum_{j=0}^{n-1}\Gamma_{2j}=\sum_{r=0}^{2n-1}\Xi_r
 \le2\widehat\mu n+L.
\end{equation*}
Choose $\delta_2$ so that
\begin{equation*}
 \mathscr Ce^{2\theta_0\widehat c\widehat\mu}\delta_2^{\theta_0}<\frac14.
\end{equation*}
This choice, and hence the corresponding $\beta$, is independent of $L$.  Iterating
\eqref{eq:one-pair-fourth} gives
\begin{equation*}
 \|\rho_{\tau_{2n}}\|_{\mathbb{H}}^{4}
 \le C_1^{n}e^{\theta_0\widehat c\sum_{j=0}^{n-1}\Gamma_{2j}}
 \sum_{\varsigma\in\{0,1,2\}^n}
 \prod_{j=0}^{n-1}B_{2j,\varsigma_j}.
\end{equation*}
Using the preceding clock bound and \eqref{eq:weighted-word-product},
\begin{align*}
 &\mathbb{E}\left[\|\rho_{\tau_{2n}}\|_{\mathbb{H}}^{4}
 \mathbf{1}_{\{\widehat\Theta^{(\widehat\delta)}\le L\}}\right]\le C_Le^{C\kappa V(z_0)}
 \left[C_1C_2e^{2\theta_0\widehat c\widehat\mu}(\widehat a_0+\widehat a_1+\widehat a_2)\right]^n.
\end{align*}
Since
\begin{equation*}
 C_1C_2e^{2\theta_0\widehat c\widehat\mu}(\widehat a_0+\widehat a_1+\widehat a_2)
 \le \mathscr Ce^{2\theta_0\widehat c\widehat\mu}
 \bigl(\delta_2^{\theta_0}+\widehat q(\chi)^{\theta_0}+d_m^{\theta_0}\bigr)
 =:\bar q<\frac12,
\end{equation*}
\eqref{eq:block-contraction} follows with $\widehat\gamma=-\log\bar q$.

For the estimate inside a pair, retain a terminal energy weight. Define the past-measurable event
\begin{equation*}
 E_n^L=\left\{\sum_{j=0}^{2n-1}\Xi_j\le2\widehat\mu n+L\right\},
 \qquad \eta_0=\theta_0\tilde\eta.
\end{equation*}
The same word-product estimate gives
\begin{equation}\label{eq:weighted-endpoint-residual}
 \mathbb E\left[\|\rho_{\tau_{2n}}\|^4e^{\eta_0V(Z_{\tau_{2n}})}
                   \mathbf1_{E_n^L}\right]
 \le C_Le^{C\kappa V(z_0)-\widehat\gamma n}.
\end{equation}
Indeed, after Cauchy--Schwarz the energy factor is bounded by
\begin{equation*}
 \mathbb E\exp\left\{2\theta_0\tilde\eta
 \left(\sum_{j=0}^{n-1}V(Z_{\tau_{2j}})+V(Z_{\tau_{2n}})\right)\right\}
 \le e^{2\theta_0\tilde\eta V(z_0)}
 \mathbb E e^{2\theta_0\tilde\eta\sum_{r=1}^{2n}V(Z_{\tau_r})}.
\end{equation*}
This is precisely the bound used to choose $C_2$ in \eqref{eq:weighted-word-product}, so the contraction factor is unchanged.

By \eqref{eq:h-L2} and $\|R_{2n,\beta}\|\le1$, throughout the current pair,
\begin{equation}\label{eq:current-pair-pathwise}
 \|\rho_t\|^4\le C_\beta H_n\|\rho_{\tau_{2n}}\|^4,
 \qquad H_n=\mathbb J_{2n}^8\mathbb J_{2n+1}^4.
\end{equation}

Apply \eqref{eq:variational-block-moment} to the odd block with
$(p,\eta_V)=(8,\eta_0)$ and to the even block with $(p,\eta_V)=(32,\eta_0)$.
Choose $\widetilde c$ at least the odd normalization coefficient and half the even one.
Conditioning first at $\tau_{2n+1}$ and then using conditional Cauchy--Schwarz gives
\begin{align}
 &~~\mathbb E[H_n^2e^{-\widetilde c(\Xi_{2n}+\Xi_{2n+1})}
                  \mid\mathcal F_{\tau_{2n}}]\notag\\
 &\le C\mathbb E[\mathbb J_{2n}^{32}e^{-2\widetilde c\Xi_{2n}}
                     \mid\mathcal F_{\tau_{2n}}]^{1/2}
       \mathbb E[e^{2\eta_0V(Z_{\tau_{2n+1}})}
                     \mid\mathcal F_{\tau_{2n}}]^{1/2}\notag\\
 &\le\widetilde C
       e^{(\eta_0/2+\eta_0e^{-\chi})V(Z_{\tau_{2n}})}
 \le\widetilde C e^{2\eta_0V(Z_{\tau_{2n}})}.
 \label{eq:current-two-block-normalized}
\end{align}
Here $2\eta_0\le\eta_*$, so the fixed-block exponent reduction applies.

On $\mathbb G_{L,K}^{\widehat\delta,b}$, the event $E_n^L$ holds and
$\Xi_{2n}+\Xi_{2n+1}\le2K+(4n+1)b$. Since $E_n^L$ and $\rho_{\tau_{2n}}$ are $\mathcal F_{\tau_{2n}}$-measurable, conditional Cauchy--Schwarz yields
\begin{align*}
 &~~\mathbb E\left[\|\rho_t\|^4\mathbf1_{\mathbb G_{L,K}^{\widehat\delta,b}}
               \mathbf1_{\{t\in[\tau_{2n},\tau_{2n+2})\}}\right]\\
 &\le C_{K,\beta}e^{\widetilde c(4n+1)b/2}
 \mathbb E\left[\|\rho_{\tau_{2n}}\|^4e^{\eta_0V(Z_{\tau_{2n}})}
                  \mathbf1_{E_n^L}\right]\\
 &\le C_{L,K}e^{C\kappa V(z_0)-\widehat\gamma n+\widetilde c(4n+1)b/2}.
\end{align*}
The stated choice $b<\widehat\gamma/(8\widetilde c)$ implies \eqref{eq:rho-between-blocks}.
\end{proof}

\begin{remark}\label{rem:fixed-rank-interpretation}
The constants in \eqref{eq:fixed-rank-admissible} are those fixed by the
preceding estimates. Although $d_m\to0$, the noise size, the admissible
$\kappa$, and the normalized moment constants can change with $Q_m$.
Thus the spectral limit alone does not verify the condition for an arbitrary
family of forcing operators. Under the hypotheses of
Proposition~\ref{prop:clock-mean-bound}, a sufficient condition is
\[
 \mathscr C\exp\!\left\{2\theta_0\widehat c
       (\overline\Xi^{\sharp}(\kappa,\chi,B_Q)+\widehat\delta)\right\}
       (\widehat q(\chi)^{\theta_0}+d_m^{\theta_0})<\tfrac14.
\]
This replaces the clock expectation by its explicit upper bound; all other
constants must still be evaluated for the selected equation and forcing.
\end{remark}

\subsection{Blockwise Skorohod estimates and the e-property}

After conditioning on a clock path $S=\ell$, the controlled operational intervals
$ I_n^c(\ell)=[\ell_{\tau_{2n}},\ell_{\tau_{2n+1}}]$ are deterministic and pairwise disjoint.  We work on the fixed Cameron--Martin space
$\mathcal{H}=L^2(\mathbb{R}_+;\mathbb{R}^m)$ and identify each block space by zero extension.  Let
\begin{equation*}
 (\mathsf P_n(\ell)h)(r)=\mathbf{1}_{I_n^c(\ell)}(r)h(r),\qquad h\in\mathcal{H}.
\end{equation*}
For finite $N$,
\begin{equation}\label{eq:partial-wiener-decomposition}
 \mathcal{H}=\left(\bigoplus_{n=0}^{N}\mathcal{H}_n^c(\ell)\right)
 \oplus\mathcal{H}_{N,\mathrm{rest}}(\ell).
\end{equation}
Fix a countable dense set $\mathcal{H}_{\mathbb{Q}}\subset\mathcal{H}$ and define the external-factor
sigma-field for the $n$th controlled block by
\begin{equation}\label{eq:external-factor-sigma}
 \mathcal{E}_n
 =\sigma(S)\vee
 \sigma\left\{W\bigl((I-\mathsf P_n(S))h\bigr):h\in\mathcal{H}_{\mathbb{Q}}\right\}.
\end{equation}
Conditionally on $S=\ell$, $\mathcal{E}_n$ contains the complementary Gaussian factor
$\mathcal{H}_n^c(\ell)^\perp$ and is therefore constant along the current Gaussian factor
$\mathcal{H}_n^c(\ell)$.  In particular, bounded $\mathcal{E}_n$-measurable multipliers have zero
partial derivative $D^{(n)}$; this includes all clock events and all random variables measurable
with respect to Brownian increments preceding the current controlled block.

For the pair index used in this subsection we set $D^{(n)}:=D^{[2n]}$ and identify $\mathcal{H}_n^c=\mathcal{H}_{2n}$ by zero extension to $\mathcal{H}$.

On jointly measurable Brownian cylindrical functionals set
\begin{equation*}
 D^{(n)}F=\mathsf P_n(S)D_WF,\qquad
 \|F\|_{1,2;(n)}^2=\mathbb{E}[|F|^2]+\mathbb{E}[\|D^{(n)}F\|_{\mathcal{H}}^2].
\end{equation*}
Its closure is denoted by $\mathbb{D}_{(n)}^{1,2}$.  For any separable Hilbert target space $E$,
the space $\mathbb{D}_{(n)}^{1,2}(E)$ is defined by closing finite-rank cylindrical $E$-valued
fields in the corresponding graph norm.  The localized space
$\mathbb{D}_{(n),\mathrm{loc}}^{1,2}(E)$ consists of fields $H$ for which there are
$\mathcal{E}_n$-measurable sets $A_k\uparrow\Omega$ such that
$\mathbf{1}_{A_k}H\in\mathbb{D}_{(n)}^{1,2}(E)$ for every $k$.  We use this with
$E=\mathcal{H}$ and $E=\mathbb{H}$.  For fixed $\ell$ this is the usual Malliavin derivative on the
Gaussian factor $\mathcal{H}_n^c(\ell)$, with the complementary Gaussian factor treated as a
parameter; the corresponding divergence on $\mathcal{H}_n^c(\ell)$ is $\delta_n^\ell$.  The resulting closed divergence operator is used in the blockwise integration-by-parts argument below.

The field of projections is strongly measurable because
\[
 \langle\mathsf P_n(\ell)h,k\rangle
 =\int_{\ell_{\tau_{2n}}}^{\ell_{\tau_{2n+1}}}h(r)\cdot k(r)\,\mathrm dr
\]
is measurable and $\mathcal H$ is separable. Closing the cylindrical core
$a(S)\varphi(W(h_1),\ldots,W(h_k))$ and using Fubini identifies $D^{(n)}$
with the direct integral of the Gaussian derivatives on
$\mathcal H_n^c(\ell)$. Its adjoint is denoted by $\delta_n$.

\begin{lemma}\label{lem:ray-sobolev}
Let $E$ be a separable Hilbert space. Suppose an $E$-valued field $U$ is locally
absolutely continuous along Cameron--Martin lines in the current Gaussian factor,
with directional derivative $\langle F,h\rangle_{\mathcal H_n^c}$, where
$F\in\mathcal H_n^c\otimes E$ is measurable. If
\[
 \mathbb E(\|U\|_E^2+\|F\|_{\mathcal H_n^c\otimes E}^2)<\infty,
\]
then $U\in\mathbb D_{(n)}^{1,2}(E)$ and $D^{(n)}U=F$.
It suffices to check the line property on a countable dense Cameron--Martin core.
\end{lemma}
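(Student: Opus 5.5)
The plan is to reduce to the classical one-dimensional ray-absolute-continuity characterization of Sobolev spaces on Wiener space, applied fiberwise after freezing the clock $S=\ell$ and the complementary Gaussian factor. For fixed $\ell$, write the Gaussian space as a product of the factor generated by $\mathcal H_n^c(\ell)$ and the complementary factor generated by $\mathcal H_n^c(\ell)^\perp$. Via the direct-integral identification of $D^{(n)}$ already recorded above, $D^{(n)}$ is, fiberwise, the usual Malliavin derivative on $\mathcal H_n^c(\ell)$ with the complementary variables held as parameters. It therefore suffices to prove the claim on a single Gaussian factor and then integrate. Square integrability of $U$ and $F$ is preserved under Fubini, since $\mathbb E\|U\|_E^2+\mathbb E\|F\|^2=\int\!\!\int(\cdots)$ over the fibers.

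On one factor, I would follow the standard Kusuoka/Sugita argument (Nualart, Prop.~1.3.8 style) in three steps.

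First, fix an orthonormal basis $(e_i)$ of $\mathcal H_n^c(\ell)$ taken from the countable dense core. For each $i$, absolute continuity along the line $\varepsilon\mapsto U(\omega+\varepsilon\int_0^\cdot e_i)$, together with the Cameron--Martin formula, gives the integration-by-parts identity
\[
\mathbb E[\langle U,a\rangle_E\,\delta_n(\varphi e_i)]=\mathbb E[\langle F,e_i\otimes a\rangle\,\varphi],
\]
for smooth cylindrical $\varphi$ and $a\in E$. I would derive it by differentiating $\mathbb E[\langle U(\omega+\varepsilon e_i),a\rangle\varphi]$ after the Girsanov change of variables. Truncating $U$ by smooth cutoffs handles integrability, and the $L^2$ assumptions on $U$ and $F$ justify passing to the limit.

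Second, this duality shows that $U$ lies in the domain of the adjoint of the divergence restricted to elementary fields $\varphi e_i\otimes a$. Since $D^{(n)}$ is closed and $\delta_n=(D^{(n)})^*$, one has $D^{(n)}=\delta_n^*$ on its natural domain. This yields $U\in\mathbb D^{1,2}_{(n)}(E)$ with $D^{(n)}U=F$. Concretely, I would approximate by the finite-dimensional conditional expectations $U_k=\mathbb E[U\mid W(e_1),\dots,W(e_k)]$, each of which is a Sobolev function on $\mathbb R^k$ with gradient $\mathbb E[\langle F,e_i\rangle\mid\cdot]$. Then $U_k\to U$ and $D^{(n)}U_k\to F$ in $L^2$ by martingale convergence, and closedness finishes the argument. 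The basis restriction to the countable core is exactly what the last sentence of the lemma allows: the line property is needed only along the $e_i$, and these are countably many, so the null sets can be collected.

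Third, I would reassemble the fibers. Measurability of the fiberwise derivative in $(\ell,\cdot)$ follows from the measurable projection field $\mathsf P_n(\ell)$ and separability. The $E$-valued case reduces to scalar components $\langle U,a_j\rangle$ over an orthonormal basis of $E$, summed in $L^2$ using the bound on $\mathbb E\|F\|^2$.

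The main obstacle is the first step, specifically the finite-dimensional identification $\partial_i U_k=\mathbb E[\langle F,e_i\rangle\mid\cdot]$. The conditional expectation averages over the remaining directions, so one must check that absolute continuity along lines survives the averaging and that the derivative commutes with the conditioning. I would first try to verify this with the Gaussian change-of-variables formula in direction $e_i$, which is independent of the conditioning for $i\le k$. If the line property is assumed only locally, I would localize using cutoffs of $\|U\|_E$.
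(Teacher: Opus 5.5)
Your proposal is correct and takes essentially the same route as the paper: freeze the clock and the complementary factor, identify the weak derivative coordinatewise by Gaussian integration by parts, reduce to scalar components in $E$, and reassemble by closedness and Fubini. You use Cameron--Martin shifts and conditional-expectation approximants where the paper disintegrates along each coordinate against compactly supported tests; this is only a technical variant. One small correction: an orthonormal basis generally cannot be drawn from a countable ($\mathbb{Q}$-linear) core, so to justify the last sentence of the lemma, first prove the integration-by-parts identity for core directions $h$, then extend it to all $h\in\mathcal H_n^c$ by continuity of both sides in $h$, and only then choose the basis.
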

\begin{proof}
Freeze the clock and complementary factor. Choose an orthonormal basis of the
current factor and disintegrate along each Gaussian coordinate. One-dimensional
integration by parts, first against compactly supported smooth tests, identifies
the weak derivative with the corresponding coordinate of $F$.
Square integrability permits removal of the test cutoffs. Applying this to a
countable dense set of scalar products in $E$, then summing the squared coordinate
bounds and integrating over the external factors, gives the closed Gaussian
Sobolev derivative; see \cite[Chapter~1]{Nualart2006}.
\end{proof}

\begin{lemma}\label{lem:conditional-block-divergence}
Fix $n$ and let $\mathcal{E}_n$ be defined by \eqref{eq:external-factor-sigma}.  If
$\Lambda$ is bounded and $\mathcal{E}_n$-measurable, then
\begin{equation*}
 \Lambda\in\mathbb{D}_{(n)}^{1,2},\qquad D^{(n)}\Lambda=0.
\end{equation*}
Let $H\in\mathbb{D}_{(n),\mathrm{loc}}^{1,2}(\mathcal{H})$ satisfy
$H=\mathsf P_n(S)H$, and let $\delta_n^{\mathrm{loc}}(H)$ be the local divergence obtained on the
current Gaussian factor after conditioning on the clock and the complementary factors.  If $F\in\mathbb D_{(n)}^{1,2}$ is bounded and
\begin{equation}\label{eq:localized-divergence-integrability}
 \mathbb{E}\!\left[|\Lambda|^2
 \left(\|H\|_{\mathcal{H}}^2+
 \|D^{(n)}H\|_{\mathcal{H}\otimes\mathcal{H}}^2\right)\right]<\infty,
\end{equation}
then $\Lambda H\in\operatorname{Dom}\delta_n$ and
\begin{equation}\label{eq:clock-multiplier-divergence}
 \delta_n(\Lambda H)=\Lambda\delta_n^{\mathrm{loc}}(H)
\end{equation}
in $L^2$.  Consequently,
\begin{equation}\label{eq:conditional-disintegration-ibp}
 \mathbb{E}\!\left[\Lambda\langle D^{(n)}F,H\rangle_{\mathcal{H}}\right]
 =\mathbb{E}\!\left[\Lambda F\,\delta_n^{\mathrm{loc}}(H)\right],
\end{equation}
and
\begin{equation}\label{eq:partial-divergence-L2-estimate}
 \mathbb{E}[|\Lambda\delta_n^{\mathrm{loc}}(H)|^2]
 \le \mathbb{E}\!\left[|\Lambda|^2
 \left(\|H\|_{\mathcal{H}}^2+
 \|D^{(n)}H\|_{\mathcal{H}\otimes\mathcal{H}}^2\right)\right].
\end{equation}
\end{lemma}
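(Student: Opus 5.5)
The plan is to reduce everything to the standard finite-dimensional Gaussian statements by freezing the clock and the complementary Gaussian factor. After that, each claim becomes a fibrewise identity on the Gaussian space $\mathcal H_n^c(\ell)$. Fubini then reassembles it. Throughout, I use the direct-integral identification of $D^{(n)}$ with the fibre derivatives recorded just before Lemma~\ref{lem:ray-sobolev}.

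\emph{Step 1: $\Lambda$ has zero derivative.} For the first assertion, I first take $\Lambda$ a bounded cylindrical functional of the form $a(S)\varphi(W((I-\mathsf P_n(S))h_1),\dots,W((I-\mathsf P_n(S))h_k))$ with $h_i\in\mathcal H_{\mathbb Q}$. Its Malliavin derivative is a combination of the vectors $(I-\mathsf P_n(S))h_i$ with bounded coefficients. Applying $\mathsf P_n(S)$ kills it, so $D^{(n)}\Lambda=0$. For a general bounded $\mathcal E_n$-measurable $\Lambda$, I approximate in $L^2$ by such cylindrical functionals, using a monotone-class argument and the fact that $\mathcal E_n$ is generated by these variables. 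The graph-norm differences then reduce to the $L^2$ differences, and closedness of $D^{(n)}$ gives $\Lambda\in\mathbb D^{1,2}_{(n)}$ with $D^{(n)}\Lambda=0$.

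\emph{Step 2: the divergence identity and the $L^2$ bound.} Fix $\ell$ and the complementary factor, which together determine $\Lambda$. On that fibre, $H$ lies in $\mathbb D^{1,2}$ of the Gaussian space $\mathcal H_n^c(\ell)$ once the localization sets $A_k$ are accounted for. Since the $A_k$ are $\mathcal E_n$-measurable, each fibre eventually lies inside some $A_k$. The usual Skorohod estimate on that Gaussian factor (\cite[Prop.~1.3.1]{Nualart2006}) gives $\delta^\ell_n(H)\in L^2$ with
\[
\mathbb E^\ell|\delta_n^\ell(H)|^2\le \mathbb E^\ell\bigl(\|H\|^2+\|D^{(n)}H\|^2_{\mathcal H\otimes\mathcal H}\bigr).
\]
This defines $\delta_n^{\mathrm{loc}}(H)$ fibrewise. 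Because $\Lambda$ is constant on the fibre, multiplying by $|\Lambda|^2$ and integrating over the external factors yields \eqref{eq:partial-divergence-L2-estimate}; the right-hand side is finite by \eqref{eq:localized-divergence-integrability}. Joint measurability of the fibrewise divergence follows by approximating $H$ with elementary fields $\sum_i a_i(\mathcal E_n)\,G_i\,k_i$, for which the divergence is explicit, and passing to the limit in the fibrewise $L^2$ bound.

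\emph{Step 3: integration by parts and identification of the adjoint.} For bounded $F\in\mathbb D^{1,2}_{(n)}$, the fibrewise duality $\mathbb E^\ell\langle D^{(n)}F,H\rangle=\mathbb E^\ell[F\,\delta_n^\ell(H)]$ holds. Multiplying by the fibre-constant $\Lambda$ and integrating gives \eqref{eq:conditional-disintegration-ibp}; integrability comes from boundedness of $F$ together with Step 2 and Cauchy--Schwarz. Since $\Lambda F$ again ranges over a core as $F$ varies, and $D^{(n)}(\Lambda F)=\Lambda D^{(n)}F$ by Step 1, the same identity reads $\mathbb E\langle D^{(n)}G,\Lambda H\rangle=\mathbb E[G\,\Lambda\delta_n^{\mathrm{loc}}(H)]$ for all $G$ in a core. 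Hence $\Lambda H\in\operatorname{Dom}\delta_n$ and \eqref{eq:clock-multiplier-divergence} holds. The identity transfers from bounded cylindrical $G$ to the domain by closedness.

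The main obstacle is in Step 2: establishing that the fibrewise divergences form a jointly measurable field, and that localization by $\mathcal E_n$-measurable sets is compatible with fibrewise integrability. I would handle this through the elementary-field approximation, using the countable core $\mathcal H_{\mathbb Q}$ to avoid uncountable unions of null sets.
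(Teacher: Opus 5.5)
Your proposal is correct and follows essentially the same route as the paper: freeze the clock and the complementary Gaussian factor, and use $\mathcal E_n$-measurability of $\Lambda$ and of the localizing sets $A_k$ to work fibrewise. You then apply the standard Skorohod $L^2$ estimate on $\mathcal H_n^c(\ell)$ and integrate back over the external factors.

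Your ordering in Step~3 is, if anything, a more careful justification of the paper's ``therefore $\Lambda H\in\operatorname{Dom}\delta_n$'': you obtain the integrated fibrewise duality first and only then deduce domain membership. The detour through ``$\Lambda F$ ranges over a core'' is unnecessary, and is false when $\Lambda$ vanishes on a set of positive measure. Identity \eqref{eq:conditional-disintegration-ibp} with $F=G$ already reads $\mathbb E\langle D^{(n)}G,\Lambda H\rangle_{\mathcal H}=\mathbb E[G\,\Lambda\delta_n^{\mathrm{loc}}(H)]$ for every bounded $G\in\mathbb D^{1,2}_{(n)}$, which is exactly the adjoint relation you need.
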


\begin{proof}
Fix the clock path $S=\ell$.  Then
\begin{equation*}
 \mathcal H=\mathcal H_n^c(\ell)\oplus\mathcal H_n^c(\ell)^\perp,
\end{equation*}
and $D^{(n)}$ is the Malliavin derivative on the first factor.  Hence every bounded
$\mathcal E_n$-measurable multiplier $\Lambda$ is constant on
$\mathcal H_n^c(\ell)$ and
\begin{equation*}
 D^{(n)}\Lambda=0.
\end{equation*}
Let $A_k\uparrow\Omega$ be $\mathcal E_n$-measurable with
$\mathbf 1_{A_k}H\in\mathbb D_{(n)}^{1,2}(\mathcal H)$.  Since
$H=\mathsf P_n(S)H$, the Gaussian divergence estimate on the current factor gives
\begin{equation*}
 \mathbb E_W\!\left[|\delta_n^\ell(\mathbf 1_{A_k}H)|^2\mid S=\ell\right]
 \le
 \mathbb E_W\!\left[\mathbf 1_{A_k}
 \left(\|H\|_{\mathcal H}^2+
 \|D^{(n)}H\|_{\mathcal H\otimes\mathcal H}^2\right)\middle|S=\ell\right].
\end{equation*}
The divergences agree on $A_k\cap A_j$ by uniqueness of the adjoint of $D^{(n)}$; thus they define
$\delta_n^{\mathrm{loc}}(H)$.  Under \eqref{eq:localized-divergence-integrability}, integration over
the clock and the complementary Gaussian factor yields
\begin{equation*}
 \mathbb E[|\Lambda\delta_n^{\mathrm{loc}}(H)|^2]
 \le \mathbb E\!\left[|\Lambda|^2
 \left(\|H\|_{\mathcal H}^2+
 \|D^{(n)}H\|_{\mathcal H\otimes\mathcal H}^2\right)\right].
\end{equation*}
Therefore $\Lambda H\in\operatorname{Dom}\delta_n$ and
\eqref{eq:clock-multiplier-divergence} holds.  
For bounded $F\in\mathbb D_{(n)}^{1,2}$, partial duality gives
\begin{align*}
 \mathbb E[\Lambda\langle D^{(n)}F,H\rangle_{\mathcal H}]
 &=\mathbb E[\langle D^{(n)}F,\Lambda H\rangle_{\mathcal H}]\\
 &=\mathbb E[F\delta_n(\Lambda H)]
 =\mathbb E[\Lambda F\delta_n^{\mathrm{loc}}(H)],
\end{align*}
which proves \eqref{eq:conditional-disintegration-ibp} and
\eqref{eq:partial-divergence-L2-estimate}.
\end{proof}

For a pair index $n$ set
\begin{equation*}
 J_n^c=J_{\tau_{2n},\tau_{2n+1}},\qquad
 \mathbb{A}_n^c=\mathbb{A}_{\tau_{2n},\tau_{2n+1}},\qquad
 \mathbb{M}_n^c=\mathbb{M}_{\tau_{2n},\tau_{2n+1}},
\end{equation*}
\begin{equation*}
 \rho_n^c=\rho_{\tau_{2n}},\qquad
 G_n^c=(\mathbb{M}_n^c+\beta I)^{-1},\qquad
 h_n^c=h_{2n}=(\mathbb{A}_n^c)^*G_n^cJ_n^c\rho_n^c,
\end{equation*}
\begin{equation*}
 \mathcal{H}_n^c=L^2([S_{\tau_{2n}},S_{\tau_{2n+1}}];\mathbb{R}^m).
\end{equation*}
For the approximation on this block, start the Galerkin equation at
$\Pi_NZ_{\tau_{2n}}$ and keep the external direction $\rho_n^c$ fixed. Set
\begin{equation*}
 h_n^{c,N}=(\mathbb A_n^{c,N})^*
 (\mathbb M_n^{c,N}+\beta I)^{-1}J_n^{c,N}\Pi_N\rho_n^c.
\end{equation*}
Define the external localizers
\begin{equation}\label{eq:external-control-localizers}
 A_{n,k}=\left\{V(Z_{\tau_{2n}})\le k,\ \Xi_{2n}\le k,
 \ \tau_{2n+1}-\tau_{2n}\le k,\ \|\rho_n^c\|_{\mathbb{H}}\le k\right\},\qquad k\ge1.
\end{equation}
Each $A_{n,k}$ is $\mathcal{E}_n$-measurable and $A_{n,k}\uparrow\Omega$.

\begin{lemma}\label{lem:external-random-direction}
Fix a controlled block $n$.  Let $R$ be an $\mathbb{H}$-valued,
$\mathcal{E}_n$-measurable random variable.  Assume that for some
$\mathcal{E}_n$-measurable sets $B_k\uparrow\Omega$,
$\|R\|_{\mathbb{H}}\le k$ on $B_k$ and
\begin{equation*}
 \mathbb{E}\!\left[\mathbf{1}_{B_k}
 \bigl(\|J_n^c\|_{\mathbb{L}(\mathbb{H})}^2+
 \mathfrak d_{2n}(J_n^c)^2\bigr)\right]<\infty.
\end{equation*}
Then
\begin{equation*}
 J_n^cR\in\mathbb{D}_{(n),\mathrm{loc}}^{1,2}(\mathbb{H}),
 \qquad D^{(n)}R=0,
\end{equation*}
and the closed directional derivative
\begin{equation}\label{eq:external-random-direction-derivative}
 \mathfrak D_nJ_n^c[R]:=D^{(n)}(J_n^cR)
\end{equation}
satisfies
\begin{equation*}
 \|\mathfrak D_nJ_n^c[R]\|_{\mathcal{H}_n^c\otimes\mathbb{H}}
 \le \mathfrak d_{2n}(J_n^c)\|R\|_{\mathbb{H}}.
\end{equation*}
\end{lemma}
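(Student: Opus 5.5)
The plan is to reduce to the case of a deterministic direction and then pass to the random direction $R$. Since $R$ is $\mathcal E_n$-measurable, it is constant along the current Gaussian factor $\mathcal H_n^c(\ell)$. This gives $D^{(n)}R=0$ immediately: approximate $R$ on $B_k$ by finite sums $\sum_i \mathbf 1_{E_i}\xi_i$ with $E_i\in\mathcal E_n$ and $\xi_i\in\mathcal T$, and note that each indicator has zero partial derivative by Lemma~\ref{lem:conditional-block-divergence}.

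\textbf{Step 1: simple directions.} I would work on $B_k\cap A_{n,k'}$ and take $R^{(i)}=\sum_{l}\mathbf 1_{E_l}\xi_l$ with finitely many disjoint $E_l\in\mathcal E_n$, $\xi_l\in\mathcal T$ and $\|R^{(i)}\|\le k$. For each deterministic $\xi_l$, Proposition~\ref{prop:differentiability} gives local Sobolev regularity of $J_n^c\xi_l$ on the current factor. Concretely, $J_n^c\xi_l$ is locally absolutely continuous along Cameron--Martin lines, with derivative given by \eqref{eq:DJ-equation}, and the pathwise bound $\|D^{(n)}(J_n^c\xi_l)\|\le\mathfrak d_{2n}(J_n^c)\|\xi_l\|$ holds by definition of $\mathfrak d_{2n}$. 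Since multiplication by bounded $\mathcal E_n$-measurable indicators commutes with $D^{(n)}$, Lemma~\ref{lem:ray-sobolev} (applied after the cutoff $\mathbf 1_{B_k}$, using the assumed integrability of $\|J_n^c\|^2+\mathfrak d_{2n}(J_n^c)^2$) gives
\[
\mathbf 1_{B_k}J_n^cR^{(i)}\in\mathbb D^{1,2}_{(n)}(\mathbb H),\qquad
\|D^{(n)}(J_n^cR^{(i)})\|\le\mathfrak d_{2n}(J_n^c)\|R^{(i)}\|.
\]

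\textbf{Step 2: closure.} I would choose $R^{(i)}\to R$ pointwise on $B_k$ with $\|R^{(i)}\|\le k$. This is possible by separability of $\mathbb H$ and density of the $\mathbb Q$-linear core $\mathcal T$. Then
\[
\mathbf 1_{B_k}J_n^c(R^{(i)}-R^{(j)})\to0 \text{ in } L^2 \quad\text{by dominated convergence with majorant } 2k\|J_n^c\|.
\]
The derivatives form a Cauchy sequence in $L^2(\Omega;\mathcal H_n^c\otimes\mathbb H)$, since by linearity in the direction they are bounded by $\mathfrak d_{2n}(J_n^c)\|R^{(i)}-R^{(j)}\|$, with majorant $2k\,\mathfrak d_{2n}$. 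Closedness of $D^{(n)}$ then yields $\mathbf 1_{B_k}J_n^cR\in\mathbb D^{1,2}_{(n)}$, and the norm bound passes to the limit. Because the $B_k$ are $\mathcal E_n$-measurable and exhaust $\Omega$, this is exactly the localized membership. The limits obtained on different $B_k$ are consistent because of the uniqueness of closed derivatives.

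\textbf{Main obstacle.} The delicate step is the linearity claim used for the Cauchy estimate, namely that $\xi\mapsto D^{(n)}(J_n^c\xi)$ acts linearly with the bound $\mathfrak d_{2n}$ \emph{simultaneously} for all $\xi$ on a common full-measure set. I would obtain this from the common pathwise bound \eqref{eq:DJ-simultaneous-bound}, with $\mathcal T$ countable and $\mathbb Q$-linear, so that additivity holds almost surely on $\mathcal T$ and extends by continuity. I would avoid any argument that interchanges a supremum with an expectation.
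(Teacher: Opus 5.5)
Your proposal is correct and follows essentially the same route as the paper. Both arguments approximate $R$ on $B_k$ by $\mathcal E_n$-measurable simple variables with values in the $\mathbb Q$-linear core $\mathcal T$, and use the bounds $\|J_n^c\|\,\|R_N-R_M\|$ and $\mathfrak d_{2n}(J_n^c)\|R_N-R_M\|$ with dominated convergence to get a Cauchy sequence in the partial graph norm, concluding by closedness of $D^{(n)}$ and patching over $B_k$. Your justification of the simple-direction case via Lemma~\ref{lem:ray-sobolev} spells out a step the paper leaves implicit. The extra intersection with $A_{n,k'}$ is harmless but unnecessary. Likewise, the simultaneous linearity on $\mathcal T$ follows from countability and $\mathbb Q$-linearity alone, without invoking \eqref{eq:DJ-simultaneous-bound}.
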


\begin{proof}
Fix $k$.  Since $R$ is $\mathcal{E}_n$-measurable, it is constant along the current Gaussian
factor.  Choose $\mathcal{E}_n$-measurable simple variables $R_N$, with values in the countable dense
set $\mathcal{T}$, such that $\mathbf1_{B_k}R_N\to\mathbf1_{B_k}R$ almost surely and $\|\mathbf1_{B_k}R_N\|\le k+1$. Dominated convergence also gives convergence in every finite $L^q$.  Then
\begin{equation*}
 D^{(n)}R_N=0,
 \qquad D^{(n)}(J_n^cR_N)=\mathfrak D_nJ_n^c[R_N].
\end{equation*}
Moreover,
\begin{align*}
 \|J_n^c(R_N-R_M)\|_{\mathbb{H}}
 &\le\|J_n^c\|\,\|R_N-R_M\|_{\mathbb{H}},\\
 \|D^{(n)}(J_n^c(R_N-R_M))\|_{\mathcal{H}_n^c\otimes\mathbb{H}}
 &\le\mathfrak d_{2n}(J_n^c)\|R_N-R_M\|_{\mathbb{H}}.
\end{align*}
The two estimates above, the bound $\|\mathbf1_{B_k}(R_N-R_M)\|\le2(k+1)$, and dominated convergence show that $\mathbf1_{B_k}J_n^cR_N$ is Cauchy in the partial Malliavin graph norm.  By the closedness of
$D^{(n)}$,
\begin{equation*}
 \mathbf{1}_{B_k}J_n^cR\in\mathbb{D}_{(n)}^{1,2}(\mathbb{H}),
 \qquad
 \|D^{(n)}(J_n^cR)\|_{\mathcal{H}_n^c\otimes\mathbb{H}}
 \le\mathfrak d_{2n}(J_n^c)\|R\|_{\mathbb{H}}.
\end{equation*}
Patching over $B_k$ proves the claim.  The same approximation gives $D^{(n)}R=0$.
\end{proof}

In particular,
$R=\rho_n^c$ satisfies the lemma after localization by \eqref{eq:external-control-localizers}, because
$\rho_n^c$ depends only on the clock and on Brownian factors preceding the current controlled block.

\begin{lemma}\label{lem:block-control-sobolev-stability}
Assume the hypotheses of Proposition \ref{prop:block-contraction}.  For every $n\ge0$ and fixed
$\beta>0$,
\begin{equation*}
 h_n^c\in\mathbb{D}_{(n),\mathrm{loc}}^{1,2}(\mathcal{H}),
 \qquad h_n^c=\mathsf P_n(S)h_n^c.
\end{equation*}
For each $k$,
$\mathbf{1}_{A_{n,k}}h_n^{c,N}\to\mathbf{1}_{A_{n,k}}h_n^c$ strongly in
$L^2(\Omega;\mathcal{H})$, and, along a subsequence, the corresponding derivatives converge weakly
in $L^2(\Omega;\mathcal{H}\otimes\mathcal{H})$ to $D^{(n)}h_n^c$.  Moreover,
\begin{align}\label{eq:Dh-local}
 D^{(n)}h_n^c
 &=(D^{(n)}(\mathbb{A}_n^c)^*)G_n^cJ_n^c\rho_n^c
 -(\mathbb{A}_n^c)^*G_n^c(D^{(n)}\mathbb{M}_n^c)G_n^cJ_n^c\rho_n^c\notag\\
 &\quad+(\mathbb{A}_n^c)^*G_n^c\mathfrak D_nJ_n^c[\rho_n^c],
\end{align}
where
\begin{equation*}
 D^{(n)}\mathbb{M}_n^c
 =(D^{(n)}\mathbb{A}_n^c)(\mathbb{A}_n^c)^*
 +\mathbb{A}_n^cD^{(n)}(\mathbb{A}_n^c)^*
\end{equation*}
in $L^2([S_{\tau_{2n}},S_{\tau_{2n+1}}];\mathbb{L}_1(\mathbb{H})^m)$, and
\begin{equation}\label{eq:Dh-beta-summary}
 \|h_n^c\|_{\mathcal{H}_n^c}^2+
 \|D^{(n)}h_n^c\|_{\mathcal{H}_n^c\otimes\mathcal{H}_n^c}^2
 \le C(\beta^{-1}+\beta^{-2}+\beta^{-3})
 \mathbb{J}_{2n}^8\|\rho_{\tau_{2n}}\|_{\mathbb{H}}^2.
\end{equation}
\end{lemma}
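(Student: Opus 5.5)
The plan is to prove the statement at the Galerkin level on each localizer $A_{n,k}$ and then pass to the limit using closedness of $D^{(n)}$. First, $h_n^c=\mathsf P_n(S)h_n^c$ holds by construction: $(\mathbb A_n^c)^*$ maps into $L^2([S_{\tau_{2n}},S_{\tau_{2n+1}}];\mathbb R^m)$, identified with $\mathcal H_n^c$ by zero extension.

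For fixed $N$ and frozen clock, the Galerkin fields $J_n^{c,N}$, $\mathbb A_n^{c,N}$ and $\mathbb M_n^{c,N}$ are smooth functionals of finitely many Gaussian coordinates on the current factor once the cutoffs of Proposition~\ref{prop:differentiability} are applied. The external data $\Pi_N\rho_n^c$ and $\Pi_NZ_{\tau_{2n}}$ are $\mathcal E_n$-measurable, so they have zero $D^{(n)}$ derivative by Lemma~\ref{lem:conditional-block-divergence}. The chain rule, the product rule \eqref{eq:DM-norm}, the resolvent identity \eqref{eq:DG-resolvent}, and Lemma~\ref{lem:external-random-direction} applied with $R=\Pi_N\rho_n^c$ then give the Galerkin analogue of \eqref{eq:Dh-local}.

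The bound \eqref{eq:Dh-beta-summary} follows from this formula and the operator bounds $\|G\|\le\beta^{-1}$ and $\|\mathbb A\|,\|\mathbb A^*\|,\|J\|,\mathfrak d_{2n}(J)\le\mathbb J_{2n}$ from \eqref{eq:J-block-definition}. We estimate the pieces separately.
\begin{itemize}
\item $\|h\|\le\beta^{-1}\mathbb J^2\|\rho\|$; a sharper route uses $\|\mathbb A^*G\|\le \tfrac12\beta^{-1/2}$.
\item The first term of $Dh$ is bounded by $\beta^{-1}\mathbb J^2\|\rho\|$.
\item The second term is bounded by $\beta^{-2}\mathbb J\cdot\mathbb J^2\cdot\mathbb J\|\rho\|$.
\item The third term is bounded by $\beta^{-1}\mathbb J^2\|\rho\|$.
\end{itemize}
Squaring these gives powers at most $\beta^{-4}\mathbb J^8$. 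This would yield $\beta^{-4}$, so to obtain only $\beta^{-3}$ I will use $\|\mathbb A^*G\|\le\beta^{-1/2}/2$, which is the spectral estimate from Lemma~\ref{lem:regularized-low}. With it, the square of the second term is at most $\beta^{-3}\mathbb J^6$, and the other terms are handled the same way; all powers of $\mathbb J$ are bounded by $\mathbb J^8$ since $\mathbb J\ge1$. These estimates are pathwise and uniform in $N$ on $A_{n,k}$.

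The passage $N\to\infty$ is the main obstacle. Strong convergence of $\mathbf 1_{A_{n,k}}h_n^{c,N}$ is obtained as follows.
\begin{itemize}
\item $J^{N}\Pi_N\rho\to J\rho$ by \eqref{eq:J-galerkin-convergence}, using dominated convergence in the direction and uniform operator bounds.
\item $\mathbb A^{N*}\to\mathbb A^*$ strongly, via the kernel representation and convergence of $J^N_{\gamma_r,t}\mathbf Q\mathbf e_j$ for each $r$.
\item $\mathbb M^N\to\mathbb M$ in operator norm, because the finite-rank kernel makes $\mathbb M$ trace-class with kernel converging in $L^2$ of the operational variable.
\item The resolvents then converge in norm, since $\|G^N-G\|\le\beta^{-2}\|\mathbb M^N-\mathbb M\|$.
\end{itemize}
Dominated convergence in $L^2(\Omega)$ then uses the bound \eqref{eq:Dh-beta-summary}, integrable on $A_{n,k}$ by \eqref{eq:variational-block-moment}. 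For this step I would intersect the localizer further with $\{\mathbb J_{2n}\le k\}$, or else use the moment bound with the clock penalty bounded on $A_{n,k}$ since $\Xi_{2n}\le k$ there.

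The derivatives are bounded in $L^2(\Omega;\mathcal H\otimes\mathcal H)$ by the same estimate, so a weakly convergent subsequence exists. Closedness of $D^{(n)}$, in its weak graph form (Mazur), identifies the limit as $D^{(n)}(\mathbf 1_{A_{n,k}}h_n^c)$. The limit equals the right side of \eqref{eq:Dh-local} because each Galerkin factor converges: the derivative fields converge weakly by the same closability argument used for \eqref{eq:DJ-equation} and \eqref{eq:DA-operational}. Products of one weakly and several strongly, uniformly bounded convergent factors converge weakly. Finally, patching over $A_{n,k}\uparrow\Omega$ gives $h_n^c\in\mathbb D^{1,2}_{(n),\mathrm{loc}}(\mathcal H)$, and lower semicontinuity transfers \eqref{eq:Dh-beta-summary} to the limit.
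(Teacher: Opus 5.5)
Your pathwise bound is fine. Once \eqref{eq:Dh-local} holds, the Tikhonov estimate $\|(\mathbb A_n^c)^*G_n^c\|\le\beta^{-1/2}$ gives the three terms of orders $\beta^{-1}$, $\beta^{-3/2}$ and $\beta^{-1/2}$ times powers of $\mathbb J_{2n}$, as in the paper.

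\textbf{The gap is in how you get membership in $\mathbb D^{1,2}_{(n),\mathrm{loc}}(\mathcal H)$.} That space only admits localizing sets that are $\mathcal E_n$-measurable, i.e.\ constant along the current Gaussian factor. The cutoff fields from Proposition~\ref{prop:differentiability} agree with the true Galerkin fields only on energy events such as $\{\sup_{[\tau_{2n},\tau_{2n+1}]}V(Z^N)\le M\}$. These events depend on the current Brownian increments, so the cutoffs do not give $\mathbf 1_{A_{n,k}}h_n^{c,N}\in\mathbb D^{1,2}_{(n)}(\mathcal H)$. (The Galerkin solution is also a functional of the whole Brownian path on the block, not of finitely many coordinates.)

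This is not a formality. For a unit vector $e$ of the current factor, $\mathbf 1_{\{W(e)>0\}}$ is locally Sobolev with the non-admissible localizers $\{|W(e)|>1/j\}$, is square integrable and has zero derivative, yet it is not in $\mathbb D^{1,2}_{(n)}$. So your closedness argument has nothing to start from.

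The missing idea is absolute continuity along Cameron--Martin lines. On $A_{n,k}$ the shifted forcing is a bounded finite-rank path. Proposition~\ref{prop:differentiability}, \eqref{eq:CM-variation} and the resolvent identity then give continuous line differentiability of $h_n^c$, with derivative \eqref{eq:Dh-local}. Lemma~\ref{lem:ray-sobolev}, together with $\mathbb E[\mathbf 1_{A_{n,k}}\mathbb J_{2n}^p]<\infty$, yields $\mathbf 1_{A_{n,k}}h_n^c\in\mathbb D^{1,2}_{(n)}(\mathcal H)$ directly. This is the paper's route; the Galerkin approximation is used there only for the convergence assertion.

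\textbf{The convergence step also needs repair.} Dominated convergence is not available: your bound involves the Galerkin quantity $\mathbb J^{N}_{2n}$, so there is no $N$-independent dominating function. Intersecting $A_{n,k}$ with $\{\mathbb J_{2n}\le k\}$ is not admissible either. That event depends on the current Gaussian factor, so it is not $\mathcal E_n$-measurable and $D^{(n)}$ does not annihilate its indicator. It also does not control $\mathbb J^N_{2n}$.

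What works is uniform integrability. On $A_{n,k}$ one has $V(\Pi_NZ_{\tau_{2n}})\le C_k$ and $\Xi_{2n}\le k$. The Galerkin version of \eqref{eq:variational-block-moment} then gives moments of order greater than two for $h_n^{c,N}$ and $L^2$ bounds for $D^{(n)}h_n^{c,N}$, uniformly in $N$. Together with pathwise convergence of the controls, this gives strong $L^2$ convergence, and weak closedness identifies the derivative limits.

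Finally, weak lower semicontinuity cannot transfer a pathwise inequality. You do not need it, because \eqref{eq:Dh-beta-summary} follows pathwise from \eqref{eq:Dh-local} applied to the limit itself.
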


\begin{proof}
Write $D^{(n)}=D^{[2n]}$.  Since $\rho_n^c$ is $\mathcal{E}_n$-measurable,
$D^{(n)}\rho_n^c=0$.  Lemma \ref{lem:external-random-direction} therefore gives, on
$A_{n,k}$,
\begin{equation*}
 D^{(n)}(J_n^c\rho_n^c)=\mathfrak D_nJ_n^c[\rho_n^c].
\end{equation*}
Differentiating
$h_n^c=(\mathbb{A}_n^c)^*G_n^cJ_n^c\rho_n^c$ and using
\eqref{eq:DG-resolvent} yields \eqref{eq:Dh-local}.  The Tikhonov bounds are
\begin{equation}\label{eq:h-L2}
 \|h_n^c\|_{\mathcal{H}_n^c}\le\beta^{-1/2}\|J_n^c\rho_n^c\|_{\mathbb{H}},
 \qquad
 \|G_n^c\|\le\beta^{-1},
 \qquad
 \|(\mathbb{A}_n^c)^*G_n^c\|\le\beta^{-1/2}.
\end{equation}
Together with \eqref{eq:DM-norm}, the three terms in \eqref{eq:Dh-local} are bounded by
\begin{equation*}
 C\beta^{-1}\mathbb{J}_{2n}^3\|\rho_n^c\|,
 \qquad
 C\beta^{-3/2}\mathbb{J}_{2n}^4\|\rho_n^c\|,
 \qquad
 C\beta^{-1/2}\mathbb{J}_{2n}^2\|\rho_n^c\|,
\end{equation*}
which implies \eqref{eq:Dh-beta-summary}.

On $A_{n,k}$, the initial state, direction, and clock cost are bounded, and are
constant along the current Gaussian factor. The weighted moment estimate
\eqref{eq:variational-block-moment} implies, for every finite $p$,
\[
 \mathbb E[\mathbf1_{A_{n,k}}\mathbb J_{2n}^p]<\infty.
\]
Cameron--Martin shifts of this factor give bounded finite-rank forcing paths.
Proposition~\ref{prop:differentiability}, \eqref{eq:CM-variation}, and the
resolvent identity give continuous line differentiability of $h_n^c$,
with derivative \eqref{eq:Dh-local}; the initial state and $\rho_n^c$
remain fixed external parameters. Hence \eqref{eq:Dh-beta-summary} and
Lemma~\ref{lem:ray-sobolev} give
$\mathbf1_{A_{n,k}}h_n^c\in\mathbb D_{(n)}^{1,2}(\mathcal H)$.
This step uses absolute continuity along Gaussian lines in addition to the
integrable derivative bound.

For the restarted Galerkin controls,
\eqref{eq:state-galerkin-convergence} and \eqref{eq:J-galerkin-convergence}
give convergence of the kernels at almost every operational time.
The common local variational bounds and dominated integration yield
$\mathbb A_n^{c,N}\to\mathbb A_n^c$ in Hilbert--Schmidt norm, hence
$\mathbb M_n^{c,N}\to\mathbb M_n^c$ in operator norm.
On $A_{n,k}$, $V(\Pi_NZ_{\tau_{2n}})\le C_k$ uniformly in $N$.
The weighted estimates therefore give uniform moments of order greater than
two for the controls and uniform $L^2$ bounds for their derivatives.
Uniform integrability yields strong $L^2$ convergence of the controls;
weak closedness identifies every derivative limit with \eqref{eq:Dh-local}.
Letting $k\to\infty$ proves the local statement.

\end{proof}
\begin{lemma}\label{lem:skorohod-control}
Assume the hypotheses of Proposition~\ref{prop:block-contraction}. There exists $\widetilde q>0$ such that, for every
\begin{equation}\label{eq:b-choice}
 0<b<\min\left\{\frac{\widehat\gamma}{8\widetilde{q}},
                    \frac{\widehat\gamma}{8\widetilde c}\right\}
\end{equation}
and every $L,K>0$, the block control
$
 h_{n,t}^{\bullet}
 =h_{2n}\,\mathbf{1}_{[S_{\tau_{2n}},(S_{\tau_{2n+1}}\wedge S_t)\vee S_{\tau_{2n}}]}
$
belongs locally to the conditional block divergence construction of Lemma
\ref{lem:conditional-block-divergence}.  On the good-clock multiplier used below,
\eqref{eq:localized-divergence-integrability} holds, so the localized field belongs to the genuine
$L^2$ divergence domain.  Define
\begin{equation*}
 \Delta_{n,t}^{L,K}
 :=\delta_n\!\left(\mathbf{1}_{\mathbb{G}_{L,K}^{\widehat\delta,b}}h_{n,t}^{\bullet}\right)
 =\mathbf{1}_{\mathbb{G}_{L,K}^{\widehat\delta,b}}\,\delta_n^{\mathrm{loc}}(h_{n,t}^{\bullet}),
\end{equation*}
where the second identity follows from \eqref{eq:clock-multiplier-divergence}.  Then
\begin{equation}\label{eq:control-bound}
 \mathbb{E}_{z_0}[|\Delta_{n,t}^{L,K}|^2]
 \le C_{\kappa,L,K,\beta}
 e^{C\kappa V(z_0)}e^{-\widehat\gamma n/4}
\end{equation}
uniformly in $t\ge0$.  Consequently, the localized series of block divergences converges in $L^2$ uniformly in $t$:
\begin{equation}\label{eq:total-control-bound}
 \sup_{t\ge0}
 \left\|\sum_{n\ge0}\Delta_{n,t}^{L,K}\right\|_{L^2(\Omega)}
 \le C_{\kappa,L,K,\beta}e^{C\kappa V(z_0)}.
\end{equation}
\end{lemma}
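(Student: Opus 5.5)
The plan is to apply Lemma~\ref{lem:conditional-block-divergence} on the $n$th controlled Gaussian factor. The clock multiplier is $\Lambda=\mathbf 1_{\mathbb G_{L,K}^{\widehat\delta,b}}$ and the field is $H=h_{n,t}^\bullet$. First I check that $\Lambda$ is bounded and $\mathcal E_n$-measurable. This holds because the good-clock event depends only on $S$. Next I note that $h_{n,t}^\bullet=\mathsf P_n(S)h_{n,t}^\bullet$. Truncating $h_{2n}$ to the clock-measurable interval $[S_{\tau_{2n}},(S_{\tau_{2n+1}}\wedge S_t)\vee S_{\tau_{2n}}]$ is multiplication by an indicator that is deterministic once $\ell$ is frozen. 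Since $h_{2n}=h_n^c$, the truncation therefore commutes with $D^{(n)}$ and does not increase either norm. Lemma~\ref{lem:block-control-sobolev-stability} then gives $h_{n,t}^\bullet\in\mathbb D^{1,2}_{(n),\mathrm{loc}}(\mathcal H)$, together with the pathwise bound \eqref{eq:Dh-beta-summary}:
\[
\|h_{n,t}^\bullet\|^2+\|D^{(n)}h_{n,t}^\bullet\|^2\le C_\beta\,\mathbb J_{2n}^8\|\rho_{\tau_{2n}}\|_{\mathbb H}^2 .
\]
By \eqref{eq:partial-divergence-L2-estimate}, both the integrability condition \eqref{eq:localized-divergence-integrability} and the bound \eqref{eq:control-bound} reduce to estimating
\[
\mathbb E_{z_0}\bigl[\mathbf 1_{\mathbb G}\,\mathbb J_{2n}^8\|\rho_{\tau_{2n}}\|^2\bigr]
\]
uniformly in $t$. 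The right-hand side no longer depends on $t$, so uniformity in $t$ comes for free.

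For this expectation I mimic the second half of the proof of Proposition~\ref{prop:block-contraction}. On $\mathbb G$ we have $\mathbf 1_{\mathbb G}\le \mathbf 1_{E_n^L}\mathbf 1_{\{\Xi_{2n}\le K+2nb\}}$. Hence
\[
\mathbf 1_{\mathbb G}\mathbb J_{2n}^8\le e^{\widetilde q(K+2nb)}\,\mathbb J_{2n}^8e^{-\widetilde q\,\Xi_{2n}}.
\]
Here $\widetilde q$ is the normalization coefficient from \eqref{eq:variational-block-moment} with $(p,\eta_V)=(16,\eta_0)$, chosen so that a conditional Cauchy--Schwarz can be applied. Conditioning at $\tau_{2n}$, where $E_n^L$ and $\rho_{\tau_{2n}}$ are measurable, gives
\[
\mathbb E\bigl[\mathbb J_{2n}^8e^{-\widetilde q\Xi_{2n}}\mid\mathcal F_{\tau_{2n}}\bigr]\le C\,e^{\eta_0V(Z_{\tau_{2n}})/2}.
\]
To handle $\|\rho_{\tau_{2n}}\|^2$ rather than the fourth power, I apply Cauchy--Schwarz once more:
\[
\mathbb E\bigl[\|\rho_{\tau_{2n}}\|^2e^{\eta_0V/2}\mathbf 1_{E_n^L}\bigr]\le \Bigl(\mathbb E\bigl[\|\rho_{\tau_{2n}}\|^4e^{\eta_0V}\mathbf 1_{E_n^L}\bigr]\Bigr)^{1/2}.
\]
By \eqref{eq:weighted-endpoint-residual} this is bounded by $C_Le^{C\kappa V(z_0)-\widehat\gamma n/2}$. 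Collecting the factors yields
\[
\mathbb E|\Delta_{n,t}^{L,K}|^2\le C_{\kappa,L,K,\beta}\,e^{C\kappa V(z_0)}\,e^{-\widehat\gamma n/2+2\widetilde q nb}.
\]
The choice $b<\widehat\gamma/(8\widetilde q)$ makes the exponent at most $-\widehat\gamma n/4$, which is \eqref{eq:control-bound}. The constraint $b<\widehat\gamma/(8\widetilde c)$ is kept so that Proposition~\ref{prop:block-contraction} applies with the same event.

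For \eqref{eq:total-control-bound} I use the triangle inequality in $L^2(\Omega)$:
\[
\sum_n\|\Delta_{n,t}^{L,K}\|_{L^2}\le C\,e^{C\kappa V(z_0)/2}\sum_n e^{-\widehat\gamma n/8}.
\]
The series is geometric and the bound holds uniformly in $t$. Enlarging the constant in $e^{C\kappa V(z_0)}$ then gives the claim. Orthogonality of the block factors is not needed.

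The main obstacle I expect is membership in the genuine divergence domain. The localization $A_{n,k}$ in Lemma~\ref{lem:block-control-sobolev-stability} must be compatible with the multiplier, and \eqref{eq:localized-divergence-integrability} must hold with $\Lambda$ alone, without the $A_{n,k}$ cutoff. I would address this by first applying Lemma~\ref{lem:conditional-block-divergence} to $\Lambda\mathbf 1_{A_{n,k}}$, which is still $\mathcal E_n$-measurable. The $L^2$ bound above is uniform in $k$, so letting $k\to\infty$ and using closedness of $\delta_n$ (weak $L^2$ limits together with monotone convergence of the right-hand side) identifies the limit with $\Lambda\,\delta_n^{\mathrm{loc}}(h^\bullet_{n,t})$.
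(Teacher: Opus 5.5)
Your proposal is correct and follows the paper's argument. You reduce, via Lemma~\ref{lem:conditional-block-divergence} and \eqref{eq:Dh-beta-summary}, to bounding $\mathbb{E}_{z_0}[\mathbf{1}_{\mathbb{G}}\mathbb{J}_{2n}^8\|\rho_{\tau_{2n}}\|^2]$ uniformly in $t$, pay $e^{\widetilde q\,\Xi_{2n}}\le e^{\widetilde q(K+2bn)}$ on the good-clock event against the normalized moment \eqref{eq:variational-block-moment} with $p=16$, and then sum geometrically by Minkowski. The one difference is the H\"older step: you condition at $\tau_{2n}$ and close with the energy-weighted endpoint bound \eqref{eq:weighted-endpoint-residual} from inside the proof of Proposition~\ref{prop:block-contraction}, whereas the paper applies an unconditional Cauchy--Schwarz and bounds the two factors by Corollary~\ref{cor:stopped-exp-uniform} and the stated estimate \eqref{eq:block-contraction}; both routes give the $e^{-\widehat\gamma n/4}$ decay once $b<\widehat\gamma/(8\widetilde q)$.
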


\begin{proof}
Condition on the full subordinator path.  The current Gaussian factor has Cameron--Martin space $\mathcal{H}_n^c$, while the complementary Wiener factors are parameters.  Lemma \ref{lem:block-control-sobolev-stability} supplies the local Sobolev domain, \eqref{eq:Dh-local}, and \eqref{eq:Dh-beta-summary}.  Choose $\eta_V\in(0,\tilde{\eta}/4]$ and apply the strengthened estimate
\eqref{eq:variational-block-moment} with $p=16$.  For a corresponding $\widetilde{q}>0$,
\begin{equation}\label{eq:weighted-current-variation}
 \mathbb{E}\left[\mathbb{J}_{2n}^{16}e^{-\widetilde{q}\Xi_{2n}}
 \mid\mathcal{F}_{\tau_{2n}}\right]
 \le C_{\kappa,\eta_V}e^{\eta_V V(Z_{\tau_{2n}})}.
\end{equation}
The choice $4\eta_V\le\tilde{\eta}$ leaves enough room for the Cauchy--Schwarz and stopped-energy
exponents below.  On $\mathbb{G}_{L,K}^{\widehat\delta,b}$,
$\Xi_{2n}\le K+2bn$.  Since
\begin{equation*}
 \mathbb{J}_{2n}^{8}\|\rho_{\tau_{2n}}\|^2
 =\bigl(\mathbb{J}_{2n}^{16}e^{-\widetilde{q}\Xi_{2n}}\bigr)^{1/2}
  \|\rho_{\tau_{2n}}\|^2e^{\widetilde{q}\Xi_{2n}/2},
\end{equation*}
the current-clock bound gives
\begin{align*}
 &\mathbb{E}\left[\mathbb{J}_{2n}^{8}\|\rho_{\tau_{2n}}\|^2
 \mathbf{1}_{\mathbb{G}_{L,K}^{\widehat\delta,b}}\right]\le e^{\widetilde{q} K/2+\widetilde{q} bn}
 \left(\mathbb{E}[\mathbb{J}_{2n}^{16}e^{-\widetilde{q}\Xi_{2n}}]\right)^{1/2}
 \left(\mathbb{E}[\|\rho_{\tau_{2n}}\|^4
 \mathbf{1}_{\mathbb{G}_{L,K}^{\widehat\delta,b}}]\right)^{1/2}.
\end{align*}
By \eqref{eq:weighted-current-variation}, conditioning at $\tau_{2n}$ and then using Corollary
\ref{cor:stopped-exp-uniform} bounds the first square-root by
$C(\kappa) e^{C\kappa V(z_0)}$.  Proposition \ref{prop:block-contraction} bounds the second by
$C_Le^{C\kappa V(z_0)-\widehat\gamma n/2}$.  Hence
\begin{equation}\label{eq:weighted-current-times-residual}
 \mathbb{E}\left[\mathbb{J}_{2n}^{8}\|\rho_{\tau_{2n}}\|^2
 \mathbf{1}_{\mathbb{G}_{L,K}^{\widehat\delta,b}}\right]
 \le C_{\kappa,L,K}e^{C\kappa V(z_0)}
 \exp\left\{\widetilde{q} bn-\frac{\widehat\gamma}2n\right\}.
\end{equation}
By Lemma \ref{lem:block-control-sobolev-stability}, \eqref{eq:Dh-beta-summary}, and
\eqref{eq:weighted-current-times-residual}, the local-domain and integrability hypotheses of Lemma
\ref{lem:conditional-block-divergence} hold with
$\Lambda=\mathbf{1}_{\mathbb{G}_{L,K}^{\widehat\delta,b}}$, also after the clock-measurable truncation defining
$h_{n,t}^{\bullet}$.  Hence the measurable conditional divergence is well defined.  On each fixed clock fiber the standard estimate
(Nualart \cite[Proposition~1.3.1]{Nualart2006}) gives
\begin{equation*}
 \mathbb{E}_W\!\left[|\delta_n^\ell(H)|^2\mid S=\ell\right]
 \le \mathbb{E}_W\!\left[\|H\|_{\mathcal{H}_n^c}^2
 +\|D^{(n)}H\|_{\mathcal{H}_n^c\otimes\mathcal{H}_n^c}^2\mid S=\ell\right].
\end{equation*}
The truncation in $h_{n,t}^{\bullet}$ and the event $\mathbb{G}_{L,K}^{\widehat\delta,b}$ are clock
measurable and hence $\mathcal{E}_n$-measurable, so their $D^{(n)}$ derivatives vanish and
\eqref{eq:clock-multiplier-divergence} applies.  Integrating the fiberwise estimate over the clock
and complementary Gaussian factors and using
\eqref{eq:Dh-beta-summary} and \eqref{eq:weighted-current-times-residual}, we obtain uniformly in
$t$,
\begin{align}
 \mathbb{E}[|\Delta_{n,t}^{L,K}|^2]\le C_{\beta,\kappa,L,K}e^{C\kappa V(z_0)}
 \exp\left\{-\frac{\widehat\gamma}2n+\widetilde{q} bn\right\}.
 \label{eq:divergence-decay-before-b}
\end{align}
If $b<\widehat\gamma/(8\widetilde{q})$, then
$-\widehat\gamma/2+\widetilde{q} b\le-3\widehat\gamma/8<-\widehat\gamma/4$, and therefore
\eqref{eq:divergence-decay-before-b} implies exactly
\eqref{eq:control-bound}.  Only $D^{(n)}h_n^c$ enters the fiberwise divergence estimate.  By Lemma~\ref{lem:conditional-block-divergence}, no cross-block trace term occurs.  By
Minkowski's inequality and \eqref{eq:control-bound},
\begin{equation*}
 \left\|\sum_{n=N_1}^{N_2}\Delta_{n,t}^{L,K}\right\|_{L^2}
 \le C\sum_{n=N_1}^{N_2}e^{-\widehat\gamma n/8},
\end{equation*}
uniformly in $t$.  Hence the localized divergence series is Cauchy in $L^2$, and the geometric series gives \eqref{eq:total-control-bound}.  \end{proof}

\begin{proposition}\label{prop:localized-gradient}
Under the hypotheses above, define
\begin{equation*}
 P_t^{L,K,\widehat\delta,b}\varphi(z)
 =\mathbb{E}_z[\varphi(Z_t)\mathbf{1}_{\mathbb{G}_{L,K}^{\widehat\delta,b}}].
\end{equation*}
For every $R>0$, there exists $C_{R,L,K,\widehat\delta,b}<\infty$, independent of $t$, such that for every $\varphi\in C_b^1(\mathbb{H})$,
\begin{equation}\label{eq:localized-gradient}
 \|\nabla P_t^{L,K,\widehat\delta,b}\varphi(z)\|_{\mathbb{H}}
 \le C_{R,L,K,\widehat\delta,b}
 (\|\varphi\|_\infty+\|\nabla\varphi\|_\infty),
 \qquad V(z)\le R.
\end{equation}
\end{proposition}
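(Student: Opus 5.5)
The plan is the standard approximate-integration-by-parts argument, carried out under the good-clock localization. Fix a unit direction $\xi\in\mathbb H$ and $z$ with $V(z)\le R$. Since $\mathbb G_{L,K}^{\widehat\delta,b}$ depends only on $S$, it does not depend on $z$. Proposition~\ref{prop:differentiability} and the moment bounds \eqref{eq:finite-horizon-sup}, \eqref{eq:variational-block-moment} justify differentiating under the expectation, which gives
\[
 \langle\nabla P_t^{L,K,\widehat\delta,b}\varphi(z),\xi\rangle
 =\mathbb E_z\bigl[\langle\nabla\varphi(Z_t),J_{0,t}\xi\rangle\mathbf 1_{\mathbb G}\bigr].
\]
(First do this for $\varphi\in C_b^1$ with $\nabla\varphi$ uniformly continuous, then pass by approximation.) Next decompose
\[
 J_{0,t}\xi=\mathbb A_{0,t}\bigl(h^{[N]}\mathbf 1_{[0,S_t]}\bigr)+\rho_t^{[N]},
\]
with $N$ chosen so that $t<\tau_{2N+2}$; then $\rho_t^{[N]}=\rho_t$. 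By \eqref{eq:CM-variation}, $\mathbb A_{0,t}h$ is the Malliavin derivative of $Z_t$ in direction $h$. Hence, blockwise,
\[
 \langle\nabla\varphi(Z_t),\mathbb A_{0,t}(h_{n,t}^\bullet)\rangle=\langle D^{(n)}\varphi(Z_t),h_{n,t}^\bullet\rangle_{\mathcal H}.
\]
Here only controlled blocks $[\tau_{2n},\tau_{2n+1}]$ with $\tau_{2n}\le t$ contribute, and $h^\bullet_{n,t}$ already encodes the truncation at $S_t$.

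The two pieces are handled as follows.

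\emph{Control part.} Apply Lemma~\ref{lem:conditional-block-divergence} with $\Lambda=\mathbf 1_{\mathbb G}$, which is clock measurable and therefore $\mathcal E_n$-measurable, with $F=\varphi(Z_t)$ (bounded and in $\mathbb D^{1,2}_{(n)}$ by Proposition~\ref{prop:differentiability}, after local cutoff), and with $H=h^\bullet_{n,t}$. This turns each term into $\mathbb E[\varphi(Z_t)\Delta^{L,K}_{n,t}]$. Summing over $n$ and using \eqref{eq:total-control-bound} bounds this part by
\[
 \|\varphi\|_\infty C_{\kappa,L,K,\beta}e^{C\kappa R}.
\]
Here the $L^2$ convergence of the series, uniform in $t$, lets us exchange the sum with the expectation. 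Since $\mathbb P(\mathbb G^c)$ can be positive, all terms must carry the indicator; this is exactly the form \eqref{eq:conditional-disintegration-ibp} provides.

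\emph{Residual part.} Bound $\mathbb E[\langle\nabla\varphi(Z_t),\rho_t\rangle\mathbf 1_{\mathbb G}]$ by $\|\nabla\varphi\|_\infty\mathbb E[\|\rho_t\|\mathbf 1_{\mathbb G}]$. Split over the disjoint events $\{t\in[\tau_{2n},\tau_{2n+2})\}$, apply Jensen to the fourth moment, and use \eqref{eq:rho-between-blocks}:
\[
 \mathbb E[\|\rho_t\|\mathbf 1_{\mathbb G}]\le\sum_n\bigl(C_{L,K}e^{C\kappa R-\widehat\gamma n/2}\bigr)^{1/4}.
\]
This series is summable and independent of $t$. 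Both bounds are homogeneous in $\|\xi\|$, and taking the supremum over unit $\xi$ gives \eqref{eq:localized-gradient}. The constants depend on $\beta,\kappa$, which are fixed once $\widehat\delta$ and the balance condition are fixed; $b$ is taken as in \eqref{eq:b-choice}.

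The main obstacle is the rigorous identification of $\langle\nabla\varphi(Z_t),\mathbb A_{0,t}h^\bullet_{n,t}\rangle$ with the partial Malliavin derivative pairing for a random, adapted-across-blocks control. Two things need care. First, $\rho_{\tau_{2n}}$ and hence $h_{2n}$ depend on earlier Brownian increments, so the chain rule must be taken only in the current factor, with earlier factors treated as parameters via $\mathcal E_n$. Second, Brownian increments of later blocks enter $Z_t$ but not $h_{n,t}^\bullet$. I would first verify the identity on the Galerkin level with the external localizers $A_{n,k}$ of \eqref{eq:external-control-localizers}, using \eqref{eq:DZ-formula-main}. I would then pass to the limit using Lemma~\ref{lem:block-control-sobolev-stability}, and finally remove $A_{n,k}$ by dominated convergence, which is justified because the $L^2$ bound \eqref{eq:control-bound} holds without the localizer.
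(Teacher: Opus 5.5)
Your proposal is correct and follows the paper's proof: you differentiate under the expectation, split $J_{0,t}\xi$ into the controlled part and the residual, and turn each controlled block into $\mathbb{E}[\varphi(Z_t)\Delta_{n,t}^{L,K}]$ via Lemma~\ref{lem:conditional-block-divergence} with $\Lambda=\mathbf{1}_{\mathbb{G}_{L,K}^{\widehat\delta,b}}$, summed using \eqref{eq:total-control-bound}, while the residual is bounded pair by pair through \eqref{eq:rho-between-blocks}. The paper handles the Sobolev step more directly than your Galerkin/$A_{n,k}$ route: on the good-clock event the $M=\lfloor c_Vt/\chi\rfloor+2$ blocks meeting $[0,t]$ satisfy $\Xi_j\le K+bM$, so H\"older with \eqref{eq:variational-block-moment} gives all moments of $\mathbf{1}_{\mathbb{G}_{L,K}^{\widehat\delta,b}}\prod_{j<M}\mathbb{J}_j$ (with harmless $t$-dependent constants), and Lemma~\ref{lem:ray-sobolev} then puts $\mathbf{1}_{\mathbb{G}_{L,K}^{\widehat\delta,b}}\varphi(Z_t)$ in $\mathbb{D}^{1,2}_{(n)}$; this is the integrability you actually need, since $A_{n,k}$ controls only current-block data and not the later-block Jacobians entering $D^{(n)}Z_t$.
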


\begin{proof}
Fix $\|\xi\|_{\mathbb{H}}=1$, let
$h_N^{(t)}=\sum_{n=0}^{N}h_{n,t}^{\bullet}$ and
$\rho_t^N=J_{0,t}\xi-\mathbb{A}_{0,t}h_N^{(t)}$.  

Fix $t$ and put $M=\lfloor c_Vt/\chi\rfloor+2$ and
$D=K+bM$. On $\mathbb G=\mathbb G_{L,K}^{\widehat\delta,b}$,
all blocks meeting $[0,t]$ have index less than $M$, $\Xi_j\le D$, and
$S_t\le MD$. For any $p<\infty$, choose $0<\eta\le\eta_*$ and use
\eqref{eq:variational-block-moment} at exponent $pM$, followed by
Corollary~\ref{cor:stopped-exp-uniform}. H\"older gives
\begin{align*}
 \mathbb E\left[\mathbf1_{\mathbb G}\prod_{j=0}^{M-1}\mathbb J_j^p\right]
 &\le e^{cD}\prod_{j=0}^{M-1}
       \left(\mathbb E[\mathbb J_j^{pM}e^{-c\Xi_j}]\right)^{1/M}\le C_{t,L,K,p}e^{\eta V(z)}.
\end{align*}
The flow property bounds $J_{0,t}$ by this product. Formula
\eqref{eq:DZ-formula-main} also bounds $\|D_WZ_t\|$ by
$C(MD)^{1/2}$ times the same product. The constants are uniform on bounded
energy sets. Pathwise continuity of $J$ and this bound with $p>2$ justify
differentiation under the expectation by uniform integrability, also along a
segment of initial states.

The Cameron--Martin line differentiability from
Proposition~\ref{prop:differentiability} and \eqref{eq:CM-variation}, together
with the preceding integrable derivative bound, permits application of
Lemma~\ref{lem:ray-sobolev}. Thus
\[
 \mathbf1_{\mathbb G}\varphi(Z_t)\in\mathbb D_{(n)}^{1,2},\qquad
 D^{(n)}(\mathbf1_{\mathbb G}\varphi(Z_t))
 =\mathbf1_{\mathbb G}D^{(n)}\varphi(Z_t).
\]

For finite $N$, condition on the clock and use \eqref{eq:partial-wiener-decomposition}. In the partial duality formula take $F=\mathbf1_{\mathbb G_{L,K}^{\widehat\delta,b}}\varphi(Z_t)$.
 Lemma
\ref{lem:conditional-block-divergence}, with
$\Lambda=\mathbf{1}_{\mathbb{G}_{L,K}^{\widehat\delta,b}}$,
gives for every controlled block
\begin{equation*}
 \mathbb{E}\!\left[
 \big\langle \nabla\varphi(Z_t),\mathbb{A}_{0,t}h_{n,t}^{\bullet}\big\rangle
 \mathbf{1}_{\mathbb{G}_{L,K}^{\widehat\delta,b}}\right]
 =\mathbb{E}\!\left[
 \varphi(Z_t)\,\Delta_{n,t}^{L,K}\right].
\end{equation*}
Summing over $n\le N$ and using
$J_{0,t}\xi=\mathbb{A}_{0,t}h_N^{(t)}+\rho_t^N$ yields
\begin{align*}
 |\nabla_\xi P_t^{L,K,\widehat\delta,b}\varphi(z)|
 &\le \|\varphi\|_\infty
 \left\|\sum_{n=0}^{N}\Delta_{n,t}^{L,K}\right\|_{L^1}+\|\nabla\varphi\|_\infty
 \mathbb{E}\left[\|\rho_t^N\|\mathbf{1}_{\mathbb{G}_{L,K}^{\widehat\delta,b}}\right].
\end{align*}
By Lemma \ref{lem:skorohod-control} the first term is uniformly
bounded, so we may take $N\to\infty$.  Blocks after the pair
containing $t$ give zero; \eqref{eq:rho-between-blocks} gives
\begin{equation*}
 \mathbb{E}[\|\rho_t\|\mathbf 1_{\mathbb{G}_{L,K}^{\widehat\delta,b}}]
 \le \sum_{n\ge0}
 \mathbb{E}[\|\rho_t\|^4\mathbf 1_{\mathbb{G}_{L,K}^{\widehat\delta,b}}
 \mathbf 1_{\{t\in[\tau_{2n},\tau_{2n+2})\}}]^{1/4}
 \le C_{R,L,K}\sum_{n\ge0}e^{-\widehat\gamma n/8}<\infty.
\end{equation*}
Hence \eqref{eq:localized-gradient} follows.
\end{proof}

\begin{lemma}\label{lem:smooth-lipschitz-approx}
Let $\varphi:\mathbb{H}\to\mathbb{R}$ be bounded and Lipschitz.  There exists
$\varphi_N\in C_b^\infty(\mathbb{H})$ such that, for every $z\in\mathbb{H}$,
\begin{equation}\label{eq:smooth-lipschitz-approx}
 \varphi_N(z)\longrightarrow\varphi(z),\qquad
 \|\varphi_N\|_\infty\le\|\varphi\|_\infty,
 \qquad
 \|\nabla\varphi_N\|_\infty\le\operatorname{Lip}(\varphi).
\end{equation}
\end{lemma}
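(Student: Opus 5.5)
We construct $\varphi_N$ in two stages: a finite-dimensional reduction followed by Gaussian convolution. Both operations preserve the sup norm and do not increase the Lipschitz constant, because they are averages of translates composed with contractions. Throughout, we use the orthonormal basis of $\mathbb H$ induced by $(e_k)$ in each component, together with the coordinate projections $\Pi_N$, which are orthogonal contractions on $\mathbb H$.

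\emph{First stage: finite-dimensional reduction.} Set $\psi_N=\varphi\circ\Pi_N$. Then $\|\psi_N\|_\infty\le\|\varphi\|_\infty$. Since $\|\Pi_N\|\le1$, we have $\operatorname{Lip}(\psi_N)\le\operatorname{Lip}(\varphi)$. Finally, $|\psi_N(z)-\varphi(z)|\le\operatorname{Lip}(\varphi)\|(I-\Pi_N)z\|_{\mathbb H}\to0$ for every $z$.

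\emph{Second stage: smoothing in the range of $\Pi_N$.} The function $\psi_N$ depends only on $\Pi_Nz\in\mathbb H_N:=\Pi_N\mathbb H\cong\mathbb R^{2N}$. Let $\gamma_{\epsilon}$ be the standard Gaussian measure on $\mathbb H_N$ with covariance $\epsilon^2 I$, and define
\[
\varphi_N(z)=\int_{\mathbb H_N}\psi_N(z+y)\,\gamma_{\epsilon_N}(\mathrm dy)
=\int_{\mathbb H_N}\varphi(\Pi_Nz+y)\,\gamma_{\epsilon_N}(\mathrm dy),
\]
with $\epsilon_N=1/N$.

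We verify the required properties in turn.
\begin{enumerate}
\item[(i)] \emph{Smoothness.} $\varphi_N(z)=F_N(\Pi_Nz)$, where $F_N=(\varphi|_{\mathbb H_N})*\gamma_{\epsilon_N}$ is a Gaussian convolution of a bounded function on $\mathbb R^{2N}$. Hence $F_N\in C_b^\infty(\mathbb R^{2N})$, every derivative is bounded by $\|\varphi\|_\infty$ times a Gaussian moment, and composition with the bounded linear map $\Pi_N$ gives $\varphi_N\in C_b^\infty(\mathbb H)$.
\item[(ii)] \emph{Sup bound.} $\|\varphi_N\|_\infty\le\|\varphi\|_\infty$, since $\varphi_N$ is an average of values of $\varphi$.
\item[(iii)] \emph{Gradient bound.} For $z,z'\in\mathbb H$, $|\varphi_N(z)-\varphi_N(z')|\le\int\operatorname{Lip}(\varphi)\|\Pi_N(z-z')\|\,\mathrm d\gamma\le\operatorname{Lip}(\varphi)\|z-z'\|$. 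Therefore $\|\nabla\varphi_N(z)\|_{\mathbb H}\le\operatorname{Lip}(\varphi)$, using $\|\nabla f(z)\|=\sup_{\|\xi\|=1}\lim_{s\to0}|f(z+s\xi)-f(z)|/s$.
\item[(iv)] \emph{Pointwise convergence.} We have
\[
|\varphi_N(z)-\varphi(z)|\le\operatorname{Lip}(\varphi)\Big(\|(I-\Pi_N)z\|+\int\|y\|\,\gamma_{\epsilon_N}(\mathrm dy)\Big)\le\operatorname{Lip}(\varphi)\big(\|(I-\Pi_N)z\|+\sqrt{2N}/N\big)\to0.
\]
\end{enumerate}

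The only step needing care is the dimension-dependent Gaussian moment in (iv). It is handled by letting $\epsilon_N$ decay faster than $N^{-1/2}$; the choice $\epsilon_N=1/N$ gives the bound $\sqrt{2N}/N=\sqrt{2/N}\to0$. Note that, since $\mathbb H$ is infinite-dimensional and has no translation-invariant measure, the smoothing must be done in the finite-dimensional range $\mathbb H_N$ after the reduction $\varphi\circ\Pi_N$, rather than by convolving directly on $\mathbb H$.
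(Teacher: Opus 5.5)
Your proof is correct and follows essentially the same route as the paper: compose with the projection $\Pi_N$ and then mollify on the finite-dimensional range $\Pi_N\mathbb H$, with the smoothing scale chosen so that the first moment of the kernel tends to zero. The only difference is cosmetic. The paper uses a compactly supported $C_c^\infty$ mollifier whose scale $\varepsilon_N$ is chosen with first moment at most $1/N$, while you use a Gaussian with the explicit scale $\epsilon_N=1/N$ and the bound $\sqrt{2N}\,\epsilon_N$ on its first moment.
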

\begin{proof}
Use the phase-space projections $\Pi_N$ from Section~\ref{sec:setting}
and set $E_N=\Pi_N\mathbb H$. On $E_N$, choose a nonnegative
$C_c^\infty$ mollifier $\varrho_N$ of unit mass and a scale
$\varepsilon_N>0$ so small that
	\begin{equation*}
	\varepsilon_N\int_{E_N}|y|\varrho_N(y)\mathrm{d} y\le \frac1N.
	\end{equation*}
	Define the function
	\begin{equation*}
	\varphi_N(z)=\int_{E_N}\varphi(\Pi_Nz-\varepsilon_Ny)\varrho_N(y)\mathrm{d} y.
	\end{equation*}
	It belongs to $C_b^\infty(\mathbb{H})$ and convolution preserves both
	the sup norm and the Lipschitz constant.  Moreover,
	\begin{equation*}
	|\varphi_N(z)-\varphi(z)|
	\le \operatorname{Lip}(\varphi)\|\Pi_Nz-z\|+\frac{\operatorname{Lip}(\varphi)}{N},
	\end{equation*}
	which implies \eqref{eq:smooth-lipschitz-approx}.
\end{proof}

\begin{proposition}\label{prop:e-property}
Let \eqref{eq:fixed-rank-admissible} hold.  Then for every $R>0$ and every bounded Lipschitz function $\varphi$,
\begin{equation}\label{eq:e-property}
 \lim_{r\to0}
 \sup_{t\ge0}
 \sup_{\substack{V(z)\le R,\;V(z')\le R\\
                  \|z-z'\|_{\mathbb{H}}\le r}}
 |P_t\varphi(z)-P_t\varphi(z')|=0.
\end{equation}
\end{proposition}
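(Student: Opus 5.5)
The plan is to split $P_t\varphi$ into a good-clock part, which has a uniform gradient bound, and a bad-clock remainder, which is uniformly small. Fix $R>0$, a bounded Lipschitz $\varphi$ and $\epsilon>0$, and take $b$ as in \eqref{eq:b-choice}. For $L,K>0$ write
\[
 P_t\varphi(z)=P_t^{L,K,\widehat\delta,b}\varphi(z)
 +\mathbb E_z\bigl[\varphi(Z_t)\mathbf 1_{(\mathbb G_{L,K}^{\widehat\delta,b})^c}\bigr].
\]
The good-clock event $\mathbb G_{L,K}^{\widehat\delta,b}$ is defined only through the subordinator increments $(\Xi_j)$, which depend only on $S$. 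Its probability therefore does not depend on the initial state, and the remainder is bounded by $\|\varphi\|_\infty\,\mathbb P\bigl((\mathbb G_{L,K}^{\widehat\delta,b})^c\bigr)$ uniformly in $t$ and $z$. By \eqref{eq:good-clock-tail} we can fix $L,K$ so large that this is at most $\epsilon/3$.

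For the good-clock part, first replace $\varphi$ by the smooth approximations $\varphi_N$ of Lemma~\ref{lem:smooth-lipschitz-approx}. Proposition~\ref{prop:localized-gradient} gives $\|\nabla P_t^{L,K,\widehat\delta,b}\varphi_N(w)\|\le C_{R',L,K}(\|\varphi\|_\infty+\operatorname{Lip}\varphi)$ for every $w$ with $V(w)\le R'$, uniformly in $t$ and $N$. We then integrate along the segment $w_\theta=z'+\theta(z-z')$. On $\mathbb H$ the functional $V$ is bounded on bounded sets, since $\|u\|_{L^4}\le C\|u\|_{H^1}$ and \eqref{eq:V-equivalence} applies. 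Together with the lower bound in \eqref{eq:V-equivalence}, this shows that $V(z),V(z')\le R$ and $\|z-z'\|\le1$ imply $V(w_\theta)\le R'$ for some $R'=R'(R)$. The mean value inequality then gives
\[
 |P_t^{L,K,\widehat\delta,b}\varphi_N(z)-P_t^{L,K,\widehat\delta,b}\varphi_N(z')|\le C_{R',L,K}(\|\varphi\|_\infty+\operatorname{Lip}\varphi)\,\|z-z'\|.
\]
Letting $N\to\infty$ by bounded pointwise convergence and dominated convergence inside the expectation, the same Lipschitz bound holds for $\varphi$ itself, uniformly in $t\ge0$.

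Combining the two parts,
\[
 \sup_t|P_t\varphi(z)-P_t\varphi(z')|\le\tfrac{2\epsilon}3+C_{R',L,K}(\|\varphi\|_\infty+\operatorname{Lip}\varphi)\,r .
\]
Choosing $r$ small gives \eqref{eq:e-property}.

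The main obstacle is checking that the constant in Proposition~\ref{prop:localized-gradient} really is uniform on the whole segment and does not depend on $t$. This needs the exponential factor $e^{C\kappa V(z)}$ from \eqref{eq:total-control-bound} and \eqref{eq:rho-between-blocks} to be controlled on $\{V\le R'\}$, which is exactly why we pass to the enlarged energy level $R'$ before applying the gradient estimate.
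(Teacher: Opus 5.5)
Your proposal is correct and follows essentially the same route as the paper. Both split off the bad-clock event with $L,K$ chosen from \eqref{eq:good-clock-tail}, enlarge to an energy level $R'$ along the segment, apply Proposition~\ref{prop:localized-gradient} to the smooth approximants of Lemma~\ref{lem:smooth-lipschitz-approx}, pass to the limit by dominated convergence, and then take $r$ small. The only minor difference is how $R'$ is obtained: the paper uses convexity of $\|\cdot\|_{\mathbb H}^2$ and $\|\cdot\|_{L^4}^4$ in \eqref{eq:V-equivalence} to get $R'=C_0(1+R/c_0)$, so your extra restriction $\|z-z'\|\le1$ is harmless but not needed.
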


\begin{proof}
Fix $R>0$ and $\widehat\epsilon>0$.  Choose $b$ by
\eqref{eq:b-choice}.  By \eqref{eq:Theta-centered-tail},
\eqref{eq:Theta-local-tail}, and \eqref{eq:good-clock-tail}, choose $L$ and $K$ so large that
\begin{equation*}
 2\|\varphi\|_\infty
 \mathbb{P}\bigl((\mathbb{G}_{L,K}^{\widehat\delta,b})^c\bigr)<\frac{\widehat\epsilon}2.
\end{equation*}
For $z_\theta=(1-\theta)z+\theta z'$, $0\le\theta\le1$, convexity of the squared norm and the fourth power gives, by \eqref{eq:V-equivalence},
\[
V(z_\theta)\le C_0(1+R/c_0)=:R'.
\]
Apply Proposition \ref{prop:localized-gradient} on $\{V\le R'\}$ to the approximants from Lemma \ref{lem:smooth-lipschitz-approx}, and use dominated convergence. Renaming the constant, uniformly in $t$,
\begin{equation*}
 |P_t^{L,K,\widehat\delta,b}\varphi(z)-P_t^{L,K,\widehat\delta,b}\varphi(z')|
 \le C_{R,L,K,\widehat\delta,b}
 (\|\varphi\|_\infty+\operatorname{Lip}(\varphi))\|z-z'\|_{\mathbb{H}}.
\end{equation*}
The bad-clock event contributes at most
$2\|\varphi\|_\infty\mathbb{P}((\mathbb{G}_{L,K}^{\widehat\delta,b})^c)$.
Taking $r$ small enough yields \eqref{eq:e-property}.
\end{proof}

\subsection{Cubic realization of the fixed-rank condition}

\begin{proposition}\label{prop:cubic-realization}
There exists $m_0$ with the following property. Fix $m\ge m_0$, any operator $Q_m$
as in \eqref{eq:Q}, and a subordinator satisfying Assumption~\ref{ass:levy}.
For
\[
 g_a(s)=a s^3,\qquad 0<a\le1,
\]
there exists $a_*=a_*(m,Q_m,S)\in(0,1]$ such that the constants and parameters in
\eqref{eq:fixed-rank-admissible} can be chosen to satisfy that condition whenever
$0<a<a_*$. No uniformity in the amplitudes of $Q_m$ is asserted.
\end{proposition}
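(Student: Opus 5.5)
The plan is to keep the clock and block parameters that do not depend on $a$ fixed first, and then let $a\downarrow0$ so that the parts of the balance that do depend on $a$ become small. The point is that in \eqref{eq:fixed-rank-admissible}, the factor $e^{2\theta_0\widehat c\widehat\mu}$ is controlled by the clock alone; the dependence on the nonlinearity enters only through $\widehat c$ and through the exponential moment constants.

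\textbf{Step 1: fix the $a$-independent parameters.} For $g_a(s)=as^3$, Assumption~\ref{ass:g} holds with $\alpha_2=a/4$ and $\iota=4$. The constants $c_V,c_D,c_*,\mathscr K$ from \eqref{eq:approx-energy-structure} and Lemma~\ref{lem:stopped-energy} can be taken uniform for $a\in(0,1]$, because the dissipation and $\delta$ are structural. We choose $\kappa\le\kappa_0$ to satisfy the hypotheses of Proposition~\ref{prop:clock-mean-bound}. We take $\chi\ge\chi_0$ and a margin $\widehat\delta=1$. Then $\widehat\mu\le\overline\Xi^\sharp(\kappa,\chi,B_Q)+1$ is independent of $a$ and is a function of $(m,Q_m,S)$ only. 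The difficulty is that $\widehat c$ multiplies $\widehat\mu$ in the exponent. The estimate $\widehat q(\chi)=\mathscr He^{-\mathfrak a\chi}$ helps, but $\overline\Xi^\sharp$ grows like $e^{8\chi}$, so enlarging $\chi$ alone does not close the condition. The strategy is therefore to make $\widehat c$ small by taking $a$ small.

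\textbf{Step 2: track the $a$-dependence of $\widehat c$, $\mathscr C$ and $d_m$.} With $g'(u)=3au^2$ we have $g'(0)=0$, so every variational energy inequality carries the factor $a$. In Lemma~\ref{lem:variational-subcritical}, Lemma~\ref{lem:high-squeeze} and the proof of Proposition~\ref{prop:high-branch}, the Gronwall exponents become $Ca\int\|u\|_{L^4}^{r}$ (with no linear part), and $d_m,\varepsilon_m$ acquire a factor $a$. The absorption \eqref{eq:high-weight-absorb}/\eqref{eq:weighted-young-variational} uses the quartic dissipation $c_*\kappa\alpha_2\omega_n\|u\|_{L^4}^4$, which is itself proportional to $a$. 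We apply Young's inequality in the form
\[
 Ca\|u\|_{L^4}^{r}\le \tfrac14 c_*\kappa\tfrac a4\,\omega_n\|u\|_{L^4}^4+C'a\,\omega_n^{-4r/(4-r)} .
\]
This produces a clock coefficient $c=O(a)$ multiplying $\Xi_n$, that is, $\widehat c=O(a)$. The prefactors $\widehat C,\mathcal Q$ stay bounded uniformly in $a$: raising the weighted estimate \eqref{eq:weighted-stopped-energy} to a power that is $O(1)$ in $a$ costs only $C_{\mathrm{wt},\kappa}^{O(1)}$. In the same way, $\mathscr C=C_1C_2C_3$ stays bounded, since $C_2$ comes from \eqref{eq:fixed-block-sum-reduction} and is uniform in $a$.

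One point needs care. The deterministic decay rate $\mathfrak c$ and the lower bound $\tau_{n+2}-\tau_{n+1}\ge\chi/c_V$ do not depend on $a$. So $\widehat q(\chi)$ stays fixed, and we choose $\chi$ with $\mathscr C e^{2\theta_0}(\widehat q(\chi)^{\theta_0})<1/16$, using a crude bound $\widehat c\widehat\mu\le1$ that will be arranged in Step 3.

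\textbf{Step 3: conclude.} With $\chi$, $\kappa$ and $\widehat\delta$ now fixed, $\widehat\mu$ is a fixed number. We choose $a_*$ so that $\widehat c(a)\widehat\mu\le1$ and $\mathscr Ce^{2\theta_0}d_m(a)^{\theta_0}<1/16$ for $a<a_*$; this uses that both $\widehat c$ and $d_m$ are $O(a)$. Then \eqref{eq:fixed-rank-admissible} holds with $\mathfrak q_m<1/8$. The role of $m_0$ is to guarantee $\varepsilon_m,d_m\le1$ and the validity of Proposition~\ref{prop:high-branch} uniformly in $a\le1$.

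\textbf{Main obstacle.} The hardest step is Step 2: verifying that every constant hidden in $\widehat c$ scales like $a$ while $\mathscr C$, $\widehat C$ and $\mathcal Q$ stay bounded. The delicate part is that the quartic coercivity used for the absorption also degenerates as $a\to0$. The homogeneous Young split above is what I would try first. If it fails, the fallback is to use the $H^1$ dissipation $\|A^{1/2}u\|^2$ together with $\|u\|_{L^4}\le C\|u\|_{H^1}$, exploiting that at small $a$ the needed powers are multiplied by $a$.
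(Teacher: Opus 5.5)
Your plan (fix the clock, then let $a\downarrow0$ so that $\widehat c\widehat\mu\to0$) is the right one, but Step~2 has a genuine gap. With an order-one power of \eqref{eq:weighted-stopped-energy}, $\widehat c$ is not $O(a)$. The exponent there contains the clock penalty $-\mathscr K\kappa B_QY_n$. Since $\mathscr K$ comes from the noise terms in \eqref{eq:approx-energy-structure}--\eqref{eq:approx-energy-bracket} and not from $g$, it does not vanish as $a\downarrow0$. To use the fraction $\theta c_*\kappa\omega_n\mathfrak F_a$ of the dissipation you must raise the estimate to the power $\theta$. The rearrangement then puts $\theta\mathscr K\kappa B_QY_n$ into the clock coefficient; this is the ``positive $Y_n$ term'' in the proof of Lemma~\ref{lem:normalized-block-exp}. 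Your homogeneous Young split does make the inverse-weight remainder $O(a)$. But with $\theta$ of order one, the clock coefficient keeps the $a$-independent part $\theta\mathscr K\kappa B_Q$. Since $\widehat\mu\ge\overline\Xi\ge(e^{8\chi}-1)/(8c_V)$, the factor $e^{2\theta_0\widehat c\widehat\mu}$ then does not tend to one.

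The missing idea is to let the power shrink with $a$, at a rate balanced against the Young remainder. The paper takes $\tilde\eta_a=a^{1/4}\bar\eta/4$ and $\lambda_a=\tilde\eta_a/\kappa$, which \eqref{eq:state-exponent-choice} allows. It writes $w=a^{1/4}u$, so that $\mathfrak F_a\ge\|w\|_{L^4}^4/4$ while $\Phi_a=a^{1/2}(1+\|w\|_{L^4}^{32/9})$. Young's inequality with exponents $9/8$ and $9$ then gives the clock coefficient $C_{\Lambda_0}(a^{1/2}+a^{9/2}\tilde\eta_a^{-8})+\mathscr K\tilde\eta_aB_Q\to0$, because $a^{9/2}\tilde\eta_a^{-8}=C_\kappa a^{5/2}$. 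Separately, the remainder weight in your Young split is $\omega_n^{-r/(4-r)}$, not $\omega_n^{-4r/(4-r)}$; only the former is controlled by $X_n$ at $r=32/9$.

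The same device closes a second gap, in your choice of $\chi$. You pick $\chi$ with $\mathscr Ce^{2\theta_0}\widehat q(\chi)^{\theta_0}<1/16$ as though $\mathscr C$ were independent of $\chi$. In fact $C_2$ (via \eqref{eq:sum-constant-explicit}) and $\mathcal Q$, hence $C_3$, contain powers of $\mathscr S_{\kappa,\chi}$, whose construction grows exponentially in $\chi$. With order-one exponents this growth competes with $\widehat q(\chi)^{\theta_0}=\mathscr H^{\theta_0}e^{-2\mathfrak c\chi/c_V}$, and nothing guarantees that the product decreases. In the paper these powers, together with $C_{\mathrm{wt},\kappa}^{\lambda_a}$, are proportional to $\tilde\eta_a/\eta_*\to0$. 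Hence $\limsup_{a\downarrow0}\mathscr C_a\le C_g$, with $C_g$ independent of $m,Q_m,\kappa,\chi$. This is what permits the order of choices: $m_0$, then $\chi$, then $\kappa$ and $\widehat\delta$, then $a$.

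Finally, your claim that $d_m$ acquires a factor $a$ double-counts. Those powers of $a$ are exactly the ones consumed when $\int\|u\|_{L^4}^{32/9}$ is normalized against the degenerate quartic dissipation $a\|u\|_{L^4}^4$; without them the Young remainder is of order $a^{-8}$. The paper keeps $d_m$ independent of $a$ and enforces $C_gd_m^{\theta_0}<1/16$ through the rank threshold $m_0$, which is why the statement has one. Extracting a residual power of $a$ from the leakage would need an argument you have not given.
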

\begin{proof}
Write $V_a$ and $\mathfrak F_a$ for the energy and dissipation associated with
$G_a(s)=as^4/4$. Their quadratic parts and the constants
$c_V,c_D,c_*,c_Q,\mathscr K,\mathfrak c$ may be chosen independently of
$0<a\le1$, since $g_a(s)s=4G_a(s)$.
Set $w=a^{1/4}u$ and
\[
 \Phi_a(u)=a^{1/2}(1+\|w\|_{L^4}^{32/9}),\qquad
 R_a(s,t)=\int_s^t\Phi_a(u_r)\,\mathrm dr.
\]
The first-variation energy calculation, retaining the coefficient of $g_a'$, gives
\begin{align*}
 3a\int |u|^2|h||k|
 &\le\epsilon\|k\|_{H^1}^2
     +C_\epsilon a^{4/5}\|w\|_{L^4}^{16/5}\mathcal W(h,k),\\
 3\delta_0a\int |u|^2|h|^2
 &\le C a^{1/2}\|w\|_{L^4}^2\mathcal W(h,k).
\end{align*}
Both state factors are bounded by $C\Phi_a$. The enhanced high-velocity
interpolation used in Proposition~\ref{prop:high-branch} produces
$a^{8/9}\|w\|_{L^4}^{32/9}$, also bounded by $\Phi_a$.
Thus its two coupled inequalities hold with $\Phi$ replaced by $\Phi_a$,
with the same spectral rates for $\varepsilon_m,d_m$ and the same linear damping.
In particular, for constants independent of $a,m,s,t$,
\begin{align}
 \|J_{s,t}\|^{p_0}&\le \mathscr H_0 e^{\Lambda_0 R_a(s,t)},\label{eq:cubic-first-factor}\\
 \|J_{s,t}\Pi_m^\perp\|^{p_0}
 &\le \mathscr H e^{-p_0\mathfrak c(t-s)/2}
          e^{\Lambda_0\varepsilon_m R_a(s,t)}
       +C d_m e^{\Lambda_0 R_a(s,t)}.\label{eq:cubic-high-factor}
\end{align}
To obtain the second line, integrate the coupled inequalities as in
\eqref{eq:leakage-energy}--\eqref{eq:full-high-energy}, use
$x\le e^x$ for $x\ge0$, and raise the energy bound to $p_0/2$.
The full-flow bound in the first line follows by discarding only its negative
linear damping; there is no positive constant multiple of $t-s$ independent of $a$.
The factor $C$ in the leakage term is absorbed into the geometric constants below.

Fix $\kappa,\chi$ for the moment and choose
\[
 \tilde\eta_a=a^{1/4}\bar\eta/4,\qquad
 \lambda_a=\tilde\eta_a/\kappa.
\]
Since $\mathfrak F_a\ge\|w\|_{L^4}^4/4$, weighted Young's inequality with
exponents $9/8$ and $9$ gives, for every fixed $\Lambda_0>0$,
\[
 \Lambda_0\Phi_a(u)
 \le \lambda_a c_*\kappa\omega_n\mathfrak F_a
    +C_{\Lambda_0}\left(a^{1/2}+a^{9/2}\tilde\eta_a^{-8}\right)\omega_n^{-8}.
\]
Raise \eqref{eq:weighted-stopped-energy} to $\lambda_a\le1$, drop its negative
terminal-energy term, and apply conditional Jensen. This yields
\begin{equation}\label{eq:cubic-normalization}
 \mathbb E\left[e^{\Lambda_0 R_a(\tau_n,\tau_{n+1})-c_{\Lambda_0,a}\Xi_n}
                      \mid\mathcal F_{\tau_n}\right]
 \le C_{{\rm wt},\kappa}^{\lambda_a}
      e^{\tilde\eta_a V_a(Z_{\tau_n})},
\end{equation}
where one may take
\[
 c_{\Lambda_0,a}=C_{\Lambda_0}\left(a^{1/2}+a^{9/2}\tilde\eta_a^{-8}\right)
                  +\mathscr K\tilde\eta_a B_Q
 \longrightarrow0\quad(a\downarrow0).
\]
Indeed, $a^{9/2}\tilde\eta_a^{-8}=C_\kappa a^{5/2}$.
The prefactor in \eqref{eq:cubic-normalization} tends to one.
Conditional Jensen supplies the same bound with every term in the exponent
multiplied by $\varepsilon_m\in(0,1]$.

Use \eqref{eq:cubic-normalization} at the finitely many coefficients $\Lambda_0$
required for fourth moments of the factors in
\eqref{eq:cubic-first-factor}--\eqref{eq:cubic-high-factor}.
It follows that those factors can be chosen with
\[
 \widehat c_a\longrightarrow0,
\]
while their moment prefactors tend to constants depending only on $p_0$ and
norm equivalence. The stopped-recursion factors in Lemma~\ref{lem:one-pair}
occur as powers proportional to $\tilde\eta_a/\eta_*$ and hence tend to one.
Moreover, \eqref{eq:sum-constant-explicit} shows that the word-product constant
$C_2$ tends to one. Consequently there is a geometric constant $C_g$, independent
of $m,Q_m,\kappa,\chi$, such that, for every fixed choice of these parameters,
\begin{equation}\label{eq:cubic-geometric-limit}
 \limsup_{a\downarrow0}\mathscr C_a\le C_g.
\end{equation}
Here the constants are those constructed from the preceding first-variation
bounds; the later Sobolev constants for the controls are not involved.

Choose $m_0$ to cover the rank threshold in Proposition~\ref{prop:high-branch},
to ensure $\varepsilon_m+d_m\le1$, and to satisfy
$C_gd_m^{\theta_0}<1/16$ for $m\ge m_0$. The cubic estimates above make this
choice uniform in $0<a\le1$. Choose $\chi\ge\chi_0$ so that
$C_g\widehat q(\chi)^{\theta_0}<1/16$.
For the fixed $Q_m$, choose $\kappa$ satisfying Lemma~\ref{lem:stopped-energy}
and fix any $\widehat\delta>0$. Then $\widehat\mu=\overline\Xi+\widehat\delta<\infty$
is independent of $a$. By \eqref{eq:cubic-geometric-limit},
\[
 \limsup_{a\downarrow0}
 \mathscr C_a e^{2\theta_0\widehat c_a\widehat\mu}
       (\widehat q(\chi)^{\theta_0}+d_m^{\theta_0})<\tfrac18.
\]
Thus \eqref{eq:fixed-rank-admissible} holds for all sufficiently small $a>0$.
The covariance tolerance and $\beta$ are chosen only afterwards.
\end{proof}

For example, a gamma subordinator has
$\nu_S(\mathrm dr)=c r^{-1}e^{-br}\,\mathrm dr$, $b,c>0$.
Its L\'evy measure is infinite and
\[
 \int_0^\infty(e^{\zeta r}-1)\nu_S(\mathrm dr)
   =c\log\frac b{b-\zeta}<\infty\qquad(0<\zeta<b),
\]
so Proposition~\ref{prop:cubic-realization} gives a concrete pure-jump family.

\section{Unique ergodicity}\label{sec:ergodicity}

\subsection{Weak irreducibility}\label{sec:irreducibility}
Let
$\zeta_t=QW_{S_t}$
and set
$ y_t=v_t-\zeta_t$.
Then, 
\begin{equation}\label{eq:shifted}
 \partial_t u(t)=y_t+\zeta_t,
 \qquad
 \partial_t y(t)=-Ay_t-A\zeta_t-Au_t-P_0g(u_t).
\end{equation}

\begin{lemma}\label{lem:small-forcing}
For every $R,\widehat r>0$, there exist $T=T(R,\widehat r)>0$ and
$\varepsilon=\varepsilon(R,\widehat r)>0$ such that, if
\begin{equation}\label{eq:small-force-event}
 \sup_{0\le t\le T}\|\zeta_t\|_{H^1}<\varepsilon,
\end{equation}
then
\begin{equation}\label{eq:reach-small-ball}
 \|Z_T^{z_0}\|_{\mathbb{H}}<\widehat r
 \qquad\text{for every }V(z_0)\le R.
\end{equation}
\end{lemma}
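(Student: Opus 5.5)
The plan is a deterministic argument on the shifted system \eqref{eq:shifted}, treating $\zeta$ as a small bounded forcing. It combines the pathwise modified-energy inequality \eqref{eq:pathwise-galerkin-energy} with the exact unforced dissipation structure. Apply the modified energy $V(u,y)$ to \eqref{eq:shifted}, as in the proof of Proposition~\ref{prop:wellposed}. We keep the version of \eqref{eq:pathwise-galerkin-energy} \emph{without} the constant $1$, namely
\[
\frac{\mathrm d}{\mathrm dt}V(u,y)+c\,\mathfrak F(u,y)\le C\bigl(\|\zeta_t\|_{H^1}^2+\|\zeta_t\|_{H^1}^4\bigr).
\]
This holds because every forcing-dependent term carries a factor of $\zeta$. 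By \eqref{eq:D-controls-V}, $\mathfrak F\ge\hat c V$, so
\[
\frac{\mathrm d}{\mathrm dt}V(u,y)+c\hat c\,V(u,y)\le C(\varepsilon^2+\varepsilon^4)
\]
on the event \eqref{eq:small-force-event}. We first derive this at the Galerkin level with constants independent of $N$, then pass to the limit using \eqref{eq:state-galerkin-convergence}.

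Gronwall then gives
\[
V(u_T,y_T)\le e^{-c\hat cT}V(u_0,v_0)+\frac{C(\varepsilon^2+\varepsilon^4)}{c\hat c}.
\]
The initial shifted state is $(u_0,v_0)$, since $\zeta_0=0$, so $V(u_0,y_0)=V(z_0)\le R$. Choose $T$ so that $e^{-c\hat cT}R\le c_0\widehat r^2/8$. Then choose $\varepsilon\le1$ so that $C(\varepsilon^2+\varepsilon^4)/(c\hat c)\le c_0\widehat r^2/8$. By \eqref{eq:V-equivalence}, $\|(u_T,y_T)\|_{\mathbb H}^2\le V/c_0\le\widehat r^2/4$, hence $\|(u_T,y_T)\|_{\mathbb H}<\widehat r/2$.

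To return to $Z_T=(u_T,y_T+\zeta_T)$, use $\|\zeta_T\|\le\lambda_1^{-1/2}\|\zeta_T\|_{H^1}<\lambda_1^{-1/2}\varepsilon$. Shrinking $\varepsilon$ further so that this is below $\widehat r/2$ yields \eqref{eq:reach-small-ball}. Both $T$ and $\varepsilon$ depend only on $R$, $\widehat r$ and structural constants, so the bound is uniform over $\{V(z_0)\le R\}$.

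The main point to check is that the pathwise energy inequality holds with no additive constant on the right. This requires the fact that $g(0)=0$, so that the only inhomogeneity comes from the cross terms with $\zeta$. Those are the terms $(g(u),\zeta)$, $(A^{1/2}u,A^{1/2}\zeta)$, $(A^{1/2}y,A^{1/2}\zeta)$ and the $\delta$-terms, all handled by Young's inequality with $\epsilon$ below the dissipation coefficients, as already noted after \eqref{eq:pathwise-galerkin-energy}. The term $(g(u),\zeta)$ is bounded using $|g(s)|\le C(|s|+|s|^3)$ and $\|\zeta\|_{L^4}\le C\|\zeta\|_{H^1}$, giving $\epsilon\|u\|_{L^4}^4+C_\epsilon(\|\zeta\|_{H^1}^4+\|\zeta\|_{H^1}^2)$. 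The $\|u\|_{L^4}^4$ part is absorbed by $\int G(u)$ in $\mathfrak F$ via \eqref{eq:G-coercive}.
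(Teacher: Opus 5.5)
Your proposal is correct and follows the same route as the paper. Both apply the modified energy $V(u,y)$ to the shifted system \eqref{eq:shifted}, absorb every $\zeta$-dependent term into $\mathfrak F$ by Young's inequality so that no additive constant remains, and then use $\mathfrak F\ge\hat cV$ and Gronwall, choosing $T$ first and then $\varepsilon$ before adding back $\zeta_T$ via $v_T=y_T+\zeta_T$. Your explicit parameter choices and the Galerkin justification simply fill in details the paper leaves implicit; the bound $\|(u_T,y_T)\|_{\mathbb H}<\widehat r/2$ should read $\le$, which is harmless because the $\zeta_T$ estimate is strict.
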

\begin{proof}
Apply the modified energy $V(u,y)$ from \eqref{eq:V} to the system \eqref{eq:shifted}.
Since $\zeta_t$ always belongs to the fixed finite-dimensional space $H_m$, all Sobolev norms of $\zeta_t$ are equivalent.  In particular, the terms $(A\zeta,y)$ and $(P_0g(u),\zeta)=(g(u),\zeta)$ are controlled by the $H^1$ size of $\zeta$.  Using \eqref{eq:g-diss} and Young's inequality, every term containing $\zeta$ is bounded by
\begin{equation*}
 \eta\mathfrak{F}(u,y)+C_{\eta,m}\bigl(\|\zeta\|_{H^1}^2+\|\zeta\|_{H^1}^4\bigr).
\end{equation*}
Taking $\eta$ small yields
\begin{equation*}
 \frac{\,\mathrm{d}}{\,\mathrm{d} t}V(u_t,y_t)+cV(u_t,y_t)
 \le C\bigl(\varepsilon^2+\varepsilon^4\bigr)
\end{equation*}
on the event \eqref{eq:small-force-event}.  Hence
\begin{equation*}
 V(u_T,y_T)
 \le e^{-cT}V(u_0,v_0)+C(\varepsilon^2+\varepsilon^4).
\end{equation*}
Choose $T$ sufficiently large and then $\varepsilon$ sufficiently small.  Since $v_T=y_T+\zeta_T$,
\eqref{eq:reach-small-ball} follows.
\end{proof}

\begin{lemma}\label{lem:small-ball}
For every $T,\varepsilon>0$,
\begin{equation*}
 \mathbb{P}\left(
   \sup_{0\le t\le T}\|QW_{S_t}\|_{H^1}<\varepsilon
 \right)>0.
\end{equation*}
\end{lemma}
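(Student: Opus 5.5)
The plan is to condition on the subordinator and reduce the claim to a small-ball estimate for Brownian motion on a fixed operational-time interval. Since $Q$ has finite spectral range in $H_m$, all Sobolev norms are equivalent there. Hence
\[
\sup_{0\le t\le T}\|QW_{S_t}\|_{H^1}\le C_{m,Q}\sup_{0\le t\le T}|W_{S_t}|\le C_{m,Q}\sup_{0\le r\le S_T}|W_r|,
\]
where we used that $S$ is nondecreasing with $S_0=0$, so $\{S_t:t\le T\}\subset[0,S_T]$. It therefore suffices to show that
\[
\mathbb P\Bigl(\sup_{0\le r\le S_T}|W_r|<\varepsilon/C_{m,Q}\Bigr)>0 .
\]

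Choose a deterministic level $M$ with $\mathbb P(S_T\le M)>0$. Such an $M$ exists because $S_T<\infty$ almost surely; by Assumption~\ref{ass:levy} even $\mathbb E S_T=T\bar s<\infty$. On $\{S_T\le M\}$ we have the inclusion
\[
\Bigl\{\sup_{r\le M}|W_r|<\varepsilon'\Bigr\}\subset\Bigl\{\sup_{r\le S_T}|W_r|<\varepsilon'\Bigr\},\qquad \varepsilon'=\varepsilon/C_{m,Q}.
\]
Since $W$ and $S$ are independent, the probability of the target event is at least
\[
\mathbb P(S_T\le M)\,\mathbb P\Bigl(\sup_{r\le M}|W_r|<\varepsilon'\Bigr).
\]

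It remains to show that the Brownian factor is positive. For $m$-dimensional standard Brownian motion on $[0,M]$, the small-ball probability is strictly positive. One way to see this is via the independent coordinates: each one-dimensional factor $\mathbb P(\sup_{r\le M}|W^j_r|<\varepsilon'/\sqrt m)$ is positive by the explicit reflection-principle series, or equivalently by full support of Wiener measure around the zero path in $C([0,M])$. The product of these positive factors is positive.

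The only step needing care is the measurability and independence argument that factorizes the joint probability. I would carry it out by Fubini, conditioning on $S=\ell$ and integrating $\mathbb P(\sup_{r\le \ell_T}|W_r|<\varepsilon')$, which is monotone in $\ell_T$, over $\{\ell_T\le M\}$. No genuine obstacle is expected.
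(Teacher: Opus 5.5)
Your proposal is correct and follows the paper's proof: you choose $M$ with $\mathbb P(S_T\le M)>0$, factor by independence of $S$ and $W$, and use positivity of the Brownian small-ball probability on $[0,M]$. The only additions are that you spell out the reduction $\sup_{t\le T}\|QW_{S_t}\|_{H^1}\le C_{m,Q}\sup_{r\le S_T}|W_r|$ and prove the small-ball positivity coordinatewise, where the paper simply cites $QW$ being a finite-dimensional continuous Gaussian process.
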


\begin{proof}
Choose $M>0$ such that $\mathbb{P}(S_T\le M)>0$.  By independence of $S$ and $W$,
\begin{align*}
 &\mathbb{P}\left(
   \sup_{0\le t\le T}\|QW_{S_t}\|_{H^1}<\varepsilon
 \right)\ge
 \mathbb{P}(S_T\le M)
 \mathbb{P}\left(
   \sup_{0\le r\le M}\|QW_r\|_{H^1}<\varepsilon
 \right)>0.
\end{align*}
The last probability is strictly positive because $QW$ is a finite-dimensional continuous Gaussian process.
\end{proof}

\begin{proposition}\label{prop:weak-irr}
For every $R,\widehat r>0$ there exists $T=T(R,\widehat r)>0$ such that
\begin{equation*}
 \inf_{V(z_0)\le R}P_T(z_0,B_{\mathbb{H}}(0,\widehat r))>0.
\end{equation*}
\end{proposition}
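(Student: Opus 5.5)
The plan is to combine the two preceding lemmas through the Markov property at a deterministic time. Fix $R,\widehat r>0$. First apply Lemma~\ref{lem:small-forcing} with these values to obtain $T=T(R,\widehat r)>0$ and $\varepsilon=\varepsilon(R,\widehat r)>0$. These have the following property: on the event
\[
 \Omega_{T,\varepsilon}=\Bigl\{\sup_{0\le t\le T}\|QW_{S_t}\|_{H^1}<\varepsilon\Bigr\},
\]
the solution satisfies $\|Z_T^{z_0}\|_{\mathbb H}<\widehat r$ simultaneously for every $z_0$ with $V(z_0)\le R$.

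The key observation is that $\Omega_{T,\varepsilon}$ depends only on the noise path $\zeta=QL$ and not on the initial state. The solution of Proposition~\ref{prop:wellposed} is constructed pathwise from the shifted system \eqref{eq:shifted}, driven by the fixed realization of $\zeta$. Lemma~\ref{lem:small-forcing} is a deterministic statement about that system. So for every $\omega\in\Omega_{T,\varepsilon}$ and every admissible $z_0$ we get $Z_T^{z_0}(\omega)\in B_{\mathbb H}(0,\widehat r)$. It follows that
\[
 P_T(z_0,B_{\mathbb H}(0,\widehat r))=\mathbb P\bigl(Z_T^{z_0}\in B_{\mathbb H}(0,\widehat r)\bigr)\ge\mathbb P(\Omega_{T,\varepsilon}),\qquad V(z_0)\le R.
\]
The right-hand side is independent of $z_0$. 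By Lemma~\ref{lem:small-ball} it is strictly positive, and taking the infimum over $\{V(z_0)\le R\}$ concludes the proof.

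The only point that needs care is that Lemma~\ref{lem:small-forcing} must hold pathwise for the energy solution itself, not only for Galerkin approximations. The pathwise energy inequality for $V(u,y)$ holds at the Galerkin level with constants independent of $N$. The strong Galerkin convergence \eqref{eq:state-galerkin-convergence} then passes the bound $V(u_T,y_T)\le e^{-cT}V(z_0)+C(\varepsilon^2+\varepsilon^4)$ to the limit. The Galerkin initial data $\Pi_Nz_0$ satisfy a uniform energy bound. Together with $v_T=y_T+\zeta_T$ and $\|\zeta_T\|<\varepsilon$, this gives the small-ball inclusion; if necessary, shrink $\varepsilon$ further so that the equivalence \eqref{eq:V-equivalence} converts the $V$-bound into an $\mathbb H$-norm bound below $\widehat r$. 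I expect this measurability and limit step to be routine, so there is no substantial obstacle.
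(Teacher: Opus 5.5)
Your proposal is correct and is exactly the paper's proof. Lemma~\ref{lem:small-forcing} gives $T,\varepsilon$. The small-forcing event depends only on the noise path, so it places $Z_T^{z_0}$ in $B_{\mathbb H}(0,\widehat r)$ uniformly over $\{V(z_0)\le R\}$, and Lemma~\ref{lem:small-ball} gives the positive lower bound. No Markov property is actually used, only $P_T(z_0,B)=\mathbb P(Z_T^{z_0}\in B)$, and your remarks on passing the pathwise energy bound through the Galerkin limit supplement a step the paper leaves implicit.
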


\begin{proof}
	Let $T,\varepsilon$ be as in Lemma \ref{lem:small-forcing}.  On the
	event of Lemma \ref{lem:small-ball}, every solution with $V(z_0)\le R$ reaches $B_{\mathbb H}(0,\widehat r)$ at time $T$. Thus
\begin{equation*}
 \inf_{V(z_0)\le R}P_T(z_0,B_{\mathbb{H}}(0,\widehat r))
 \ge
 \mathbb{P}\left(\sup_{0\le t\le T}\|QW_{S_t}\|_{H^1}<\varepsilon\right)>0.
\end{equation*}
This completes the proof.
\end{proof}

\subsection{Proof of the main results}
This subsection is devoted to the proofs of the main results
presented in Section~\ref{DeSubintro}.

\begin{proof}[Proof of Theorem \ref{thm:main}]
The existence of an invariant measure and \eqref{eq:mu-V} follow from Proposition \ref{prop:existence}, while weak irreducibility is given by Proposition \ref{prop:weak-irr}.
\end{proof}

We use the following standard support consequence of the e-property.
\begin{lemma}\label{lem:unique-criterion}
Let $\{P_t\}$ be a stochastically continuous Feller Markov semigroup on a Polish space with the
e-property.  Then the supports of two distinct ergodic invariant
measures are disjoint.
\end{lemma}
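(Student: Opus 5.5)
We argue by contradiction, following the classical argument of Komorowski--Peszat--Szarek for e-property semigroups. Let $\mu_1\neq\mu_2$ be ergodic invariant measures and suppose $z_*\in\operatorname{supp}\mu_1\cap\operatorname{supp}\mu_2$. Ergodic invariant measures that are distinct are mutually singular, so there is a bounded Lipschitz $\varphi$ with $\int\varphi\,\mathrm d\mu_1\neq\int\varphi\,\mathrm d\mu_2$. Indeed, bounded Lipschitz functions separate Borel probability measures on a metric space, and we may normalize so that $\int\varphi\,\mathrm d\mu_1-\int\varphi\,\mathrm d\mu_2=:3\epsilon>0$.

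The first step is to use ergodicity to reach Ces\`aro limits. By the Birkhoff (continuous-time, Feller and stochastically continuous) ergodic theorem, for $\mu_i$-a.e.\ $z$ we have
\[
\frac1T\int_0^TP_t\varphi(z)\,\mathrm dt\longrightarrow\int\varphi\,\mathrm d\mu_i .
\]
This follows by taking expectations in the pathwise ergodic theorem for the stationary ergodic Markov process started from $\mu_i$, together with dominated convergence and a conditioning on the initial point. Stochastic continuity gives the joint measurability in $t$ needed here. Let $\Omega_i$ denote the full-measure set on which this holds.

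The second step uses the e-property at $z_*$. Choose $r>0$ with $\sup_{t\ge0}|P_t\varphi(z)-P_t\varphi(z_*)|<\epsilon$ whenever $\|z-z_*\|<r$. Since $z_*$ lies in both supports, $\mu_i(B(z_*,r))>0$, so we can pick $z_i\in B(z_*,r)\cap\Omega_i$ for $i=1,2$. Then, for every $T$,
\[
\Bigl|\frac1T\int_0^T\bigl(P_t\varphi(z_1)-P_t\varphi(z_2)\bigr)\,\mathrm dt\Bigr|<2\epsilon .
\]
Letting $T\to\infty$ gives $|\int\varphi\,\mathrm d\mu_1-\int\varphi\,\mathrm d\mu_2|\le2\epsilon<3\epsilon$, which is a contradiction.

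The only delicate point is the first step: deriving pointwise a.e.\ Ces\`aro convergence of $P_t\varphi(z)$ from the ergodic theorem in continuous time. We handle it by applying Birkhoff to the shift on path space of the stationary process with law $\mathbb P_{\mu_i}$, which is ergodic because $\mu_i$ is extremal. This gives $\frac1T\int_0^T\varphi(Z_t)\,\mathrm dt\to\int\varphi\,\mathrm d\mu_i$, $\mathbb P_{\mu_i}$-a.s. Disintegrating over $Z_0=z$ and applying bounded convergence under $\mathbb P_z$ then yields the claim for $\mu_i$-a.e.\ $z$. Note that the Feller property is used only to guarantee that the semigroup acts on $C_b$ and that the path-space construction is measurable.
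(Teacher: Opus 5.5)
Your proof is correct and follows the paper's argument: almost-everywhere convergence of the Ces\`aro averages $T^{-1}\int_0^TP_t\varphi\,\mathrm dt$ on $\mu_i$-full sets, then the e-property at a common support point. The only difference is how you get that a.e.\ convergence: you use Birkhoff's theorem on path space plus disintegration, which gives convergence along the full limit $T\to\infty$, whereas the paper uses the $L^2(\mu_i)$ mean ergodic theorem and extracts a common subsequence $T_k$, which avoids the path-space construction.
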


\begin{proof}

Suppose $x$ lies in the supports of distinct ergodic invariant measures
$\mu_1,\mu_2$. Choose $f\in\operatorname{Lip}_b$ with $\mu_1(f)\ne\mu_2(f)$,
and write $Q_Tf=T^{-1}\int_0^TP_tf\,\mathrm dt$.
The mean ergodic theorem on $L^2(\mu_i)$ gives
$Q_Tf\to\mu_i(f)$ in $L^2(\mu_i)$; along one common subsequence $T_k\to\infty$,
the convergence holds $\mu_i$-almost everywhere, $i=1,2$.
For $\epsilon>0$, the e-property at $x$ provides $r>0$ such that
\[
 \sup_{t\ge0}|P_tf(y)-P_tf(x)|<\epsilon\qquad(y\in B(x,r)).
\]
Each ball has positive $\mu_i$-mass, so choose convergence points $y_i\in B(x,r)$.
Then $|Q_{T_k}f(y_1)-Q_{T_k}f(y_2)|\le2\epsilon$.
Letting $k\to\infty$ and then $\epsilon\downarrow0$ is a contradiction.
This is the support-separation criterion used in
\cite{KapicaSzarekSleczka2012,KomorowskiPeszatSzarek2010,GongLiu2015}.

\end{proof}

\begin{proof}[Proof of Theorem \ref{thm:conditional-ergodicity}]
By Theorem \ref{thm:main}, invariant measures exist.  Under \eqref{eq:fixed-rank-admissible}, Proposition \ref{prop:e-property} gives the e-property, while Proposition \ref{prop:wellposed} gives stochastic continuity.  It remains to show that the origin belongs to the support of every invariant measure.

Let $\mu$ be invariant, and assume that $\widehat r>0$.  Since $V$ is finite on $\mathbb{H}$ and
$\bigcup_{R>0}\{V\le R\}=\mathbb{H}$, we may choose $R$ such that
\begin{equation*}
 \mu(\{V\le R\})>0.
\end{equation*}
By invariance and Proposition \ref{prop:weak-irr}, we obtain
\begin{align*}
 \mu(B_{\mathbb{H}}(0,\widehat r))
 &=\int_{\mathbb{H}}P_T(z,B_{\mathbb{H}}(0,\widehat r))\,\mu(\,\mathrm{d} z)\\
 &\ge \int_{\{V\le R\}}P_T(z,B_{\mathbb{H}}(0,\widehat r))\,\mu(\,\mathrm{d} z)>0.
\end{align*}
Thus $0\in\operatorname{supp}\mu$ for every invariant measure, and the same holds for every ergodic one.  By the standard ergodic decomposition theorem for invariant measures on Polish spaces \cite{DaPratoZabczyk1996}, nonuniqueness would give two distinct ergodic invariant measures, whose supports both contain the origin, contradicting Lemma \ref{lem:unique-criterion}.  Hence the invariant measure is unique and therefore ergodic.

Finally, fix $z\in\mathbb{H}$ and set
$\mu_T^z=\frac{1}{T}\int_0^T P_t^*\delta_z\,\mathrm{d}t$.  By Lemma
\ref{lem:compactness}, $(\mu_T^z)$ is tight as $T\to\infty$; the
Feller--Krylov--Bogoliubov argument shows every weak limit point is invariant.  Uniqueness forces every such limit point to equal
$\mu_*$.  A tight family with a single possible subsequential limit converges as a whole, proving \eqref{eq:cesaro-convergence}.
\end{proof}

\begin{proof}[Proof of Corollary~\ref{cor:cubic-ergodicity}]
For $g(s)=as^3$, Assumption~\ref{ass:g} holds with $G(s)=as^4/4$ and $\iota=4$.
Proposition~\ref{prop:cubic-realization} verifies the fixed-rank condition;
Theorem~\ref{thm:conditional-ergodicity} gives the conclusions.
\end{proof}

\subsection{Synchronous coupling for a small cubic coefficient}\label{sec:synchronous}

For a sufficiently small cubic coefficient, an independent argument gives
uniqueness without the covariance and control estimates. Both solutions are
driven by the same realization of $L$. This is a synchronous coupling, a
special case of asymptotic coupling; see \cite{HairerMattinglyScheutzow2011}.
The smallness condition below is separate from \eqref{eq:fixed-rank-admissible}.

\begin{proposition}\label{prop:synchronous}
Let $g(s)=as^3$, $0<a\le1$, and suppose Assumption~\ref{ass:levy} holds.
In this proposition, $Q:\mathbb R^m\to H_0$ may be any operator with finite
spectral range; neither \eqref{eq:Q} nor a lower bound on its rank is required.
There are structural constants $c_s,C_s>0$, independent of $a,Q,S$, such that,
if
\begin{equation}\label{eq:synchronous-smallness}
 \epsilon_a\left(2+\frac{\mathcal E_L+2}{a_0}\right)<c_s,
 \qquad \epsilon_a=C_sa^{4/5},
\end{equation}
then the semigroup has the bounded-set-uniform e-property, a unique invariant
measure $\mu_a$, and
\begin{equation}\label{eq:synchronous-convergence}
 P_t^*\delta_z\Longrightarrow\mu_a\quad(t\to\infty),
 \qquad z\in\mathbb H.
\end{equation}
Here $a_0$ is the dissipation constant in Lemma~\ref{lem:basic-energy}, chosen
with $\iota=4$, so it is independent of $a$. In particular, the condition
holds for every fixed $Q,S$ when $a>0$ is sufficiently small.
\end{proposition}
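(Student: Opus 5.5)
The plan is to couple two solutions $Z_t=Z_t^{z}$ and $Z'_t=Z_t^{z'}$ driven by the same realization of $L$. The additive noise cancels in the difference. Write $\rho=(h,k)=Z-Z'$, so that
\[
\partial_t h=k,\qquad \partial_t k=-Ak-Ah-aP_0(u^3-u'^3).
\]
Test this in the modified difference energy $\mathcal W(\rho)$ from \eqref{eq:res-energy}; at the Galerkin level the linear part gives
\[
\tfrac{\mathrm d}{\mathrm dt}\mathcal W(\rho)+c\mathcal W(\rho)+\tfrac14\|A^{1/2}k\|^2\le\text{nonlinear terms}.
\]
The rate $c$ and the dissipation coefficient here are structural, independent of $a$, $Q$ and $S$.

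For the nonlinear terms I use $|u^3-u'^3|\le C(|u|^2+|u'|^2)|h|$. This is exactly the pairing estimated in Lemma~\ref{lem:variational-subcritical}, with $L^4$ interpolation for $k$ and Young's inequality with exponents $8/3,8/5$:
\[
a\int(|u|^2+|u'|^2)|h||k|\le\tfrac18\|A^{1/2}k\|^2+Ca^{8/5}(\|u\|_{L^4}^{16/5}+\|u'\|_{L^4}^{16/5})\mathcal W(\rho).
\]
The $\delta_0$ pairing is handled the same way. Since $G_a(s)=as^4/4$, I write $a^{8/5}\|u\|_{L^4}^{16/5}=a^{4/5}(a\|u\|_{L^4}^4)^{4/5}$ and apply Young's inequality once more, which gives a bound $\epsilon_a(1+\mathfrak F_a(Z)+\mathfrak F_a(Z'))$ with $\epsilon_a=C_sa^{4/5}$. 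This yields the pathwise inequality
\[
\mathcal W(\rho_t)\le C\mathcal W(\rho_0)\exp\Big\{-ct+\epsilon_a\int_0^t\bigl(2+\mathfrak F_a(Z_s)+\mathfrak F_a(Z'_s)\bigr)\mathrm ds\Big\}.
\]

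The main obstacle is controlling the exponential moment of $\epsilon_a\int_0^t\mathfrak F_a$ uniformly and linearly in $t$. I take the energy identity \eqref{eq:D-controls-3.6} (with $\iota=4$), which gives
\[
a_0\int_0^t\mathfrak F_a\le V(z)+\tfrac12\mathcal E_Lt+M_t,
\]
and then apply the exponential martingale inequality $\mathbb E e^{\lambda M_t-\lambda^2\langle M\rangle_t/2}\le1$ in its jump version. Here $\langle M\rangle$ (including the compensator of the jumps) is bounded by $C(\mathcal E_L+1)\int(1+V)$, which is absorbed into a fraction of the dissipation provided $\lambda$ is small. The jump exponential moments are finite by Assumption~\ref{ass:levy}, after restricting to small $\lambda$ or using $\|Qy\|^2\le B_Q|y|^2$ with Gaussian tails of $W_r$ given $S$.

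With $\lambda=\epsilon_a/a_0$, condition \eqref{eq:synchronous-smallness} is chosen precisely so that
\[
\epsilon_a\Bigl(2+\tfrac{\mathcal E_L+2}{a_0}\Bigr)<c_s\le c/2.
\]
Then Cauchy--Schwarz over the two solutions gives
\[
\mathbb E\,\mathcal W(\rho_t)^{1/2}\le C e^{C(V(z)+V(z'))}e^{-ct/4}\|z-z'\|_{\mathbb H}.
\]
If the exponential moment cannot be closed at power $1$, I would take a small fractional power of $\mathcal W$ and a Hölder split, since only a decaying first moment of $\|\rho_t\|$ is needed.

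For the conclusions, take $\varphi$ bounded Lipschitz. Then
\[
|P_t\varphi(z)-P_t\varphi(z')|\le\mathrm{Lip}(\varphi)\,\mathbb E\|\rho_t\|\le C_R e^{-ct/4}\|z-z'\|_{\mathbb H}
\]
uniformly on $\{V\le R\}$, which is the bounded-set-uniform e-property. Existence of an invariant measure comes from Proposition~\ref{prop:existence}. For uniqueness, let $\mu_1,\mu_2$ be invariant and integrate the bound against $\mu_1\otimes\mu_2$. The moment bound \eqref{eq:mu-V} alone is not enough for the $e^{CV}$ factor, so I would first restrict to $\{V\le R\}$, which has mass close to one, and bound the remaining contribution by $2\|\varphi\|_\infty$ times its measure. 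This gives $\mu_1(\varphi)=\mu_2(\varphi)$. For the convergence \eqref{eq:synchronous-convergence}, I use
\[
|P_t\varphi(z)-\mu_a(\varphi)|\le\int|P_t\varphi(z)-P_t\varphi(z')|\,\mu_a(\mathrm dz')\to0
\]
by the same truncation and dominated convergence. Convergence for bounded Lipschitz $\varphi$ is equivalent to weak convergence.
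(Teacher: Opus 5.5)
\textbf{Gap: the exponential moment step.} Your reduction to a pathwise bound for $\mathcal W(\rho_t)$ is the same as the paper's, with one small correction. To obtain $\epsilon_a=C_sa^{4/5}$ you should discard the $\delta_0$-pairing using monotonicity, $\bigl(u^3-u'^3,h\bigr)\ge0$. Bounding it in absolute value only gives $a\|u\|_{L^4}^2\mathcal W\lesssim a^{1/2}(1+a\|u\|_{L^4}^4)\mathcal W$, hence a rate $a^{1/2}$.

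The genuine gap is the step you flagged as the main obstacle. An exponential moment of $\int_0^t\mathfrak F_a$ is not available for this noise, and taking $\lambda$ small does not help.
\begin{itemize}
\item For a jump martingale the exponential supermartingale is $\exp\{\lambda M_t-\int_0^t\!\int(e^{\lambda H}-1-\lambda H)\,\mathrm d\nu_L\,\mathrm ds\}$, not $\exp\{\lambda M_t-\tfrac{\lambda^2}{2}\langle M\rangle_t\}$.
\item Here $H(z,y)=(v+\delta u,Qy)+\tfrac12\|Qy\|^2\ge\tfrac14\|Qy\|^2-\|v+\delta u\|^2$. Since $\nu_L$ is a Gaussian mixture over $\nu_S$, $\int e^{c\|Qy\|^2}\nu_L(\mathrm dy)=\infty$ for every $c>0$ once $\nu_S$ has unbounded support. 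This includes the paper's own gamma example.
\item Assumption~\ref{ass:levy} only guarantees $\mathbb E e^{\theta\cdot L_t}<\infty$ for $|\theta|^2\le2\zeta_S$. Even the linear part of $H$ therefore needs $\lambda\|Q^*(v+\delta u)\|\le\sqrt{2\zeta_S}$, which fails on large states.
\item In fact one large jump $y$ forces $\int_0^t\mathfrak F_a\gtrsim\|Qy\|^2$ with conditional probability bounded below. Hence $\mathbb E\exp\{\lambda\int_0^t\mathfrak F_a\}=\infty$ for every $\lambda>0$, and passing to a fractional power of $\mathcal W$ merely rescales $\lambda$.
\item Conditioning on $S$ does not rescue the argument. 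The conditional bracket is of order $B_Q\int(1+V)\,\mathrm dS$, an integral against the purely atomic clock measure, which physical-time dissipation cannot absorb. This is exactly why Section~\ref{sec:stopping} works on clock-weighted stopping blocks.
\end{itemize}
Consequently your decay $\mathbb E\,\mathcal W(\rho_t)^{1/2}\lesssim e^{-ct/4}$ is unsupported. So are the rate-based e-property, uniqueness and convergence you build on it. The paper itself notes that no rate follows without an exponential moment of the envelope.

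\textbf{The missing idea.} Use only second moments of the martingale together with a pathwise envelope.
\begin{itemize}
\item Set $H_z=\sup_{t\ge0}(M^z_t-t)_+$. The bound $\mathbb E_z|M^z_t|^2\le C_Rt$ (from the finite fourth jump moment) and Doob's inequality on dyadic intervals give $\sup_{V_a(z)\le R}\mathbb P(H_z>K)\le C_R/K$.
\item Pathwise, $a_0\int_0^t\mathfrak F_a\le V_a(z)+H_z+(\tfrac12\mathcal E_L+1)t$. This is where the constant $(\mathcal E_L+2)/a_0$ in \eqref{eq:synchronous-smallness} comes from.
\item This yields almost sure synchronization $\mathcal W(Z^z_t-Z^{z'}_t)\le\mathcal W(z-z')\exp\{-\gamma_st+\epsilon_a(V_a(z)+V_a(z')+H_z+H_{z'})/a_0\}$, with an a.s.\ finite random prefactor.
\item Splitting on $\{H_z\le K,\,H_{z'}\le K\}$ gives $\sup_t|P_t\varphi(z)-P_t\varphi(z')|\le C\operatorname{Lip}(\varphi)e^{\epsilon_a(R+K)/a_0}\|z-z'\|_{\mathbb H}+C_R\|\varphi\|_\infty/K$. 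This is the bounded-set-uniform e-property, without a rate.
\item Uniqueness and \eqref{eq:synchronous-convergence} then follow from a.s.\ synchronization under $\mu_1\otimes\mu_2$ (respectively $\delta_z\otimes\mu_a$) and bounded convergence. No truncation of an $e^{CV}$ factor is needed.
\end{itemize}
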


\begin{proof}
Write $V_a,\mathfrak F_a$ for the cubic energy and dissipation. The energy
calculation in Lemma~\ref{lem:basic-energy}, before taking expectations, gives
\begin{equation}\label{eq:synchronous-energy}
 V_a(Z_t)+a_0\int_0^t\mathfrak F_a(Z_r)\,\mathrm dr
 \le V_a(z)+\tfrac12\mathcal E_Lt+M_t^z.
\end{equation}
The martingale starts at zero and has compensated jump integrand
$(v+\delta u,Qy)+\|Qy\|^2/2$. Consequently,
\[
 \mathbb E_z|M_t^z|^2
 \le C_Q\int_0^t(1+\mathbb E_zV_a(Z_r))\,\mathrm dr
 \le C_Rt,\qquad V_a(z)\le R.
\]
The fourth jump moment is finite by Assumption~\ref{ass:levy}; the quadratic
coercivity and mean energy constants can be chosen uniformly for $0<a\le1$.
Set $H_z=\sup_{t\ge0}(M_t^z-t)_+$. For $K\ge1$, Doob's inequality on
$[0,1]$ and $[0,2^{j+1}]$ gives
\begin{align}\label{eq:martingale-envelope}
 \sup_{V_a(z)\le R}\mathbb P(H_z>K)
 &\le C_R\left(K^{-2}
       +\sum_{j\ge0}\frac{2^j}{(K+2^j)^2}\right)
 \le\frac{C_R}{K}.
\end{align}
For the last inequality, split the sum at $2^j\le K$. In particular, $H_z$
is finite almost surely and
\begin{equation}\label{eq:synchronous-occupation}
 \int_0^t\mathfrak F_a(Z_r^z)\,\mathrm dr
 \le a_0^{-1}\left[V_a(z)+H_z+(\mathcal E_L/2+1)t\right].
\end{equation}

Let $Z^{(i)}=(u^{(i)},v^{(i)})$, $i=1,2$, share the same noise and put
$h=u^{(1)}-u^{(2)}$, $k=v^{(1)}-v^{(2)}$. Then
\[
 \partial_t h=k,\qquad
 \partial_t k=-Ak-Ah-aP_0\bigl((u^{(1)})^3-(u^{(2)})^3\bigr).
\]
Use $\mathcal W(h,k)$ from \eqref{eq:res-energy}, with a fixed small
$\delta_0$. Its linear dissipation controls $\mathcal W$ and a positive
fraction of $\|k\|_{H^1}^2$. Moreover,
\[
 \bigl((u^{(1)})^3-(u^{(2)})^3,h\bigr)
 =\int h^2\bigl((u^{(1)})^2+u^{(1)}u^{(2)}+(u^{(2)})^2\bigr)\,\mathrm dx
 \ge0.
\]
Writing $U_i=\|u^{(i)}\|_{L^4}$, the remaining nonlinear pairing satisfies
\begin{align*}
 a\left|\bigl((u^{(1)})^3-(u^{(2)})^3,k\bigr)\right|
 &\le Ca(U_1^2+U_2^2)\|h\|_{H^1}\|k\|^{1/4}\|k\|_{H^1}^{3/4}\\
 &\le\eta\|k\|_{H^1}^2
   +C_\eta a^{8/5}(U_1^{16/5}+U_2^{16/5})\mathcal W(h,k).
\end{align*}
Choose $\eta$ below the available velocity dissipation. Since
$\mathfrak F_a(Z^{(i)})\ge aU_i^4/4$ and $x^{4/5}\le1+x$ for $x\ge0$,
there are $c_s,C_s>0$ as in the statement such that
\begin{equation}\label{eq:synchronous-difference}
 \frac{\mathrm d}{\mathrm dt}\mathcal W(h,k)
 \le\left[-c_s+\epsilon_a
   \bigl(2+\mathfrak F_a(Z^{(1)})+\mathfrak F_a(Z^{(2)})\bigr)\right]
   \mathcal W(h,k).
\end{equation}
These calculations are valid by Galerkin approximation; the common additive
noise cancels, and the difference velocity belongs to $L^2(0,T;H^1)$.
Let
\[
 \gamma_s=c_s-\epsilon_a\left(2+\frac{\mathcal E_L+2}{a_0}\right)>0.
\]
Combining \eqref{eq:synchronous-occupation} and
\eqref{eq:synchronous-difference} gives
\begin{equation}\label{eq:synchronous-path-contraction}
 \mathcal W(Z_t^z-Z_t^{z'})
 \le\mathcal W(z-z')
 \exp\left\{-\gamma_st+\frac{\epsilon_a}{a_0}
   \bigl(V_a(z)+V_a(z')+H_z+H_{z'}\bigr)\right\}.
\end{equation}
Thus every fixed pair of initial states synchronizes almost surely.

For $V_a(z),V_a(z')\le R$, split according to
$\{H_z\le K,H_{z'}\le K\}$. Norm equivalence and
\eqref{eq:martingale-envelope} yield, for $\varphi\in\operatorname{Lip}_b(\mathbb H)$,
\[
 \sup_{t\ge0}|P_t\varphi(z)-P_t\varphi(z')|
 \le C\operatorname{Lip}(\varphi)
       e^{\epsilon_a(R+K)/a_0}\|z-z'\|_{\mathbb H}
       +\frac{C_R\|\varphi\|_\infty}{K}.
\]
First choose $K$ large and then $\|z-z'\|$ small. This proves
\eqref{eq:e-property} independently of Sections~\ref{sec:stopping}--\ref{sec:control}.

Existence follows from Proposition~\ref{prop:existence}: its proof requires
only finite spectral range of $Q$. If $\mu_1,\mu_2$ are invariant, couple
initial states with law $\mu_1\otimes\mu_2$, independently of the common noise.
For each deterministic pair, \eqref{eq:synchronous-path-contraction} gives
almost sure convergence of the difference to zero. Fubini's theorem and
bounded convergence imply
\[
 |\mu_1(\varphi)-\mu_2(\varphi)|
 \le\mathbb E\min\{2\|\varphi\|_\infty,
       \operatorname{Lip}(\varphi)\|Z_t^{(1)}-Z_t^{(2)}\|\}\longrightarrow0.
\]
Bounded Lipschitz functions separate probability measures, so $\mu_1=\mu_2$.
For \eqref{eq:synchronous-convergence}, couple $z$ with an independent initial
state of law $\mu_a$ and repeat this estimate uniformly over functions with
sup norm and Lipschitz constant at most one. The bounded Lipschitz distance
to $\mu_a$ tends to zero. No exponential mixing rate follows from
\eqref{eq:synchronous-path-contraction} alone, since an exponential moment
of $H_z$ has not been established.
\end{proof}

\begin{remark}\label{rem:comparison-cubic}
For fixed $Q,S$, an explicit sufficient interval in
\eqref{eq:synchronous-smallness} is $0<a<a_{\rm syn}$, where
\[
 a_{\rm syn}=\min\left\{1,
 \left[\frac{c_s}{C_s\left(2+(\mathcal E_L+2)/a_0\right)}\right]^{5/4}\right\}.
\]
Proposition~\ref{prop:cubic-realization} instead constructs
$a_*(m,Q_m,S)$ through the fixed-rank leakage--clock balance.
Neither construction establishes an ordering of these thresholds.
The fixed-rank criterion gives unique ergodicity and Ces\`aro convergence.
For the cubic equation, synchronous coupling gives convergence of each
transition law under the separate explicit condition
\eqref{eq:synchronous-smallness}, with no minimum forcing rank.
\end{remark}

\subsection*{Declaration of competing interest}
The authors declare that they have no known competing financial interest or personal relationship that could have appeared to influence the work reported in this paper.

\subsection*{Data availability}
No datasets were generated or analysed in this theoretical study.

\subsection*{Declaration of AI use}
During the preparation of this manuscript, the authors used
AI to improve the language and clarity of expression. The authors reviewed and edited the
AI-assisted content and take full responsibility for the final manuscript.

\end{document}